\documentclass{amsart}
\usepackage[dvipsnames]{xcolor}
\usepackage{amscd,amsmath,xypic,amssymb,combelow,tikz-cd,etoolbox,calligra,mathrsfs,enumitem,mathtools,hyperref,comment,graphicx}
\usepackage{stmaryrd}
\usetikzlibrary{quotes}

\makeatletter
\patchcmd{\@settitle}{\uppercasenonmath\@title}{}{}{}
\makeatother

\newtheorem{theorem}{Theorem}[section]

\newtheorem{proposition}[theorem]{Proposition}
\newtheorem{lemma}[theorem]{Lemma}
\newtheorem{corollary}[theorem]{Corollary}

\newtheorem{definition}[theorem]{Definition}
\newtheorem{claim}[theorem]{Claim}

\newtheorem{remark}[theorem]{Remark}

\def\fg{{\mathfrak{g}}}

\def\fsl{{\mathfrak{sl}}}
\def\fgl{{\mathfrak{gl}}}
\def\fosp{{\mathfrak{osp}}}

\def\hsl{{\widehat{\fsl}}}

\def\BC{{\mathbb{C}}}
\def\BK{{\mathbb{K}}}

\def\BN{{\mathbb{N}}}

\def\BP{{\mathbb{P}}}
\def\BR{{\mathbb{R}}}
\def\BQ{{\mathbb{Q}}}
\def\BZ{{\mathbb{Z}}}

\def\CO{{\mathcal{O}}}

\def\CS{{\mathcal{S}}}
\def\CT{{\mathcal{T}}}

\def\CV{{\mathcal{V}}}

\def\ph{\varphi}

\def\sym{\textrm{sym}}
\def\Sym{\textrm{Sym}}

\def\U{\mathbf{U}}
\def\Up{\mathbf{U}^+}
\def\Upm{\mathbf{U}^\pm}

\def\Um{\mathbf{U}^-}

\def\usl{U_q(L\fg_{A_\mathbf{s}})}

\def\uslG{U_q(L\fg_{A_\Gamma})}
\def\uslGp{U_q^+(L\fg_{A_\Gamma})}
\def\uslGm{U_q^-(L\fg_{A_\Gamma})}

\def\Usl{U_{q,d}(L\fg_{\wA_\mathbf{s}})}

\def\UslGBM{U^{BM}_{q,d}(L\fg_{\wA_\Gamma})}
\def\UslG{U_{q,d}(L\fg_{\wA_\Gamma})}
\def\UslGp{U_{q,d}^+(L\fg_{\wA_\Gamma})}
\def\UslGm{U_{q,d}^-(L\fg_{\wA_\Gamma})}

\def\tuslGp{\widetilde{U}_q^+(L\fg_{A_\Gamma})}

\def\tUslGp{\widetilde{U}_{q,d}^+(L\fg_{\wA_\Gamma})}

\def\ub{U_q(L\fg_{B_\mathbf{s}})}

\def\ubG{U_q(L\fg_{B_\Gamma})}
\def\ubGp{U_q^+(L\fg_{B_\Gamma})}

\def\tubG{\widetilde{U}_q(L\fg_{B_\Gamma})}
\def\tubGp{\widetilde{U}_q^+(L\fg_{B_\Gamma})}

\def\uc{U_q(L\fg_{C_\mathbf{s}})}

\def\ucG{U_q(L\fg_{C_\Gamma})}
\def\ucGp{U_q^+(L\fg_{C_\Gamma})}

\def\tucG{\widetilde{U}_q(L\fg_{C_\Gamma})}
\def\tucGp{\widetilde{U}_q^+(L\fg_{C_\Gamma})}

\def\ud{U_q(L\fg_{D_\mathbf{s}})}

\def\udG{U_q(L\fg_{D_\Gamma})}
\def\udGp{U_q^+(L\fg_{D_\Gamma})}

\def\tudG{\widetilde{U}_q(L\fg_{D_\Gamma})}
\def\tudGp{\widetilde{U}_q^+(L\fg_{D_\Gamma})}

\def\tUp{\mathbf{\widetilde{U}}^+}

\def\tUm{\mathbf{\widetilde{U}}^-}

\def\br{{\mathbf{r}}}

\def\bsi{\boldsymbol{\varsigma}}

\def\nn{{\mathbb{N}^I}}
\def\zz{{\mathbb{Z}^I}}

\def\bpsi{{\boldsymbol{\psi}}}

\def\bn{{\boldsymbol{n}}}

\def\b0{{\boldsymbol{0}}}

\def\loccit{\emph{loc.~cit.~}}
\def\loccitt{\emph{loc.~cit.}}

\def\hdeg{\text{hdeg }}
\def\vdeg{\text{vdeg }}

\def\oCS{\mathring{\CS}}

\def\Sym{\text{Sym}}

\def\wCV{\CV}
\def\wCS{\CS}

\def\tUpsilon{\widetilde{\Upsilon}}
\def\twUpsilon{\tUpsilon}

\def\tQ{\widetilde{Q}}
\def\wA{\widehat{A}}

\def\bs{\mathbf{s}}

\newcommand{\twist}{\underline{z}^{\bn}}
\newcommand{\iso}{\,\vphantom{j^{X^2}}\smash{\overset{\sim}{\vphantom{\rule{0pt}{0.20em}}\smash{\longrightarrow}}}\,}

\def\pI{I_N}
\def\pnn{{\BN^{\pI}}}

\def\wI{\widehat{I}_N}
\def\wnn{\BN^{\wI}}
\def\wzz{\BZ^{\wI}}

\def\sI{I_{N-1}}
\def\snn{{\BN^{\sI}}}
\def\szz{{\BZ^{\sI}}}

\def\tI{I_N^{\mathrm{ext}}}
\def\tnn{{\BN^{\tI}}}

\def\wIthree{\widehat{I}_3}
\def\wItwo{\widehat{I}_2}

\def\Gb{\Gamma_{\bullet}}
\def\Gh{\Gamma_{\circ}}
\def\Ga{\Gamma}
\def\Gar{\Gamma'}
\def\Gal{\Gamma''}

\def\wXYG{\widehat{XY}_{\Gamma}}

\def\relR{{\mathrm{relations}\, }_{Y_{\Gamma'}}}
\def\relL{{\mathrm{relations}\, }_{X_{\Gamma''}}}

\begin{document}

\title[Scary Wheels (and Super Shrubs)]{\Large{\textbf{Scary Wheels (and Super Shrubs)}}}

\author[Andrei Negu\cb t]{Andrei Negu\cb t}
\address{École Polytechnique Fédérale de Lausanne (EPFL), Lausanne, Switzerland \newline 
         \text{ } \ \ Simion Stoilow Institute of Mathematics (IMAR), Bucharest, Romania} 
\email{andrei.negut@gmail.com}

\author[Alexander Tsymbaliuk]{Alexander Tsymbaliuk}
\address{Purdue University, Department of Mathematics, West Lafayette, IN, USA}
\email{sashikts@gmail.com}

\maketitle

\begin{abstract} 
We prove the shuffle realization for quantum affine superalgebras of types $B,C,D$ in the loop realization, in that we construct an isomorphism between each of these algebras and a suitably defined double shuffle algebra. We explicitly describe the latter shuffle algebra using certain vanishing conditions that generalize the Feigin-Odesskii wheel conditions; a novel feature is the appearance of a so-called scary wheel, which is a particular order $2$ vanishing condition involving $7$ variables. Along the way, we fully develop the theory of shrubs in super types $A$ (affine and toroidal), which are important combinatorial tools in the study of the shuffle algebras associated to toric Calabi-Yau threefolds. Our techniques also allow us to define quantum toroidal superalgebras of types $B,C,D$. More importantly, we provide a general framework to formulate and prove shuffle realizations in the wide generality of quivers with parameters.
\end{abstract}


\bigskip
    
\section{Introduction}
\label{sec:intro}


\medskip

\subsection{Motivation} 
\label{sub:motivation}

Let $\fg$ be any finite-dimensional simple complex Lie algebra, with a henceforth fixed set $I$ of simple roots, and Cartan matrix 
\begin{equation}
\label{eq:nonsymm-Cartan}
  \left( c_{ij} = \frac {2d_{ij}}{d_{ii}} \in \BZ \right)_{i,j \in I}
\end{equation}
where $\{d_{ij} = (\alpha_i,\alpha_j)\}_{i,j \in I}$ denotes the symmetrized Cartan matrix. It is customary to represent the Dynkin diagram of $\fg$ by using simple, double or triple edges between vertices $i,j \in I$. However, in the present paper, we will find it more convenient to work instead with the so-called \emph{decorated Dynkin diagram} of $\fg$
\begin{equation}
\label{eqn:g}
\begin{tikzcd}
  i && j
  \arrow["q^{d_{ij}}"', no head, from=1-3, to=1-1]
  \arrow["q^{d_{ii}}"', no head, from=1-1, to=1-1, looseness=4.5]
  \arrow["q^{d_{jj}}"', no head, from=1-3, to=1-3, looseness=4]
\end{tikzcd}
\end{equation}
for a henceforth fixed $q \in \BC^\times$ that is not a root of unity (thus, the decorated Dynkin diagram equally well encodes the entries of the symmetrized Cartan matrix).

\medskip 
\noindent 
If one applies the construction of \cite{Dr 0,J} to the particular case of the affine Dynkin diagram associated to $\fg$, one obtains the quantum affine algebra with trivial central charge \footnote{The case of arbitrary central charge can easily be accommodated by suitably twisting the product in the doubles of Subsection \ref{sub:double}, but we choose to not do so for conciseness.} $U_q(L\fg)$. Drinfeld proposed in \cite{Dr} a new realization of this algebra  as
\begin{equation*}
  U_q(L\fg) = U_q^+(L\fg) \otimes (\text{loop Cartan subalgebra}) \otimes U_q^-(L\fg)
\end{equation*}
(which was proved in full in \cite{B}). In \cite{E, FO}, a homomorphism was constructed
\begin{equation}
\label{eqn:intro}
  U_q^+(L\fg) \rightarrow \CV^+_{\fg} = 
  \bigoplus_{\bn = (n_i)_{i \in I} \in \nn} \BC[z_{i1}^{\pm 1},\dots,z_{in_i}^{\pm 1}]^{\sym}_{i \in I} 
\end{equation}
(in the present paper, $\BN$ contains $0$) where the right-hand side is made into an algebra using the trigonometric version of the Feigin-Odesskii shuffle product, cf.~\eqref{eqn:shuffle product}. It was conjectured in \loccit that the image of the homomorphism above coincides with the following particular subset of $\CV^+_{\fg}$: 
\begin{equation*}
  \CS^+_{\fg} = \Big\{ E(z_{ia})_{i \in I, a \geq 1} 
  \text{ which vanishes at the wheels \eqref{eqn:classic wheels}} \text{ for all } i\neq j \Big\} 
\end{equation*}
where we represent vanishing conditions by the following diagram (cf.~\eqref{eqn:g}): 
\begin{equation}
\label{eqn:classic wheels}
\begin{tikzcd}
  &&& {z_{j1}} &&& \\ \\
  {z_{i1}} && {z_{i2}} && \dots && {z_{i,1-c_{ij}}}
  \arrow["{q^{d_{ji}}}"', from=1-4, to=3-1]
  \arrow["{q^{d_{ii}}}", from=3-1, to=3-3]
  \arrow["{q^{d_{ii}}}", from=3-3, to=3-5]
  \arrow["{q^{d_{ii}}}", from=3-5, to=3-7]
  \arrow["{q^{d_{ij}}}"', from=3-7, to=1-4]
\end{tikzcd}
\end{equation}
If we specialize a Laurent polynomial $E(z,z',\dots)$ according to $z'=zt$ for all arrows
$$
  z \xrightarrow{t} z'
$$
that appear in the diagram \eqref{eqn:classic wheels}, then the phrase ``$E$ vanishes at the wheel'' simply means that the aforementioned specialization should be $0$; due to the graphical representation \eqref{eqn:classic wheels}, this vanishing was called a \emph{wheel condition} in \cite{FO}. We proved in \cite{NT} that the algebra homomorphism \eqref{eqn:intro} is injective and induces an isomorphism
\begin{equation}
\label{eqn:upsilon intro 1}
  \Upsilon^+_{\fg} \colon U_q^+(L\fg) \iso \CS^+_{\fg}
\end{equation} 
Using the fact that $\Upsilon^+_{\fg}$ can be naturally upgraded to a topological bialgebra isomorphism, we may further upgrade it to an algebra isomorphism
\begin{equation}
\label{eqn:upsilon intro 2}
  \Upsilon_{\fg} \colon  U_q(L\fg) \iso \Big( \text{Drinfeld double of extended } \CS^+_{\fg} \Big) 
\end{equation}
This yields a complete description of the quantum affine algebra $U_q(L\fg)$ in terms of explicit shuffle algebras (the central charge can also be incorporated into $\Upsilon_{\fg}$ in a straightforward manner, but we will not do it for simplicity). We call isomorphisms such as \eqref{eqn:upsilon intro 1} and \eqref{eqn:upsilon intro 2} \emph{shuffle realizations} of the corresponding quantum loop algebras. In recent years, such shuffle realizations also appeared in the following situations:
\begin{itemize}[leftmargin=0.7cm]

\item 
type $A$ (including super and affine types, as well as $\widehat{\widehat{\fgl}}_1$) in \cite{FHHSY, N Shuffle, N Toroidal, Ts};

\medskip 
		
\item 
preprojective $K$-theoretic Hall algebras (\cite{N Wheel});

\medskip 
        
\item 
quantum loop algebras associated to symmetric Cartan matrices (\cite{N Symmetric});
		
\medskip 
		
\item 
BPS algebras associated to toric Calabi-Yau threefolds (\cite{N Reduced});
		
\medskip
		
\item 
twisted quantum affine algebras (\cite{NW}).
		
\end{itemize}

\medskip 
\noindent 
It is very interesting that the shuffle realizations in each of the situations above involve unique formulas and challenges, and thus the combinatorics and algebra behind them differ on a case-by-base basis. In the present paper, we add one more instance of shuffle realization to the list above, which involves novel combinatorics.


\medskip 
	
\subsection{Quantum affine superalgebras}
\label{sub:super}

We fix $N>0$, a so-called parity sequence 
$$
  \bs=(s_1,\dots,s_N) \in \{+1,-1\}^N
$$
and let $m$ and $n$ be the number of $+1$'s and $-1$'s occurring in $\bs$. The goal of the present paper is to provide the shuffle realization for the following quantum affine superalgebras (see \cite{Y} for their Drinfeld-Jimbo presentation and Drinfeld new  presentation in type $A$, and \cite{BFK, XZ} for their Drinfeld new presentation in types $B,C,D$), which we classify according to their decorated Dynkin diagrams as follows:

\medskip

\begin{itemize}[leftmargin=0.5cm]
		
\item[$(a)$]
$\usl$ for finite type $A$, corresponding to the decorated Dynkin diagram $\Gamma$
\begin{equation}
\label{eqn:dynkin a intro}
\begin{tikzcd}
  1 && 2 && \cdots && {N-2} && {N-1}
  \arrow["q^{-s_{N-1}}"', no head, from=1-9, to=1-7]
  \arrow["q^{-s_{N-2}}"', no head, from=1-7, to=1-5]
  \arrow["q^{-s_3}"', no head, from=1-5, to=1-3]
  \arrow["q^{-s_2}"', no head, from=1-3, to=1-1]
  \arrow[dashed, no head, from=1-7, to=1-7, looseness=4]
  \arrow[dashed, no head, from=1-9, to=1-9, looseness=4]
  \arrow[dashed, no head, from=1-3, to=1-3, looseness=4.5]
  \arrow[dashed, no head, from=1-1, to=1-1, looseness=4.5]
\end{tikzcd}
\end{equation}
We note that the assignment $\bs \mapsto \Gamma$ is bijective and switching $\bs \leadsto -\bs$ results in inverting all edge parameters. Explicitly, such $\Gamma$ can be described as follows: 
\begin{itemize}[leftmargin=0.7cm]

\item 
the vertex set is $\{1,\dots,N-1\}$ 

\item 
there is an edge between $k-1$ and $k$ with parameter $q^{-s_k}\in \{q^{\pm1 }\}$, for $1< k \leq N-1$ 

\item 
a vertex $1<k<N-1$ has a loop iff $s_k = s_{k+1}$, and its parameter is $q^{2s_k}$ 

\item 
the vertex $1$ may have a loop only with parameter $q^{2s_2}$ 

\item 
the vertex $N-1$ may have a loop only with parameter $q^{2s_{N-1}}$ 

\end{itemize}
Because of the above bijection, we will often write $\uslG = \usl$.

\medskip
\noindent
We also have the superalgebra $U_{q}(L\fg_{\wA_\mathbf{s}}) = U_{q}(L\fg_{\wA_\Gamma})$ of affine type $\wA$, whose decorated Dynkin diagram is obtained from~\eqref{eqn:dynkin a intro} by adding an extra vertex $0$ connected to both $N-1$ and $1$ with respective edge parameters $q^{-s_N}$ and $q^{-s_1}$, and having a loop at $0$ with parameter $q^{2s_1}$ iff $s_1=s_N$. This is the $d=1$ version of the more general quantum toroidal superalgebra $\Usl=\UslG$ that we recall in Subsection~\ref{sub:quantum toroidal sl}, with (non-symmetric) quiver depicted in~\eqref{eqn:quiver affine type A}.

\medskip 
		
\item[$(b)$] 
$\ub$, i.e.\ type $B$ as in Subsection \ref{sub:b} (this covers the classical $B(m,n)$ super types, that is $\fosp(2m+1|2n)$, for all possible Dynkin diagrams). We prefer to denote them by $\ubG$, where $\Gamma$ is one of two decorated diagrams~below: 

\medskip 

\begin{itemize}[leftmargin=*]

\item 
Case 1: 
\begin{equation*}
\label{eqn:dynkin b 1 intro}
\begin{tikzcd}
  \cdots && {N-2} && {N-1} && N
  \arrow["q^{\pm 1}"', no head, from=1-7, to=1-5]
  \arrow[no head, from=1-5, to=1-3]
  \arrow[no head, from=1-3, to=1-1]
  \arrow["q^{\mp 1}"', no head, from=1-7, to=1-7, looseness=4]
  \arrow[dashed, no head, from=1-5, to=1-5, looseness=4]
  \arrow[dashed, no head, from=1-3, to=1-3, looseness=4]
\end{tikzcd}
\end{equation*}
If $q^{\pm 1} = q^{-1}$, this construction recovers $\ub$ associated to $\bs$ satisfying $s_N=1$, in the loop (Drinfeld new) realization.

\medskip

\item 
Case 2: 
\begin{equation*}
\label{eqn:dynkin b 2 intro}
\begin{tikzcd}
  \cdots && {N-2} && {N-1} && N 
  \arrow["q^{\pm 1}"', no head, from=1-7, to=1-5]
  \arrow[no head, from=1-5, to=1-3]
  \arrow[no head, from=1-3, to=1-1]
  \arrow["q^{\pm 1}", no head, from=1-7, to=1-7, looseness=4]
  \arrow["q^{\mp 2}"', no head, from=1-7, to=1-7, looseness=8]
  \arrow[dashed, no head, from=1-5, to=1-5, looseness=4]
  \arrow[dashed, no head, from=1-3, to=1-3, looseness=4]
\end{tikzcd}
\end{equation*}
If $q^{\pm 1} = q$, this construction recovers $\ub$ associated to $\bs$ satisfying $s_N=-1$, in the loop (Drinfeld new) realization.

\end{itemize}

\medskip
\noindent
In both cases, the subdiagram to the left of $N$ is a type $A$ decorated diagram as in~\eqref{eqn:dynkin a intro}, and the dotted loop at $N-1$ means no loop or a loop with parameter $q^{\mp 2}$.

\medskip 

\item[$(c)$] 
$\uc$, i.e.\ type $C$ as in Subsection \ref{sub:c} (this covers the classical $C(n)$ and $D(m,n)$ super types, that is $\fosp(2m|2n)$, for all possible Dynkin diagrams that end with the longer even simple root, see~\cite[\S 3.1, diagrams ($C_N$)]{Y1}; $s_N=-1$). We prefer to denote them by $\ucG$, where $\Gamma$ is one of the following diagrams:

\medskip 

\begin{itemize}[leftmargin=*]

\item 
Case 1: 
\begin{equation*}
\label{eqn:dynkin c 1 intro}
\begin{tikzcd}
  \cdots &&  {N-2} && {N-1} && N
  \arrow["q^{\pm 2}"', no head, from=1-7, to=1-5]
  \arrow["q^{\pm 1}"', no head, from=1-5, to=1-3] 
  \arrow[no head, from=1-3, to=1-1]
  \arrow["q^{\mp 4}"', no head, from=1-7, to=1-7, looseness=4]
  \arrow["q^{\mp 2}"', no head, from=1-5, to=1-5, looseness=4]
  \arrow[dashed, no head, from=1-3, to=1-3, looseness=4]
\end{tikzcd}
\end{equation*}
If $q^{\pm 2} = q^{2}$, this construction recovers $\uc$ for the parity sequence $\bs$ with $s_{N-1}=-1$  (recall that $s_N=-1$), in the loop (Drinfeld new) realization.

\medskip 

\item 
Case 2: 
\begin{equation*}
\label{eqn:dynkin c 2 intro}
\begin{tikzcd}
  \cdots && N-3 && {N-2} && {N-1} && N
  \arrow["q^{\pm 2}"', no head, from=1-9, to=1-7]
  \arrow["q^{\mp 1}"', no head, from=1-7, to=1-5]
  \arrow["q^{\pm 1}"', no head, from=1-5, to=1-3]
  \arrow[no head, from=1-3, to=1-1]
  \arrow["q^{\mp 4}"', no head, from=1-9, to=1-9, looseness=4]
  \arrow[dashed, no head, from=1-3, to=1-3, looseness=4]
\end{tikzcd}
\end{equation*}
If $q^{\pm 2} = q^{2}$, this construction recovers $\uc$ when $s_{N-1}=1$ and $s_{N-2}=-1$ (recall that $s_N=-1$), 
in the loop (Drinfeld new) realization.

\medskip 

\item 
Case 3: 
\begin{equation*}
\label{eqn:dynkin c 3 intro}
\begin{tikzcd}
  \cdots && N-3 && {N-2} && {N-1} && N
  \arrow["q^{\pm 2}"', no head, from=1-9, to=1-7]
  \arrow["q^{\mp 1}"', no head, from=1-7, to=1-5]
  \arrow["q^{\mp 1}"', no head, from=1-5, to=1-3]
  \arrow[no head, from=1-3, to=1-1]
  \arrow["q^{\mp 4}"', no head, from=1-9, to=1-9, looseness=4]
  \arrow["q^{\pm 2}"', no head, from=1-5, to=1-5, looseness=4]
  \arrow[dashed, no head, from=1-3, to=1-3, looseness=4]
\end{tikzcd}
\end{equation*}
If $q^{\pm 2} = q^{2}$, this construction recovers $\uc$ with $s_{N-1}=1$ and $s_{N-2}=1$ (recall that $s_N=-1$), in the loop (Drinfeld new) realization. 

\end{itemize}

\medskip
\noindent
In all cases, the part to the left of $N$ is a type $A$ decorated diagram as in~\eqref{eqn:dynkin a intro}.

\medskip 
		
\item[$(d)$] 
$\ud$, i.e.\ type $D$ as in Subsection \ref{sub:d} (this covers the classical $C(n)$ and $D(m,n)$ super types, that is $\fosp(2m|2n)$, for all possible Dynkin diagrams that end with a fork-type pattern as in~\cite[\S 3.1, diagrams ($D_N$)]{Y1}; $s_N=1$). We prefer to denote them by $\udG$, where $\Gamma$ is one of the following diagrams:

\medskip

\begin{itemize}

\item 
Case 1: 
\begin{equation*}
\label{eqn:dynkin d 1 intro}
\begin{tikzcd}
  &&&& {N-1} \\
  \cdots && {N-2} \\
  &&&& N
  \arrow[no head, from=2-1, to=2-3]
  \arrow["q^{\pm 1}", no head, from=2-3, to=1-5]
  \arrow["q^{\mp 2}", swap, no head, from=1-5, to=1-5, looseness=4]
  \arrow["q^{\pm 1}"', no head, from=2-3, to=3-5]
  \arrow["q^{\mp 2}", swap, no head, from=3-5, to=3-5, looseness=4]
  \arrow[dashed, no head, from=2-3, to=2-3, looseness=4]
\end{tikzcd}
\end{equation*}
If $q^{\pm 1} = q^{-1}$, this construction recovers $\ud$ for the parity sequence $\bs$ with $s_{N-1}=1$ (recall that $s_N=1$), in the loop (Drinfeld new) realization.

\medskip 

\item 
Case 2: 
\begin{equation*}
\label{eqn:dynkin d 2 intro}
\begin{tikzcd}
  &&&& {N-1} \\
  \cdots && {N-2} \\
  &&&& N
  \arrow[no head, from=2-1, to=2-3]
  \arrow["q^{\pm 1}", no head, from=2-3, to=1-5]
  \arrow["q^{\pm 1}"', no head, from=2-3, to=3-5]
  \arrow["{q^{\mp 2}}", no head, from=1-5, to=3-5]
  \arrow[dashed, no head, from=2-3, to=2-3, looseness=4]
\end{tikzcd}
\end{equation*}
If $q^{\pm 1} = q$, this construction recovers $\ud$ with $s_{N-1}=-1$ (recall that $s_N=1$), in the loop (Drinfeld new) realization. 

\end{itemize}

\medskip
\noindent
In both cases, the part to the left of $N-1,N$ is a type $A$ decorated diagram, and the dotted loop at $N-2$ means either no loop or a loop with parameter~$q^{\mp 2}$. 
   
\end{itemize}

\medskip
\noindent
A vertex $i$ of a decorated Dynkin diagram as in the list above is called \emph{even} (denoted by $|i|=0$) if it has a loop and \emph{odd} (denoted by $|i|=1$) otherwise; the only exception is type $B$ Case 2, where the vertex $N$ with two loops is declared to be odd ($|N|=1$). For a type $X \in \{B,C,D\}$ decorated Dynkin diagram, let its

\medskip 

\begin{itemize}[leftmargin=0.7cm]

\item 
\emph{body} be the type $A$ decorated Dynkin diagram with vertices $1,\dots,N-1-\delta_{XD}$;

\medskip 

\item 
\emph{head} be the subdiagram with vertices $N-1-\delta_{XD},\dots,N$;

\medskip 

\item 
\emph{neck} be the vertex $N-1-\delta_{XD}$, and its \emph{neck parameter} be the decoration $q^{\pm 1}$ or $q^{\pm 2}$ of the horizontal/slanted edge connecting the neck with vertex $N$.

\end{itemize}

\medskip 
\noindent
Inverting all edge parameters in a decorated Dynkin diagram of type $X \in \{B,C,D\}$ leads to the quantum affine superalgebra $U_{q^{-1}}(L\fg_{X_{-\bs}})$.


\medskip 

\subsection{Shuffle realizations}
\label{sub:shuffle intro}

Recall the bijection $\bs \leftrightarrow \Gamma$ between parity sequences and decorated Dynkin diagrams of type $A$. As shown in \cite{Ts}, we have the following analogue of the shuffle realization isomorphism \eqref{eqn:upsilon intro 1} 
\begin{equation*}
\uslGp \simeq \CS^+_{A_{\Gamma}} \subset \bigoplus_{\{n_i \geq 0\}_{i \in \{1,\dots,N-1\}}} 
\BC[z_{i1}^{\pm 1},\dots,z_{in_i}^{\pm 1}]^{\sym}_{i \in \{1,\dots,N-1\}} 
\end{equation*}
where $\CS^+_{A_{\Gamma}}$ is the subset of Laurent polynomials which vanish at length $3$ wheels 
\begin{equation}
\label{eqn:wheel finite even}
		\begin{tikzcd}
			{z_{i-1,1}} && {z_{i2}} &&&& {z_{i2}} \\ \\
			{z_{i1}} &&&& {z_{i1}} && {z_{i+1,1}}
			\arrow["{q^{-s_{i}}}"', from=1-1, to=3-1]
			\arrow["{q^{-s_i}}"', from=1-3, to=1-1]
			\arrow["{q^{-s_{i}}}", from=1-7, to=3-7]
			\arrow["{q^{2s_i}}"', from=3-1, to=1-3]
			\arrow["{q^{2s_i}}", from=3-5, to=1-7]
			\arrow["{q^{-s_i}}", from=3-7, to=3-5]
		\end{tikzcd}
\end{equation}
for each $i$ having a loop with parameter $q^{2s_i}$ (so that $|i|=0$) and length $4$ wheel
\begin{equation}
\label{eqn:wheel finite odd}
		\begin{tikzcd}
			{z_{i-1,1}} && {z_{i2}} \\ \\
			{z_{i1}} && {z_{i+1,1}}
			\arrow["{q^{-s_i}}", from=1-1, to=1-3]
			\arrow["{q^{-s_{i+1}}}", from=1-3, to=3-3]
			\arrow["{q^{-s_i}}", from=3-1, to=1-1]
			\arrow["{q^{-s_{i+1}}}", from=3-3, to=3-1]
		\end{tikzcd}
\end{equation}
for each $i$ without a loop (so that $|i|=1$).

\medskip
\noindent
In affine type $A$ such that $m\neq n$, the following result was proved in \cite{N Reduced}, when the Calabi-Yau threefold is the generalized conifold (see also \cite{FJM} for a conjectured shuffle presentation of the quantum toroidal $\fgl_{m|n}$)
\begin{equation}
\label{eqn:shuffle realization affine}
  \UslGp \simeq \wCS^+_{\wA_{\Gamma}} \subset \bigoplus_{\{n_i \geq 0\}_{i \in \BZ/N\BZ}} 
  \BC[z_{i1}^{\pm 1},\dots,z_{in_i}^{\pm 1}]^{\sym}_{i \in \BZ/N\BZ} 
\end{equation}
where $\wCS^+_{\wA_{\Gamma}}$ is the subset of Laurent polynomials which vanish at the length $3$ wheels 
\begin{equation}
\label{eqn:wheel affine even}
		\begin{tikzcd}
			{z_{i-1,1}} && {z_{i2}} &&&& {z_{i2}} \\ \\
			{z_{i1}} &&&& {z_{i1}} && {z_{i+1,1}}
			\arrow["{q^{-s_{i}}d^{-s_{i}}}"', from=1-1, to=3-1]
			\arrow["{q^{-s_i}d^{s_i}}"', from=1-3, to=1-1]
			\arrow["{q^{-s_i}d^{-s_i}}", from=1-7, to=3-7]
			\arrow["{q^{2s_i}}"', from=3-1, to=1-3]
			\arrow["{q^{2s_i}}", from=3-5, to=1-7]
			\arrow["{q^{-s_i}d^{s_i}}", from=3-7, to=3-5]
		\end{tikzcd}
\end{equation}
for each $i$ having a loop with parameter $q^{2s_i}$ (so that $|i|=0$) and length $4$ wheel
\begin{equation}
\label{eqn:wheel affine odd}
		\begin{tikzcd}
			{z_{i-1,1}} && {z_{i2}} \\ \\
			{z_{i1}} && {z_{i+1,1}}
			\arrow["{q^{-s_i}d^{-s_i}}", from=1-1, to=1-3]
			\arrow["{q^{-s_{i+1}}d^{-s_{i+1}}}", from=1-3, to=3-3]
			\arrow["{q^{-s_i}d^{s_i}}", from=3-1, to=1-1]
			\arrow["{q^{-s_{i+1}}d^{s_{i+1}}}", from=3-3, to=3-1]
		\end{tikzcd}
\end{equation}
for each $i$ without a loop (so that $|i|=1$). When $m=n$, we conjecture that \eqref{eqn:shuffle realization affine} holds with $\CS^+_{\wA_\Gamma}$ defined as the subset of Laurent polynomials which vanish at the wheels 
\begin{equation}
\label{eqn:wheel special 1}
\begin{tikzcd}
  && {z_{N-1,1}} &&& {z_{01}} && \\ \\
  \vdots &&&&&&& {z_{11}} \\ \\
  && {z_{31}} &&& {z_{21}}
  \arrow["{q^{-s_N}d^{-s_N}}", from=1-3, to=1-6]
  \arrow["{q^{-s_1}d^{-s_1}}", from=1-6, to=3-8]
  \arrow[from=3-1, to=1-3]
  \arrow["{q^{-s_2}d^{-s_2}}", from=3-8, to=5-6]
  \arrow[from=5-3, to=3-1]
  \arrow["{q^{-s_3}d^{-s_3}}", from=5-6, to=5-3]
\end{tikzcd}
\end{equation}
\begin{equation}
\label{eqn:wheel special 2}
\begin{tikzcd}
  && {z_{N-1,1}} &&& {z_{01}} && \\ \\
  \vdots &&&&&&& {z_{11}} \\ \\
  && {z_{31}} &&& {z_{21}}
  \arrow["{q^{-s_N}d^{s_N}}", swap, from=1-6, to=1-3]
  \arrow["{q^{-s_1}d^{s_1}}", swap, from=3-8, to=1-6]
  \arrow[from=1-3, to=3-1]
  \arrow["{q^{-s_2}d^{s_2}}", swap, from=5-6, to=3-8]
  \arrow[from=3-1, to=5-3]
  \arrow["{q^{-s_3}d^{s_3}}", swap, from=5-3, to=5-6]
\end{tikzcd}
\end{equation}
on top of \eqref{eqn:wheel affine even}-\eqref{eqn:wheel affine odd}. All the results above will be reviewed in Section \ref{sec:gl}. Then, in Section~\ref{sec:osp} we will prove the following results on the shuffle realizations of the quantum affine algebras in the cases $(b), (c), (d)$ above (let $\pI = \{1,\dots,N\}$).

\medskip

\begin{theorem}
\label{thm:d intro}
We have the following shuffle realization:
\begin{equation}
\label{eqn:shuffle d intro}
  \udGp \simeq \CS^+_{D_{\Ga}} 
\end{equation}
$$
= \Big \{ E(z_{ia})_{i \in \pI, a \geq 1} 
  \text{ which vanishes at the wheels \eqref{eqn:wheel finite even}, \eqref{eqn:wheel finite odd}, \eqref{eqn:wheel d 1}} \Big\} 
$$
where \eqref{eqn:wheel finite even} applies to all indices $i,i\pm 1 < N$ as well as to $(i,i\pm 1)$ replaced by $(N-2,N)$ and $(N,N-2)$, while \eqref{eqn:wheel finite odd} applies to all indices $i-1,i,i+1 < N$ as well as to $(i-1,i,i+1)$ replaced by $(N-3,N-2,N)$. Additionally, the following length $3$ wheels are imposed in Case 2, where we write $q^{\pm 1}$ for the neck parameter: 
\begin{equation}
\label{eqn:wheel d 1}
		\begin{tikzcd}
	&& {z_{N-1,1}} &&&& {z_{N-1,1}} \\
	{z_{N-2,1}} &&&& {z_{N-2,1}} \\
	&& {z_{N1}} &&&& {z_{N1}}
	\arrow["{q^{\mp 2}}", from=1-3, to=3-3]
	\arrow["q^{\pm 1}"', from=1-7, to=2-5]
	\arrow["q^{\pm 1}", from=2-1, to=1-3]
	\arrow["q^{\pm 1}"', from=2-5, to=3-7]
	\arrow["q^{\pm 1}", from=3-3, to=2-1]
	\arrow["{q^{\mp 2}}"', from=3-7, to=1-7]
\end{tikzcd}
\end{equation}
\end{theorem}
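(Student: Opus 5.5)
We follow the standard three-step scheme used for the shuffle realizations reviewed above (\cite{NT, Ts, N Reduced}). We construct the map, show its image lies in $\CS^+_{D_\Ga}$, prove injectivity, and finally prove surjectivity onto $\CS^+_{D_\Ga}$.

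\emph{Step 1 (homomorphism).} Let $\CV^+_{D_\Ga}$ be the super shuffle algebra on $\pI$. Its shuffle product \eqref{eqn:shuffle product} uses the rational functions $\zeta_{ij}$ read off from the decorated diagram $\Ga$: one factor for each edge $i - j$ with parameter $t$, and loops at even vertices. Odd vertices get the sign and skew-symmetry conventions as in \cite{Ts}. We send $e_{i,d}\mapsto z_{i1}^d$ and check the defining relations of $\udGp$ from \cite{BFK, XZ}. The quadratic relations follow from the form of $\zeta_{ij}$. The Serre-type relations are degree-by-degree identities of rational functions in at most four variables, so they can be verified directly. The extra relations at odd vertices include the quartic relations in Case 2 coming from the triangle $N-2,N-1,N$. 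Each such relation is the shuffle product of degree-one elements, hence it lies in the image. A direct computation shows its symmetrization vanishes, just as in the type $A$ super computations of \cite{Ts}.

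\emph{Step 2 (image lies in $\CS^+$ and injectivity).} Since $\CS^+_{D_\Ga}$ is closed under shuffle product, and degree-one elements trivially satisfy the wheel conditions, the image lies in $\CS^+_{D_\Ga}$. Closure is checked as usual: in each summand of a shuffle product $E*F$, a wheel either lies entirely in the variables of $E$ or of $F$, or is cut. A cut wheel is killed by a $\zeta$ factor vanishing along one of its arrows. This needs checking for the three new wheel shapes: the forked \eqref{eqn:wheel finite even}/\eqref{eqn:wheel finite odd} involving $(N-2,N)$ and the triangles \eqref{eqn:wheel d 1}. For injectivity we use the pairing argument of \cite{NT}. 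Upgrade the map to a bialgebra map with the standard coproduct on shuffle algebras, pair the positive and negative halves, and use nondegeneracy of the Drinfeld–Hopf pairing on $\udGp\otimes \udGm$, which is known from the Drinfeld–Jimbo side. Compatibility of the pairing with the shuffle pairing via contour integrals is formal.

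\emph{Step 3 (surjectivity: the main obstacle).} We must show that $\dim$ of each graded and degree-filtered piece of $\CS^+_{D_\Ga}$ is at most that of $\udGp$. The plan is the slope / specialization argument of \cite{NT, N Symmetric}. First we need a PBW basis of $\udGp$ indexed by words in positive roots of $\fosp$ and loop degrees, with an upper bound via a convex order. Then we define, for each ordered root multiset, a specialization map on $\CS^+_{D_\Ga}$. For each root $\beta$ we specialize its variables along a chain (a ``shrub'') determined by $\Ga$, and show these maps are injective on the associated graded. The crux is that the wheel conditions force enough vanishing. This says the specialization of $E$ at a root chain lands in a space of dimension no larger than the number of PBW monomials. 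It requires, for every root $\beta$ of $\fosp$ (notably the roots $\alpha_{N-1}+\alpha_N+\dots$ passing through the fork), choosing a shrub where the specializations of the $\zeta$ factors produce zeros of the correct multiplicity. Here the odd-vertex and Case 2 triangle conditions \eqref{eqn:wheel d 1} must supply exactly the missing order of vanishing. We expect that to handle roots with multiplicity (e.g.\ $2\alpha$ components in the head) one must reduce via the body to the affine type $A$ shrub theory of Section \ref{sec:gl}, gluing the two branches $N-1,N$. Verifying that no further (``scary'') wheels are needed in type $D$ is the most delicate combinatorial point, and we would first test it in small rank, $N=3,4$, by direct computation.
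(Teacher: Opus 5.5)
Your proposal has genuine gaps at both of its substantive steps.

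Your Step 2 derives injectivity from nondegeneracy of the pairing $\udGp\otimes U_q^-(L\fg_{D_\Gamma})\to\BC$, which you say is ``known from the Drinfeld--Jimbo side''. It is not. Yamane's nondegeneracy result concerns the Drinfeld--Jimbo Borel halves, which are different subalgebras from the loop halves. Transporting it would require an isomorphism between the loop presentation of \cite{BFK} and $U_q^{DJ}$, which in super types $B,C,D$ is only conjectural (\cite[Conjecture 5.1]{BFK}), and a Beck-type PBW factorization, which is not available here either. In fact, nondegeneracy of the loop-half pairing is Corollary~\ref{cor:intro-nondegeneracy}, which the paper deduces \emph{from} this theorem, so the step is circular.

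What is missing is a direct proof that the ideal $J^+$ generated by the Serre-type relations equals $K^+=\mathrm{Ker}\,\tUpsilon^+$. The paper gets this from Proposition~\ref{prop:criterion 2}, by showing that the relations are dual to the wheels:
\begin{itemize}
\item each relation series $X$ satisfies $\tUpsilon^+(X)=0$, so by \eqref{eqn:fact 2 d} its modes give functionals on $\CV_{-\bn,-d}/\CS_{-\bn,-d}$;
\item this quotient has dimension $1$, or $2$ for the pair of triangles \eqref{eqn:wheel d 1};
\item explicit test-function computations such as \eqref{eqn:not zero d} show that $\langle X^{a,b,c},-\rangle$ and $\langle X^{a,b+1,c-1},-\rangle$ are linearly independent there, hence cut out exactly $\CS$;
\item the general results of \cite{N Arbitrary} then give $J^+=K^+$.
\end{itemize}
(Also, \eqref{eqn:rel cd 1} is cubic, not quartic.)

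Your Step 3 is a programme rather than an argument. It presupposes a PBW basis with a convex order for the loop presentation in super type $D$, which is not available a priori. The decisive claims are left to ``we expect'' and small-rank checks: that root-by-root specializations are injective on an associated graded with the PBW dimension bound, and that \eqref{eqn:wheel d 1} supplies exactly the missing vanishing so no further wheels are needed.

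The paper avoids PBW theory altogether. By Proposition~\ref{prop:criterion 1}, $\oCS^+=\CS^+$ follows from $\langle K^+,\CS^-\rangle=0$. This in turn follows from \eqref{eqn:fact 2 d}: for $F\in\CS^-_{D_\Gamma}$, the pairing $\langle e_{i_1,d_1}\cdots e_{i_k,d_k},F\rangle$ is a linear functional of $\tUpsilon^+(e_{i_1,d_1}\cdots e_{i_k,d_k})$. The proof moves the contours from $|z_1|\gg\cdots\gg|z_k|$ to circles of radii $\rho_i$ with $\rho_1,\dots,\rho_{N-2}\ll\rho_{N-1}=\rho_N$, and shows every pole encountered is simple. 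That reduces to classifying acceptable specialization patterns and checking they are closed under extensions. When rooted in the body these are finite type shrubs. When rooted at the fork they are a shrub on the vertex set $1,\dots,N-2,*$ extended by chains of colours $N-1,N$ (Figures~\ref{fig:asps D 1}--\ref{fig:asps D 2}). That closure check is exactly where the wheel conditions, including \eqref{eqn:wheel d 1}, are shown to suffice. Without it, or a fully executed substitute, surjectivity is not established.
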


\medskip

\begin{theorem}
\label{thm:b intro}
We have the following shuffle realization
\begin{equation}
\label{eqn:shuffle b intro}
  \ubGp \simeq \CS^+_{B_{\Ga}} 
\end{equation} 
$$
  = \Big \{ E(z_{ia})_{i \in \pI, a \geq 1} 
  \text{ which vanishes at the wheels \eqref{eqn:wheel finite even}, \eqref{eqn:wheel finite odd}, \eqref{eqn:wheel b 1}, \eqref{eqn:wheel b 2}} \Big\} 
$$
where \eqref{eqn:wheel finite even}-\eqref{eqn:wheel finite odd} apply to all indices with the exception of \eqref{eqn:wheel finite even} for $(i,i-1)=(N,N-1)$ in Case 1, when we impose instead the length $4$ wheel (cf.~\eqref{eqn:classic wheels} for non-super type~$B$)
\begin{equation}
\label{eqn:wheel b 1}
  \begin{tikzcd}
  && {z_{N-1,1}}  && \\ \\ 
  {z_{N 1}} && {z_{N 2}} && {z_{N 3}}
  \arrow["{q^{\pm 1}}"', swap, from=3-1, to=1-3]
  \arrow["{q^{\mp 1}}"', swap, from=3-3, to=3-1]
  \arrow["{q^{\mp 1}}"', swap, from=3-5, to=3-3]
  \arrow["{q^{\pm 1}}"', swap, from=1-3, to=3-5]
  \end{tikzcd}
\end{equation}
and finally the following length $3$ wheel is imposed only in Case 2
\begin{equation}
\label{eqn:wheel b 2}
			\begin{tikzcd}
				&& {z_{N1}} \\ \\ \\
				{z_{N2}} &&&& {z_{N3}}
				\arrow["q^{\pm 1}"', swap, from=4-1, to=1-3]
				\arrow["{q^{\mp 2}}"', swap, from=4-5, to=4-1]
				\arrow["q^{\pm 1}"', swap, from=1-3, to=4-5]
			\end{tikzcd}
\end{equation}
(in the pictures above, $q^{\pm 1}$ denotes the neck parameter).
\end{theorem}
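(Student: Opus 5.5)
We follow the three-step scheme used for $U_q^+(L\fg)$ in \cite{NT}: well-definedness, injectivity, and surjectivity of
$$\Upsilon^+ \colon \ubGp \rightarrow \CV^+ = \bigoplus_{\bn \in \pnn} \BC[z_{i1}^{\pm 1},\dots,z_{in_i}^{\pm 1}]^{\sym}_{i \in \pI}.$$
The map sends $e_{i,d} \mapsto z_{i1}^d$, and $\CV^+$ carries the (super) shuffle product \eqref{eqn:shuffle product}, whose zeta functions are read off from the decorated diagram $\Ga$.

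\textbf{Step 1: well-definedness and image in $\CS^+_{B_\Ga}$.} First check that the quadratic relations of \cite{BFK, XZ} hold among the $z_{i1}^d$; this is automatic from the choice of zeta factors. Then check the higher order Serre-type relations: the cubic relation at even vertices, the quartic super relation at odd vertices, and at the head either the type $B$ relation (Case 1) or the extra cubic relation at the odd vertex $N$ with two loops (Case 2). Each of these reduces to a symmetrization identity of rational functions.

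The image lies in $\CS^+_{B_\Ga}$ because $\CS^+_{B_\Ga}$ is closed under the shuffle product and contains the generators $z_{i1}^d$. To prove closure, specialize each summand of a shuffle product $E*F$ at a wheel \eqref{eqn:wheel finite even}, \eqref{eqn:wheel finite odd}, \eqref{eqn:wheel b 1} or \eqref{eqn:wheel b 2}. For each way of splitting the wheel variables between $E$ and $F$, either one factor already vanishes at a smaller wheel, or some zeta factor $\zeta(z'/z)$ with $z\to z'$ an arrow of the wheel vanishes. The double-loop factor at $N$ in Case 2 has both a $q^{\pm1}$ and a $q^{\mp2}$ zero. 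This is what makes the length $3$ wheel \eqref{eqn:wheel b 2} the right condition, rather than the usual \eqref{eqn:wheel finite even}.

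\textbf{Step 2: injectivity.} Use a Hopf pairing. Extend $\Upsilon^+$ to a map of topological bialgebras, using the coproduct on the extended shuffle algebra. Then compare the pairing $\ubGp\otimes \U^-\to\BC$ with the integral pairing on shuffle algebras. Nondegeneracy of the former (\cite{BFK,XZ}, or the PBW results there) then forces $\ker\Upsilon^+ = 0$.

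\textbf{Step 3: surjectivity.} This is the main obstacle. The standard route is a filtration-by-specialization argument. Order the degree $\bn$ part by dominance and fix, for each PBW-type root vector, a "leading specialization" map $\ph_{\underline{k}}$. The map sets the variables along a chosen path in $\Ga$ (for roots through the head, a path going out to $N$ and possibly back, using $z_{N}$ twice with ratio given by the loop parameter) equal to a single variable times prescribed powers of $q$. We then need two dimension estimates.

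(i) Let $E \in \CS^+_{B_\Ga}$ and suppose all higher $\ph$'s of $E$ vanish. Then the wheel conditions force $\ph_{\underline{k}}(E)$ to be divisible by a specific product of linear factors. This bounds the graded dimension of $\CS^+_{B_\Ga}$ above by that of the PBW basis.

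(ii) Conversely, the images of ordered products of root vectors realize these leading terms, giving the lower bound.

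For the body we import the type $A$ results recalled in Section~\ref{sec:gl}, including the shrub machinery, to handle roots supported on the body. The genuinely new work is at the head, and the hard case is Case 2. There, roots through the double-looped odd vertex $N$ produce specializations where the $q^{\mp2}$ loop and the $q^{\pm1}$ edges interact. We must verify that the vanishing order extracted from \eqref{eqn:wheel b 2} together with \eqref{eqn:wheel finite odd} at the neck exactly matches the PBW count. The first thing to try is a direct count of the forced zeros for $\ph_{\underline{k}}$ attached to $\alpha_{i}+\dots+\alpha_{N-1}+2\alpha_N$. If this falls short, we would add a finer argument via partial specializations. The same count in Case 1 reduces to the non-super type $B$ computation of \cite{NT}, with \eqref{eqn:wheel b 1} replacing the classical wheel.

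Combining (i) and (ii) with the injectivity from Step 2 yields the isomorphism \eqref{eqn:shuffle b intro}.
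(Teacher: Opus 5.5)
\textbf{The gap is in your Step 2.} You derive injectivity from the non-degeneracy of $\ubGp\otimes\U^-\to\BC$, attributed to \cite{BFK,XZ}. That result is not available for the loop (Drinfeld new) presentation.
\begin{itemize}
\item The known non-degeneracy \cite{Y} is for the Drinfeld--Jimbo presentation.
\item The loop algebra of \cite{BFK} is only conjectured to be isomorphic to $U_q^{DJ}(\fg_{\widehat{X}_\bs})|_{c=1}$ (\cite[Conjecture 5.1]{BFK}). A surjection onto the DJ algebra cannot pull non-degeneracy back, and in any case the loop half is not the DJ half.
\item In the paper this non-degeneracy is Corollary~\ref{cor:intro-nondegeneracy}, a \emph{consequence} of the theorem. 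So invoking it here is circular.
\end{itemize}
What injectivity actually needs is that the specific relations \eqref{eqn:rel quantum affine 8}, \eqref{eqn:rel quantum affine 9} and \eqref{eqn:rel b plus} (resp.\ \eqref{eqn:rel b 1}--\eqref{eqn:rel b 2}) generate all of $\mathrm{Ker}\,\tUpsilon^+_{B_\Gamma}$. Nothing in your proposal addresses this. The missing idea is the duality between relations and wheels. For each relation series $X$ the paper:
\begin{itemize}
\item checks $\tUpsilon^+(X)=0$;
\item notes that the corresponding wheel cuts out a codimension-one subspace $\CS_{-\bn,-d}\subset\CV_{-\bn,-d}$;
\item exhibits a test function with nonzero pairing, e.g.\ \eqref{eq:const-B-1st}--\eqref{eq:const-B-3rd}.
\end{itemize}
This gives \eqref{eqn:fact 1 b}. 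Proposition~\ref{prop:criterion 2} then yields $J^+=K^+$ with no a priori non-degeneracy needed.

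\textbf{Your Step 3 is a different route, but it stops at the hard part.} The paper never uses root vectors. It proves \eqref{eqn:fact 2 b}: the pairing of $e_{i_1,d_1}\cdots e_{i_k,d_k}$ with $F\in\CS^-$ is a linear functional of its image under $\tUpsilon^+$. To do this it chooses $\rho_N\gg\rho_1,\dots,\rho_{N-1}$ and shows that asps are closed under extensions:
\begin{itemize}
\item asps rooted in the body are finite type shrubs;
\item asps rooted at $N$ are two non-interacting shrubs $S_\bullet,S_\circ$, extended by a chain in colors $N-1,N$.
\end{itemize}
Hence $\langle K^+,\CS^-\rangle=0$, and Proposition~\ref{prop:criterion 1} gives $\oCS^+=\CS^+$.

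A specialization-map/PBW count as in \cite{NT,Ts} would instead require:
\begin{itemize}
\item a convex order and root vectors for super type $B$ with an arbitrary parity sequence, including isotropic odd roots and, in Case 2, the non-isotropic odd $\alpha_N$ with $2\alpha_N$ a root;
\item a proof that \eqref{eqn:wheel b 1} resp.\ \eqref{eqn:wheel b 2} force exactly the right zeros of each $\ph_{\underline{k}}$.
\end{itemize}
Saying ``if this falls short, add a finer argument'' is not that proof. Also, the shrubs of Section~\ref{sec:gl} are used in the paper for the pairing lemma, not for dimension bounds. So they do not directly supply the body part of your count.
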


\medskip

\begin{theorem}
\label{thm:c intro}
We have the following shuffle realization: 
\begin{equation}
\label{eqn:shuffle c intro}
  \ucGp \simeq \CS^+_{C_{\Ga}}
\end{equation}
$$
  = \Big \{ E(z_{ia})_{i \in \pI, a \geq 1} 
  \text{ which vanishes at the wheels \eqref{eqn:wheel finite even}, \eqref{eqn:wheel finite odd}, \eqref{eqn:wheel c 1}, \eqref{eqn:wheel c 2}, \eqref{eqn:wheel c 3}} \Big\} 
$$
where \eqref{eqn:wheel finite even} applies to all indices $i,i\pm 1 < N$ as well as to $(i,i-1)=(N,N-1)$, while \eqref{eqn:wheel finite odd} applies to all indices $i-1,i,i+1 < N$. Additionally, if we let $q^{\pm 2}$ denote the neck parameter, we impose in Case 1 the following length $4$ wheel: 
\begin{equation}
\label{eqn:wheel c 1}
  \begin{tikzcd}
				&&&& {z_{N-1,2}} \\ \\
				&& {z_{N-1,3}} \\ &&&  \\
				{z_{N-1,1}} &&&& {z_{N1}}
				\arrow["{q^{\pm 2}}"', from=5-1, to=5-5]
				\arrow["{q^{\pm 2}}"', from=5-5, to=1-5]
				\arrow["{q^{\mp 2}}"', from=1-5, to=3-3]
				\arrow["{q^{\mp 2}}"', from=3-3, to=5-1]
	\end{tikzcd}
\end{equation}
(cf.~\eqref{eqn:classic wheels} for non-super type $C$), we impose in Case 2 the following length $6$ wheel: 
\begin{equation}
\label{eqn:wheel c 2}
	\begin{tikzcd}
				&& {z_{N-2,1}} && {z_{N-1,2}} \\ \\
				{z_{N-2,2}} && {z_{N-1,3}} \\ &&&  \\
				{z_{N-1,1}} &&&& {z_{N1}}
				\arrow["{q^{\pm 2}}"', from=5-1, to=5-5]
				\arrow["{q^{\pm 2}}"', from=5-5, to=1-5]
				\arrow["{q^{\mp 1}}"', from=1-5, to=1-3]
				\arrow["{q^{\mp 1}}"', from=1-3, to=3-3]
				\arrow["{q^{\mp 1}}"', from=3-3, to=3-1]
				\arrow["{q^{\mp 1}}"', from=3-1, to=5-1]
	\end{tikzcd}
\end{equation}
and we impose in Case 3 the following length $7$ wheel: 
\begin{equation}
			\label{eqn:wheel c 3}
			\begin{tikzcd}
				z_{N-3,1} && {z_{N-2,1}} && {z_{N-1,2}} \\ \\
				{z_{N-2,2}} && {z_{N-1,3}} \\ &&& \boxed{2} \\
				{z_{N-1,1}} &&&& {z_{N1}}
				\arrow["{q^{\pm 2}}"', from=5-1, to=5-5]
				\arrow["{q^{\pm 2}}"', from=5-5, to=1-5]
				\arrow["{q^{\mp 1}}"', from=1-5, to=1-3]
				\arrow["{q^{\mp 1}}"', swap, from=1-3, to=3-3]
				\arrow["{q^{\mp 1}}"', from=3-1, to=5-1]
				\arrow["{q^{\mp 1}}"', from=1-1, to=3-1]
				\arrow["{q^{\mp 1}}"', from=1-3, to=1-1]
				\arrow["{q^{\mp 1}}"', swap, from=3-3, to=3-1]
				\arrow["{q^{\pm 2}}", from=3-1, to=1-3]
		\end{tikzcd}
\end{equation}
The meaning of the $\boxed{2}$ in \eqref{eqn:wheel c 3} is the following: any Laurent polynomial $E$ which satisfies \eqref{eqn:wheel finite even} for $i=N-2$ already vanishes at the wheel \eqref{eqn:wheel c 3}, due to the two triangles in the top-left corner. Instead, we further require such a Laurent polynomial $E$ to vanish to order $2$ at the wheel \eqref{eqn:wheel c 3}, in a sense which will be made precise in~\eqref{eqn:rigorous}.
\end{theorem}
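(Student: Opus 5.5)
We follow the strategy of \cite{NT, N Symmetric, Ts}, which splits \eqref{eqn:shuffle c intro} into three steps: constructing a homomorphism $\Upsilon^+ \colon \ucGp \to \CV^+$, showing that its image lies in $\CS^+_{C_\Ga}$, and showing that it is injective with image exactly $\CS^+_{C_\Ga}$.

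\textbf{Step 1: the homomorphism.} We send each generator $e_{i,k} \mapsto z_{i1}^k \in \CV^+$. The shuffle product \eqref{eqn:shuffle product} is built from the rational functions $\zeta_{ij}$ read off the decorated diagram. For super vertices it carries the sign twist that makes odd variables antisymmetric. We then check that the defining relations of $\ucGp$ hold in the shuffle algebra:
\begin{itemize}
\item the quadratic loop relations, which are immediate from the choice of $\zeta_{ij}$;
\item the cubic Serre relations at even vertices;
\item the quartic super Serre relations at odd vertices;
\item the higher head relations of \cite{BFK, XZ} at $N-1,N$, namely the $(1-c_{ij})$-Serre relation of length $4$ in Case 1 and the extra higher-order super relations in Cases 2 and 3.
\end{itemize}
Each of these reduces to a symmetrization identity of rational functions in at most $7$ variables, which can be verified directly.

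\textbf{Step 2: the image lies in $\CS^+_{C_\Ga}$.} The image of $\ucGp$ is generated by the degree-one elements $z_{i1}^k$. By the argument of \cite{FO, NT}, the wheel conditions are preserved under shuffle product: a wheel specialization of $\mathrm{Sym}(\prod\zeta \cdot \ldots)$ kills every summand, because each summand either contains a zero factor $\zeta$ along some arrow of the wheel, or factors as a product of pieces each of which satisfies a smaller wheel.

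The order-$2$ condition \eqref{eqn:wheel c 3} needs more care. The plan is to make \eqref{eqn:rigorous} concrete as the vanishing of the derivative along the one-parameter family that leaves the two top-left triangles of \eqref{eqn:wheel finite even} intact. One then checks that in each summand the $\zeta$-factors contribute two zeros along that family. Whenever the count is short, one zero comes from a factor $\zeta$ and the other from the inductive vanishing of the smaller piece.

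\textbf{Step 3: injectivity and surjectivity.} We use the pairing or filtration method of \cite{N Symmetric, NT}. One introduces the specialization maps $\varphi_{\mathbf{d}}$ indexed by multisets of positive roots, ordered by a convex order. Composed with the standard PBW basis of $\ucGp$, these maps give a triangular matrix, which proves injectivity. For surjectivity, one shows that the leading-term specializations of any $E \in \CS^+_{C_\Ga}$ land in the span predicted by the PBW basis of $U_q^+(L\fg)$. This bounds $\dim$ of each graded piece of $\CS^+_{C_\Ga}$ from above by the dimension for $\ucGp$, and the dimension count closes the argument.

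The constraint analysis splits into two parts:
\begin{itemize}
\item The type $A$ body is handled by the shrub theory of Section~\ref{sec:gl}, which gives the shuffle description \cite{Ts}. The plan is to reduce to it via restriction to a type $A$ subdiagram, as in the factorization of the specialization maps $\varphi$.
\item Only the root vectors supported on the head must be treated by hand. These are the roots $\alpha_{N-1}+\alpha_N$, $2\alpha_{N-1}+\alpha_N$, $\alpha_{N-2}+2\alpha_{N-1}+\alpha_N$, and so on.
\end{itemize}

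\textbf{Main obstacle.} The hardest step is surjectivity in Case 3. We must show that the order-$2$ vanishing at \eqref{eqn:wheel c 3} imposes exactly the right number of linear conditions on the specialization of $E$ at the root $\alpha_{N-3}+2\alpha_{N-2}+2\alpha_{N-1}+\alpha_N$. An ordinary order-$1$ condition there is automatically implied and gives nothing, so the extra condition must come from the first derivative along the wheel locus. We would first compute the specialization explicitly in the minimal degree $(1,2,3,1)$ on $(N-3,\dots,N)$, where there are $7$ variables. There we would show that the order-$2$ condition cuts the space down to the dimension of $\ucGp$ in that degree. Then we would propagate this to all degrees by the usual inductive wheel argument, which treats the scary wheel as a rank-one correction on top of the type $A$ shrub conditions.
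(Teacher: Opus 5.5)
\textbf{There is a genuine gap, in Step 3.} Your Steps 1 and 2 agree with the paper. The relations map to zero under $\tUpsilon^+$ by direct (computer) verification, and Lemma~\ref{lem:easy} together with Lemma~\ref{lem:7} shows that the wheel conditions, including the order-$2$ one, are preserved by the shuffle product.

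\textbf{Injectivity.} Your triangularity argument only shows that the \emph{images} of PBW monomials are linearly independent. That gives injectivity of $\ucGp \to \CV^+$ only if you already know those monomials \emph{span} $\ucGp$. Here $\ucGp$ is the algebra defined by generators and the quadratic relations, \eqref{eqn:rel quantum affine 8}--\eqref{eqn:rel quantum affine 9} and \eqref{eqn:rel b plus-plus}/\eqref{eqn:rel cd 2 1}/\eqref{eqn:rel cd 2 2}. Spanning is equivalent to the statement that these relations generate all of $\mathrm{Ker}\,\tUpsilon^+$, which is the real content of the theorem.

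There is no ``standard'' PBW basis available here. In the non-super case this input is imported from the Drinfeld--Jimbo side (Beck's PBW basis, or the known non-degeneracy of the Drinfeld pairing). For $\fosp$, however:
\begin{itemize}
\item the isomorphism between the loop algebra of \cite{BFK} and $U^{DJ}_q$ is only conjectural;
\item braid group actions exist only on the direct sum over all parity sequences;
\item non-degeneracy of $U^+\otimes U^-\to\BC$ is Corollary~\ref{cor:intro-nondegeneracy}, a \emph{consequence} of the theorem, and PBW bases are the subject of the concurrent work \cite{BKL}.
\end{itemize}
So Step 3 is either circular or defers the whole difficulty to an unproved PBW theorem.

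\textbf{Surjectivity.} The ``dimension count'' is not meaningful as written, because the graded pieces $\CS_{\bn,d}$ are infinite-dimensional. The ``usual inductive wheel argument'' for a super head carrying the scary wheel is not spelled out. You also conflate the scary wheel with the root $\alpha_{N-3}+2\alpha_{N-2}+2\alpha_{N-1}+\alpha_N$. The scary wheel lives in degree $\bsi^{N-3}+2\bsi^{N-2}+3\bsi^{N-1}+\bsi^N$, which is not a root. So your plan does not correctly locate where the order-$2$ condition enters.

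\textbf{The missing idea: the duality framework of Section~\ref{sec:arbitrary}, which needs no PBW theorem.}
\begin{itemize}
\item \emph{Kernel generation.} That the listed relations generate the kernel follows from Proposition~\ref{prop:criterion 2}, once you show that pairing with the relations cuts out exactly $\CS^-$ inside $\CV^-$. Since $\tUpsilon^+(X)=0$ and $\CV_{-\bn,-d}/\CS_{-\bn,-d}$ is finite-dimensional, this reduces to exhibiting test functions with nonzero pairing, namely \eqref{eqn:not zero c 1}--\eqref{eqn:not zero c 3}.
\item \emph{The scary wheel.} Here the codimension is $2$. You pass through the intermediate space $\CT$ of first-order vanishing, which is already cut out by \eqref{eqn:rel quantum affine 8} at $N-2$. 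The degree-$7$ relation then cuts the remaining one condition, using a test function lying in $\CT$.
\item \emph{Surjectivity.} That $\oCS^+=\CS^+$ follows from Proposition~\ref{prop:criterion 1}, once you show that $\langle e_{i_1,d_1}\cdots e_{i_k,d_k},F\rangle$ depends only on $\tUpsilon^+(e_{i_1,d_1}\cdots e_{i_k,d_k})$ for $F\in\CS^-$.
\item \emph{How that last fact is proved.} It is a residue computation organized by acceptable specialization patterns with $\rho_N\gg\rho_1,\dots,\rho_{N-1}$. The crucial input in Cases 2--3 is that apparent double poles at colour $N-2$ vertices reduce to simple poles. This uses the length-$6$ wheel in Case 2 and, for the scary wheel, Claim~\ref{claim:seven}.
\end{itemize}
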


\medskip 
\noindent 
Due to the higher order vanishing condition, the wheel condition \eqref{eqn:wheel c 3} is significantly more complicated than the straightforward specializations \eqref{eqn:wheel finite even}-\eqref{eqn:wheel finite odd} or even \eqref{eqn:wheel special 1}-\eqref{eqn:wheel special 2}, so we dub it a \emph{scary wheel}. We hope that in the future, we will be able to understand such wheels as part of a more general framework.


\medskip 
	
\subsection{Shrubs}
\label{sub:intro shrub}
	
In the present paper, we also classify shrubs in super-types $A$ and $\wA$. Let us give an overview of this important technical notion, which is instrumental in the proofs of Theorems \ref{thm:d intro}, \ref{thm:b intro}, \ref{thm:c intro}: inspired by $4$-dimensional superconformal field theories and later $1$-dimensional quantum mechanics, \cite{GLY, LY, NW1, NW2} have studied the so-called \emph{quiver quantum toroidal algebras} associated to certain quivers $Q$ drawn on the torus $S^1 \times S^1$. These quivers are quite special, as the arrows form oriented loops around every face of $Q$. More importantly, these quivers correspond to toric Calabi-Yau threefolds, with many of the geometric features of the latter being captured by the combinatorics of the former. For example, the quiver in Figure~\ref{fig:c3} corresponds to the simplest toric Calabi-Yau threefold, namely~$\BC^3$ (the torus $S^1 \times S^1$ is represented by the square, with opposite sides identified).

\begin{figure}[h]
  \includegraphics[scale=2]{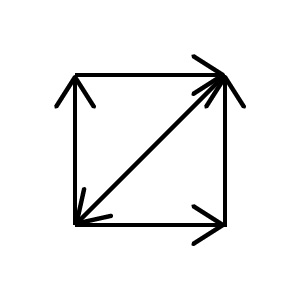} 
  \caption{The quiver $Q$ corresponding to $\BC^3$.}
  \label{fig:c3}
\end{figure}

\medskip
\noindent 
A \emph{shrub} is a generalization of the notion of sub-tree of the universal cover $\tQ \subset \BR^2$ of the quiver $Q \subset S^1 \times S^1$, that was introduced by the first-named author in \cite{N Reduced}; see Figure \ref{fig:C3 cover} for an example when $Q$ is the quiver of Figure \ref{fig:c3}. This notion allowed us to calculate in \loccit the \emph{reduced} version of quiver quantum toroidal algebras, i.e.\ to impose precisely the correct higher order relations between generators that ensure the shuffle realizations of these algebras (and also ensure faithful actions of said algebras on vector spaces of crystal configurations, which are important combinatorial models for the BPS states that arise in physics, see \cite{LY}).

\begin{figure}[h]
  \includegraphics[scale=2]{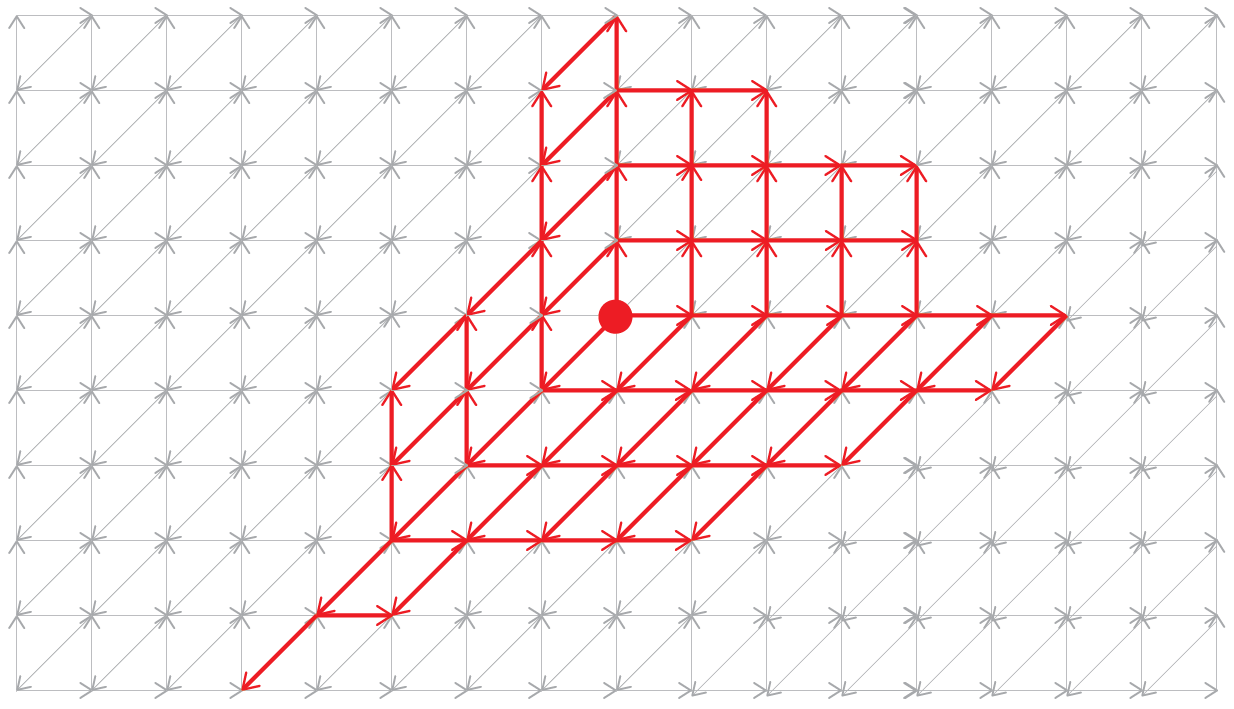} 
  \caption{The quiver $\tQ$ corresponding to $\BC^3$, and a shrub (in red).}
  \label{fig:C3 cover}
\end{figure}

\medskip 
\noindent 
In Subsection \ref{sub:shrubs sl}, we will define a certain generalization of the quiver from Figure~\ref{fig:C3 cover}, which corresponds to type $\wA_\Gamma$ with an arbitrary (periodic) parity sequence $\bs$ of $\pm 1$'s. For instance, Figure \ref{fig:grid} shows the corresponding quiver $\tQ$ for the parity sequence indicated on the sidelines. For such quivers, the main result of \cite{N Reduced} states:
\begin{equation*}
\Big(\text{reduced half quantum toroidal algebra of the quiver of Figure \ref{fig:grid}}\Big) \simeq \wCS^+_{\wA_{\Gamma}}
\end{equation*}
Using the isomorphism \eqref{eqn:shuffle realization affine}, this formula implies that the reduced quiver quantum toroidal algebra for the quiver associated to $\bs$ in Figure \ref{fig:grid} is isomorphic to
$$
  \UslG
$$
which was long known to mathematical physicists (\cite{GLY, LY, NW1, NW2}).

\medskip
\noindent
In the present paper, we completely classify shrubs for finite and affine type $A$ decorated Dynkin diagrams. For finite and affine type $B,C,D$ decorated Dynkin diagrams, we study certain generalizations of shrubs that we call \emph{asps (acceptable specialization patterns)}. Once we do so, we will be able to prove that the so-called Drinfeld-Serre-like relations discovered in \cite{Y} (type $A$) and \cite{BFK, XZ} (types $B,C,D$) are precisely dual to the wheel conditions in Subsection \ref{sub:super}. As we will explain in Section \ref{sec:arbitrary}, this is the key fact needed to prove Theorems~\ref{thm:d intro},~\ref{thm:b intro},~\ref{thm:c intro}. In Subsection \ref{sub:double}, we explain how to prove the following.

\medskip

\begin{corollary}
\label{cor:intro-nondegeneracy}
When $U$ is one of the quantum affine superalgebras 
$$
  \uslG,\ \ \ubG,\ \ \ucG,\ \ \udG,
$$
the natural pairing between its half subalgebras
$$
  U^+ \otimes U^- \rightarrow \BC
$$
is non-degenerate (see \cite{Y} for an analogue in the Drinfeld-Jimbo presentation).
\end{corollary}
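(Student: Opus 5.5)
The non-degeneracy will be deduced from the shuffle realizations (Theorems~\ref{thm:d intro}, \ref{thm:b intro}, \ref{thm:c intro}, and the type $A$ realization of \cite{Ts}) together with the general framework of Section~\ref{sec:arbitrary}. Write $U = U^+ \otimes (\text{Cartan}) \otimes U^-$ with $U^\pm$ generated by the Drinfeld currents modulo the Drinfeld--Serre-like relations of \cite{Y, BFK, XZ}. I would first realize the pairing concretely. Let $\widetilde{U}^\pm$ be the free algebras on the loop generators $e_{i,k}$, $f_{i,k}$ modulo only the quadratic relations. These carry a Hopf pairing $\widetilde{U}^+ \otimes \widetilde{U}^- \to \BC$, and the pairing on $U^+ \otimes U^-$ is its descent. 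Under the homomorphism $\widetilde{\Upsilon}^+ \colon \widetilde{U}^+ \to \CV^+$, pairing with $f_{i_1,k_1}\cdots f_{i_n,k_n}$ becomes an iterated contour integral (a constant term) of the shuffle element against the monomial $z_1^{-k_1}\cdots z_n^{-k_n}$, divided by the product of the $\zeta$-factors. The first step is to check this integral formula, which is a routine induction on degree using the coproduct.

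Second, I would identify the two radicals. The right kernel of $\widetilde{U}^+\otimes\widetilde{U}^-\to\BC$ consists of elements of $\widetilde{U}^-$ orthogonal to all of $\widetilde{\Upsilon}^+(\widetilde U^+)$, and symmetrically for the left kernel. The key claim is:
\[
  \operatorname{Ker}\big(\widetilde{U}^- \to U^-\big) = \big(\CS^+\big)^{\perp}.
\]
Here $\CS^+ = \CS^+_{X_\Gamma}$ is the wheel-condition shuffle algebra, viewed inside the image of $\widetilde U^+$, which contains all symmetric Laurent polynomials times the appropriate denominators. The inclusion $\subseteq$ holds because the Drinfeld--Serre-like relations pair to zero with anything satisfying the wheel conditions. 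The inclusion $\supseteq$ is exactly the statement, explained in the shrubs/asps discussion, that the Drinfeld--Serre-like relations are \emph{precisely dual} to the wheel conditions. Concretely, for each degree $\bn$ the annihilator of the wheel conditions is spanned by the specialization (residue) functionals at the wheels and asps. Each such functional is realized as the pairing with an element of the two-sided ideal generated by the relations.

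Third, I would conclude. Since $\Upsilon^+\colon U^+\iso\CS^+$ is injective, any $x\in U^+$ pairing to zero with all of $U^-$ corresponds to a shuffle element $E$ killed by every element of $\widetilde U^-$. The pairing of $\CV^+$ against $\widetilde{U}^-$ separates points, because constant terms against all monomials detect every Laurent coefficient. Hence $E=0$ and so $x=0$. For the other side, $y\in U^-$ with $\langle U^+,y\rangle=0$ lifts to $\tilde y\in(\CS^+)^\perp$, which lies in the kernel by the second step, so $y=0$. Alternatively, apply the anti-automorphism exchanging $U^\pm$ together with the shuffle realization of $U^-$.

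The main obstacle is the inclusion $\supseteq$ in the second step. One must show that the annihilator of the wheel conditions in each graded piece is no bigger than the span of the relation functionals, meaning that no hidden higher-order conditions are missing. For the scary wheel \eqref{eqn:wheel c 3} this requires handling the derivative (order $2$) functionals and matching them to the corresponding higher Serre-like relation of \cite{BFK, XZ}. I would attack this degree by degree using the asp classification: bound the dimension of $\CS^+_{\bn}$ from below via the injectivity of $\Upsilon^+$, and compare it with the number of independent functionals.
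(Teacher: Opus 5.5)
Your logical skeleton is the paper's argument. In Subsection~\ref{sub:double} the corollary is deduced from the result of \cite{N Arbitrary} that the pairing \eqref{eqn:pairing} descends to a pairing $\Up\otimes\oCS^-\to\BK$ that is non-degenerate in both arguments (diagram \eqref{eqn:diagram intro}). That pairing is then transported along the shuffle realization $U^+=\tUp/J^+=\Up$ (Proposition~\ref{prop:criterion 2}) and $U^-\cong\Um\cong\oCS^-=\CS^-$ (Proposition~\ref{prop:criterion 1}). Your Step~1, together with the separation-of-points remark, re-derives that input from \cite{N Arbitrary}: the two radicals of the pairing of the pre-quantum halves are exactly $K^\pm=\operatorname{Ker}\tUpsilon^\pm$. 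Granting the realization theorems, Step~3 is then correct, including your alternative treatment of the second argument.

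Step~2 and your final paragraph, however, misplace the difficulty. Step~3 already uses the realization, and with it the key claim is formal. Step~1 (on both sides) and separation give $(\oCS^+)^\perp=K^-$, while $\CS^+=\oCS^+$ and $K^-=J^-$ are part of the realization. So there is no remaining obstacle and no asp analysis is needed at this stage.

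If you instead meant to prove $(\CS^+)^\perp\subseteq J^-$ from scratch, the attack you sketch would not work, for three reasons.
\begin{enumerate}
\item For $|\bn|\ge 2$ the spaces $\CV_{\bn,d}$ and $\CS_{\bn,d}$ are infinite-dimensional, so there is no dimension comparison to make. This is exactly why Propositions~\ref{prop:criterion 1}--\ref{prop:criterion 2} need \cite{N Arbitrary}.
\item A lower bound via injectivity of $\Upsilon^+$ on $\tUp/J^+$ is circular, since that injectivity is the statement $J^+=K^+$.
\item The asps serve the opposite inclusion.
\end{enumerate}
In the paper, the asps prove that pairing with $F\in\CS^-$ factors through $\tUpsilon^+$ (Lemma~\ref{lem:pairing sl} and its type $B,C,D$ analogues), hence $\langle K^+,\CS^-\rangle=0$. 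This gives both $\CS=\oCS$ and your ``easy'' inclusion for the whole two-sided ideal, not just its generators, which your justification of $\subseteq$ does not cover.

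The fact that the relations cut out exactly the wheels comes from elsewhere. It uses the codimension $1$ or $2$ of $\CS$ in $\CV$ in the minimal degree of each wheel, plus one explicit non-zero pairing such as \eqref{eqn:not zero 1} or \eqref{eqn:not zero c 3}, propagated to all degrees by \cite{N Arbitrary}.

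A minor inaccuracy: the image $\oCS^+$ of $\tUpsilon^+$ does not contain all color-symmetric Laurent polynomials; that space is $\CV^+$.
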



\medskip

\subsection{Application to quantum toroidal superalgebras}
\label{sub:toroidal intro}

As we will explain in Section \ref{sec:arbitrary}, the principles above can be applied even beyond finite types $B,C,D$. We recall from Subsection \ref{sub:super} that a finite type $B,C,D$ decorated Dynkin diagram $\Gamma$ is obtained by gluing one of the following patterns, denoted $\Gh$ (\emph{head} of $\Gamma$)
\begin{equation}
\label{eqn:type b 1 head}
  \text{Type $B$ Case 1:} 
\qquad 
  \begin{tikzcd}
    N-1 && N
    \arrow["q^{\pm 1}"', no head, from=1-3, to=1-1]
    \arrow["q^{\mp 1}"', no head, from=1-3, to=1-3, looseness=4]
    \arrow[dashed, no head, from=1-1, to=1-1, looseness=4]
  \end{tikzcd} 
\end{equation}
\begin{equation}
\label{eqn:type b 2 head}
  \text{Type $B$ Case 2:} 
\quad  
  \begin{tikzcd}
    N-1 && N 
    \arrow["q^{\pm 1}"', no head, from=1-3, to=1-1]
    \arrow["q^{\pm 1}", no head, from=1-3, to=1-3, looseness=4]
    \arrow["q^{\mp 2}"', no head, from=1-3, to=1-3, looseness=8]
    \arrow[dashed, no head, from=1-1, to=1-1, looseness=4]
  \end{tikzcd}
\end{equation}
\begin{equation}
\label{eqn:type c head}
  \text{Type $C$:} 
\qquad \qquad
  \begin{tikzcd}
    N-1 && N
    \arrow["q^{\pm 2}"', no head, from=1-3, to=1-1]
    \arrow["q^{\mp 4}"', no head, from=1-3, to=1-3, looseness=4]
    \arrow[dashed, no head, from=1-1, to=1-1, looseness=4]
  \end{tikzcd} 
\end{equation}
\begin{equation}
\label{eqn:type d 1 head}
  \text{Type $D$ Case 1:} \
\quad 
  \begin{tikzcd}
    && {N-1} \\
    {N-2} \\
    && N
    \arrow["q^{\pm 1}", no head, from=2-1, to=1-3]
    \arrow["q^{\mp 2}", swap, no head, from=1-3, to=1-3, looseness=4]
    \arrow["q^{\pm 1}"', no head, from=2-1, to=3-3]
    \arrow["q^{\mp 2}", swap, no head, from=3-3, to=3-3, looseness=4]
    \arrow[dashed, no head, from=2-1, to=2-1, looseness=4]
  \end{tikzcd} 
\end{equation}
\begin{equation}
\label{eqn:type d 2 head}
  \text{Type $D$ Case 2:} \
\qquad 
  \begin{tikzcd}
    && {N-1} \\
    {N-2} \\
    && N
    \arrow["q^{\pm 1}", no head, from=2-1, to=1-3]
    \arrow["q^{\pm 1}"', no head, from=2-1, to=3-3]
    \arrow["{q^{\mp 2}}", no head, from=1-3, to=3-3]
    \arrow[dashed, no head, from=2-1, to=2-1, looseness=4]
  \end{tikzcd}
\end{equation}
to the rightmost end of a type $A$ decorated Dynkin diagram $\Gb$ (\emph{body} of $\Gamma$) as in~\eqref{eqn:dynkin a intro}. The dashed loops at $N-1$ (types $B,C$) or $N-2$ (type $D$) should coincide in $\Gb$ and $\Gh$, both denoting either no loop or a loop with parameter $q^{\mp 2}$.

\medskip
\noindent
An \emph{extended decorated Dynkin diagram} of type $B,C,D$ will refer to the result $\Gamma$ of gluing two \emph{heads} $X,Y \in \{B,C,D\}$ as in diagrams \eqref{eqn:type b 1 head}-\eqref{eqn:type d 2 head} to the leftmost and rightmost (respectively) vertices of a type $A$ decorated Dynkin diagram $\Gb$, which is called the \emph{body} of $\Gamma$; see Subsection \ref{sub:extended} for pictures. We assume that $|\Gb|\geq 2$ (and $|\Gb|\geq 3$ if $X=Y=C$). We note that if we remove the left (respectively right) head $X$ (respectively $Y$), the resulting shape $\Gamma'$ (respectively $\Gamma''$) is a decorated Dynkin diagram of type $Y$ (respectively $X$) $\in \{B,C,D\}$. To this data, we associate
\begin{equation}
\label{eqn:toroidal intro}
  \text{a quantum algebra }U^+_q(L\fg_{\wXYG}) \text{ and a shuffle algebra } \CS^+_{\wXYG}
\end{equation}
(see Subsections~\ref{sub:toroidal}-\ref{sub:wheel toroidal} for details; in short, we only impose those relations on the quantum algebra and those wheel conditions on the shuffle algebra which come either from the body or from the two heads separately). The doubles of the algebras above are defined as in Subsection \ref{sub:double}, and in particular we refer to the double of the quantum algebra in \eqref{eqn:toroidal intro} as a type $XY$ \emph{quantum toroidal superalgebra}.

\medskip

\begin{theorem}
\label{thm:toroidal intro}
We have the following shuffle realization
\begin{equation}
\label{eqn:shuffle toroidal intro}
  U^+_q(L\fg_{\wXYG}) \simeq \CS^+_{\wXYG} 
\end{equation} 
for any extended decorated Dynkin diagram $\Gamma$ that is non-special (see Definition~\ref{def:special}). 
\end{theorem}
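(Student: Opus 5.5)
Following the general framework announced for Section \ref{sec:arbitrary}, I would prove \eqref{eqn:shuffle toroidal intro} in three steps: (i) construct an algebra homomorphism $\Upsilon^+ \colon U^+_q(L\fg_{\wXYG}) \to \CV^+$ whose image lies in $\CS^+_{\wXYG}$; (ii) show that $\Upsilon^+$ is injective; (iii) show that it is surjective onto $\CS^+_{\wXYG}$.

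For step (i), I would send each generator $e_{i,d}$ to $z_{i1}^d$ in degree $\boldsymbol{\varsigma}^i$. The defining relations of $U^+_q(L\fg_{\wXYG})$ are, by construction, only those coming from the body $\Gb$ or from one of the two heads separately. Each such relation involves a bounded set of vertices, all contained in $\Gamma'$ or in $\Gamma''$. These are finite type $Y$ or $X$ diagrams, already handled by Theorems \ref{thm:d intro}, \ref{thm:b intro}, \ref{thm:c intro} (and by the type $A$ results for the body). So every relation holds in the shuffle algebra because it already holds in the shuffle algebra of $\Gamma'$ or $\Gamma''$, which sits inside $\CV^+$ as the subspace of degrees supported there.

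The same locality shows the image lies in $\CS^+_{\wXYG}$. The shuffle algebra $\CS^+_{\wXYG}$ is closed under the product, each wheel condition only involves variables of a head or the body, and generators trivially satisfy all wheel conditions.

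For step (ii), injectivity, I would use the pairing $U^+ \otimes U^- \to \BC$ and its shuffle counterpart, as in Subsection \ref{sub:double}. An element of the kernel of $\Upsilon^+$ pairs trivially with all of $U^-$. Non-degeneracy would then force it to vanish. To get non-degeneracy, I would show that the relations imposed are exactly the annihilator of the pairing. This is precisely the duality "relations $\leftrightarrow$ wheel conditions" that the asp/shrub machinery provides: every element of $U^-$ pairing to zero with all of $U^+$ is a combination of products of Drinfeld–Serre-like relations.

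For step (iii), surjectivity, I would use the filtration/leading-term argument of \cite{NT, N Reduced}. For each degree $\bn$ and each order of specialization, consider the linear maps $\varphi_{\text{asp}}$ that specialize shuffle elements along acceptable specialization patterns. The key claim is that the kernel of all such specializations, within $\CS^+_{\wXYG}$ of fixed degree and bounded vertical degree, matches the kernel on the image. Equivalently, $\dim$ of each graded piece of $\CS^+_{\wXYG}$ is bounded above by the number of asps/PBW words, which in turn is bounded by the dimension of the image.

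The main obstacle is classifying asps for the extended diagram. This requires showing that an asp either stays in one head plus the body, or decomposes into pieces that do. That is exactly where the non-special hypothesis (Definition \ref{def:special}) should be used: it rules out "wrap-around" specialization patterns traversing the whole body between the two heads, analogous to the extra wheels \eqref{eqn:wheel special 1}--\eqref{eqn:wheel special 2} in the $m=n$ affine type $A$ case. My first attempt would be an induction on $|\bn|$. I would argue that any asp whose support touches both heads must contain a cycle, and non-specialness makes that cycle produce a zero through a local wheel condition; this reduces the count to the finite type results for $\Gamma'$ and $\Gamma''$ already established.
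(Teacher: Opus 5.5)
Your step (i) is fine, and you correctly identify classifying asps for the extended diagram as the main work. However, you use the asps for the wrong purpose, and you use non-specialness through the wrong mechanism, so steps (ii)--(iii) have genuine gaps.

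\emph{Surjectivity.} Step (iii) cannot run as a dimension count. The graded pieces $\CV_{\bn,d}$ are infinite-dimensional once $|\bn|\geq 2$. No PBW theorem for $U^+_q(L\fg_{\wXYG})$ is available to compare with, since these algebras are introduced in this very paper. And asps do not index a basis of anything. The missing idea is the analogue of Lemma \ref{lem:pairing sl}: for $F\in\CS^-_{\wXYG}$, the pairing $\langle e_{i_1,d_1}\cdots e_{i_k,d_k},F\rangle$ is a linear functional of $\tUpsilon^+(e_{i_1,d_1}\cdots e_{i_k,d_k})$. This gives $\langle K^+,\CS^-\rangle=0$, hence $\CS^\pm=\oCS^\pm$ by Proposition \ref{prop:criterion 1}, i.e.\ surjectivity. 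This is where asps enter. One moves the contours of the pairing integral from $\infty$ to the circles $|z|=\rho_i$, and the residues picked up are indexed by acceptable specialization patterns. One must then show that the class of asps is closed under extensions, so that every apparent pole is either simple or cancelled by wheel conditions.

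\emph{Injectivity.} As written, step (ii) only restates the goal, since non-degeneracy of $U^+\otimes U^-\to\BC$ is equivalent to injectivity. The relations/wheels duality it needs does not come from asps. It comes from the same local computations as in finite type: $\tUpsilon^+(X)=0$, the wheel condition having codimension $1$ or $2$ in the relevant degree, and a non-zero pairing with an explicit test function. These transfer verbatim because every relation lives on $\Gamma'$ or $\Gamma''$, and Proposition \ref{prop:criterion 2} then gives $J^+=K^+$. This local argument already uses the pairing lemma above, to know that pairing with the coefficients of a relation descends to $\CV_{-\bn,-d}/\CS_{-\bn,-d}$. So the asp step must come first, not last.

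\emph{Role of non-specialness.} Your mechanism is wrong. An asp rooted at one head that reaches the other head contains no cycle. It is a path through the body, and the coordinate it assigns to a vertex of the far head differs from that of the root by a factor whose absolute value is governed by $|Q|$ (or $|Qq|$ for type $B$ heads). The paper takes the $\rho$'s of the two heads equal and much larger than those of the body, as in \eqref{eqn:greater toroidal}. It then uses $Q\neq 1$ to choose $|q|$ so that $|Q|<1$, as in \eqref{eqn:Q}. The acceptability condition $|x_\ell|>\rho_{i_\ell}$ then forbids reaching the far head, except at a few boundary vertices. From those vertices, length $3,4,6,7$ wheel conditions (the last via Claim \ref{claim:seven}) together with acceptability forbid any further extension. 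Suitable modifications are needed when a type $B$ head is present, e.g.\ an auxiliary ratio $r=\rho_0/\rho_N$ or the condition $|Qq|<1$. In the special case the situation is the opposite of what you suggest: the cycle with parameter product $1$ produces a genuinely new non-local wheel condition, not one implied by local ones, which is why the theorem excludes that case.
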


\medskip
\noindent
Moreover, Corollary~\ref{cor:intro-nondegeneracy} holds for any $U_q(L\fg_{\wXYG})$ with non-special $\Gamma$, and for any $\UslG$ with $\Gamma$ such that $m \ne n$ (we will also call the latter $\Gamma$ \emph{non-special}).


\medskip

\subsection{Application to representation theory}
\label{sub:q-char intro}

A very useful consequence of shuffle realizations as in Theorems \ref{thm:d intro}, \ref{thm:b intro}, \ref{thm:c intro}, \ref{thm:toroidal intro}, \ref{thm:shuffle sl} arises in the study of simple modules of quantum affine/toroidal superalgebras of types $A,B,C,D$. In what follows, $\CS^+$ will refer to any one of the shuffle algebras in the right-hand sides of \eqref{eqn:shuffle d intro}, \eqref{eqn:shuffle b intro}, \eqref{eqn:shuffle c intro} or in Definitions \ref{def:wheel sl} and \ref{def:extended shuffle} (in fact, the discussion below applies even beyond classical types, such as the type $D(2,1;\alpha)$ shuffle algebra studied in \cite{FH}).

\medskip 
\noindent 
We recall that finite-dimensional modules in the non-super setting were studied by \cite{CP}, and a wider framework of category $\CO$ modules was introduced in \cite{H Shifted, HJ}. All these modules admit a notion of $q$-characters, following the seminal work of~\cite{FR}. The generalization of this theory to quantum affine superalgebras is currently underway, see for instance \cite{Lee} for the study of $q$-characters in super type $A$ and \cite{DM} for connections with $R$-matrices and intertwiners in all types.

\medskip 
\noindent 
The first-named author showed in \cite{N Cat} that shuffle algebras provide a good approach for the construction of simple modules in category $\CO$, and we will review this framework here. For simplicity, in what follows we will discuss the simple modules 
\begin{equation}
\label{eqn:simple module}
  L^{\text{sh}}(\bpsi), \quad \forall\, \bpsi = (\psi_i(z) \in \BC(z)^\times)_{i \in I}
\end{equation}
of the shifted quantum loop algebra (see Subsection \ref{sub:double} and especially Remark \ref{rem:shifted} for the definition of shifted doubles), defined by analogy with~\cite{H Shifted}. The version of this theory where the word ``shifted" is replaced by ``Borel subalgebra" is treated as in \cite{HJ, N Cat}, and its equivalence with the discussion above was shown in \cite{HN}. Let
\begin{equation}
\label{eqn:j}
  J^{\text{sh}}(\bpsi) = \left\{E \in \CS^+ \text{ s.t. }  \int_{z_1} \dots \int_{z_n} \frac {E (z_1,\dots,z_n)(\text{any monomial})}{\prod_{1\leq a < b \leq n} \zeta_{i_bi_a} \left(\frac {z_b}{z_a} \right)} \prod_{a=1}^n \psi_{i_a}(z_a) = 0 \right\}
\end{equation}
In the formula above, $\int_z$ denotes the difference between the residue at $\infty$ and the residue at 0 of the rational function in the integrand. For any $E \in \CS^+$ and any
\begin{equation}
\label{eqn:sequence of i}
  i_1,\dots,i_n \in I
\end{equation}
we use in the right-hand side of \eqref{eqn:j} the notation 
$$
  E(z_1,\dots,z_n)
$$
to mean that we plug each variable $z_a$ into one of the variables $z_{i_a\bullet_a}$ of $E$ (where $\bullet_1,\dots,\bullet_n$ denote the minimal positive integers such that $\bullet_a < \bullet_b$ whenever $a<b$ and $i_a = i_b$). Therefore, in the right-hand side of \eqref{eqn:j} we require the vanishing of certain residues of $E$ times arbitrary monomials, for all possible sequences \eqref{eqn:sequence of i}. By analogy with \cite{N Cat}, the underlying vector space of the simple module \eqref{eqn:simple module} satisfies
\begin{equation*}
  L^{\text{sh}}(\bpsi) \simeq \CS^+ / J^{\text{sh}}(\bpsi)
\end{equation*}
The isomorphism above immediately implies that the $q$-character of $L^{\text{sh}}(\bpsi)$ can be expressed in terms of dimensions of certain quotients of shuffle algebras. More categorically, it shows that shuffle algebras give natural settings for the underlying vector spaces of simple modules of (shifted) quantum affine/toroidal superalgebras.


\medskip 

\subsection{Historical analysis}

The theory of Lie superalgebras and their quantizations is more interesting than the non-super case, particularly due to the following (here, we shall use $\bs$ instead of $\Gamma$ in order to better match the existing literature):

\medskip

\begin{itemize}[leftmargin=0.7cm]

\item[-] 
every Lie superalgebra $\fg$ has several Dynkin diagrams; for the classical types $X \in \{A,B,C,D\}$ treated in this paper, such Dynkin diagrams are parameterized by parity sequences $\bs$; we shall use $\fg_{X_{\bs}}$ for the associated Lie superalgebra 

\medskip

\item[-] 
the Chevalley-Serre presentation of each $\fg_{X_\bs}$ features new higher order relations, depending in a case-by-case way on the respective $\bs$ (see~\cite{Y,Z})

\medskip

\item[-] 
the Lie algebras $\fg_{X_\bs}$ associated with different parity sequences $\bs,\bs'$ are non-trivially isomorphic ($\fg_{X_\bs} \simeq \fg_{X_{\bs'}}$) through the Weyl groupoid action (with simple reflections in odd isotropic roots introduced in~\cite[Appendix]{LSS})

\medskip

\item[-] 
each $\fg_{X_\bs}$ and its affinization $\fg_{\widehat{X}_\bs}$ admit quantizations $U^{DJ}_q(\fg_{X_\bs})$ and $U^{DJ}_q(\fg_{\widehat{X}_\bs})$ in the sense of Drinfeld and Jimbo, featuring highly nontrivial higher order relations that one needs to impose in order to have an expected  Drinfeld double realization of these algebras, i.e.\ have a non-degenerate bialgebra pairing between their two halves (this is a key result of~\cite{Y})

\medskip

\item[-]
For any parity sequences $\bs,\bs'$, the isomorphisms of the corresponding finite and affine quantum supergroups $U^{DJ}_q(\fg_{X_\bs})\simeq U^{DJ}_q(\fg_{X_{\bs'}})$, $U^{DJ}_q(\fg_{\widehat{X}_\bs})\simeq U^{DJ}_q(\fg_{\widehat{X}_{\bs'}})$ are highly nontrivial and are provided through the braid group action on the total direct sums $\oplus_{\bs} U^{DJ}_q(\fg_{X_\bs})$ and $\oplus_{\bs} U^{DJ}_q(\fg_{\widehat{X}_\bs})$, respectively (this constitutes another main result of \cite{Y}, see also \cite{BFK} for the computation of key constants). 

\end{itemize}

\medskip
\noindent
Furthermore, there is no uniform loop (Drinfeld new) realization for quantum affine superalgebras akin to the one invoked in Subsection~\ref{sub:motivation}. In type $A$, such an algebra $U_q(L\fg_{X_\bs})$ was defined in~\cite[\S8]{Y}, together with an epimorphism 
$$
  U_q(L\fg_{X_\bs}) \twoheadrightarrow U_q^{DJ}(\fg_{\widehat{X}_\bs}) |_{c=1}
$$
($c$ denotes the central element) which was shown to be an isomorphism in~\cite{LYZ}. A similar technique was used in~\cite{HSTY} to provide a new loop realization $U_q(L\fg_{X_\bs})$ for the exceptional $1$-parametric family $D(2,1;\alpha)$. For $X\in \{B,C,D\}$, the corresponding algebra $U_q(L\fg_{X_\bs})$ endowed with a surjective  homomorphism to $U_q^{DJ}(\fg_{\widehat{X}_\bs})|_{c=1}$ appeared recently in~\cite{BFK} (also cf.~\cite{WLZ}). It is these algebras whose shuffle realization is established in Section~\ref{sec:osp}. 
The shuffle realizations of the respective positive halves $U^+_q(L\fg_{X_\bs})$ were established in~\cite{Ts} for type $A$ and in~\cite{FH} for type $D(2,1;\alpha)$. We also note that~\cite{DF} suggested such a shuffle realization for $\fg=\fosp(1|2)$; while Theorem~4.8 of \loccit is presented without any proof, it served as one of initial motivations for the present work. Concurrently and independently of our work, \cite{BKL} develops a proof of Corollary \ref{cor:intro-nondegeneracy} for type $B,C,D$ quantum affine superalgebras, using PBW bases. It would be very interesting to understand the latter from the point of view of our shuffle realization.


\medskip
    
\subsection{Outline of the paper} 

The structure of this paper is as follows:

\medskip

\begin{itemize}[leftmargin=0.7cm]

\item 
In Section~\ref{sec:arbitrary}, we recall the general framework of quantum loop and shuffle algebras associated to arbitrary quivers with parameters, and discuss the general principles behind shuffle realizations. We believe that these principles will find future applications to types beyond those studied in the present paper.

\medskip 

\item 
In Section~\ref{sec:gl}, we recall the shuffle realization of quantum affine and toroidal superalgebras in type $A$. A key ingredient is Lemma~\ref{lem:pairing sl}, and to prove it, we classify all affine type shrubs. We conclude with a notion of \emph{asps} that mimics the combinatorial features of shrubs and is more suitable for types $B,C,D$.

\medskip 

\item 
In Section~\ref{sec:osp}, we prove Theorems~\ref{thm:d intro},~\ref{thm:b intro},~\ref{thm:c intro} by means of classifying the corresponding asps in types $D$, $B$, $C$, respectively.

\medskip 

\item 
In Section~\ref{sec:osp-toroidal}, we prove Theorem \ref{thm:toroidal intro} concerning the shuffle realization in non-special extended types, closely following the proofs of Theorems~\ref{thm:d intro},~\ref{thm:b intro},~\ref{thm:c intro}. 

\medskip 

\item 
In Appendix~\ref{sec:small-N}, we present an analysis of small rank cases (rank $\leq 2$ as well as super case of rank $3$) of $\UslG$ arising uniformly in the present approach. 

\medskip 

\item 
In Appendix~\ref{non-zero pairings}, we present explicit calculations of various pairing constants that are used in the proofs of Theorems~\ref{thm:d intro},~\ref{thm:b intro},~\ref{thm:c intro}. 

\medskip 

\item
In Appendix~\ref{sec:delta-equalities}, we present in a more straightforward way the duality between the wheel conditions and the corresponding ``higher order'' relations used to define quantum affine superalgebras (our analysis does not cover the degree $6$ and $7$ relations dual to the wheels~\eqref{eqn:wheel c 2} and~\eqref{eqn:wheel c 3}, as the formulas are  unwieldy).

\end{itemize}


\medskip
    
\subsection{Acknowledgements} 
A.N.\ gratefully acknowledges the support of the Swiss National Science Foundation grant 10005316. A.T.\ gratefully acknowledges the support by NSF Grant DMS-2302661 and Simons Fellowship MPS-SFM-00020373, and is indebted to the MPIM Bonn for the hospitality and support in July 2026, where part of this project was carried out.


\bigskip
    
\section{Shuffle realizations for arbitrary quivers}
\label{sec:arbitrary}

\medskip 
In the present Section, we recall quantum loop algebras and Feigin-Odesskii shuffle algebras in the generality of arbitrary quivers with parameters, and provide a general blueprint for proving isomorphisms between these two kinds of algebras. This blueprint will be used in Sections \ref{sec:gl}, \ref{sec:osp}, \ref{sec:osp-toroidal} in finite and affine super types $A,B,C,D$.


\medskip 

\subsection{Quantum loop and shuffle algebras}
\label{sub:quiver}

We follow the framework of \cite{N Arbitrary}, though many ideas herein have their origins in the seminal works \cite{FO} and \cite{E} (see also \cite{FHHSY, FT, N Shuffle}). Consider any quiver with vertex set $I$, whose arrows
\begin{equation}
\label{eqn:parameters}
  \alpha \colon i \xrightarrow{t_\alpha} j
\end{equation}
are all decorated with non-zero elements in a certain field $\BK$ of characteristic $0$.

\medskip

\begin{remark}
The decorated Dynkin diagrams of Subsection \ref{sub:super} are particular examples of quivers with parameters, if we posit that any undirected decorated edge
\[
\begin{tikzcd}
  i & j
  \arrow["t", no head, from=1-1, to=1-2]
\end{tikzcd}
\]
represents a pair of decorated arrows $i \xrightarrow{t} j$ and $j \xrightarrow{t} i$ for $i \neq j$, and a single decorated loop $i \xrightarrow{t} i$ for $i=j$. In all these examples, we have $\BK = \BC$ and $t \in q^{\BZ}$. 
\end{remark}

\medskip 
\noindent 
Any quiver with parameters \eqref{eqn:parameters} encodes \emph{zeta functions} 
\begin{equation}
\label{eqn:zeta intro}
  \zeta_{ij}(x) \sim \frac {\prod_{\alpha : i \rightarrow j} (1-xt_{\alpha})}{(1-x)^{\delta_{ij}}}, \quad \forall\, i,j\in I
\end{equation}
where $\sim$ means that the two sides are equal up to multiplication by $\gamma x^\ell$, with the precise values of $\gamma \in \BK^\times$ and $\ell \in \BQ$ only becoming important in Section \ref{sub:double}. To the data \eqref{eqn:zeta intro}, one associates a (half) \emph{pre-quantum loop algebra} 
\begin{equation}
\label{eqn:quad intro}
  \tUp = \BK \langle e_{i,d} \rangle_{i \in I, d\in \BZ} \Big/ \left(e_i(x)e_j(y) \zeta_{ji}\left(\frac yx\right) = e_j(y) e_i(x) \zeta_{ij}\left(\frac xy\right) \right)_{i,j \in I}
\end{equation}
where $e_i(x) = \sum_{d \in \BZ} \frac {e_{i,d}}{x^d}$ and one interprets the quadratic relation above by clearing the denominators of the form $(x-y)^{\delta_{ij}}$ and then equating all monomials in $x,y$ in the two sides of the equation.  By analogy with \cite{E, FO}, we may also consider 
\begin{equation}
\label{eqn:shuf intro}
  \CV^+ = \bigoplus_{\bn \in \nn} \BK[z_{i1}^{\pm 1}, \dots, z_{in_i}^{\pm 1}]^{\sym}_{i \in I}
\end{equation}
with the superscript ``sym" referring to color-symmetric Laurent polynomials, i.e. those which are symmetric in $z_{i1},\dots,z_{in_i}$ for each individual $i \in I$. This terminology is due to the fact that $i\in I$ is called the \emph{color} of the variable $z_{ia}$. We make the vector space \eqref{eqn:shuf intro} into an associative algebra via the \emph{shuffle product} 
\begin{equation}
\label{eqn:shuffle product}
  E(z_{i1},\dots,z_{in_i})_{i \in I} * E'(z_{i1},\dots,z_{in'_i})_{i \in I} = 
\end{equation}
$$
  \Sym\, \left[ \frac {E(z_{i1},\dots,z_{in_i})_{i \in I} E'(z_{i,n_i+1},\dots,z_{i,n_i+n_i'})_{i \in I}}{\bn!\bn'!} \prod_{i,j \in I} \prod_{a=1}^{n_i} \prod_{b=n_j+1}^{n_j+n_j'} \zeta_{ij} \left(\frac {z_{ia}}{z_{jb}} \right) \right] 
$$
where ``Sym" denotes summation over the $(\bn+\bn')! = \prod_{i \in I} (n_i+n_i')!$ permutations of the variables of each color taken separately. The relation between the algebras \eqref{eqn:quad intro} and \eqref{eqn:shuf intro} is that there exists an algebra homomorphism
\begin{equation}
\label{eqn:tilde upsilon}
  \tUpsilon^+ \colon \tUp \rightarrow \CV^+
\end{equation}
given by $e_{i,d} \mapsto z_{i1}^d$ for all $i \in I$ and $d \in \BZ$. One is interested in computing 
\begin{equation*}
  K^+ = \text{Ker }\tUpsilon^+ \quad \text{and} \quad \oCS^+ = \text{Im }\tUpsilon^+
\end{equation*}
since tautologically, $\tUpsilon^+$ descends to an algebra isomorphism
\begin{equation}
\label{eqn:iso plus}
  \Upsilon^+ \colon \Up \iso \oCS^+
\end{equation}
where the (half) \emph{quantum loop algebra} is $\Up = \tUp / K^+$. The algebras $\CV^+$ and $\oCS^+$ are called the \emph{big} and \emph{small shuffle algebras}, respectively. By definition, the latter is the linear span of all color-symmetric Laurent polynomials of the form
\begin{equation}
\label{eqn:span}
  \oCS^+ = \text{span}_{\BK} \left\{ \Sym \left[z_1^{d_1}\dots z_k^{d_k} \prod_{1 \leq a < b \leq k} \zeta_{i_ai_b} \left(\frac {z_a}{z_b} \right)\right] \right\}_{\substack{i_1,\dots,i_k \in I\\d_1,\dots,d_k \in \BZ}}
\end{equation}
In the right-hand side of the expression above, we make the convention that each variable $z_a$ stands for $z_{i_a\bullet_a}$, where $\bullet_1,\dots,\bullet_k$ are the minimal positive integers such that $\bullet_a < \bullet_b$ whenever $a<b$ and $i_a = i_b$.

\medskip 
\noindent 
(Pre-)quantum loop algebras have the \emph{horizontal} and the \emph{vertical} grading given by
$$
  \hdeg e_{i,d} = \bsi^i \quad \text{and} \quad \vdeg e_{i,d} = d
$$
where $\bsi^i \in \nn$ denotes the $I$-tuple consisting of a single $1$ on position $i$ and $0$ everywhere else. Similarly, the assignment
$$
  \hdeg E = \bn \quad \text{and} \quad \vdeg E = d
$$
for any homogeneous degree $d$ Laurent polynomial $E$ in $\bn = (n_i \geq 0)_{i \in I}$ variables of each color, induces a $\nn \times \BZ$ grading on big and small shuffle algebras. We write
$$
  \CV^+ = \bigoplus_{\bn \in \nn} \CV_{\bn} = \bigoplus_{\bn \in \nn} \bigoplus_{d \in \BZ} \CV_{\bn,d}
$$
for the graded components, and similarly for the small shuffle algebra.


\medskip 

\subsection{The pairing}
\label{sub:duals}

Consider the opposite algebras $\CV^- = \CV^{+,\text{op}}$ and
\begin{equation*}
  \tUm = \BK \langle f_{i,d} \rangle_{i \in I, d\in \BZ} \Big/ 
  \left(f_i(x)f_j(y) \zeta_{ij}\left(\frac xy\right) = f_j(y) f_i(x) \zeta_{ji}\left(\frac yx\right) \right)_{i,j \in I}
\end{equation*}
where $f_i(x) = \sum_{d \in \BZ} \frac {f_{i,d}}{x^d}$. Define $\Um$ and $\oCS^-$ to be the opposites of the analogous algebras (with superscript $+$) from Subsection \ref{sub:quiver}. The horizontal and vertical degrees of elements in the opposite algebras are set by convention to be
$$
  \hdeg f_{i,d} = -\bsi^i \quad \text{and} \quad \vdeg f_{i,d} = d
$$
and
$$
  \hdeg F = -\bn \quad \text{and} \quad \vdeg F = d
$$
for any homogeneous degree $d$ Laurent polynomial $F$ in $\bn$ variables. We will write
$$
  \CV^- = \bigoplus_{\bn \in \nn} \CV_{-\bn} = \bigoplus_{\bn \in \nn} \bigoplus_{d \in \BZ} \CV_{-\bn,d}
$$
for the graded components, and similarly for the small shuffle algebra.

\medskip 
\noindent 
The relation between the objects introduced above is the existence of a pairing
\begin{equation}
\label{eqn:pairing}
  \tUp \otimes \CV^- \xrightarrow{\langle \cdot, \cdot \rangle} \BK
\end{equation}
given by the following formula for all $i_1,\dots,i_k \in I$, $d_1,\dots,d_k \in \BZ$ and homogeneous $F \in \CV^-$:
\begin{equation}
\label{eqn:pairing explicit}
  \Big \langle e_{i_1,d_1} \cdots e_{i_k,d_k}, F \Big \rangle = \int_{|z_1| \gg \cdots \gg |z_k|} 
  \frac {z_1^{d_1}\cdots z_k^{d_k} F(z_1,\dots,z_k)}{\prod_{1\leq a < b \leq k} \zeta_{i_bi_a} \left(\frac {z_b}{z_a} \right)} \prod_{a=1}^k Dz_a 
\end{equation}
if $\bsi^{i_1}+\dots+\bsi^{i_k} = \bn$ and $d_1+\dots+d_k$ is equal to minus the homogeneous degree of $F$, and $0$ otherwise; the meaning of the variables $z_1,\dots,z_k$ is the same as in the paragraph following~\eqref{eqn:span}. In the right-hand side of formula \eqref{eqn:pairing explicit}, the notation 
\begin{equation*}
  \int_{|z_1| \gg \cdots \gg |z_k|} G(z_1,\dots,z_k) \prod_{a=1}^k Dz_a
\end{equation*}
refers to the constant term in the expansion of any (homogeneous of degree 0) rational function $G$ as a power series in 
$$
  \frac {z_2}{z_1}, \dots, \frac {z_k}{z_{k-1}}
$$
One of the main results of \cite{N Arbitrary} is that the pairing \eqref{eqn:pairing} descends to a pairing $\langle \cdot, \cdot \rangle'$ as in the diagram below, which is non-degenerate in both arguments:
\begin{equation}
\label{eqn:diagram intro}
\begin{tikzcd}
	{\mathbf{\widetilde{U}}^+} & \otimes & {\CV^-} &&&& \\
	&&&&&& {\BK} \\
	{\mathbf{U}^+} & \otimes & {\oCS^-}
	\arrow[two heads, from=1-1, to=3-1]
	\arrow["{\langle \cdot, \cdot \rangle}", from=1-3, to=2-7]
	\arrow[hook, from=3-3, to=1-3]
	\arrow["{\langle \cdot, \cdot \rangle'}"', from=3-3, to=2-7]
\end{tikzcd}
\end{equation}
In other words, the aforementioned descent statement claims that
\begin{equation}
\label{eqn:other words}
  \langle K^+, F \rangle = 0 \quad \text{for} \quad F \in \CV^- \quad \Leftrightarrow \quad F \in \oCS^-
\end{equation}
or equivalently, every linear condition that cuts out the vector subspace $\oCS^- \subset \CV^-$ is given by pairing with a specific element of $K^+$.


\medskip 

\subsection{Shuffle realizations}
\label{sub:plan}

In a wide variety of examples (which include all items in the bulleted list of Subsection \ref{sub:motivation}), the algebras $\Upm$ are isomorphic to algebras of great interest in representation theory, algebraic geometry, and mathematical physics. In light of the isomorphism \eqref{eqn:iso plus} and its natural opposite analogue
\begin{equation*}
  \Upsilon^- \colon \Um \iso \oCS^-
\end{equation*}
the small shuffle algebras provide models of (half) quantum loop algebras. Therefore, an important problem becomes to understand small shuffle algebras as explicitly as possible. To this end, suppose one constructs an explicit pair of opposite subalgebras (slightly abusively called \emph{shuffle algebras})
\begin{equation}
\label{eqn:shuffle algebra}
  \CS^\pm \subset \CV^\pm
\end{equation}
which contain all horizontal degree $\{\pm \bsi^i\}_{i \in I}$ Laurent polynomials, thus implying 
$$
  \oCS^\pm \subseteq \CS^\pm 
$$
\cite[Theorem 2.11]{N Arbitrary} provides an explicit criterion for the opposite inclusion.

\medskip

\begin{proposition}
\label{prop:criterion 1}
Suppose that the pairing \eqref{eqn:pairing} has the property that
\begin{equation*}
  \langle K^+, \CS^- \rangle = 0
\end{equation*}
Then we have $\CS^\pm = \oCS^\pm$.
\end{proposition}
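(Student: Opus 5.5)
The plan is to deduce the statement from the descent property \eqref{eqn:other words} established in \cite{N Arbitrary}, working one graded piece at a time and arguing separately for the two signs.

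\emph{The minus sign.} Let $F \in \CS^-$. By hypothesis $\langle K^+, F\rangle = 0$. By \eqref{eqn:other words}, the subspace $\oCS^- \subset \CV^-$ is exactly the annihilator of $K^+$ under the pairing \eqref{eqn:pairing}. Hence $F \in \oCS^-$, so $\CS^- \subseteq \oCS^-$. Together with the inclusion $\oCS^- \subseteq \CS^-$ (which holds because $\CS^-$ is a subalgebra containing all the generators $z_{i1}^d$), this gives $\CS^- = \oCS^-$.

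\emph{The plus sign.} Here I would use a symmetry rather than a second descent argument. The quiver with all arrows reversed gives the same picture with $\pm$ interchanged, and there is an anti-isomorphism $\CV^+ \cong \CV^-$ that intertwines the shuffle products. Concretely, the map is the identity on Laurent polynomials, and it swaps $\zeta_{ij}(x)$ with $\zeta_{ji}(x^{-1})$, possibly combined with $z \mapsto z^{-1}$. Under this identification, $\oCS^+$ corresponds to $\oCS^-$ for the opposite quiver.

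The subtle point is that the hypothesis is stated only for $K^+$ paired against $\CS^-$. I would therefore argue that the pairing is symmetric, i.e.\ that there is a mirror pairing $\tUm \otimes \CV^+ \to \BK$ given by the analogous contour integral. I would then check that $\langle K^+, \CS^-\rangle = 0$ is equivalent to $\langle \CS^+, K^-\rangle = 0$. This should follow from the fact that the pairing restricted to $\oCS^+ \otimes \oCS^-$ is computed by the same integral formula in both directions, namely both equal $\langle \Upsilon^{+,-1}(\cdot), \cdot\rangle$.

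I expect this symmetry step to be the main obstacle. In particular, one must verify that the contour ordering $|z_1| \gg \cdots \gg |z_k|$ and the factor $\prod \zeta_{i_b i_a}(z_b/z_a)^{-1}$ match up under the swap, and that the descent result \eqref{eqn:other words} applies verbatim to the opposite quiver. Once this is in place, applying the minus-sign argument to the opposite quiver yields $\CS^+ \subseteq \oCS^+$, and hence $\CS^+ = \oCS^+$.
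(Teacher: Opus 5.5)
Your argument for the minus sign is correct, and it is the formal deduction the paper intends. The paper does not reprove the statement; it quotes it from \cite[Theorem 2.11]{N Arbitrary}. Once the descent statement \eqref{eqn:other words} is granted, the chain $\CS^- \subseteq \{F \in \CV^- : \langle K^+, F\rangle = 0\} = \oCS^- \subseteq \CS^-$ settles it. All the infinite-dimensional difficulty the paper mentions sits inside the ``$\Rightarrow$'' direction of \eqref{eqn:other words}.

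The plus-sign half, however, is not established as written, and the route you sketch for the step you call the main obstacle would not work. You want to transfer $\langle K^+,\CS^-\rangle=0$ to a statement $\langle \CS^+,K^-\rangle=0$ for a mirror pairing, using that the two pairings agree on $\oCS^+\otimes\oCS^-$. That agreement says nothing about elements of $\CS^\pm$ lying outside $\oCS^\pm$. Those are exactly the elements you are trying to rule out, so the justification is circular at that point.

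No symmetry is needed. By definition $\CV^-=\CV^{+,\mathrm{op}}$ is the same vector space as $\CV^+$, and $\oCS^-=(\oCS^+)^{\mathrm{op}}$. In any case, the subalgebra generated by the $z_{i1}^d$ is the same subset whether one uses the shuffle product or its opposite. Likewise $\CS^\pm$ is assumed to be a pair of \emph{opposite} subalgebras, so $\CS^-$ and $\CS^+$ are the same set of Laurent polynomials. This is also how the paper uses the notation in Sections~\ref{sec:osp}--\ref{sec:osp-toroidal}: $\CS^-$ is declared to be the same set as $\CS^+$ with the opposite product. Hence the set equality $\CS^-=\oCS^-$ you proved is literally $\CS^+=\oCS^+$, and you can drop the quiver reversal, the anti-isomorphism and the mirror pairing entirely.
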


\medskip 
\noindent 
Suppose now that one is given a two-sided ideal
\begin{equation}
\label{eqn:j and k}
  J^+ \subseteq K^+
\end{equation}
i.e.\ a collection of elements of $\tUp$ which map to zero inside $\CV^+$ under the map \eqref{eqn:tilde upsilon}. The following result of \cite{N Arbitrary} (contained in Proposition~3.23 and Subsection~3.24 of \loccitt) provides an explicit criterion for the opposite inclusion.

\begin{proposition}
\label{prop:criterion 2}
Suppose that the pairing \eqref{eqn:pairing} has the property that
\begin{equation}
\label{eqn:criterion 2}
  \langle J^+, F \rangle = 0 \quad \text{for} \quad F \in \CV^- \quad \Rightarrow \quad F \in \CS^-
\end{equation}
Then we have $J^+ = K^+$. 
\end{proposition}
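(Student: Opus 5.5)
Proposition \ref{prop:criterion 2} states: if $\langle J^+, F\rangle = 0$ with $F \in \CV^-$ forces $F \in \CS^-$, then $J^+ = K^+$. Since $J^+ \subseteq K^+$ by \eqref{eqn:j and k}, only the reverse inclusion $K^+ \subseteq J^+$ needs proof.

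The plan is to sandwich $\oCS^-$ using the hypothesis, then use non-degeneracy of the descended pairing in \eqref{eqn:diagram intro}.

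\textbf{Step 1: a chain of inclusions.} Let $J^\perp = \{F \in \CV^- : \langle J^+, F\rangle = 0\}$. Since $J^+ \subseteq K^+$, the descent statement \eqref{eqn:other words} gives $\oCS^- \subseteq J^\perp$. The hypothesis \eqref{eqn:criterion 2} gives $J^\perp \subseteq \CS^-$. The standing assumption after \eqref{eqn:shuffle algebra} gives $\CS^- \supseteq \oCS^-$. So
$$
  \oCS^- \subseteq J^\perp \subseteq \CS^-.
$$

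\textbf{Step 2: show $J^\perp = \oCS^-$.} This closing of the sandwich is the main obstacle. I would argue as follows.

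First, $\CS^-$ is a subalgebra of $\CV^-$ containing all degree $-\bsi^i$ Laurent polynomials.

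Second, $J^+$ is a two-sided ideal, and the pairing \eqref{eqn:pairing} is a bialgebra-type pairing: $\tUp$ acts on $\CV^-$ through the coproduct on $\CV^-$, which is built from the zeta factors. Hence $J^\perp$ is stable under the adjoint operations dual to left and right multiplication by the generators $e_{i,d}$.

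Third, take $F \in J^\perp$ of minimal horizontal degree with $F \notin \oCS^-$. Apply these adjoint operations, which are contraction-like maps extracting leading coefficients as one variable goes to $0$ or $\infty$. The results lie in $J^\perp$ of smaller degree, hence in $\oCS^-$ by minimality.

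Fourth, conclude that $F$ itself lies in $\oCS^-$, using that $\oCS^-$ is the span \eqref{eqn:span}, together with a nondegeneracy argument, giving a contradiction.

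Fallback: if this induction is cumbersome, I would instead quote Proposition~3.23 of \cite{N Arbitrary}. It states that $J^\perp$ equals the annihilator of the ideal generated by $J^+$ inside the image of $\CV^-$, and this directly yields $J^\perp = \oCS^-$ whenever $J^\perp$ is a subalgebra with the correct degree-one part, which the sandwich ensures.

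\textbf{Step 3: conclude $K^+ \subseteq J^+$.} Now $J^\perp = \oCS^- = K^\perp$. Since $\tUp/J^+$ surjects onto $\Up = \tUp/K^+$, and the pairing of $\Up$ with $\oCS^-$ is non-degenerate by \eqref{eqn:diagram intro}, it suffices to check that the pairing $\tUp/J^+ \otimes \CV^- \to \BK$ is non-degenerate in the first argument. This is a graded statement.

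Fix a graded component $\tUp_{\bn,d}$. It is a quotient of the span of words $e_{i_1,d_1}\cdots e_{i_k,d_k}$ of fixed colors and total degree $d$. This span is infinite-dimensional, so a naive finite-dimensional double-annihilator argument fails. Instead I would use that the explicit formula \eqref{eqn:pairing explicit} identifies the pairing with residue-type functionals on $\CV^-$. Under this identification, $\tUp_{\bn,d}$ maps into the restricted dual of $\CV_{-\bn,-d}$ along linear functionals determined by residues of rational functions with controlled poles. The orthogonal of $J^\perp$ inside this space of functionals is $J^+$, by a closed-subspace (weak-$*$) duality argument.

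Hence $K^+ \subseteq (K^\perp)^\perp = (J^\perp)^\perp = J^+$.
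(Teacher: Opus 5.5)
Your Step~2 cannot work as posed. The sandwich $\oCS^- \subseteq J^\perp \subseteq \CS^-$ does not force $J^\perp = \oCS^-$ when $\CS^-$ is only assumed to be a subalgebra containing the degree $-\bsi^i$ elements. Nothing in \eqref{eqn:shuffle algebra} excludes $\CS^- = \CV^-$. Then $J^+ = 0$ satisfies \eqref{eqn:criterion 2}, while the conclusion $J^+ = K^+$ fails whenever $K^+ \neq 0$; moreover $J^\perp = \CV^- \neq \oCS^-$ in every example with a nontrivial wheel condition.

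Your induction breaks exactly at ``conclude that $F$ itself lies in $\oCS^-$''. Take type $A$, $|i|=0$ and $F$ of horizontal degree $-(2\bsi^i+\bsi^{i\pm 1})$. Every contraction of $F$ lands in degree $-(\bsi^i+\bsi^{i\pm1})$ or $-2\bsi^i$, where $\oCS^- = \CV^-$. Yet $F \notin \oCS^-$ unless it vanishes at \eqref{eqn:wheel finite even}. The linear conditions cutting out $\oCS^-$ are precisely those invisible from lower degrees. Your fallback paraphrase of \cite{N Arbitrary} fails for the same reason, since $\CV^-$ is itself a subalgebra with the correct degree-one part.

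The missing input is the standing assumption under which the proposition is meant (see the remark right after it, and Definition~\ref{def:shuffle realization}): $\CS^-$ satisfies the hypothesis of Proposition~\ref{prop:criterion 1}, so $\CS^- = \oCS^-$. With that, Step~2 is one line: $J^\perp \subseteq \CS^- = \oCS^- = K^\perp \subseteq J^\perp$, using \eqref{eqn:other words} and $J^+ \subseteq K^+$.

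The real content is therefore Step~3, and there your argument is circular. From $J^\perp = \oCS^-$ you need $\{x \in \tUp : \langle x, J^\perp\rangle = 0\} = J^+$, i.e.\ that $J^+$ is closed in the weak topology that $\CV^-$ induces on $\tUp$. (Note also that $\tUp/J^+$ pairs with $J^\perp$, not with all of $\CV^-$.) The non-degeneracy in \eqref{eqn:diagram intro} shows this annihilator is exactly $K^+$. So closedness of $J^+$ is literally equivalent to $J^+ = K^+$, and invoking ``a closed-subspace (weak-$*$) duality argument'' assumes what is to be proved.

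The graded pieces of $\tUp$ and $\CV^-$ are infinite-dimensional, and $\CV^-$ is far from the full dual of $\tUp$. So there is no general reason for a given ideal to be weakly closed. This is precisely the infinite-dimensional obstruction the paper flags right after Propositions~\ref{prop:criterion 1} and~\ref{prop:criterion 2}, noting that in finite dimensions both would be immediate. The paper does not reprove the statement; it imports it from \cite[Proposition~3.23 and \S3.24]{N Arbitrary}. Your proposal supplies no replacement for that input.
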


\medskip 
\noindent 
Note that the opposite implication to \eqref{eqn:criterion 2} is a consequence of Proposition \ref{prop:criterion 1} and equations \eqref{eqn:other words} and \eqref{eqn:j and k}. If big shuffle algebras were finite-dimensional vector spaces, Propositions \ref{prop:criterion 1} and \ref{prop:criterion 2} would be immediate consequences of the fact that the complement of the complement of a vector subspace (with respect to a non-degenerate pairing) is the vector subspace itself. Thus, the main challenge in the aforementioned Propositions is the infinite-dimensional nature of shuffle algebras.

\medskip

\begin{definition}
\label{def:shuffle realization}
Assume we have a shuffle algebra \eqref{eqn:shuffle algebra} satisfying the hypothesis of Proposition \ref{prop:criterion 1} and an ideal \eqref{eqn:j and k} satisfying the hypothesis of Proposition \ref{prop:criterion 2}. Let
$$
  U^+ = \tUp / J^+
$$
and $U^- = U^{+,\emph{op}}$. Under these circumstances, we call
$$
  U^+ \stackrel{\text{Prop.\ref{prop:criterion 2}}}{=} \Up \iso \oCS^+ \stackrel{\text{Prop.\ref{prop:criterion 1}}}{=} \CS^+
$$
a \emph{shuffle realization} (more precisely, the right-most algebra is said to provide a shuffle realization of the left-most algebra).
\end{definition}


\medskip 

\subsection{Wheel conditions}
\label{sub:wheel}

The small shuffle algebra $\oCS^+$ is, by its very definition, only as explicit as the linear span \eqref{eqn:span}. On the other hand, shuffle algebras $\CS^+$ as in \eqref{eqn:shuffle algebra} are often defined by very explicit properties; see Theorems~\ref{thm:d intro},~\ref{thm:b intro},~\ref{thm:c intro} for specific examples. With the exception of the scary wheel \eqref{eqn:wheel c 3}, the shuffle algebras in all the aforementioned examples are of the following nature: consider the set of Laurent polynomials $F$ which vanish when a certain subset $z_1,\dots,z_k$ of their variables of colors $i_1,\dots,i_k \in I$ are specialized according to the so-called \emph{length $k$ wheel}
\begin{equation}
\label{eqn:general wheel}
  \bigcirc = \Big\{ z_k = z_{k-1} t_k, \dots, z_2 = z_1 t_2, z_1 = z_k t_1 \Big\}
\end{equation}
for various quiver arrows $i_{a-1} \xrightarrow{t_a} i_a$, whose parameters satisfy $t_1\cdots t_k = 1$ and 
\begin{equation}
\label{eq:assumption}
  t_{a+1}\dots t_b \neq 1 \qquad \forall \, 1\leq a < b\leq k \text{   with   } i_a = i_b
\end{equation}
Above, we make the convention that $z_a$ actually denotes the variable $z_{i_a\bullet_a}$ of $F$, for various $i_a \in I$ and positive integers $\bullet_a$ (the latter of which will not be important to us, but we do require $\bullet_a \neq \bullet_b$ if $i_a = i_b, a \neq b$). Such a wheel may be represented graphically as
\begin{equation}
\label{eqn:wheel}
\begin{tikzcd}
  && {z_{k-1}} &&& {z_{k}} && \\ \\
  \vdots &&&&&&& {z_{1}} \\ \\
  && {z_{3}} &&& {z_{2}}
  \arrow["{t_k}", from=1-3, to=1-6]
  \arrow["{t_1}", from=1-6, to=3-8]
  \arrow[from=3-1, to=1-3]
  \arrow["{t_2}", from=3-8, to=5-6]
  \arrow[from=5-3, to=3-1]
  \arrow["{t_3}", from=5-6, to=5-3]
\end{tikzcd}
\end{equation}

\medskip

\begin{lemma}
\label{lem:easy}
The set of Laurent polynomials which vanish at the length $k$ wheel~\eqref{eqn:general wheel} is closed under the shuffle product.
\end{lemma}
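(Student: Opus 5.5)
We argue directly from the shuffle product \eqref{eqn:shuffle product}. Let $E$ and $E'$ both vanish at the wheel \eqref{eqn:general wheel}, i.e.\ under any specialization of the form \eqref{eqn:general wheel} of any choice of distinct variables of colors $i_1,\dots,i_k$. We must show that $E*E'$ does too. Since $E*E'$ is a sum over shuffles, it suffices to show that each summand
$$
E(\dots)E'(\dots)\prod \zeta_{ij}\left(\frac{z_{ia}}{z_{jb}}\right)
$$
(with variables split between $E$ and $E'$ according to some permutation) vanishes at the wheel. One point needs care: each individual summand is a rational function, because $\zeta_{ii}$ has the denominator $1-x$. We therefore multiply through by the common denominator $\prod_{i}\prod_{a\neq b}(z_{ia}-z_{ib})$ and work with Laurent polynomials. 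The wheel specialization does not set any two same-color variables equal, by the assumption \eqref{eq:assumption} (two wheel variables $z_a,z_b$ with $i_a=i_b$ differ by the factor $t_{a+1}\cdots t_b\neq 1$). Hence this denominator is nonzero at a generic point of the wheel locus, and vanishing of the symmetrized numerator is equivalent to vanishing of the full sum.

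Now fix one summand, i.e.\ a partition of the wheel variables $z_1,\dots,z_k$ into those fed into $E$ and those fed into $E'$. There are three cases.
\begin{itemize}
\item If all of $z_1,\dots,z_k$ go into $E$, then $E$ vanishes at the wheel by hypothesis, and so does the summand.
\item If all of them go into $E'$, the same argument applies to $E'$.
\item Otherwise, going cyclically around the wheel, there is an index $a$ such that $z_{a-1}$ is a variable of $E$ and $z_a$ is a variable of $E'$. The product contains the factor
$$
\zeta_{i_{a-1}i_a}\left(\frac{z_{a-1}}{z_a}\right),
$$
whose numerator contains $1-\frac{z_{a-1}}{z_a}t_\alpha$ for every arrow $\alpha: i_{a-1}\to i_a$. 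In particular it contains the factor for the arrow $t_a$ used in the wheel, which vanishes when $z_a=z_{a-1}t_a$.
\end{itemize}
In the third case, the vanishing must not be cancelled by a denominator. The only denominators are $1-x$ for $i=j$, which become $z_{a-1}-z_a$ when $i_{a-1}=i_a$, and these are nonzero on the wheel by \eqref{eq:assumption}. The $\gamma x^\ell$ normalizations in \eqref{eqn:zeta intro} are harmless units. So every summand vanishes, and hence so does $E*E'$.

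The main obstacle is this bookkeeping of denominators: in particular, checking that the chosen arrow factor genuinely survives, and that no same-color denominator vanishes identically on the wheel locus. Condition \eqref{eq:assumption} handles both. One should also note that "vanishing at the wheel" must hold for every choice of variable labels $\bullet_a$, and this is automatic because $E*E'$ is color-symmetric.
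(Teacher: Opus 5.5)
Your argument is correct and is essentially the paper's proof. You split each term of the symmetrization according to which wheel variables go into $E$ and which into $E'$, and use the hypothesis on $E$ or $E'$ when the split is trivial. Otherwise you find a cyclic position where $z_{a-1}$ lies in $E$ and $z_a$ in $E'$, so that the numerator of $\zeta_{i_{a-1}i_a}(z_{a-1}/z_a)$ supplies the vanishing factor. Your explicit clearing of the same-color denominators, justified by \eqref{eq:assumption}, only makes precise what the paper compresses into ``it suffices to prove the more general statement'' about the numerator $E\,E'\prod(z_{jb}-z_{ia}t)$.
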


\medskip

\begin{proof} 
We must show that if $E$ and $E'$ vanish at the wheel \eqref{eqn:general wheel}, so does $E * E'$. From formula \eqref{eqn:shuffle product}, it suffices to prove the more general statement that
\begin{equation}
\label{eqn:lem shuf prod}
  E(z_{i1},\dots,z_{in_i})_{i \in I} E'(z_{i,n_{i}+1},\dots,z_{i,n_{i}+n'_{i}})_{i \in I} 
  \prod_{i,j \in I} \mathop{\prod_{1 \leq a \leq n_i}}_{n_j < b \leq n_j+n_j'} \prod_t (z_{jb} - z_{ia} t) 
\end{equation}
vanishes when an arbitrary subset 
\begin{equation*}
  \{z_{i_1\bullet_1},\dots,z_{i_k\bullet_k} \} \subseteq \{z_{i1},\dots,z_{i,n_i+n'_i}\}_{i \in I}
\end{equation*}
is specialized according to $\{z_{i_\ell\bullet_\ell} = z_{i_{\ell-1} \bullet_{\ell-1}}t_\ell\}_{\ell \in \{1,\dots,k\}}$. The linear factors which appear in the right-most product in \eqref{eqn:lem shuf prod} depend on the particular zeta functions, and the only thing that will matter to us is that they include the linear factors
$$
  (z_{i_\ell \bullet_\ell} - z_{i_{\ell-1}\bullet_{\ell-1}} t_{\ell}), \quad \text{if  } \bullet_\ell > n_{i_{\ell}} 
  \text{   and   } \bullet_{\ell-1} \leq n_{i_{\ell-1}} 
$$
This implies that the $z_{i_\ell\bullet_\ell} = z_{i_{\ell-1} \bullet_{\ell-1}}t_\ell$ specialization of \eqref{eqn:lem shuf prod} will vanish, because there will always be some $\ell \in \{1,\dots,k\}$ such that $\bullet_\ell > n_{i_{\ell}}$ and $\bullet_{\ell-1} \leq n_{i_{\ell-1}}$ (with the indices $\ell,\ell-1$ taken modulo $k$) unless we have either $\bullet_\ell \leq n_{i_\ell}$ for all $\ell$ or $\bullet_\ell > n_{i_\ell}$ for all $\ell$. However, in the latter two cases, the condition that both $E$ and $E'$ vanish at the wheel \eqref{eqn:general wheel} implies the vanishing of the specialization of \eqref{eqn:lem shuf prod}. 
\end{proof}

\medskip 
\noindent
Recall the following series from \cite[(3.3)]{N Reduced} (that we refer to as a \emph{clunky relation}):
\begin{equation}
\label{eqn:clunky}
  e_{\bigcirc} = \sum_{\ell = 1}^k P_{\ell}(x_1,\dots,x_k) e_{i_\ell}(x_{\ell}) \cdots e_{i_1}(x_1) e_{i_k}(x_k) \cdots e_{i_{\ell+1}}(x_{\ell+1})
\end{equation}
where if we write $\widetilde{\zeta}_{ij}(x) = \zeta_{ij}(x) (1-x)^{\delta_{ij}}$, then we set
\[
  P_{\ell}(x_1,\dots,x_k) = \frac {\frac {x_1 t_2 \cdots t_{\ell}}{x_{\ell}} \prod^{1 \leq a < b \leq \ell \text{ or } \ell < a < b \leq k}_{\text{or } 1 \leq b \leq \ell < a \leq k} \widetilde{\zeta}_{i_ai_b}\left(\frac {x_a}{x_b} \right) \left(-\frac {x_b}{x_a}\right)^{\delta_{b<a} \delta_{i_bi_a}}}{\left(1- \frac {x_k t_{1}}{x_{1}}\right)^{\delta_{\ell \neq k}} \prod^{1 \leq a < \ell}_{\text{ or } \ell < a < k}\left(1- \frac {x_a t_{a+1}}{x_{a+1}}\right)}
\]
The reason for this choice of $P_{\ell}$'s is that they are Laurent polynomials and that we have the following formula for power series expansions of rational functions (see \cite[Proposition 3.5]{N Reduced})
\begin{equation}
\label{eqn:delta long wheel}
\begin{split}
  \sum_{\ell=1}^k \text{ev}_{|x_{\ell}| \gg \cdots \gg |x_1| \gg |x_k| \gg \cdots \gg |x_{\ell+1}|} \left[ \frac {P_{\ell}(x_1,\dots,x_k)}{\prod^{1 \leq a < b \leq \ell \text{ or } \ell < a < b \leq k}_{\text{or } 1 \leq b \leq \ell < a \leq k} \zeta_{i_ai_b}\left(\frac {x_a}{x_b} \right)} \right] \\ = \delta \left(\frac {x_k}{x_{k-1}t_k} \right) \cdots  \delta \left(\frac {x_2}{x_1t_2} \right) \prod_{1 \leq a < b \leq k} \left(1-\frac {x_a}{x_b}\right)^{\delta_{i_ai_b}}
\end{split}
\end{equation}
where $\delta (x) = \sum_{d \in \BZ} x^d$ is the formal delta function. It has the key property that
\begin{equation}
\label{eqn:key}
\delta \left(\frac xy \right) P(x) = \delta \left(\frac xy \right) P(y)
\end{equation}
for any Laurent polynomial $P$. As shown in \loccitt, formula \eqref{eqn:delta long wheel} ensures that
\begin{equation}
\label{eqn:compare 1}
  \Big \langle e_{\bigcirc}, F \Big \rangle = 0 \quad \Leftrightarrow \quad 
  \Big(F \text{ vanishes at the wheel} \bigcirc \text{of \eqref{eqn:general wheel}} \Big)
\end{equation}
for any shuffle element $F \in \CV_{-\bsi^{i_1}-\dots-\bsi^{i_k}}$.


\medskip

\subsection{Equivalent ideals of relations}
\label{sub:ideals}

More generally, suppose we define a shuffle algebra $\CS^\pm \subset \CV^\pm$ as in \eqref{eqn:shuffle algebra} to consist of all color-symmetric Laurent polynomials which vanish at several wheels $\bigcirc_1,\dots,\bigcirc_k$. In this situation, \cite{N Arbitrary, NSS} showed that
\[
  K^+  = \Big(e_{\bigcirc_1},\dots,e_{\bigcirc_k}\Big) 
\]
cf.~Proposition~\ref{prop:criterion 2}. 
For instance, this is the case for the quantum affine algebras associated to finite type Lie algebras $\fg$ that we discussed in Subsection \ref{sub:motivation}: indeed, the wheel \eqref{eqn:classic wheels} is a particular case of the wheel \eqref{eqn:wheel}, so one would expect 
\begin{equation}
\label{eqn:classic clunky}
  e_{\bigcirc \text{ of \eqref{eqn:classic wheels}}} = 0
\end{equation}
to be a relation in the half quantum affine algebra $U^+_q(L\fg)$. However, the aforementioned half quantum affine algebra is known to be determined by the quadratic relations in \eqref{eqn:quad intro} and the well-known Drinfeld-Serre relations (\cite{Dr})
\begin{multline}
\label{eqn:drinfeld-serre}
  \sum_{k=0}^{1-c_{ij}} (-1)^k \binom{1-c_{ij}}{k}_{q^{\frac {d_{ii}}2}} \text{Sym} \Big[ e_i(x_1) \cdots e_i(x_k) e_j(y) e_i(x_{k+1}) \cdots e_i(x_{1-c_{ij}}) \Big] \\ = 0
\end{multline}
with $c_{ij}$ as in \eqref{eq:nonsymm-Cartan} and $\text{Sym}$ denoting symmetrization in the $x$-variables. The left-hand sides of \eqref{eqn:classic clunky} and \eqref{eqn:drinfeld-serre} are manifestly not identical, but they determine the same quantum affine algebra. This is because the ideal
$$
  J^+_{\fg} = \Big(\text{LHS of \eqref{eqn:drinfeld-serre}} \Big)
$$
satisfies the hypothesis of Proposition \ref{prop:criterion 2}, due to a combinatorial identity that was proved in~\cite{DJ}. Therefore, we conclude that $J^+_{\fg}$ matches
$$
  K^+_{\fg} = \Big(\text{LHS of \eqref{eqn:classic clunky}}\Big) 
$$
which reveals a phenomenon that we will encounter repeatedly throughout the present paper: different collections of relations in quantum loop algebras actually generate one and the same ideal, and are thus equivalent to each other.

\medskip 
\noindent 
Even more so, the particular shape of the aforementioned relations depends quite strongly on the zeta functions \eqref{eqn:zeta intro}, which in turn depend on the quiver \eqref{eqn:parameters}. In particular, one often encounters similarly looking wheels associated to different decorated Dynkin diagrams, such as

\begin{itemize}[leftmargin=0.7cm]

\item[-] \eqref{eqn:wheel finite even} in type $A$ 

\item[-] \eqref{eqn:wheel finite even} for $(i,i-1)=(N,N-1)$ in type $B$ Case 2 

\item[-] \eqref{eqn:wheel b 2} in type $B$ Case 2 

\item[-] \eqref{eqn:wheel d 1} in type $D$ Case 2 

\end{itemize}
that are dual to substantially different formal series of elements of $K^+$: the LHS of \eqref{eqn:rel quantum affine 8}, \eqref{eqn:rel b 2}, \eqref{eqn:rel b 1},  \eqref{eqn:rel cd 1}, respectively. Another example is provided by the wheels
\begin{itemize}[leftmargin=0.7cm]

\item[-] \eqref{eqn:wheel affine even} in type $\wA$ with $|i|=0$

\item[-] its toroidal $\fgl_1$-analogue of~\eqref{eq:wheel-pic-gl1} 

\item[-] its toroidal $\fgl_2$-analogue of~\eqref{eq:wheel-pic-gl2} 

\end{itemize}
that are dual to significantly different formal series of elements of $K^+$: the LHS of \eqref{eqn:rel quantum affine 8}, \eqref{eq:Miki}, \eqref{eq:3Serre-sl2}, respectively. Sometimes, a single formal series of elements in $K^+$ is dual to several linear conditions on the shuffle algebra elements, such as 
\begin{itemize}[leftmargin=0.7cm]

\item[-] \eqref{eqn:rel cd 1} is dual to both wheels in \eqref{eqn:wheel d 1}

\item[-] \eqref{eq:Miki} is dual to both wheels in~\eqref{eq:wheel-pic-gl1}

\item[-] \eqref{eq:3Serre-sl2} is dual to both wheels in~\eqref{eq:wheel-pic-gl2}

\end{itemize}
according to \eqref{eq:D-fork-odd-delta}-\eqref{eq:D-fork-odd-conclusion}, \eqref{eq:delta-gl1}-\eqref{eq:wheel-gl1}, \eqref{eq:delta-gl2}-\eqref{eq:wheel-gl2}, respectively.


\medskip

\subsection{Doubles}
\label{sub:double}

In the generality of Subsection \ref{sub:quiver}, assume that we have
\begin{equation*}
  \lim_{x \rightarrow \infty} \frac {\zeta_{ij}(x)}{\zeta_{ji}(x^{-1})} < \infty, \quad \forall\, i,j \in I
\end{equation*}
Then we define the \emph{quantum loop algebra} as the Drinfeld double
\begin{equation}
\label{eqn:double intro}
  \U = \Up \otimes \BK[\ph_{i,0}^\pm, \ph_{i,1}^{\pm}, \ph_{i,2}^{\pm}, \dots ]_{i \in I} \otimes \Um 
\end{equation}
In more detail, the algebra structure on \eqref{eqn:double intro} is governed by the following commutation relations for all $i,j \in I$  (let $\ph^\pm_i(x)=\sum_{d\geq 0} \ph^\pm_{i,d} x^{\mp d}$):
\begin{align}
  & \ph^\pm_i(x) e_j(y)  = e_j(y) \ph^\pm_i(x) 
    \frac {\zeta_{ij} \left(\frac xy \right)}{\zeta_{ji} \left(\frac yx \right)}  \label{eqn:e phi} \\
  & f_j(y) \ph^\pm_i(x) = \ph^\pm_i(x) f_j(y) 
    \frac {\zeta_{ij} \left(\frac xy \right)}{\zeta_{ji} \left(\frac yx \right)} \label{eqn:f phi}
\end{align}
where the ratios of zeta functions in the right-hand sides are expanded as power series in non-positive powers of $x^{\pm 1}$. Finally, we impose the relation 
\begin{equation}
\label{eqn:plus minus comm}
  [e_{i,k}, f_{j,\ell} ] = 
  \delta_{ij} \cdot \text{constant}_{i}\Big( \ph^+_{i,k+\ell} \delta_{k +\ell\geq 0} - \ph^-_{i,-k-\ell} \delta_{k+\ell\leq 0} \Big)
\end{equation}
for all $i,j \in I$ and $k,\ell \in \BZ$ (here $\text{constant}_{i}\in \BK^\times$ are arbitrary).

\medskip

\begin{remark}
\label{rem:super}
In the context of quantum loop superalgebras that will be discussed in Sections~\ref{sec:gl},~\ref{sec:osp},~\ref{sec:osp-toroidal}, every $i \in I$ carries a parity $|i| \in \BZ/2\BZ$. In this case, one defines the Drinfeld super-double \eqref{eqn:double intro} by multiplying the right-hand sides of \eqref{eqn:e phi} and \eqref{eqn:f phi} by $(-1)^{|i||j|}$, and replacing the commutator $[e_{i,k}, f_{j,\ell} ]$ in \eqref{eqn:plus minus comm} by the super-commutator
$$
  e_{i,k} f_{j,\ell} - (-1)^{|i||j|} f_{j,\ell} e_{i,k} 
$$
\end{remark}

\medskip

\begin{remark}
\label{rem:shifted} 
One can also define \emph{shifted quantum loop algebras} by the following natural analogue of the construction of \cite{FiT}. Pick any $\br = (r_i)_{i \in I} \in \zz$ and define
\begin{equation*}
  \U^{\br} = \Up \otimes \BK[\ph_{i,0}^\pm, \ph_{i,1}^{\pm}, \ph_{i,2}^{\pm}, \dots ]_{i \in I} \otimes \Um 
\end{equation*}
where the algebra structure is governed by relations \eqref{eqn:e phi}-\eqref{eqn:f phi} and the following alternative version of relation \eqref{eqn:plus minus comm}:
\begin{equation*}
  [e_{i,k} , f_{j,\ell} ]  = 
  \delta_{ij} \cdot \emph{constant}_{i}\Big( \ph^+_{i,k+\ell} \delta_{k +\ell\geq 0} - \ph^-_{i,-k-\ell+r_i} \delta_{k+\ell\leq r_i} \Big)
\end{equation*}
The generalization to superalgebras is straightforward, following Remark~\ref{rem:super}.
\end{remark}

\medskip 
\noindent 
Assume that we have a shuffle realization as in Definition \ref{def:shuffle realization}. Since $\Upm \simeq U^\pm$ and $\oCS^\pm \simeq \CS^\pm$, the diagram \eqref{eqn:diagram intro} implies that the induced pairing
\begin{equation}
\label{eqn:non degenerate}
  U^+ \otimes U^- \xrightarrow{\langle\cdot,\cdot\rangle'} \BK 
\end{equation}
is non-degenerate. This is the general framework behind Corollary \ref{cor:intro-nondegeneracy}.


\bigskip
   	
\section{Quantum affine/toroidal superalgebras of type $A$}
\label{sec:gl}

\medskip

In this Section, we develop the shuffle realization for type $A$ and $\wA$ Lie superalgebras, namely Theorem \ref{thm:shuffle sl}. While the result is likely known to experts, the techniques developed herein (particularly the classification of shrubs, following \cite{N Reduced}) will be very important when we will discuss types $B,C,D$ in the next Sections. The notion of \emph{asps} introduced in Subsection \ref{sub:specialization patterns} is of more general interest.


\medskip 
	
\subsection{Lie superalgebras}
\label{sub:super lie}

Fix any $N\geq 2$ and a type $A$ decorated Dynkin diagram $\Gamma$ corresponding to a \emph{parity sequence} (see Subsection~\ref{sub:super})
\begin{equation}
\label{eqn:signs}
  \bs=(s_1,\dots,s_{N}) \in \{+1,-1\}^{N}
\end{equation}
We shall use $m$ and $n$ to denote the number of $+1$'s and $-1$'s in $\bs$. Let $\sI=\{1,\ldots,N-1\}$ and consider the lattice $P_{A_\Gamma}=\bigoplus_{k=1}^{N} \BZ \varepsilon_k$. We endow $P_{A_\Gamma}$ with the symmetric bilinear form $(\varepsilon_k,\varepsilon_l)=\delta_{kl}s_k$, and set $\alpha_k=\varepsilon_k-\varepsilon_{k+1}$ for all $k\in \sI$. Then 
\begin{equation}
\label{eqn:finite sl Cartan}
  \big( (s_i+s_{i+1})\delta_{ij} - s_i \delta_{i,j+1} - s_j \delta_{i+1,j} \big)_{i,j \in \sI} = 
  \big( (\alpha_i,\alpha_j) \big)_{i,j \in \sI}
\end{equation}
is a Cartan matrix of the type $A$ Lie superalgebra 
$$
  \fg_{A_\Gamma} \simeq \fsl_{m|n}
$$
Let $\wI=\{0,\ldots,N-1\}$, often identified with $\BZ/N\BZ$. We endow $P_{\wA_\Gamma}=P_{A_\Gamma} \oplus \BZ\delta$ with the same symmetric bilinear form as above, extended by $(\delta,\delta)=(\delta,\varepsilon_k)=0$. Recall that $\alpha_k = \varepsilon_k - \varepsilon_{k+1}$ for $k\in \sI$ and further set $\alpha_0=\delta+\varepsilon_N-\varepsilon_1$. Then formula~\eqref{eqn:finite sl Cartan}, with indices $i,j$ considered as residues modulo $N$, is an affine Cartan matrix of the affine type $\wA$ Lie superalgebra 
\begin{equation*}
  \fg_{\wA_\Gamma} \simeq \hsl_{m|n}
\end{equation*}
We henceforth identify the root lattice $Q_{A_\Gamma} = \bigoplus_{i\in \sI} \BZ\alpha_i\subset P_{A_\Gamma}$ with $\szz$ via the assignment $\alpha_i \mapsto \bsi^i$. In particular, this endows $\szz$ with the bilinear form satisfying $(\bsi^i,\bsi^j)=(\alpha_i,\alpha_j)$, see~\eqref{eqn:finite sl Cartan}. We also have a group homomorphism 
\begin{equation}
\label{eq:i-parity}
  \szz\to \BZ/2\BZ \quad \text{given by} \quad \bsi^i \mapsto |\bsi^i|=|\alpha_i|=|i|=\tfrac{1-s_is_{i+1}}{2} \in \BZ/2\BZ
\end{equation}
Here, $|x|$ shall denote the \emph{parity} of any element. This naturally generalizes to $\wzz$. The contents of this section are unchanged (up to replacing $q\leadsto q^{-1}$) by reversing all signs of $\bs$; with this in mind the case $\bs=(1,\dots,1)$ is called the \emph{non-super} case.


\medskip 
	
\subsection{Quantum affine superalgebras}
\label{sub:quantum affine sl}
  
In what follows, we will work over $\BC$ and fix $q \in \BC^\times$ not a root of unity (alternatively, all the contents of the present paper would hold with the ground field $\BC$ replaced by $\BQ(q)$). We recall the definition of the quantum affine superalgebra $\uslG$ in the loop (Drinfeld new) realization, cf.~\cite[\S 8.5]{Y}. The idea is to start from the Chevalley generators $E_i,F_i,H_i$ of $\fg_{A_\Gamma}$, and to replace each of them by a  countable family of generators, all the while $q$-deforming the defining relations. The aforementioned relations involve the zeta functions
\begin{equation} 
\label{eqn:zeta finite sl}
\zeta^{A_{\Gamma}}_{ij}(x) = 
  \begin{cases} 
    \frac {1-xq^{2s_i}}{1-x} &\text{if } i = j \text{ and } |i|=0 \\ 
    x^{\frac 12}\frac {1}{1-x} &\text{if } i = j \text{ and } |i|=1 \\ 
    -(1-xq^{-s_i}) &\text{if } i = j+1 \\ 
    (-1)^{|i||j|}x^{-1}(1-xq^{-s_j}) &\text{if } j = i+1 \\ 
    (-1)^{|i||j|\delta_{i<j}} &\text{otherwise} 
  \end{cases} 
\end{equation}
for all $i,j \in \sI = \{1,\dots,N-1\}$. Following~\eqref{eqn:zeta intro}, we note that the linear factors in the numerators of the zeta functions are prescribed by the corresponding parameters in $\Gamma$. As in Subsection \ref{sub:quiver}, define the (half) pre-quantum affine superalgebra as
$$
  \tuslGp = \BC \langle e_{i,d} \rangle_{i \in \sI, d\in \BZ} \Big/ \left(e_i(x)e_j(y) \zeta^{A_{\Gamma}}_{ji}\left(\frac yx\right) = e_j(y) e_i(x) \zeta^{A_{\Gamma}}_{ij}\left(\frac xy\right) \right)_{i,j \in \sI}
$$
The horizontal grading $\hdeg e_{i,d}=\bsi^i$ induces a $\BZ/2\BZ$-grading via~\eqref{eq:i-parity}; we use $|x|$ to denote the parity of $x$. Throughout the paper, we use the standard notation 
\begin{equation}
\begin{split}
\label{eq:super-q-commutators}
  [x,y] &= xy-(-1)^{|x||y|}yx  \\ 
  [x,y]_a &= xy-(-1)^{|x||y|}ayx  
\end{split}
\end{equation}
for any $\szz$-graded elements $x,y$. We consider the ideal 
$$
  J^+_{A_{\Gamma}} \subset \tuslGp
$$
generated by the left-hand sides of the following super-analogues of the Drinfeld-Serre relations (see~\cite{BFK,BM}) \footnote{Note that~\cite[Theorem 8.4.1]{Y} contained the $q\leadsto q^{-1}$ version of the relation~\eqref{eqn:rel quantum affine 9}. It is a general phenomenon of many higher order relations that we shall encounter in this paper that these different-looking relations are equivalent (modulo the quadratic relations in \eqref{eqn:quad intro}), cf. Remark~\ref{rem:equivalent-C-six-relations}.}
\begin{align}
& \Sym_{z_1,z_2}\, [ e_i(z_1), [ e_i(z_2),e_{i\pm 1}(w) ]_{q^{s_i}} ]_{q^{-s_{i}}} = 0  
    \qquad \qquad \qquad \ \  \mathrm{if} \quad |i| = 0 
    \label{eqn:rel quantum affine 8} \\
& \Sym_{z_1,z_2}\, [ e_i(z_1), [ e_{i+1}(y), [ e_i(z_2) , e_{i-1}(w) ]_{q^{s_i}} ]_{q^{-s_{i}}} ]_{1} = 0  
  \qquad \mathrm{if} \quad |i|=1 
    \label{eqn:rel quantum affine 9}
\end{align}
The (half) quantum affine superalgebra is defined as 
$$
  \uslGp = \tuslGp / J^+_{A_{\Gamma}}
$$


\medskip 
	
\subsection{Quantum toroidal superalgebras}
\label{sub:quantum toroidal sl}
	
By analogy with Subsection \ref{sub:quantum affine sl}, we will now define the quantum toroidal superalgebra of type $A$. One of the special features of this algebra is that it depends on two parameters, to be denoted by $q,d$ ($d$ is a non-zero complex number that does not satisfy any non-trivial relations $q^ad^b=1$). Following~\cite[\S 4.1]{BM}, we assume that $N \geq 3$ and start from the zeta functions
\begin{equation} 
\label{eqn:zeta affine sl}
\zeta^{\wA_{\Gamma}}_{ij}(x) = 
  \begin{cases} 
    \frac {1-xq^{2s_i}}{1-x} &\text{if } i = j \text{ and } |i|=0  \\ 
    x^{\frac 12}\frac {1}{1-x} &\text{if } i = j \text{ and } |i|=1 \\ 
    -(1-xq^{-s_i}d^{s_i}) &\text{if } i = j+1 \\ 
    (-1)^{|i||j|}d^{s_{j}}x^{-1}(1-xq^{-s_j}d^{-s_j}) &\text{if } j = i+1 \\ 
    (-1)^{|i||j|\delta_{i<j}} &\text{otherwise} 
  \end{cases} 
\end{equation} 
where the equalities of indices above are viewed in $\wI = \BZ/N\BZ$ (however, note that we write $\delta_{i<j}$ to denote $1$ if $0\leq i<j <N$ and $0$ otherwise). In the language of Subsection~\ref{sub:quiver}, the corresponding quiver is
\begin{equation}
\label{eqn:quiver affine type A}
\begin{tikzcd}[row sep=3em,column sep=3em]
  && {N-1} &&& {0} && \\ \\
  \vdots &&&&&&& {1} \\ \\
  && {3} &&& {2}
  \arrow["{q^{-s_N}d^{-s_N}}", bend left, from=1-3, to=1-6]
  \arrow["{q^{-s_N}d^{s_N}}", bend left, from=1-6, to=1-3]
  \arrow["{q^{-s_1}d^{-s_1}}", bend left, from=1-6, to=3-8]
  \arrow["{q^{-s_1}d^{s_1}}", bend left, from=3-8, to=1-6]
  \arrow[bend left, from=3-1, to=1-3]
  \arrow[bend left, from=1-3, to=3-1]
  \arrow["{q^{-s_2}d^{-s_2}}", bend left, from=3-8, to=5-6]
  \arrow["{q^{-s_2}d^{s_2}}", bend left, from=5-6, to=3-8]
  \arrow[bend left, from=5-3, to=3-1]
  \arrow[bend left, from=3-1, to=5-3]
  \arrow["{q^{-s_3}d^{-s_3}}", bend left, from=5-6, to=5-3]
  \arrow["{q^{-s_3}d^{s_3}}", bend left, from=5-3, to=5-6]
  \arrow[loop above, distance=4em, dashed, from=1-3, to=1-3, looseness=4]{}{q^{2s_{N-1}}}
  \arrow[loop above, distance=4em, dashed, from=1-6, to=1-6, looseness=4]{}{q^{2s_{N}}}
  \arrow[loop right, distance=4em, dashed, from=3-8, to=3-8, looseness=4]{}{q^{2s_{1}}}
  \arrow[loop below, distance=4em, dashed, from=5-6, to=5-6, looseness=4]{}{q^{2s_{2}}}
  \arrow[loop below, distance=4em, dashed, from=5-3, to=5-3, looseness=4]{}{q^{2s_{3}}}
\end{tikzcd}
\end{equation}

\medskip

\begin{remark}
\label{rem:d-redundancy}
We note that \eqref{eqn:zeta finite sl} is simply the restriction of \eqref{eqn:zeta affine sl} to $i,j \neq 0$, followed by setting $d=1$. Though one can also introduce analogous powers of $d$ in the formulas \eqref{eqn:zeta finite sl}, this would be made redundant through the following renormalization: $e_i(z) \leadsto e_i(z \lambda_i), f_i(z) \leadsto f_i(z \lambda_i), \ph^\pm_i(z) \leadsto \ph^\pm_i(z \lambda_i)$ with $\lambda_i=d^{-s_1-\ldots-s_{i-1}}$.  
\end{remark}

\medskip
\noindent
For any $N \geq 4$ and any decorated Dynkin diagram of type $\wA_\Gamma$ (as well as $N=3$ and $\bs=(1,1,1)$) \footnote{The cases $N \in \{1,2\}$ as well as the super-cases for $N=3$ require separate treatment, which we present in Appendix~\ref{sec:small-N}.}, define the (half) pre-quantum toroidal superalgebra as
\begin{equation*}
  \tUslGp = \BC \langle e_{i,d} \rangle_{i \in \wI, d\in \BZ} \Big/ \left(e_i(x)e_j(y) \zeta^{\wA_{\Gamma}}_{ji}\left(\frac yx\right) = e_j(y) e_i(x) \zeta^{\wA_{\Gamma}}_{ij}\left(\frac xy\right) \right)_{i,j \in \wI}
\end{equation*}
We consider the ideal 
$$
  J^+_{\wA_{\Gamma}} \subset \tUslGp
$$
generated by the left-hand sides of \eqref{eqn:rel quantum affine 8}, \eqref{eqn:rel quantum affine 9}, and the special elements (see the general discussion in~\eqref{eqn:clunky})
\begin{equation}
\label{eqn:rel quantum toroidal special}
  e_{\bigcirc \text{ of \eqref{eqn:wheel special 1}}},\, e_{\bigcirc \text{ of \eqref{eqn:wheel special 2}}} \in \tUslGp 
\end{equation}
only when $m=n$. The (half) quantum toroidal superalgebra is defined as 
$$
  \UslGp = \tUslGp / J^+_{\wA_{\Gamma}}
$$


\medskip 
	
\subsection{Shuffle algebras}
\label{sub:big sl}

Because the zeta functions \eqref{eqn:zeta finite sl} involve square roots, we are compelled to redefine the big shuffle algebras of Subsection \ref{sub:quiver} as
\begin{equation*}
  \CV^+_{A_{\Gamma}} = \bigoplus_{\bn = (n_i)_{i \in \sI} \in \snn} \BC[z_{i1}^{\pm 1}, \dots, z_{in_i}^{\pm 1}]^{\sym}_{i \in \sI}  \cdot \twist
\end{equation*}
where
\begin{equation}
\label{eq:root-twist}
  \twist=\prod_{i\in \sI}^{|i|=1} \prod_{a=1}^{n_i} z_{ia}^{\frac{n_i-1}{2}}
\end{equation}
All the general theory of shuffle algebras from Section~\ref{sec:arbitrary} goes through in the present context as well. Similarly, we redefine the big shuffle algebra associated to the zeta functions \eqref{eqn:zeta affine sl} as
\begin{equation*}
  \wCV^+_{\wA_{\Gamma}} = \bigoplus_{\bn = (n_i)_{i \in \wI} \in \wnn} \BC[z_{i1}^{\pm 1}, \dots, z_{in_i}^{\pm 1}]^{\sym}_{i \in \wI}   \cdot \twist 
\end{equation*}
where we recall that we identify $\wI = \BZ/N\BZ$, and $\twist=\prod_{i\in \wI}^{|i|=1} \prod_{a=1}^{n_i} z_{ia}^{\frac{n_i-1}{2}}$.

\medskip

\begin{definition}
\label{def:wheel sl}
Consider the shuffle algebras
\begin{align*} 
  & \CV^+_{A_{\Gamma}} \supset \CS^+_{A_{\Gamma}} = \Big\{E(z_{ia})_{i \in \sI,a \geq 1} \text{ which vanishes at the wheels \eqref{eqn:wheel finite even},
    \eqref{eqn:wheel finite odd}} \Big\} \\
  & \wCV^+_{\wA_{\Gamma}} \supset \wCS^+_{\wA_{\Gamma}} = \Big\{E(z_{ia})_{i \in \wI,a \geq 1} \text{ which vanishes at the wheels \eqref{eqn:wheel affine even}, \eqref{eqn:wheel affine odd},
    \eqref{eqn:wheel special 1},  \eqref{eqn:wheel special 2}} \Big\} 
\end{align*} 
(recall that the wheels \eqref{eqn:wheel special 1}-\eqref{eqn:wheel special 2} only appear for $m=n$) and their opposites $\CS^-_{A_{\Gamma}}, \wCS^-_{\wA_{\Gamma}}$.
\end{definition}

\medskip 
\noindent 
The fact that $\CS^+_{A_{\Gamma}}$ and $\wCS^+_{\wA_{\Gamma}}$ are subalgebras (i.e.\ are preserved by shuffle product) is a particular case of Lemma \ref{lem:easy}. The following is the main result of this Section.

\medskip

\begin{theorem}
\label{thm:shuffle sl}
$\CS^+_{A_{\Gamma}}$ provides a shuffle realization of the superalgebra $\uslGp$, in the sense of Definition \ref{def:shuffle realization}. Similarly, if $m\neq n$, $\CS^+_{\wA_{\Gamma}}$ provides a shuffle realization of $\UslGp$ (we conjecture that the latter result holds even when $m=n$).
\end{theorem}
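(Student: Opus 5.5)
Following Definition~\ref{def:shuffle realization}, we verify two hypotheses, with $J^+ = J^+_{A_\Gamma}$ (respectively $J^+_{\wA_\Gamma}$) and $\CS^\pm$ as in Definition~\ref{def:wheel sl}. By Lemma~\ref{lem:easy}, $\CS^\pm$ are subalgebras containing all horizontal degree $\pm\bsi^i$ elements, since wheels involve at least two variables. It then remains to check:
\begin{enumerate}[leftmargin=0.7cm]
\item[(i)] $\langle K^+, \CS^-\rangle = 0$, the hypothesis of Proposition~\ref{prop:criterion 1};
\item[(ii)] $\langle J^+, F\rangle = 0 \Rightarrow F \in \CS^-$, the hypothesis of Proposition~\ref{prop:criterion 2}.
\end{enumerate}
The first step is the inclusion $J^+ \subseteq K^+$. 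We check directly that the left-hand sides of \eqref{eqn:rel quantum affine 8}--\eqref{eqn:rel quantum affine 9} map to zero under $\tUpsilon^+$: after symmetrization, their images are shuffle products of degree-one elements whose zeta factors cancel. In the $m=n$ toroidal case, the elements \eqref{eqn:rel quantum toroidal special} lie in $K^+$ by the general identity \eqref{eqn:delta long wheel}.

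For (ii), we compute the pairing of each relation with an arbitrary $F$ via \eqref{eqn:pairing explicit}. The key identity is a ``delta-function'' version of \eqref{eqn:delta long wheel}. Expanding the nested super-commutators in \eqref{eqn:rel quantum affine 8} and summing the resulting rational functions over the various expansion regions, we expect to obtain, for the relevant zeta functions \eqref{eqn:zeta finite sl} (respectively \eqref{eqn:zeta affine sl}), a Laurent polynomial multiple of $\delta$-functions supported exactly on the length~$3$ wheels \eqref{eqn:wheel finite even} (respectively \eqref{eqn:wheel affine even}). Likewise, \eqref{eqn:rel quantum affine 9} should give $\delta$-functions supported on the length~$4$ wheel \eqref{eqn:wheel finite odd} (respectively \eqref{eqn:wheel affine odd}). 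Via \eqref{eqn:key}, pairing $J^+$ with all of $F$ times arbitrary monomials then forces $F$ to vanish on these wheels. In the toroidal case $m=n$, \eqref{eqn:compare 1} handles \eqref{eqn:wheel special 1}--\eqref{eqn:wheel special 2} directly. Since the pairing with the ideal involves products $x\cdot r\cdot y$, we also need vanishing on wheels embedded in higher degrees. This follows from the coproduct compatibility of the pairing: pairing $x r y$ with $F$ factors through specializing the extra variables, exactly as in \cite{N Arbitrary}.

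For (i), which is the main obstacle, we would follow \cite{N Reduced}. Any $F \in \CS^-$ pairs to zero with $K^+$ if we can show that $\oCS^-$ and $\CS^-$ have the same ``associated graded'' dimensions under a filtration by specialization patterns. Concretely, we filter $\CS^-_{\bn}$ by the order of vanishing along all degenerations $z_{ia} = z_{jb}\cdot(\text{parameter})$ organized by \emph{shrubs} in the universal cover $\tQ$ of the quiver. We then show, in two steps:
\begin{enumerate}[leftmargin=0.7cm]
\item[(a)] the wheel conditions force the leading coefficient along any non-shrub pattern to vanish, so each graded piece of $\CS^-$ injects into a space indexed by shrubs;
\item[(b)] the pairing with explicit products of $e$'s, one per shrub, is triangular and nondegenerate, so $\CS^-$ is no bigger than what the pairing with $\tUp/K^+$ detects. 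This gives $\CS^- \subseteq \oCS^-$, equivalently (i).
\end{enumerate}
The hard part is the classification of shrubs in super types $A$ and $\wA$. One must show that shrubs containing a length~$3$ even cycle, a length~$4$ odd cycle, or (when $m=n$) a full horizontal cycle around the torus are exactly the ones excluded by the wheels. This is also where $m\neq n$ enters: otherwise extra closed loops of total parameter $1$ appear that the wheels \eqref{eqn:wheel affine even}--\eqref{eqn:wheel affine odd} do not control. We would first treat the finite type $A$ case, where $\tQ$ is a strip, and then reduce the affine case to it by cutting along a vertex $0$, invoking Remark~\ref{rem:d-redundancy} to match parameters.
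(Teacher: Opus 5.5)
There is a genuine gap in step (i), which is the real content of the theorem: it is Lemma~\ref{lem:pairing sl}, and both halves of the paper's proof depend on it. Your filtration argument does not go through as stated, for two reasons.

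First, the graded pieces $\CV_{-\bn,d}$ are infinite-dimensional as soon as $|\bn|\geq 2$. So are the ``associated graded'' pieces you get by specializing along shrub patterns, since these are spaces of Laurent polynomials in the root variables. There is therefore no dimension count to compare.

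Second, (b) does not imply (i). A triangular, nondegenerate pairing between $\CS^-$ and a family $\{e_S\}\subset\tUp$ only shows that $\CS^-$ embeds into the dual of $\mathrm{span}\{e_S\}$. It says nothing about whether $\langle K^+,F\rangle=0$. The sentence ``$\CS^-$ is no bigger than what the pairing with $\tUp/K^+$ detects'' simply restates what must be proved. A filtration argument of this kind would at least need $\oCS^-$ itself to realize every admissible leading term. That is a PBW-type surjectivity statement you never address, and it is hard for arbitrary parity sequences.

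The missing idea is the contour-deformation residue formula \eqref{eqn:residue formula} from \cite{N Reduced}:
\begin{itemize}
\item In $\langle e_{i_1,d_1}\cdots e_{i_k,d_k},F\rangle$, for $\ell=k,\dots,1$, move the contour of $z_\ell$ from the region $|z_1|\gg\cdots\gg|z_k|$ to the circle $|z_\ell|=\rho_{i_\ell}$. Along the way you pick up residues at the poles $z_\ell=t_\alpha z_b$.
\item The wheel conditions on $F$ kill such a pole unless the new vertex is addable to the current shrubbery. In that case the pole is simple (\cite[Propositions 3.21, 3.24]{N Reduced}). This is what shrubs are for: bookkeeping of which residues survive, not an index set for a basis.
\item The hypothesis $m\neq n$ ensures that distinct same-color vertices of a shrub have distinct coordinates.
\item The outcome is a sum over acceptable shrubberies of integrals over color-symmetric contours. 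This is manifestly a linear functional of $\Sym[z_1^{d_1}\cdots z_k^{d_k}/\prod_{a<b}\zeta_{i_bi_a}(z_b/z_a)]$, hence of $\tUpsilon^+(e_{i_1,d_1}\cdots e_{i_k,d_k})$. Step (i) follows with no dimension counting.
\end{itemize}

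Your plan to reduce the affine case to finite type by ``cutting along vertex $0$'' also fails. The finite case is the special case of shrubs confined to a strip, whereas affine shrubs in the periodic quiver $\tQ$ wrap around. Remark~\ref{rem:d-redundancy} only says that $d$ can be absorbed in finite type.

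Your step (ii) is sound. It is the direct $\delta$-function route of \cite{NSS}, carried out in Appendix~\ref{sec:delta-equalities} (formulas \eqref{eq:A-Enriguez} and \eqref{eq:a-deg4-equality}). The main text instead combines $X\in K^+$ with Lemma~\ref{lem:pairing sl}, the one-dimensionality of $\CV_{-\bn,-d}/\CS_{-\bn,-d}$, and a single nonzero test pairing. That avoids the expansions, which matters later for the type $C$ scary wheel.
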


\medskip 
\noindent 
As a consequence of Theorem~\ref{thm:shuffle sl}, the discussion of Subsection~\ref{sub:double} applies to type $A$ quantum affine and toroidal superalgebras. In other words, if we define 
$$
  \uslGm = \uslGp^{\text{op}} \qquad \text{and} \qquad \UslGm = \UslGp^{\text{op}}
$$
then the particular case of \eqref{eqn:non degenerate} yields pairings
\begin{align*} 
  \uslGp \otimes \uslGm &\to \BC  \\
  \UslGp \otimes \UslGm &\to \BC 
\end{align*} 
non-degenerate in both arguments. This establishes the type $A$ and $\wA$ versions of Corollary~\ref{cor:intro-nondegeneracy}. Moreover, we have the following Drinfeld doubles
\begin{align*} 
  \uslG &= \uslGp \otimes \BC[\ph_{i,0}^{\pm },\ph_{i,1}^{\pm },\ph_{i,2}^{\pm },\dots]_{i\in \sI} \otimes \uslGm \\
  \UslG &= \UslGp \otimes \BC[\ph_{i,0}^{\pm },\ph_{i,1}^{\pm },\ph_{i,2}^{\pm },\dots]_{i \in \wI} \otimes \UslGm 
\end{align*} 
The left-hand sides of the equations above are called the type $A$ \emph{quantum affine} and \emph{quantum toroidal} superalgebras, although we note that this terminology is typically used for the further quotients of these algebras by the relations $\ph_{i,0}^+\ph_{i,0}^-=1$. In other words, the superalgebras $\uslG$ and $\UslG$ have

\medskip 

\begin{itemize}[leftmargin=0.7cm]
    
\item 
generators $e_{i,d}$, $f_{i,d}$, $\ph^\pm_{i,d'}$ with $d\in \BZ, d'\in \BN$ and $i \in \sI$ (respectively $i\in \wI$) with the parities $|e_{i,d}|=|i|$, $|f_{i,d}|=|i|$, $|\ph^\pm_{i,d'}|=0$;

\medskip 

\item 
relations in the corresponding pre-quantum affine superalgebras
\begin{equation}
\label{eqn:rel quantum affine 1}
  \left[\ph^+_{i}(x), \ph^+_{j}(y)\right] = \left[\ph^+_{i}(x), \ph^-_{j}(y)\right] = 
  \left[\ph^-_{i}(x), \ph^-_{j}(y)\right] = 0, \quad   
  \ph^\pm_{i,0} \ph^\mp_{i,0} = 1
\end{equation} 
\begin{equation}
\label{eqn:rel quantum affine 2}
  e_i(x) e_j(y) \zeta^{A_{\Gamma}}_{ji} \left(\frac yx\right) = e_j(y) e_i(x) \zeta^{A_{\Gamma}}_{ij} \left( \frac xy \right)
\end{equation}
\begin{equation}
\label{eqn:rel quantum affine 3}
  f_j(y) f_i(x) \zeta^{A_{\Gamma}}_{ji} \left(\frac yx\right) = f_i(x) f_j(y) \zeta^{A_{\Gamma}}_{ij} \left( \frac xy \right)
\end{equation}
\begin{equation} 
\label{eqn:rel quantum affine 4}
  \ph^\pm_i(x) e_j(y) = e_j(y) \ph^\pm_i(x) \frac {\zeta^{A_{\Gamma}}_{ij} 
  \Big(\frac xy \Big)}{\zeta^{A_{\Gamma}}_{ji} \Big(\frac yx \Big)} (-1)^{|i||j|}
\end{equation} 
\begin{equation} 
\label{eqn:rel quantum affine 5}
  f_j(y)\ph^\pm_i(x)  = \ph^\pm_i(x)  f_j(y) \frac {\zeta^{A_{\Gamma}}_{ij} 
  \Big(\frac xy \Big)}{\zeta^{A_{\Gamma}}_{ji} \Big(\frac yx \Big)}  (-1)^{|i||j|}
\end{equation}
\begin{equation}
\label{eqn:rel quantum affine 6}
  \Big[e_{i}(x), f_{j}(y)\Big] = \delta_{ij} \delta \left(\frac xy \right) \frac {\ph^+_i(x) - \ph^-_i(y)}{q-q^{-1}}
\end{equation} 
(respectively the same relations with $\zeta^{\wA_\Gamma}$ instead of $\zeta^{A_\Gamma}$ in type $\wA$); 

\medskip 

\item 
all applicable relations among \eqref{eqn:rel quantum affine 8} and \eqref{eqn:rel quantum affine 9} (also \eqref{eqn:rel quantum toroidal special} in type $\wA$ for $m=n$), as well as their $f$-versions in which we replace every $e_i(z)$ by $f_i(z)$; 
the $f$-version of the clunky relation $e_{\bigcirc}$ in~\eqref{eqn:clunky} is defined via 
$$
  f_{\bigcirc} = \sum_{\ell = 1}^k P_{\ell}(x_1,\dots,x_k) 
  f_{i_{\ell+1}}(x_{\ell+1}) \cdots f_{i_k}(x_k) f_{i_1}(x_1) \cdots f_{i_\ell}(x_{\ell})
$$

\end{itemize}


\medskip 
	
\subsection{Proving Theorem~\ref{thm:shuffle sl}}
\label{sub:proof}

As explained in Definition \ref{def:shuffle realization}, the proof of Theorem~\ref{thm:shuffle sl} boils down to establishing Propositions~\ref{prop:criterion 1} and \ref{prop:criterion 2} for the shuffle algebras of Definition~\ref{def:wheel sl} and the ideals $J^+_{A_{\Gamma}}$ and $J^+_{\wA_{\Gamma}}$, respectively. The former of these amounts to verifying that the shuffle algebras introduced in Definition \ref{def:wheel sl} satisfy the hypothesis of Proposition~\ref{prop:criterion 1}. This is an immediate consequence of the following result, which will be established in Subsection~\ref{sub:residues sl} using the notion of shrubs.

\medskip

\begin{lemma}
\label{lem:pairing sl}
When $F \in \CS^-_{A_{\Gamma}}$, the pairing \eqref{eqn:pairing explicit} has the property that
\begin{equation}
\label{eqn:lem pairing sl}
  \Big \langle e_{i_1,d_1}\dots e_{i_k,d_k}, F \Big \rangle = 
  \left[\text{a linear functional of }\tUpsilon^+_{A_{\Gamma}} \left(e_{i_1,d_1}\dots e_{i_k,d_k} \right) \right] 
\end{equation}
and analogously for $\wCS^-_{\wA_{\Gamma}}$ (when $m\neq n$). 
\end{lemma}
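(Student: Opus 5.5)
We follow the residue/shrub strategy of \cite{N Reduced}. The pairing \eqref{eqn:pairing explicit} is an iterated constant term over the region $|z_1| \gg \cdots \gg |z_k|$. We deform it to a sum of residues. We move the contours one variable at a time, starting from $z_k$, towards a common contour $|z_a|=1$, where the integral equals $\int \frac{z_1^{d_1}\cdots z_k^{d_k} F}{\prod_{a<b}\zeta_{i_bi_a}(z_b/z_a)}\prod Dz_a$ taken over the unit torus. Symmetrizing that integrand over variables of equal color replaces $z_1^{d_1}\cdots z_k^{d_k}\prod \zeta^{-1}$ by $\tUpsilon^+(e_{i_1,d_1}\cdots e_{i_k,d_k})$ divided by the color-symmetric product $\prod_{a\neq b}\zeta_{i_bi_a}(z_b/z_a)$. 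This uses the identity $\Sym[\prod_{a<b}\zeta_{i_ai_b}(z_a/z_b)/\prod_{a\neq b}\zeta]=\Sym[\prod_{a<b}\zeta_{i_bi_a}(z_b/z_a)^{-1}]$ up to the normalizations of $\zeta$ (and the twist \eqref{eq:root-twist}). The main term is therefore manifestly a linear functional of $\tUpsilon^+_{A_\Gamma}(e_{i_1,d_1}\cdots e_{i_k,d_k})$, paired against the fixed function $F/\prod_{a\ne b}\zeta$.

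Each contour move picks up residues at the poles $z_b = z_a t$ coming from the linear factors $(1-\frac{z_b}{z_a}t_\alpha)$ in the denominators $\zeta_{i_bi_a}(z_b/z_a)$, for arrows $\alpha$ of $\Gamma$; poles at $z_b=z_a$ cancel against the factors $(1-x)$. Iterating gives a sum indexed by combinatorial data: forests of specializations $z_b = z_a\cdot(\text{product of parameters})$, which are exactly the shrubs in the universal cover of the quiver. The claim to prove is that every residue term with at least one specialization vanishes once $F\in\CS^-_{A_\Gamma}$, or else recombines into a functional of the symmetrized image.

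The heart of the argument is to classify the shrubs and check, for each, that the specialized integrand is regular or zero. For this we use that $F$ vanishes at the wheels \eqref{eqn:wheel finite even}--\eqref{eqn:wheel finite odd}, together with the numerator factors $\prod_{i_a=i_b}(1-z_a/z_b)$ that survive. We classify connected shrubs for the finite type $A$ quiver (a path with loops of parameter $q^{2s_i}$ and edges $q^{-s_i}$). Any chain of specializations that closes up (returns to a variable of the same color with total parameter $1$, or produces a repeated value) forces either a zero of $(z_a-z_b)$ or a wheel of $F$. For a shrub not containing such a configuration, the order of the pole must be counted against the order of vanishing of the numerator. The target statement is that the total residue contribution of such shrubs cancels in pairs between the orderings $a<b$ and $b<a$, exactly as in \cite[Section 4]{N Reduced}.

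In the affine case $\wA_\Gamma$ the same scheme applies with parameters $q^{\mp s_i}d^{\pm s_i}$, and shrubs may wrap around the cycle $0,1,\dots,N-1$. A wrap-around path has total parameter $\prod q^{-s_i}d^{\pm s_i}=q^{n-m}d^{\pm(\cdots)}$. When $m\ne n$ this is never $1$, so such paths never close into a wheel, and no wheels beyond \eqref{eqn:wheel affine even}--\eqref{eqn:wheel affine odd} are needed. This is exactly where the hypothesis $m\neq n$ enters. The main obstacle is the exhaustive shrub classification in the super setting. Odd vertices carry no loop, so chains through them behave differently: the four-variable wheel \eqref{eqn:wheel finite odd} replaces the three-variable one. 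My first attempt would be an induction on $k$, removing a leaf of the shrub, to reduce to the local configurations \eqref{eqn:wheel finite even}--\eqref{eqn:wheel finite odd}.
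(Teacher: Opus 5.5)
Your set-up matches the paper: you deform the iterated constant term \eqref{eqn:pairing explicit} one variable at a time, and the term with no residues taken is a torus integral that is manifestly a functional of $\tUpsilon^+_{A_\Gamma}(e_{i_1,d_1}\cdots e_{i_k,d_k})$ after color-symmetrization. But the step you call the target is false. The residue terms attached to nontrivial shrubs neither vanish nor cancel in pairs. Already for $k=2$ with colors $(i_1,i_2)=(i,i+1)$, a degree in which no wheel condition applies, crossing the pole $z_1=q^{-s_{i+1}}z_2$ (when $|q^{-s_{i+1}}|\rho_{i+1}>\rho_i$) contributes, up to sign, $\int (q^{-s_{i+1}}z)^{d_1}z^{d_2}F(q^{-s_{i+1}}z,z)\,Dz$. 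This equals $1$ for $F=z_{i1}^{-d_1}z_{i+1,1}^{-d_2}$. Similarly, for the shrub of Figure~\ref{fig:shrub} the contribution is the derivative \eqref{eqn:shrub}, which is not zero in general. For the same reason, ``checking that the specialized integrand is regular or zero'' is the wrong goal: at an addable vertex there genuinely is a pole, only a simple one.

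What is missing is the mechanism that makes these nonzero terms functionals of $\tUpsilon^+$ anyway. The wheel conditions enter only through \cite[Proposition 3.21]{N Reduced}. If $v_\ell$ receives $\#$ arrows from the already specialized vertices, then $F$ vanishes there to order $\geq \#$ unless $v_\ell$ is addable, in which case it vanishes to order at least $\#-1$. Hence the deformation only produces acceptable shrubberies and only simple poles, which gives \eqref{eqn:residue formula}. Simplicity of the poles then yields Claim~\ref{claim:independent}: each iterated residue depends only on the shrubbery, not on the labeling, because relabeling only permutes contours of variables not linked by any pole. Since $F$ and the root contours $|z_{r_c}|=\rho_{i_{r_c}}$ are color-symmetric, summing over labelings replaces $z_1^{d_1}\cdots z_k^{d_k}/\prod_{a<b}\zeta^{A_\Gamma}_{i_bi_a}(z_b/z_a)$ by $\tUpsilon^+_{A_\Gamma}(e_{i_1,d_1}\cdots e_{i_k,d_k})/\prod_{a\neq b}\zeta^{A_\Gamma}_{i_bi_a}(z_b/z_a)$. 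So every shrubbery term, not just your main term, is a linear functional of $\tUpsilon^+$. This is exactly what breaks for $F\notin\CS^-_{A_\Gamma}$, where higher-order poles and non-shrub patterns appear.

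Two smaller points. The full classification of shrubs (Proposition~\ref{prop:shrubs sl}), which you single out as the main obstacle, is not needed here; only closure under adding addable vertices is. And you should use generic color-dependent radii $\rho_i$ rather than a common unit circle: a right-then-down path in $\tQ$ reaches a vertex of coordinate $q\cdot q^{-1}=1$, whose pole would then lie on the contour.
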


\medskip 
\noindent 
We postpone the proof of Lemma~\ref{lem:pairing sl} to Subsection~\ref{sub:residues sl}. Let us now complete the proof of Theorem~\ref{thm:shuffle sl} by verifying Proposition~\ref{prop:criterion 2} in the present setup:
\begin{equation}
\label{eqn:key pairing finite sl}
  \langle J^+_{A_{\Gamma}}, F \rangle = 0 \quad \text{for} \quad F \in \CV^-_{A_{\Gamma}} 
  \quad \Rightarrow \quad F \in \CS^-_{A_{\Gamma}}
\end{equation}
and analogously for $\wA_\Gamma$. The direct approach  was carried out in \cite[Proposition 3.5 and Example~3.9]{NSS}, and can in principle be applied to all cases in the present paper (the interested reader may find this treatment for types $A,B,D$ in Appendix~\ref{sec:delta-equalities}). However, when dealing with the type $C$ scary wheel in Theorem \ref{thm:c intro}, this approach would involve expanding various $7$-variable rational functions in different domains, and collecting the result into an appropriate linear combination of derivatives of delta functions. While this can be done on a computer, it is unpractical to be written in a theoretical paper.

\medskip 
\noindent 
With this in mind, we provide an alternative approach. Assuming $|i|=0$, consider 
\begin{equation}
\label{eqn:x}
  X_{i,i\pm 1}(z_1,z_2,w) = \Big( \mathrm{LHS\ of} \ \eqref{eqn:rel quantum affine 8} \Big) \in \tuslGp [[z^{\pm1}_1,z^{\pm 1}_2, w^{\pm 1}]]
\end{equation}
which is a formal power series 
$$
  X_{i,i\pm 1}(z_1,z_2,w)=\sum_{k_1,k_2,\ell\in \BZ} X_{i,i\pm 1}^{k_1,k_2,\ell}  z_1^{-k_1}z_2^{-k_2}w^{-\ell}
$$
with all $X_{i,i\pm 1}^{k_1,k_2,\ell}\in \tuslGp$ of horizontal degree $\bn = 2\bsi^i + \bsi^{i\pm 1}$. We recall the homomorphism
$$
  \tUpsilon^+_{A_{\Gamma}}\colon \uslGp \to \CV^+_{A_{\Gamma}}
$$
that is the particular case of \eqref{eqn:tilde upsilon} in the case at hand. The formula
\begin{equation}
\label{eqn:rational function}
  \tUpsilon^+_{A_{\Gamma}}(X_{i,i\pm 1}(z_1,z_2,w)) = 0
\end{equation}
holds, as it is tautologically equivalent to the following rational function identity 
\begin{multline*}
  \zeta^{A_{\Gamma}}_{ii}(\tfrac{z_1}{z_2}) 
  \Big(\zeta^{A_{\Gamma}}_{i,i\pm 1}(\tfrac{z_1}{w})\zeta^{A_{\Gamma}}_{i,i\pm 1}(\tfrac{z_2}{w})-(q+q^{-1})\zeta^{A_{\Gamma}}_{i,i\pm 1}(\tfrac{z_1}{w})\zeta^{A_{\Gamma}}_{i\pm 1,i}(\tfrac{w}{z_2})+\zeta^{A_{\Gamma}}_{i\pm 1,i}(\tfrac{w}{z_1})\zeta^{A_{\Gamma}}_{i\pm 1,i}(\tfrac{w}{z_2})\Big) + \\
  \zeta^{A_{\Gamma}}_{ii}(\tfrac{z_2}{z_1}) 
  \Big(\zeta^{A_{\Gamma}}_{i,i\pm 1}(\tfrac{z_1}{w})\zeta^{A_{\Gamma}}_{i,i\pm 1}(\tfrac{z_2}{w})-(q+q^{-1})\zeta^{A_{\Gamma}}_{i,i\pm 1}(\tfrac{z_2}{w})\zeta^{A_{\Gamma}}_{i\pm 1,i}(\tfrac{w}{z_1})+\zeta^{A_{\Gamma}}_{i\pm 1,i}(\tfrac{w}{z_1})\zeta^{A_{\Gamma}}_{i\pm 1,i}(\tfrac{w}{z_2})\Big) 
  = 0
\end{multline*}
(that can be easily checked either by hand or by computer). Therefore, we have
\begin{equation*}
  X_{i,i\pm 1}^ {k_1,k_2,\ell} \in \text{Ker }\tUpsilon^+_{A_{\Gamma}}
\end{equation*}
for all $k_1,k_2,\ell \in \BZ$. Due to Lemma~\ref{lem:pairing sl}, this implies that 
\begin{equation*}
  \langle X_{i,i\pm 1}^{k_1,k_2,\ell}, F \rangle = 0
\end{equation*}
for all $F \in \CS_{A_{\Gamma};-\bn,-k_1-k_2-\ell}$. Thus, for all $d \in \BZ$ the assignment
$$
  \lambda_{d}(F) = \big \langle X_{i,i\pm 1}^{0,0,d}, F \big \rangle
$$
induces a linear functional 
$$
  \lambda_{d} \colon \CV_{A_{\Gamma};-\bn,-d}/\CS_{A_{\Gamma};-\bn,-d} \rightarrow \BC
$$
However, we have 
\begin{equation}
\label{eqn:dim 1}
  \dim_{\BC} \left( \CV_{A_{\Gamma};-\bn,-d}/\CS_{A_{\Gamma};-\bn,-d} \right) = 1
\end{equation}
for all $d \in \BZ$, because the length $3$ wheel \eqref{eqn:wheel finite even} imposes one linear condition on degree $d$ homogeneous Laurent polynomials in $\bn$ variables. For any $d \in \BZ$, a straightforward computation using~\eqref{eqn:pairing explicit} (either by computer or by hand, see Appendices~\ref{sub:non-zero type a even} and~\ref{app:A-cubic} for details) shows that
$$
  F_d = z_{i\pm 1,1}^{-d} \in \CV_{A_{\Gamma};-\bn,-d}
$$
satisfies the property 
\begin{equation}
\label{eqn:not zero 1}
  \lambda_d(F_d) = 2
\end{equation}
We conclude from \eqref{eqn:dim 1} and \eqref{eqn:not zero 1} that pairing with $X_{i,i\pm 1}^{0,0,d}$ imposes a non-trivial linear condition on $\CV_{A_{\Gamma};-\bn,-d}$ that precisely cuts out $\CS_{A_{\Gamma};-\bn,-d}$:
$$
  \big \langle X_{i,i\pm 1}^{0,0,d}, F \big \rangle = 0 \text{ for } F \in \CV_{A_{\Gamma};-\bn,-d} \quad \Leftrightarrow \quad F \in \CS_{A_{\Gamma};-\bn,-d}
$$
The ``$\Rightarrow$'' implication above yields the ``$\Rightarrow$'' implication in the following equivalence
$$
  \big \langle X_{i,i\pm 1}(z_1,z_2,w), F \big \rangle = 0 \text{ for } F \in \CV_{A_{\Gamma};-\bn} \quad \Leftrightarrow \quad F \in \CS_{A_{\Gamma};-\bn}
$$
(while the ``$\Leftarrow$'' implication follows from~\eqref{eqn:rational function} and Lemma~\ref{lem:pairing sl}). As shown in \cite{N Arbitrary}, this implies that the ideal $I_{i,i\pm 1}=(X_{i,i\pm 1}^{k_1,k_2,\ell})_{k_1,k_2,\ell\in \BZ}$ satisfies 
\begin{equation}
\label{eq:duality-deg3}
  \big \langle I_{i,i\pm 1} , F \big \rangle = 0 \text{ for } F \in \CV^-_{A_{\Gamma}} \quad \Leftrightarrow \quad F \text{ vanishes at the wheel \eqref{eqn:wheel finite even} }
\end{equation}

\medskip

\begin{remark}
\label{rem:cont-irrelevance}
Let us emphasize that the precise formula for $F_d$ is immaterial to the argument above: all that matters is that it be a Laurent polynomial which is not annihilated by pairing with some linear combination of $X_{i,i\pm 1}^{k_1,k_2,\ell}$ with $k_1+k_2+\ell=d$. 
\end{remark}

\medskip
\noindent
Assuming $|i|=1$, consider 
$$
    X_{i-1,i,i+1}(z_1,z_2,w,y) = \Big( \text{LHS of \eqref{eqn:rel quantum affine 9}} \Big) \in \tuslGp [[z^{\pm 1}_1, z^{\pm 1}_2, w^{\pm 1}, y^{\pm 1}]]
$$
which is a formal power series 
$$
  X_{i-1,i,i+1}(z_1,z_2,w,y) = \sum_{k_1,k_2,a,b\in \BZ} X_{i-1,i,i+1}^{k_1,k_2,a,b} z^{-k_1}_1 z^{-k_2}_2 w^{-a} y^{-b}
$$
with all $X_{i-1,i,i+1}^{k_1,k_2,a,b}\in \tuslGp$ of degree $\bn' = \bsi^{i-1} + 2\bsi^i + \bsi^{i+1}$. Similarly to \eqref{eqn:rational function}, the following formula can be checked either by hand or on a computer:
\begin{equation}
\label{eqn:rational function 2}
  \tUpsilon^+_{A_{\Gamma}}(X_{i-1,i,i+1}(z_1,z_2,w,y)) = 0
\end{equation}
Thus, due to Lemma~\ref{lem:pairing sl}, for all $d \in \BZ$ the assignment 
$$
  \lambda'_{d}(F) = \big \langle X_{i-1,i,i+1}^{0,0,0,d}, F \big \rangle 
$$
descends to a linear functional
$$
  \lambda'_{d} \colon \CV_{A_{\Gamma};-\bn',-d}/\CS_{A_{\Gamma};-\bn',-d} \rightarrow \BC
$$
However, we have for all $d \in \BZ$
\begin{equation}
\label{eqn:dim 2}
  \dim_{\BC} \left( \CV_{A_{\Gamma};-\bn',-d}/\CS_{A_{\Gamma};-\bn',-d} \right) = 1
\end{equation}
because the length $4$ wheel \eqref{eqn:wheel finite odd} imposes one linear condition on homogeneous Laurent polynomials of degree $\bn'$. A straightforward computation using~\eqref{eqn:pairing explicit} (either by computer or by hand, see Appendices~\ref{sub:non-zero type a odd} and~\ref{app:A-quartic}) shows that
$$
  F'_d = z_{i+1,1}^{-d} \left( \sqrt{\frac {z_{i1}}{z_{i2}}} + \sqrt{\frac {z_{i1}}{z_{i2}}} \right) \in \CV_{A_{\Gamma};-\bn',-d}
$$
(note that square roots are necessary due to~\eqref{eq:root-twist}) satisfies property  
\begin{equation}
\label{eqn:not zero 2}
  \lambda'_d(F'_d) = -2(q+q^{-1})(-1)^{|i+1|(1+|i-1|)}
\end{equation}
We conclude from \eqref{eqn:dim 2} and \eqref{eqn:not zero 2} that pairing with $X_{i-1,i,i+ 1}^{0,0,0,d}$ imposes a non-trivial linear condition on $\CV_{A_{\Gamma};-\bn',-d}$ that precisely cuts out $\CS_{A_{\Gamma};-\bn',-d}$:
$$
  \big \langle X_{i-1,i,i+ 1}^{0,0,0,d}, F \big \rangle = 0 \text{ for } F \in \CV_{A_{\Gamma};-\bn',-d} \quad \Leftrightarrow \quad F \in \CS_{A_{\Gamma};-\bn',-d}
$$
Invoking \eqref{eqn:rational function 2} and Lemma~\ref{lem:pairing sl}, we conclude as before that
$$
  \big \langle X_{i-1,i,i+ 1}(z_1,z_2,w,y), F \big \rangle = 0 \text{ for } F \in \CV_{A_{\Gamma};-\bn'} \quad \Leftrightarrow \quad F \in \CS_{A_{\Gamma};-\bn'}
$$
and therefore akin to~\eqref{eq:duality-deg3}, the ideal $I_{i-1,i,i+1}=(X_{i-1,i,i+1}^{k_1,k_2,a,b})_{k_1,k_2,a,b \in \BZ}$ satisfies 
\begin{equation*}
  \big \langle I_{i-1,i,i+1} , F \big \rangle = 0 \text{ for } F \in \CV^-_{A_{\Gamma}} \quad \Leftrightarrow \quad F \text{ vanishes at the wheel \eqref{eqn:wheel finite odd} }
\end{equation*}

\medskip 
\noindent 
The affine version of the proof, namely the fact that
$$
  \langle J^+_{\wA_{\Gamma}}, F \rangle = 0 \quad \text{for} \quad F \in \CV^-_{\wA_{\Gamma}} \quad \Rightarrow \quad F \in \CS^-_{\wA_{\Gamma}}
$$
is proved just like in the finite type $A$ case treated above. This is true even for $m=n$, because the elements \eqref{eqn:rel quantum toroidal special} are precisely defined in order to be dual to the special wheels  \eqref{eqn:wheel special 1}-\eqref{eqn:wheel special 2}, as explained in Subsection \ref{sub:wheel} (for instance, the analogues of the key properties~\eqref{eqn:rational function} and \eqref{eqn:not zero 1} for the special wheel conditions were proved in \cite[Propositions 3.3 and~3.5]{N Reduced}).


\medskip 
	
\subsection{Shrubs}
\label{sub:shrubs sl}

We will prove Lemma \ref{lem:pairing sl} by explicitly calculating the linear functional which appears in \eqref{eqn:lem pairing sl}. To this end, we must recall a combinatorial tool known as a \emph{shrub}, that was introduced in \cite{N Reduced}. Consider the quiver \footnote{The quiver $\tQ$ appeared in the mathematical physics literature in the setting of the generalized conifold toric Calabi-Yau threefold, see \cite{CFIKV, FHKVW, FHMSVW, HK} and the overview in Subsection \ref{sub:intro shrub}.} $\tQ$ with

\medskip 
	
\begin{itemize}[leftmargin=0.7cm]
		
\item 
vertex set $\BZ^2$; we call $(x,y) \in \BZ^2$ odd/even based on the parity of $\frac{1 - s_{x-y}s_{x-y+1}}{2}$;
		
\medskip 
		
\item 
an arrow $(x,y) \rightarrow (x+1,y+1)$ (i.e.\ pointing northeast) if $s_{x-y} = s_{x-y+1} = 1$;
		
\medskip 
		
\item 
an arrow $(x,y) \rightarrow (x-1,y-1)$ (i.e.\ pointing southwest) if $s_{x-y} = s_{x-y+1} = -1$;

\medskip 
		
\item 
an arrow $(x,y) \rightarrow (x+1,y)$ (i.e.\ pointing right) if $s_{x-y+1} = -1$;

\medskip 
		
\item 
an arrow $(x,y) \rightarrow (x-1,y)$ (i.e.\ pointing left) if $s_{x-y} = 1$;
		
\medskip 
		
\item 
an arrow $(x,y) \rightarrow (x,y+1)$ (i.e.\ pointing up) if $s_{x-y} = -1$;
		
\medskip 
		
\item 
an arrow $(x,y) \rightarrow (x,y-1)$ (i.e.\ pointing down) if $s_{x-y+1} = 1$;
		
\end{itemize}

\medskip 
\noindent 
see Figure~\ref{fig:grid} for an example. Above, $\bs$ is the parity sequence \eqref{eqn:signs} that determines a type $\wA$ decorated Dynkin diagram $\Gamma$. We call $\tQ$ the \emph{periodic} quiver as it is preserved by translations by $(1,1)$ and $(N,0)$, and thus it descends to a quiver $Q$ on the torus
\begin{equation}
\label{eqn:translation}
  \BR^2 / \{(a+Nb,a) \,|\, a,b\in \BZ\}
\end{equation}
If we wish to deal with the case of finite type $A$ instead of affine type $\wA$, then we simply remove from $\tQ$ all vertices $(x,y)$ with $x-y \notin \{1,\dots,N-1\}$. Thus, the finite type quiver will be constrained to an infinite diagonal strip, see Figure \ref{fig:strip}.

\begin{figure}[h]
  \includegraphics[scale=1.5]{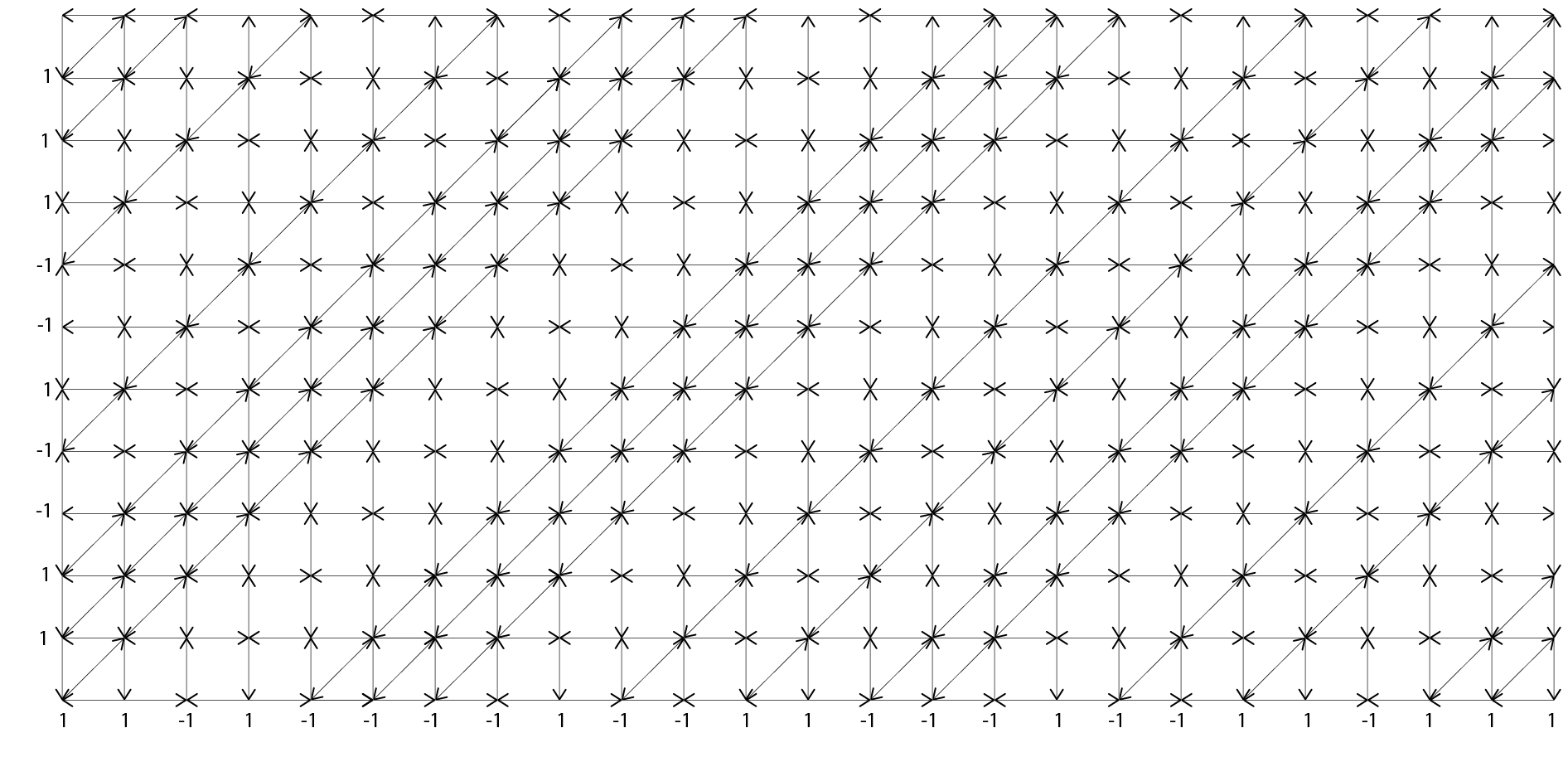} 
  \caption{Zooming in on a part of $\tQ$.}
  \label{fig:grid}
\end{figure}

\medskip 
\noindent 
The \emph{faces} of $\tQ$ are the small triangles and squares that appear in Figure~\ref{fig:grid}. Note that they are all oriented cycles, and that the quiver above is shrubby in the sense of \cite[Definition~1.5]{N Reduced}. Thus, we will freely invoke the constructions of \loccit For instance, we have the notion of a \emph{broken wheel}, which refers to the boundary of any face minus any one arrow. The \emph{mirror image} of a broken wheel refers to the broken wheel which lies on the other side of the missing arrow (in Figure \ref{fig:broken}, the blue broken wheel is the mirror image of the red one, and vice versa).

\begin{figure}[h]
  \includegraphics[scale=2]{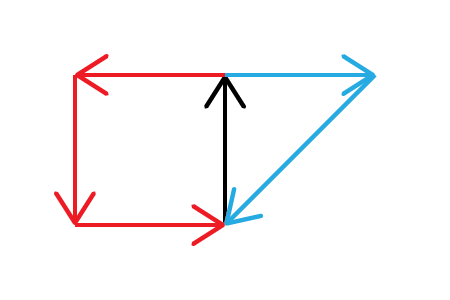} 
  \caption{Broken wheels that are mirror images of each other.}
  \label{fig:broken}
\end{figure}

\begin{definition} 
\label{def:shrub}
An \emph{(affine type) shrub} $S$ is a rooted subgraph of $\tQ$ such that:
		
\medskip 
		
\begin{enumerate}[leftmargin=1cm]
			
\item 
any vertex of $S$ can be reached from the root by following arrows in $S$;
			
\medskip 
			
\item 
$S$ does not contain any oriented cycles \footnote{As shown in \cite{N Reduced}, this is equivalent to the a priori weaker claim that $S$ does not contain the entire boundary of any face.};
			
\medskip 
			
\item 
whenever $S$ contains a broken wheel, it also contains its mirror image;
			
\medskip 
			
\item 
$S$ contains all vertices completely surrounded by arrows of $S$. 
			
\end{enumerate}
\end{definition}

\medskip
\noindent 
If the shrub is completely contained in the diagonal strip $\{ (x,y) \,\big|\,1 \leq x-y < N \}$ (see Figure~\ref{fig:strip}) then we call it a \emph{finite type shrub}. Note that even in finite type, a shrub is still required to satisfy the conditions in Definition \ref{def:shrub} in the periodic quiver $\tQ$, not merely in the aforementioned diagonal strip; for instance, the red subgraph in Figure \ref{fig:strip} is not a finite type shrub.

\medskip

\begin{figure}[h]
  \includegraphics[scale=1.5]{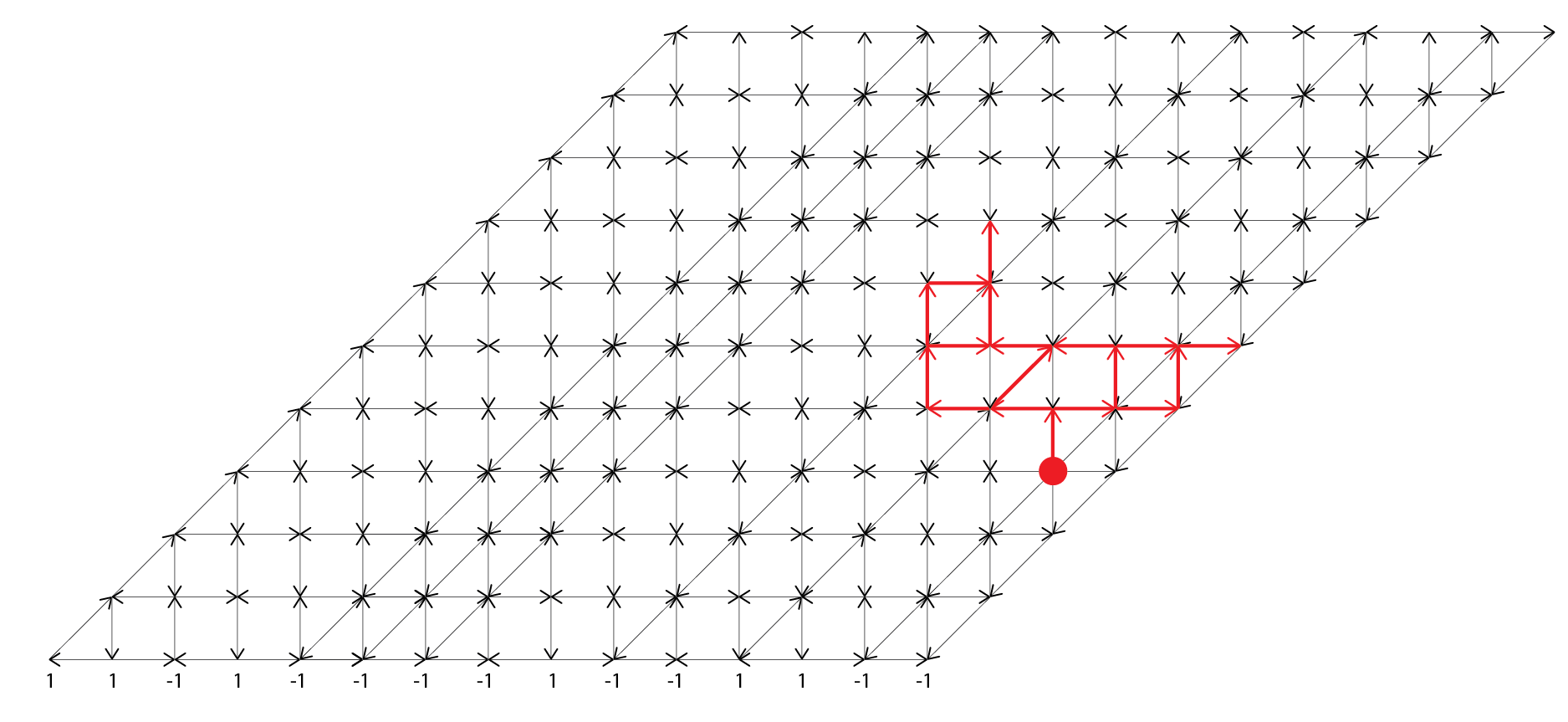} 
  \caption{
The finite type strip (for $N=16$) of a periodic quiver; note that the red subgraph is \underline{not} a shrub because the broken wheel property \emph{(3)} of Definition \ref{def:shrub} is violated at the right-most edge.}
  \label{fig:strip}
\end{figure}


\medskip 

\subsection{Addable vertices}
\label{sub:addable}

We recall from \cite[\S 3.18]{N Reduced} the notion of when a vertex $v$ is \emph{addable} to a shrub $S$, which depends on the arrows $e_1,\dots,e_k$ incoming to $v$ as depicted in Figure \ref{fig:addable}.

\begin{figure}[h]
\includegraphics[scale=2]{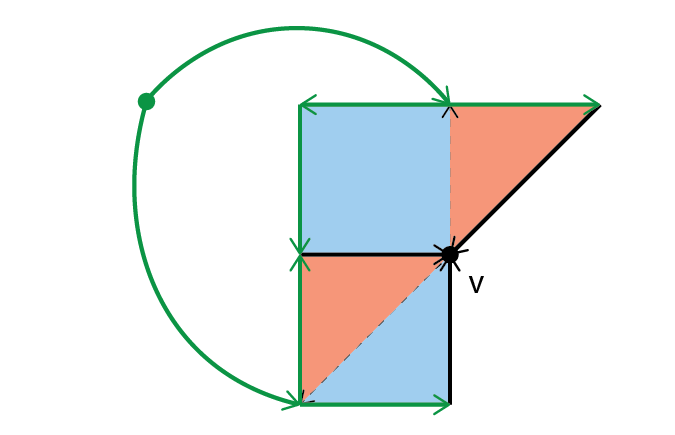} 
\caption{An addable vertex (black) to a shrub (green).}
\label{fig:addable}
\end{figure}

\medskip        
\noindent 
Specifically, in Figure \ref{fig:addable} the shrub $S$ is drawn in green and its root is the green disk. The vertex $v$ marked by the black disk has $k=3$ incoming solid black arrows, which are labeled by $e_1,e_2,e_3$ in counterclockwise order from northeast to south. By definition, we say that $v$ is addable to $S$ if for any $1\leq s<k$ one may continue the edges $e_{s}$ and $e_{s+1}$ in $S$ until they meet for the first time giving rise to sub-paths 
\begin{align*}
  p_s & \colon w \rightarrow \dots \xrightarrow{e_s} v \\
  p_{s+1} & \colon w \rightarrow \dots \xrightarrow{e_{s+1}} v
\end{align*}
(simple, non-intersecting except at the endpoints, and the region between them is minimal with respect to inclusion) so that $p_s$ and $p_{s+1}$ are broken wheels that are mirror images of each other, with interface given by a dotted arrow in Figure \ref{fig:addable}.

\medskip
\noindent 
It was shown in \cite[Proposition~3.20]{N Reduced} that $v$ is addable to $S$ if and only if $S+v$ is a shrub (by definition, we let $S+v$ consist of all vertices and arrows in $S$, together with the vertex $v$, and together with the arrows $e_1,\dots,e_k$).  Moreover, any shrub can be obtained from the root, viewed as a one-vertex shrub, by successively adding addable vertices in some not-necessarily-unique order. This recursive characterization of shrubs will be used in their classification in Proposition \ref{prop:shrubs sl}.


\medskip

\subsection{Classification}
\label{sub:classification}

We will now completely classify affine type shrubs.

\medskip

\begin{definition}
\label{def:directrix}
Fix a vertex $r \in \tQ$ called the \emph{root}. Depending on whether $r$ is even or odd, we define either $3$ or $4$ oriented paths called \emph{directrixes} as follows:
		
\medskip 
		
\begin{itemize}[leftmargin=0.5cm]
			
\item 
the $1$ or $2$ \emph{regular} directrixes: if $r$ is even, this refers to the infinite diagonal path emerging from $r$; if $r$ is odd, this refers to the two infinite horizontally-vertically alternating paths emerging from $r$. Regular directrixes point northeast-southwest;
			
\medskip 
			
\item 
the $2$ \emph{irregular} directrixes: these emerge from $r$ and stay totally within either the northwest or the southeast quadrant, and are defined as the unique succession of horizontal and vertical arrows which change direction precisely when they meet an odd vertex, and do not contain broken wheels.
			
\end{itemize}
\end{definition}

\noindent 
Since a picture is worth a thousand words, we include in  Figures \ref{fig:even irregular}-\ref{fig:odd 2} several examples of directrixes (depicted in green), which we hope will explain the general pattern. The relevance of directrixes of a certain vertex $r$ is that any shrub with root $r$ that is a tree must be an oriented rooted subtree of the union of the directrixes.

\begin{figure}[ht]
  \includegraphics[scale=2]{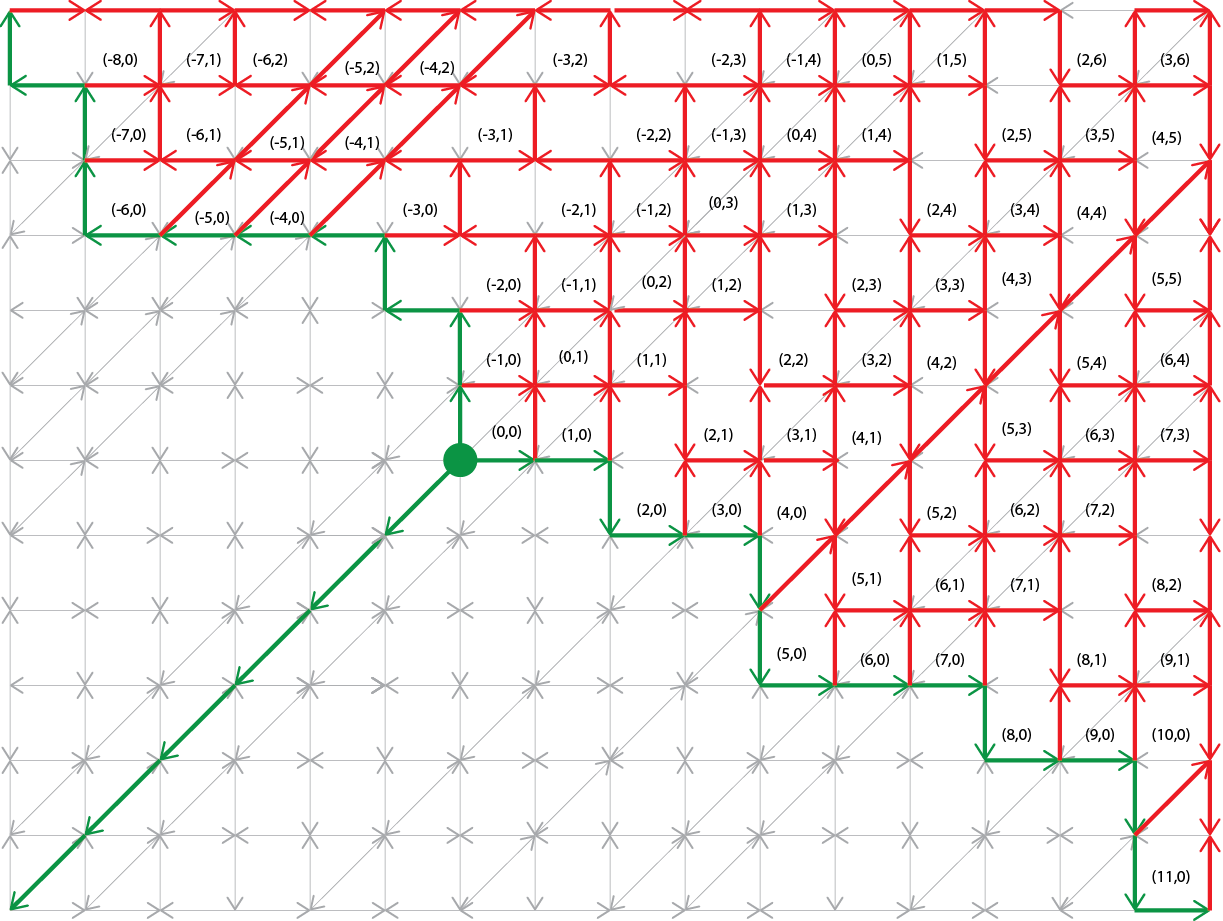} 
  \caption{Even root, irregular region.}
  \label{fig:even irregular}
\end{figure}

\begin{figure}[ht]
   \includegraphics[scale=2]{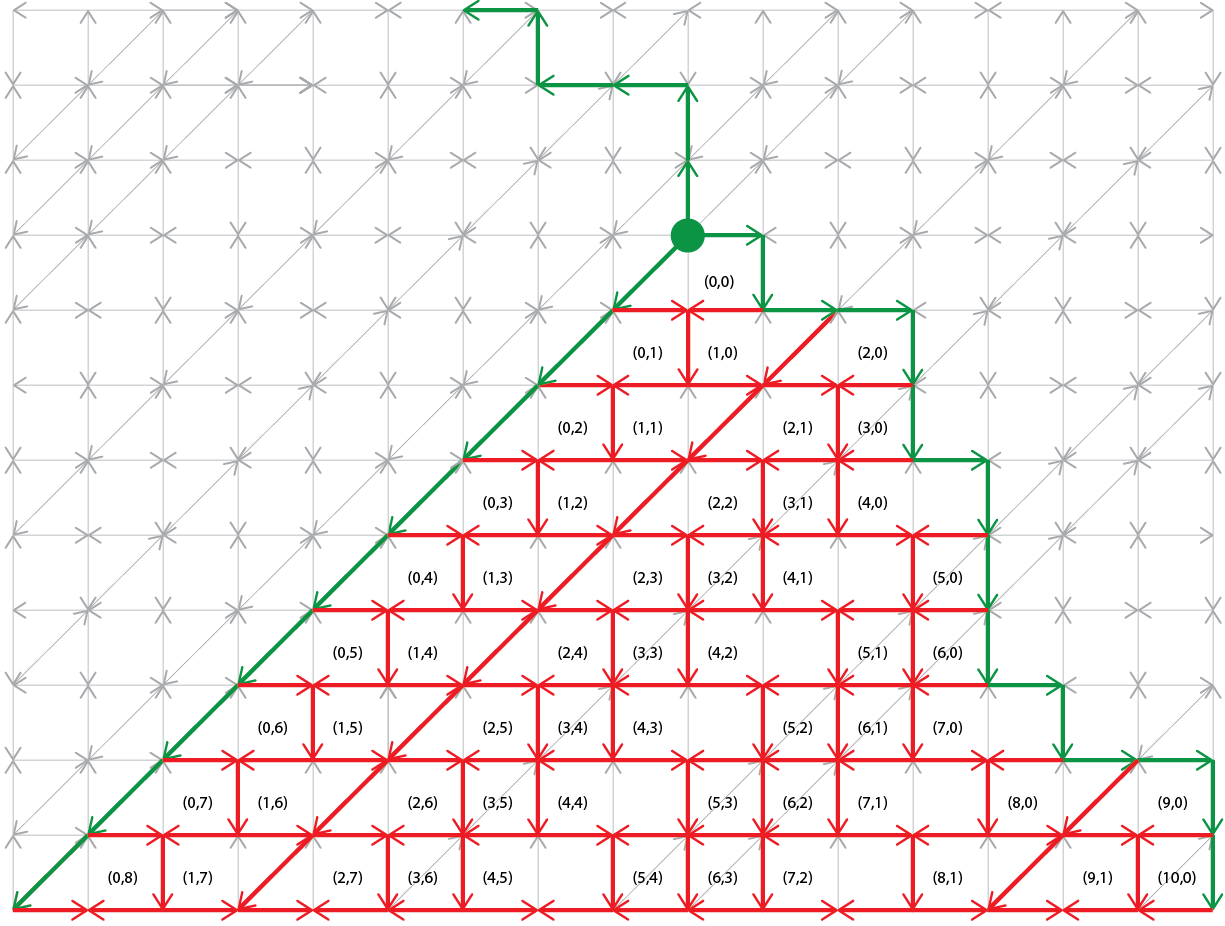} 
   \caption{Even root, regular region.}
  \label{fig:even regular}
\end{figure}

\begin{figure}[ht]
   \includegraphics[scale=2]{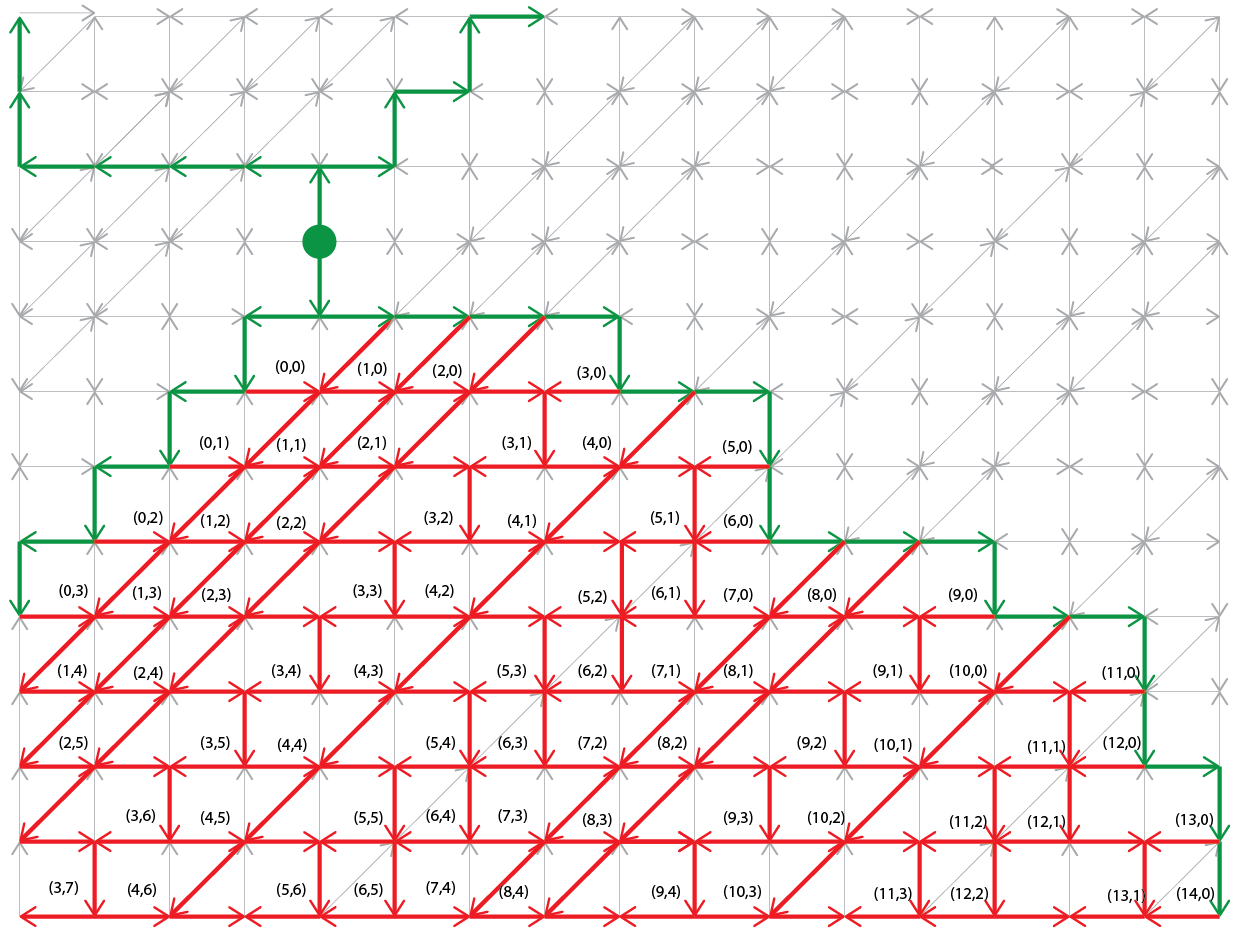} 
   \caption{Odd root (with odd vertices next to it).}	
   \label{fig:odd 1}
\end{figure}

\begin{figure}[ht]
  \includegraphics[scale=2]{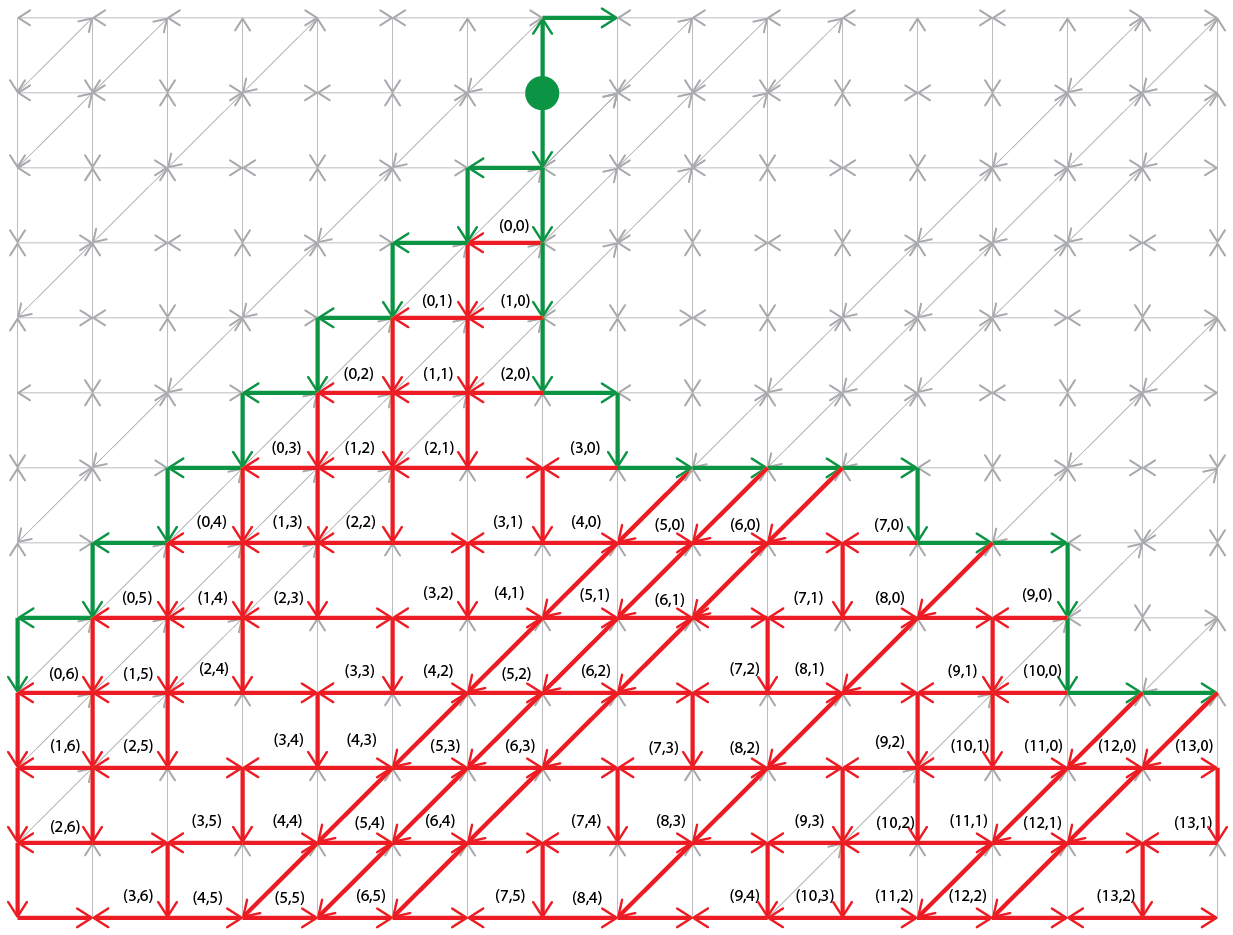} 
  \caption{Odd root (with even vertices next to it).}	
  \label{fig:odd 2}
\end{figure}

\medskip        
\noindent 
The directrixes emerging from an even/odd root divide $\tQ$ into three/four regions. In the even root case, we call the two regions adjacent to the regular directrix \emph{regular}, while the region bounded by the two irregular directrixes will be called \emph{irregular}. As indicated in Figures \ref{fig:even irregular}-\ref{fig:odd 2}, the regions are naturally tiled (periodically in the $(1,1)$ direction) with the following types of dominoes (depicted in red in the figures), each consisting of a broken wheel and its mirror image:
	
\medskip 
	
\begin{itemize}[leftmargin=0.7cm]
		
\item 
double squares, consisting of two adjacent square faces (two examples of double squares are the ones labeled $(2,0)$ and $(-2,0)$ in Figure \ref{fig:even irregular});
		
\medskip 
		
\item 
single squares, consisting of two triangular faces adjacent along a diagonal (an example of single square is the one labeled $(0,0)$ in Figure \ref{fig:even irregular});
		
\medskip 
		
\item 
lozenges, consisting of two triangular faces adjacent along a horizontal or vertical edge (an example of lozenge is the one labeled $(1,1)$ in Figure \ref{fig:odd 1});
		
\medskip 
		
\item 
trapezoids, consisting of a square face and an adjacent triangular face (two examples of trapezoids are the ones labeled $(0,0)$ and $(3,0)$ in Figure \ref{fig:odd 1}).
		
\end{itemize}

\medskip 
\noindent 
As apparent from Figures \ref{fig:even irregular}-\ref{fig:odd 2}, the dominoes in every region can be numbered in Young-diagram like fashion: the one closest to the root is denoted by $(0,0)$, and those adjacent to the irregular directrix are denoted by $(1,0)$, $(2,0)$, etc. Then as we move in the northeast-southwest direction from the domino denoted by $(a,0)$ toward the interior of the region, we start labeling the dominoes we encounter by $(a,1)$, $(a,2)$, etc. Note that in the case of the irregular region rooted at an even vertex, since it is bounded by two irregular directrixes, we will label the dominoes adjacent to them as $\dots, (-2,0), (-1,0), (0,0), (1,0), (2,0), \dots$, but then still increase the second index as we move diagonally in the northeast-southwest direction.

\medskip

\begin{proposition}
\label{prop:shrubs sl}
To obtain the full classification of shrubs rooted at some $r \in \tQ$

\medskip 

\begin{itemize}[leftmargin=0.7cm]
			
\item 
choose a finite connected subgraph $P \ni r$ of the union of the green directrixes;
			
\medskip 
			
\item 
pick a collection $C$ of dominoes in each region with the following properties:

\medskip 

\begin{itemize}[leftmargin=0.7cm] 

\item the green boundary of any domino $\{(d,0)\in C, d \in \BZ\}$ is contained in $P$;

\medskip 

\item all single squares which contain at least two edges of $P$ must lie in $C$;

\medskip 

\item if $(a,b) \in C$, then $(a',b') \in C$ for any $|a'| \leq |a|$ and $b' \leq b$;

\medskip 

\item if $(a,b) \in C$, then $(a',b') \in C$ if $|a'-a|=1$, $b'=b-1$ and $(a',b')$ is a single square; 

\end{itemize}

\medskip 

\item 
consider any set $A$ of vertical/horizontal arrows that point away from the root, and have sources lying in the union of $P$ and boundaries of dominoes in $C$. The meaning of ``point away" is to point up, right, down, left if the arrow lies in the region up, right, down, left of the root. The only situation when this is unambiguous is in the irregular region corresponding to an even root; in this case the arrow is required to point away from the regular directrix.

\end{itemize}

\medskip 
\noindent 
The union of the subgraph $P$ in the first bullet, the boundary of dominoes in the collection $C$ in the second bullet, and the set $A$ of arrows in the third bullet, forms a shrub rooted at $r$. Conversely, any shrub rooted at $r$ can be obtained in this way. 
\end{proposition}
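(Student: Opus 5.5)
The plan is to argue both directions through the recursive characterization recalled in Subsection~\ref{sub:addable}: by \cite[Proposition~3.20]{N Reduced}, a subgraph containing $r$ is a shrub iff it can be built from the one-vertex shrub $\{r\}$ by successively adding addable vertices, and $S+v$ is a shrub iff $v$ is addable to $S$. For the direct statement (that $P \cup \partial C \cup A$ is a shrub), I would exhibit an explicit order of addition. First build $P$ outward from $r$ along the directrixes, one vertex at a time. Each such vertex has a single incoming arrow in the current graph, so the addability condition (which only concerns consecutive pairs $e_s,e_{s+1}$ of incoming arrows) is vacuous. Next add the dominoes of $C$ region by region, in an order compatible with the partial order encoded by the third and fourth sub-bullets. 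These are $(a',b')$ with $|a'|\le|a|$ and $b'\le b$ before $(a,b)$, and diagonal single-square neighbours $(a\pm1,b-1)$ before $(a,b)$.

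When a domino is added, its far vertex (the common endpoint of its two broken wheels) acquires exactly two incoming arrows. I would check, separately for the four domino shapes (double square, single square, lozenge, trapezoid), that the two paths ending at it are mirror-image broken wheels. The predecessor conditions are exactly what guarantee that these paths already lie in the shrub. For dominoes $(d,0)$, the first sub-bullet supplies the green boundary. The second sub-bullet handles single squares whose two edges in $P$ would otherwise form a broken wheel whose mirror image is missing, which would violate condition (3) of Definition~\ref{def:shrub}. Finally, add the arrows of $A$. Since they point away from the root, each new target vertex either is new with one incoming arrow, or already sits in the graph. In the latter case one must check that no oriented cycle and no unmatched broken wheel is created. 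The ``pointing away'' convention is designed to make any new two-arrow configuration only bound regions outside the shrub.

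For the converse, I would induct on the number of vertices. Take a shrub $S$ and a vertex $v\ne r$ that is maximal, for instance one with no outgoing arrows in $S$ to vertices of $S$, or a leaf of a spanning structure. Then $S-v$ (removing $v$ and its incoming arrows) is again a shrub, since $v$ was addable to it. By induction $S-v=P\cup\partial C\cup A$, and I would classify how an addable $v$ can attach.

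In the one-incoming-arrow case, either $v$ lies on a directrix, so we enlarge $P$, or the arrow is horizontal or vertical and points away from the root, so we enlarge $A$. An arrow pointing toward the root would create an oriented cycle or a broken wheel without its mirror image; this is where the geometry of the periodic quiver $\tQ$ and Figures~\ref{fig:even irregular}--\ref{fig:odd 2} enter. In the case of two or more incoming arrows, addability forces the incoming paths to bound mirror-image broken wheels, that is, a domino, and I would check that this domino satisfies the closure conditions relative to the existing $C$.

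The main obstacle is this local case analysis. It requires showing that the tiling of each region by dominoes is exactly the set of possible ``mirror pairs'' reachable from the root, which depends on the parity pattern $\bs$ near the root (even or odd root, and the regular versus irregular region). I would first handle the non-super case, where only single squares occur, to fix the combinatorics. I would then treat the odd-vertex turning behaviour of the irregular directrixes by reducing, via the periodicity in the $(1,1)$ direction, to finitely many local configurations around each domino.
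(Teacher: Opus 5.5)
\textbf{Verdict: genuine gap.} Your forward direction follows the paper's route (the paper only says it follows from the recursive construction of Subsection~\ref{sub:addable}). However, your justification for the one-arrow steps is false, and the one-arrow case of your converse has the matching gap. A vertex entering with a single incoming arrow is not automatically addable. That arrow, together with an arrow into its source, can form a broken wheel; the mirror image would also have to end at the new vertex, so it is absent. For instance, take $\bs=(1,\dots,1)$ (arrows point northeast, left, down) and $r=(0,0)$. Add $(-1,-1)$ to the shrub $(0,0)\to(-1,0)$ via its unique incoming arrow $(-1,0)\to(-1,-1)$, which points away from the root. This creates the broken wheel $(0,0)\to(-1,0)\to(-1,-1)$ without its mirror image $(0,0)\to(0,-1)\to(-1,-1)$, violating (3) of Definition~\ref{def:shrub}. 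So ``pointing away'' is not what governs $A$. As the paper's proof makes explicit, the admissible extra arrows are the red arrows that do \emph{not} point to an essential vertex, i.e.\ those that go head-to-head with gray arrows. Likewise, your one-arrow dichotomy in the converse (``on a directrix, or horizontal/vertical pointing away'') omits two kinds of arrows, and you give no mechanism to exclude them:
\begin{itemize}
\item red diagonal arrows off the directrixes, e.g.\ $(-1,0)\to(0,1)$, which is half of a lozenge;
\item horizontal or vertical arrows into essential vertices.
\end{itemize}

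The missing idea is the paper's global minimality principle. Weight diagonal arrows by $1$ and horizontal/vertical ones by $\frac12$, so every face has weight $2$. Minimal paths between two vertices are $F$-term equivalent, and non-minimal paths are $F$-term equivalent to paths containing a loop around a face. Hence (2) and (3) force every path in a shrub $S$ to be minimal, and force every minimal path from $r$ to a vertex of $S$ to lie in $S$. This does all the work of your local case analysis at once:
\begin{itemize}
\item gray arrows (tail farther from $r$ than head) cannot occur, so $S$ meets each region inside the union of domino boundaries;
\item a domino lies in $S$ if and only if its essential vertex does;
\item the closure conditions on $C$ come from the essential vertex of $(a',b')$ lying on a minimal path to that of $(a,b)$;
\item the remaining arrows are exactly the red ones not hitting essential vertices.
\end{itemize}
Your two-arrow case needs exactly this input. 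A mirror pair of broken wheels ending at $v$ is a tile of the fixed tiling because both wheels lie on minimal paths, so the omitted arrow $v\to w$ is the gray arrow of both faces. Moreover, the closure conditions do not follow from $S-v$ merely containing those two wheels; they follow from $S$ containing every minimal path to $v$.

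Two smaller points. In the non-super warm-up, the regular regions are tiled by lozenges, and only the irregular region by single squares. Also, the vertex you peel off should be the last one in an addition sequence, not an arbitrary sink.
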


\medskip

\begin{proof} 
It is easy to see that any union $P \cup C \cup A$ as above determines a shrub, using the recursive construction of shrubs from the end of Subsection~\ref{sub:addable}. We will now prove the converse, which states that any shrub $S$ rooted at $r$ arises in this way.

\medskip 
\noindent 
To this end, we will use the following well-known features of $\tQ$: inspired by the notion of $R$-charge of dimer models (see \cite{HHV} and \cite{Br}), we define the length of a path by weighing diagonal arrows with $1$ and weighing horizontal/vertical arrows with $\frac 12$. A path starting at $v$ and ending at $v'$ is called \emph{minimal} if its length is the smallest among all paths from $v$ and $v'$. It is a well-known fact that any two minimal paths from one vertex to another are $F$-term equivalent (\cite{FHKVW}), i.e.\ equivalent under the relation which identifies any path with the result of replacing any broken wheel by its mirror image (see Figure~\ref{fig:broken}). Moreover, paths which fail to be minimal are $F$-term equivalent with paths that contain loops around the faces of the quiver $\tQ$.

\medskip 
\noindent
Therefore, properties \emph{(2)} and \emph{(3)} in Definition \ref{def:shrub} imply that any path in a shrub must be minimal, and moreover, any minimal path from the root to a vertex of a shrub must be fully contained in the shrub. This immediately explains why no gray arrow $\alpha$ in Figure \ref{fig:even irregular} (irregular region), Figure \ref{fig:even regular} (regular region) and Figures \ref{fig:odd 1}-\ref{fig:odd 2} can lie in $S$: this is because the tail of $\alpha$ is farther from the root (in terms of length of a minimal path) than the head of $\alpha$.

\medskip 
\noindent 
Because of the previous paragraph, the intersection of $S$ with any one of the three/four regions is a subgraph of the domino tiling (i.e.\ the union of the green and red arrows in Figures \ref{fig:even irregular}-\ref{fig:odd 2}). Let $C$ be the collection of dominoes that lie completely within $S$, and we must show that it satisfies the conditions in the second bullet of Proposition \ref{prop:shrubs sl}. To this end, we observe that any domino has a single vertex which has two incoming red arrows: we call it the \emph{essential vertex} of the domino. The terminology is due to the easy-to-see claim that a domino is completely contained in a shrub $S$ if and only if its essential vertex lies in $S$. Therefore, the last two properties in the second bullet of Proposition \ref{prop:shrubs sl} are immediate consequences of the previous paragraph and the following claim: the essential vertex of a domino $(a',b')$ lies on a minimal path from the root to the essential vertex of a domino $(a,b)$ if

\medskip 

\begin{itemize}[leftmargin=0.7cm]

\item $|a'| \leq |a|$ and $b' \leq b$, or if

\medskip 

\item $|a'-a| = 1$, $b'=b-1$ and $(a',b')$ is a single square.
    
\end{itemize}

\medskip 
\noindent 
Finally, we must show that if an arrow $\alpha$ lies in $S$ but is not part of a complete domino, then it must satisfy the properties in the third bullet of Proposition \ref{prop:shrubs sl}. These arrows are easily seen to be precisely the red arrows which go head-to-head with gray arrows in Figures \ref{fig:even irregular}-\ref{fig:odd 2}, or equivalently precisely the red arrows that do not point to an essential vertex. Indeed, if an arrow $\alpha \in S$ pointed to the essential vertex of a domino, then the domino in question would have to be completely contained in $S$. The contrapositive statement is that if $\alpha$ does not belong to a domino completely contained in $S$, then it cannot point to an essential vertex, as we needed to prove. \end{proof}

\medskip 
\noindent 
The explicit characterization of all shrubs in Proposition \ref{prop:shrubs sl} immediately implies the following property, which will play a crucial role in Sections \ref{sec:osp} and \ref{sec:osp-toroidal}.

\medskip

\begin{corollary}
\label{cor:shrubs sl}
In any shrub, the vertices which lie on the extremal diagonals form ladders of the form depicted in Figure~\ref{fig:last}, that point either northeast or southwest.

\begin{figure}[h]
  \includegraphics[scale=2]{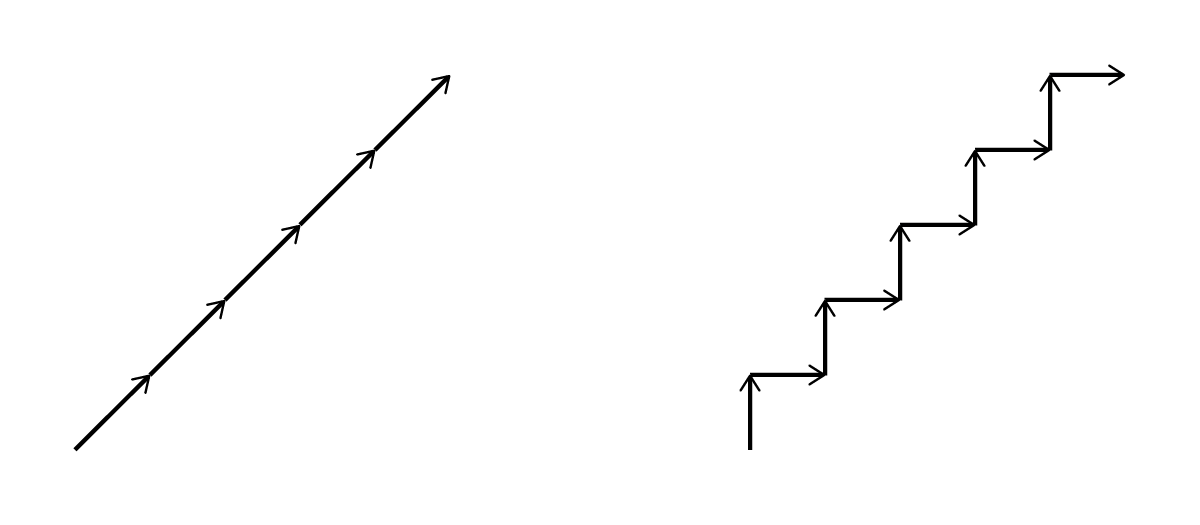} 
  \caption{Even on the left, odd on the right.}
  \label{fig:last}
\end{figure}

\end{corollary}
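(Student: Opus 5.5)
The plan is to read the statement off the explicit description $S = P \cup C \cup A$ of Proposition \ref{prop:shrubs sl}, by restricting it to a single extremal diagonal $D$. An extremal diagonal is a diagonal $\{x-y = \text{const}\}$ on which the shrub has vertices and beyond which it has none. First I would note which pieces of $P \cup C \cup A$ can meet $D$.

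\begin{itemize}[leftmargin=0.7cm]

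\item The part of $P$ on $D$. The directrixes are monotone: the regular ones move along the northeast-southwest direction, and the irregular ones stay in one quadrant, alternating horizontal and vertical steps at odd vertices. Hence a connected finite subgraph $P \ni r$ meets each diagonal in a contiguous segment of consecutive vertices of one directrix.

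\item The dominoes of $C$ touching $D$. By the third and fourth conditions of the second bullet, $C$ is a Young-diagram-like downward closed set. So on the outermost diagonal reached by $C$, the dominoes touching it are exactly those of the form $(a,b)$ with $b \leq b_{\max}$, for a fixed $a$ (or a pair $\pm a$ in the irregular region of an even root). Since the dominoes are arranged periodically in the $(1,1)$ direction, consecutive $(a,0),(a,1),\dots$ contribute vertices on $D$ that are successive translates by $(1,1)$. Together they form a chain pointing away from the root, northeast or southwest according to the region.

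\item The arrows of $A$. These point horizontally or vertically away from the root, so each adds at most one vertex on the next diagonal, attached to a vertex already on the chain. The extremality of $D$ tells us which such arrows can land on $D$.

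\end{itemize}

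Second, I would assemble these pieces diagonal by diagonal. On $D$, the vertices come from a contiguous run of domino essential vertices or boundary vertices, possibly extended at the end nearest the root by a stretch of $P$. Together with the arrows joining them, they form exactly the picture of Figure~\ref{fig:last}.

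\begin{itemize}[leftmargin=0.7cm]

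\item If $D$ is even, the vertices are joined by diagonal arrows, giving a straight ladder.

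\item If $D$ is odd, they are joined through the adjacent diagonal by alternating horizontal and vertical arrows, giving a staircase ladder.

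\end{itemize}

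The direction (northeast or southwest) is forced, because every arrow of $S$ points away from the root. This follows from the minimality-of-paths argument in the proof of Proposition \ref{prop:shrubs sl}: any path in a shrub is minimal, so no gray arrow can occur.

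The main obstacle is the case analysis by root parity and region: even or odd root, regular or irregular region, and the types of domino (double square, single square, lozenge, trapezoid) that appear along the extremal diagonal. In particular, at the corner where $P$ (a directrix) meets the domino region, one must check that no gap or branching appears on $D$. I would handle this using the condition that single squares containing two edges of $P$ lie in $C$, and the condition that boundaries of $(d,0)$ dominoes lie in $P$. Together these force the ladder to continue without a gap across the transition from $P$ to $C$. I would verify this separately for each of the four configurations of Figures \ref{fig:even irregular}-\ref{fig:odd 2}.
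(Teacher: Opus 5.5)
Your route, restricting $S=P\cup C\cup A$ from Proposition~\ref{prop:shrubs sl} to an extremal diagonal, is the one the paper intends; the paper gives no argument beyond saying the corollary follows immediately from that classification. However, your central step has a genuine gap.

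The Young-diagram conditions on $C$ hold region by region, each region with its own labeling. An extremal diagonal $D$, by contrast, is typically met by \emph{two} regions: the two flanking the irregular directrix that crosses $D$ at a single vertex $v$. For an even root these are the regular region on that side of the root's diagonal and the irregular region. For an odd root they are, e.g., the right and down regions on either side of the southeast irregular directrix. In one of the two regions the dominoes $(a,0),(a,1),\dots$ are successive translates by $(1,1)$, and in the other by $(-1,-1)$. So ``pointing away from the root'' means northeast on one side of $v$ and southwest on the other.

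Your assembly step (``a chain pointing away from the root, northeast or southwest according to the region'') therefore allows $S\cap D$ to extend in both directions from $v$. That is exactly the configuration the corollary forbids, and the one Section~\ref{sec:osp-toroidal} invokes it to rule out. Relatedly, your ``pair $\pm a$'' remark is off. The dominoes $(a,0)$ and $(-a,0)$ run along the two different irregular directrixes, on opposite sides of the root's diagonal, so they never reach the same extremal diagonal.

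What is missing is an argument that at most one of the two flanking regions contributes a vertex of $D$ other than $v$. This is where the fourth condition of the second bullet does real work; you only use it for downward closure.

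For instance, in the non-super case the irregular region of an even root is tiled by single squares. Let $c$ be the root's color and suppose $D=\{x-y=c+m\}$ is extremal. If $C$ contained a domino $(m-1,b)$ with $b\geq 1$, then $(m,b-1)\in C$ would be forced, and that domino reaches the diagonal $c+m+1$. So the irregular side meets $D$ only at $v$, and the ladder points northeast into the lozenge-tiled regular region.

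When neither flanking region consists of single squares, a different mechanism applies. An example is an odd root followed only by odd diagonals, where both sides are tiled by double squares. There, a parity offset between the diagonal bands occupied by the two regions' dominoes prevents both from reaching the same $D$.

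Your closing paragraph only addresses gaps at the passage from $P$ to $C$, and does not touch this two-sided issue.
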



\medskip 

\subsection{Residues}
\label{sub:residues sl}
	
We decorate the arrows $\alpha$ of $\tQ$ with the following parameters:
	
\medskip 
	
\begin{itemize}[leftmargin=0.7cm] 
		
\item 
$t_\alpha = q^2$ to the arrows pointing northeast
		
\medskip 
		
\item 
$t_\alpha = q^{-2}$ to the arrows pointing southwest
		
\medskip 
		
\item 
$t_\alpha = qd$ to the arrows pointing right
		
\medskip 
		
\item 
$t_\alpha = q^{-1}d$ to the arrows pointing left
		
\medskip 
		
\item 
$t_\alpha = qd^{-1}$ to the arrows pointing up
		
\medskip 
		
\item 
$t_\alpha = q^{-1}d^{-1}$ to the arrows pointing down
		
\end{itemize}

\medskip 
\noindent 
(in finite type, in which the quiver $\tQ$ is reduced to the diagonal strip $\{(x,y) \,|\, 1\leq x-y < N\}$, the parameter $d$ is set equal to 1 by convention, see Remark~\ref{rem:d-redundancy}). Thus, the parameters of arrows are completely determined by their orientation: 
$$
  \text{an arrow }(x,y) \to (x',y') \text{ has parameter } q^{x'+y'-x-y}d^{|x'-x|-|y'-y|}
$$
The product of parameters around any triangular or square face (see Figure \ref{fig:grid}) is easily seen to be equal to 1. As a consequence, it is straightforward to see that for any vertex $v$ in any shrub $S$ with root $r$, the \emph{coordinate}
\begin{equation*}
  t_S(v) = t_{\alpha_1} \dots t_{\alpha_k} 
\end{equation*}
is independent on the choice of a path 
\begin{equation*}
  r \xrightarrow{\alpha_k} \dots \xrightarrow{\alpha_1} v
\end{equation*}
If $m\neq n$, it is easy to see that different vertices of the same color in a shrub have different coordinates.

\medskip

\begin{definition} 
\label{def:acceptable}
With respect to fixed positive real numbers
\begin{equation}
\label{eqn:rho}
  \rho_1,\dots,\rho_{N} \in \BR_{>0}
\end{equation}
a shrub $S$ is called \emph{acceptable} if 
\begin{equation}
\label{eqn:acceptable}
  |t_S(v)| > \frac {\rho_{\emph{col}(v)}}{\rho_{\emph{col}(r)}}
\end{equation}
for all vertices $v \in S$ other than the root $r$.
\end{definition}

\medskip 
\noindent 
We henceforth assume that the numbers \eqref{eqn:rho} are chosen generic enough so that 
\begin{equation*}
  \rho_{\text{col}(r)} |t_{\alpha_1} \dots t_{\alpha_k}| \neq \rho_{\text{col}(r')} |t_{\beta_1} \dots t_{\beta_\ell}|
\end{equation*}
for all paths
$$
r \xrightarrow{\alpha_k} \dots \xrightarrow{\alpha_1} v \xleftarrow{\beta_1} \dots \xleftarrow{\beta_{\ell}} r'
$$
with $r \neq r'$. From now on, all shrubs will be considered modulo translation by $\{(a+Nb,a)\}_{a,b \in \BZ}$, see \eqref{eqn:translation}. A \emph{shrubbery} $\mathscr{S}$ is an unordered collection of shrubs $S_1 \sqcup \dots \sqcup S_t$, and it is called acceptable if all its constituent shrubs are acceptable.

\medskip

\begin{proof} \emph{of Lemma \ref{lem:pairing sl} (sketch following \cite[Proposition 3.10, \S 3.22]{N Reduced}):} 
We shall treat the affine case $\wA_{\Gamma}$ (with $m\ne n$), while the case of $A_{\Gamma}$ is analogous. Consider any shrubbery $\mathscr{S} = S_1\sqcup \dots \sqcup S_t$. A \emph{labeling} of $\mathscr{S}$ will refer to an ordering of its vertices
\begin{equation}
\label{eqn:labeling}
  v_1, \dots, v_k
\end{equation}
such that there is a path in $\tQ$ from $v_a$ to $v_b$ only if $a>b$. Let $i_1,\dots,i_k \in \wI$ be the colors of $v_1,\dots,v_k$, and let $\bn = \bsi^{i_1}+\dots+\bsi^{i_k}$. For any $F \in \wCS_{\wA_{\Gamma};-\bn}$ and $d_1,\dots,d_k \in \BZ$, we consider the rational function 
\begin{equation}
\label{eqn:f}
  f(z_1,\dots,z_k) = 
  \frac{F(z_{i_a\bullet_a})_{a \in \{1,\dots,k\}} z_1^{d_1}\dots z_k^{d_k}}
       {\prod_{1 \leq a < b \leq k} \zeta^{\wA_{\Gamma}}_{i_bi_a} \left(\frac {z_b}{z_a} \right)}
\end{equation}
where $\bullet_1,\dots,\bullet_k$ denote the minimal positive integers such that $\bullet_a < \bullet_b$ if $a<b$ and $i_a = i_b$. It was shown in \cite{N Reduced} that for any labeled shrubbery $\mathscr{S} = S_1 \sqcup \dots \sqcup S_t$ (assume that the roots of the shrubs $S_1,\dots,S_t$ have labels $r_1,\dots,r_t$), we may define
\begin{equation}
\label{eqn:residue}
  \underset{S_1}{\text{Res}} \dots \underset{S_t}{\text{Res}} \ f \in \BC(z_{r_1}, \dots,z_{r_t})
\end{equation}
as the quantity $f^{(1)}$ produced by the following recursive procedure in $\ell = k,\dots,1$:
		
\medskip 
		
\begin{itemize}[leftmargin=0.7cm]
			
\item 
start from $f^{(k)} = f$ of \eqref{eqn:f}; 

\medskip 
			
\item 
if $v_{\ell}$ is a root of some constituent shrub $S_c \subset \mathscr{S}$, define $f^{(\ell)} = f^{(\ell+1)}$;
			
\medskip 
			
\item 
if $v_{\ell} \in S_c \subset \mathscr{S}$ is not a root, then note that $f^{(\ell+1)}$ has a simple pole at 
\begin{equation*}
  z_\ell = t_{S_c}(v_\ell) z_{r_c}
\end{equation*}
(the previous claim uses the combinatorics of shrubs, and was proved in~\cite[Proposition 3.24]{N Reduced}). Define $f^{(\ell)}$ as the residue of $f^{(\ell+1)}$ at the above pole.
			
\end{itemize}

\medskip

\begin{remark}
\label{rem:zero}
Since the zeta function $\zeta^{\wA_\Gamma}_{ii}(x)$ has a pole at $x=1$ for all $i \in \wI$, the rational function \eqref{eqn:f} has a zero at $z_a=z_b$ whenever $i_a= i_b$. Because of this, vertices of a shrub cannot ``double up" in the following sense: once a variable $z_b$ is specialized in accordance to a vertex $v \in S_c \subset \mathscr{S}$, no other variable $z_a$ with $a<b$ can be specialized to the same vertex for the same constituent shrub $S_c \subset \mathscr{S}$.
\end{remark}

\medskip 
\noindent 
As an example, consider the shrubbery consisting of the single shrub $S$ in Figure~\ref{fig:shrub}, rooted at the red disk. The southwest, southeast, northwest, northeast vertices are labeled by $v_4,v_3,v_2,v_1$ and their colors will be labeled $i_4,i_3,i_2,i_1$, respectively.

\begin{figure}[h]
  \includegraphics[scale=2]{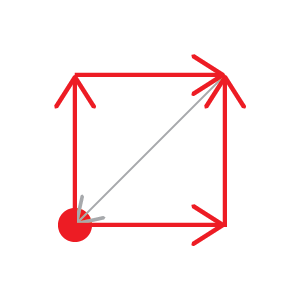} 
  \caption{A shrub $S$, and the corresponding specialization map.}
  \label{fig:shrub}
\end{figure}

\noindent
Therefore, the zeta functions are given by
\begin{align*}
  & \zeta^{\wA_{\Gamma}}_{i_4 i_3} \left(\frac {z_4}{z_3} \right) \sim (z_3 - z_4 qd), \qquad  \ \ \zeta^{\wA_{\Gamma}}_{i_2 i_1} \left(\frac {z_2}{z_1} \right) \sim (z_1 - z_2 qd)  \\
  & \zeta^{\wA_{\Gamma}}_{i_4 i_2} \left(\frac {z_4}{z_2} \right) \sim (z_2 - z_4 qd^{-1}), \qquad \zeta^{\wA_{\Gamma}}_{i_3 i_1} \left(\frac {z_3}{z_1} \right) \sim (z_1 - z_3 qd^{-1})
\end{align*}
All other zeta functions do not vanish at the point $(z_1,z_2,z_3) = (z_4q^2, z_4qd^{-1}, z_4 qd)$, which will be the main focus in the subsequent argument. Therefore, up to multiplication by a monomial and a non-zero constant that we will ignore, we have

\begin{multline*}
  \underset{S}{\text{Res}} \left( 
  \frac {F(z_1,z_2,z_3,z_4)}{\prod_{1\leq a < b \leq 4} \zeta^{\wA_{\Gamma}}_{i_bi_a}\left(\frac {z_b}{z_a}\right)}\right) = \\
  \underset{S}{\text{Res}} \left( 
  \frac {F(z_1,z_2,z_3,z_4)}{(z_1-z_2qd)(z_1-z_3qd^{-1})(z_2-z_4qd^{-1})(z_3-z_4qd)} \right) = \\ 
  \underset{z_1 = z_4q^2}{\text{Res}} \left(\frac {F(z_1,z_4qd^{-1},z_4qd,z_4)}{(z_1-z_4q^2)^2}\right)
\end{multline*}
However, because $F$ satisfies the length $3$ wheel \eqref{eqn:wheel affine even} corresponding to the two triangles in Figure \ref{fig:shrub}, the numerator of the expression above vanishes at $z_1=z_4q^2$. Thus the overall fraction has a simple pole at $z_1=z_4q^2$, as prescribed. We conclude that, up to multiplication by a non-zero constant that we will ignore, we have
\begin{equation}
\label{eqn:shrub}
  \underset{S}{\text{Res}} \left( 
  \frac {F(z_1,z_2,z_3,z_4)}{\prod_{1\leq a < b \leq 4} \zeta^{\wA_{\Gamma}}_{i_bi_a}\left(\frac {z_b}{z_a}\right)}\right) 
  = \frac {\partial F(z_1,z_4qd^{-1},z_4qd,z_4)}{\partial z_1} \Big|_{z_1 = z_4q^2}
\end{equation}

\medskip

\begin{claim}
\label{claim:independent}
\eqref{eqn:residue} only depends on the shrubbery $\mathscr{S}$, and not on the labeling \eqref{eqn:labeling}.
\end{claim}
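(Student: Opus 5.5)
Any two labelings of $\mathscr{S}$ compatible with the path order are connected by a sequence of adjacent transpositions $v_\ell \leftrightarrow v_{\ell+1}$, where $v_\ell$ and $v_{\ell+1}$ are incomparable: there is no path in $\tQ$ between them, in either direction. This is the usual fact that linear extensions of a finite poset are connected by swaps of adjacent incomparable elements. So the plan is to show that \eqref{eqn:residue} is unchanged under one such swap.

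Fix such a swap and compare the two rational functions \eqref{eqn:f}. They differ only through the ordering of the pair $(z_\ell,z_{\ell+1})$. The factor $\zeta^{\wA_\Gamma}_{i_{\ell+1}i_\ell}(z_{\ell+1}/z_\ell)$ in the denominator is replaced by $\zeta^{\wA_\Gamma}_{i_\ell i_{\ell+1}}(z_\ell/z_{\ell+1})$. If $i_\ell=i_{\ell+1}$, the positions $\bullet_\ell,\bullet_{\ell+1}$ are also interchanged, which is harmless because $F$ is color-symmetric. The ratio of these two zeta factors is, by \eqref{eqn:zeta affine sl}, a monomial times a sign times a ratio of linear factors $(1-z_\ell t/z_{\ell+1})$ and $(1-z_{\ell+1}t'/z_\ell)$. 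Here $t,t'$ run over the parameters of the arrows between the colors $i_\ell,i_{\ell+1}$ (and the diagonal $x=1$ factors when the colors agree). On the other hand, every residue in the recursion is a specialization $z_a\mapsto t_{S_c}(v_a)z_{r_c}$. So the key point is that after all specializations, this ratio evaluates to a well-defined nonzero number, and the specializations of the other variables do not interact with the order of $\ell,\ell+1$.

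To see this, I would use the combinatorics of shrubs from \cite{N Reduced} and Proposition \ref{prop:shrubs sl}. Suppose one of the linear factors vanished at the specialized point, say $t_{S}(v_{\ell+1})z_{r}=t\,t_{S'}(v_\ell)z_{r'}$. If $r\neq r'$, the genericity assumption on the $\rho$'s forbids this. If $r=r'$, the equality of coordinates would mean $v_{\ell+1}$ sits at the head of an arrow of $\tQ$ out of $v_\ell$; this uses that coordinates determine vertices of a given color when $m\neq n$. Such an arrow, or the corresponding one in the opposite direction, would be a path between $v_\ell$ and $v_{\ell+1}$, contradicting incomparability. The diagonal factors $z_\ell=z_{\ell+1}$ are excluded in the same way by Remark \ref{rem:zero}. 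Hence, apart from the overall constant coming from the monomial prefactors of the zeta functions and signs (which I would track and show are identical because the twist $\twist$ and the $(-1)^{|i||j|}$ conventions are designed to make $\zeta_{ij}(x)/\zeta_{ji}(x^{-1})$ well behaved), the two integrands differ by a factor that is regular and nonvanishing near the specialization locus.

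There remains the real issue: the recursion takes residues one variable at a time in the order $k,\dots,1$, and after the swap the residues in $z_\ell$ and $z_{\ell+1}$ are taken in the opposite order. Since $v_\ell,v_{\ell+1}$ are incomparable, neither is the source of an arrow in $S_c$ ending at the other, and both simple poles $z_\ell=t(v_\ell)z_r$ and $z_{\ell+1}=t(v_{\ell+1})z_{r'}$ are present in $f^{(\ell+2)}$ independently of each other (by \cite[Proposition 3.24]{N Reduced} applied to either labeling). I would therefore write $f^{(\ell+2)}$ locally as $g/((z_\ell-a)(z_{\ell+1}-b))$ with $g$ regular, so that the iterated residues commute. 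The main obstacle is the case where a wheel condition was used to cancel a double pole, as in \eqref{eqn:shrub}. There the pole in one variable is simple only after the other has been specialized. I would handle this by checking that such cancellations involve only vertices comparable to the specialized one (the essential vertices of dominoes). Consequently a swap of incomparable vertices never separates a cancelling pair, and the local factorization above still holds.
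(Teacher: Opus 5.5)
\textbf{There is a genuine gap in your comparison step, and the equality it aims at is false.}

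When you swap two incomparable labels, you correctly find that the two integrands \eqref{eqn:f} differ by the ratio of the zeta factors $\zeta^{\wA_\Gamma}_{i_{\ell+1}i_\ell}(z_{\ell+1}/z_\ell)$ and $\zeta^{\wA_\Gamma}_{i_\ell i_{\ell+1}}(z_\ell/z_{\ell+1})$. You also correctly find that this ratio is regular and nonvanishing at the specialization point. But that only shows the two iterated residues are \emph{proportional}, with constant equal to the value of the ratio there. It does not show they are equal.

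No bookkeeping of monomial prefactors and signs can rescue this, because the linear factors themselves do not cancel. The ratio $\zeta_{ij}(x)/\zeta_{ji}(x^{-1})$ is the genuinely nontrivial function appearing in \eqref{eqn:e phi}.

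A concrete counterexample, in the non-super case with all $d_a=0$. Take the shrub consisting of a root $r$ of color $c$ and the two arrows $r\to r+(1,1)$ and $r\to r+(0,-1)$.
\begin{itemize}
\item The two leaves are incomparable (no arrow joins them).
\item They have colors $c$ and $c+1$, and coordinate ratio $y=q^{-3}d^{-1}$.
\item By \eqref{eqn:zeta affine sl},
\[
\frac{\zeta^{\wA_\Gamma}_{c+1,c}(y)}{\zeta^{\wA_\Gamma}_{c,c+1}(y^{-1})}=\frac{-(1-q^{-4})}{q^{-3}(1-q^{2})}=q+q^{-1}\neq 1 .
\]
\item The residue itself is nonzero for, e.g., $F=(z_{c+1,1}-qd^{-1}z_{c1})(z_{c+1,1}-qd^{-1}z_{c2})\in\wCS^-_{\wA_\Gamma}$. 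This $F$ vanishes on the wheel \eqref{eqn:wheel affine even} but not at $(z,q^2z,q^{-1}d^{-1}z)$.
\end{itemize}
So if relabeling is allowed to change the color order in $\prod_{a<b}\zeta_{i_bi_a}(z_b/z_a)$, the two residues genuinely differ by a factor $(q+q^{-1})^{\pm1}$.

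\textbf{What the claim actually says.} As the paper's proof makes clear, the claim concerns a \emph{fixed} integrand. Relabeling only renames the variables, so it changes only the order in which the successive residues are taken, or equivalently the nesting of the contours. The paper's argument runs as follows:
\begin{itemize}
\item The only poles of $f$ relevant at the specialization link $z_a$ and $z_b$ when there is an arrow between $v_a$ and $v_b$.
\item Every labeling orders such pairs in the same direction.
\item Hence passing between labelings only interchanges the nesting of contours of variables not linked by any pole, which leaves the integral unchanged.
\end{itemize}

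\textbf{What to keep.} Your last paragraph is the residue-language version of exactly this argument. It uses the local form $g/((z_\ell-a)(z_{\ell+1}-b))$. It also observes that the wheel conditions reducing a double pole at $v$ involve only $v$ and the vertices of the faces bounded by the broken wheels into $v$. All of these precede $v$, so no incomparable vertex is involved. Keep that part and discard the comparison of two different integrands.

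One further correction: the partial order must be the one generated by the arrows of $\mathscr{S}$, as in the paper's proof. In $\tQ$ every arrow lies on an oriented face, so paths exist in both directions between its vertices. Consequently ``no path in $\tQ$ in either direction'' never holds.
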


\medskip

\begin{proof} 
The value of \eqref{eqn:residue} is given by a successive contour integral in the variables $z_1,\dots,z_k$, where the contour of $z_a$ lies outside the contour of $z_b$ if $a<b$. Thus, if there is an arrow from $v_b$ to $v_a$ in our shrubbery, then the contour of $z_a$ must lie outside the contour of $z_b$. However, the arrows determine all possible poles of the rational function \eqref{eqn:f}. Thus, if the aforementioned rational function has a pole involving two variables $z_a$ and $z_b$, then there is an arrow from $v_b$ to $v_a$ and so the contours of these variables must be in some particular order. This implies the claim: even though we are allowed to relabel the variables, the fact that the property ``$\exists$ arrow from $v_b$ to $v_a$ only if $a<b$" holds both before and after the relabeling, means that we are only required to change the order of contours of variables $z_a$ and $z_b$ which are not related by any possible poles of the rational function \eqref{eqn:f}.   
\end{proof}

\medskip
\noindent 
Let us now return to the proof of Lemma \ref{lem:pairing sl}. Formula (3.31) of \cite{N Reduced} shows that the pairing \eqref{eqn:pairing explicit} is given by the following formula
$$
  \Big \langle e_{i_1,d_1}\dots e_{i_k,d_k}, F \Big \rangle = 
  \sum^{\text{labeled acceptable}}_{\text{shrubberies }\mathscr{S} = S_1 \sqcup \dots \sqcup S_t} \int_{|z_{r_1}| = \rho_{i_{r_1}}} \dots \int_{|z_{r_t}|=\rho_{i_{r_t}}} Dz_{r_1} \dots Dz_{r_t}
$$
\begin{equation}
\label{eqn:residue formula} 
  \underset{S_1}{\text{Res}} \dots \underset{S_t}{\text{Res}} \ 
  \frac{F(z_{i_a\bullet_a})_{a \in \{1,\dots,k\}} z_1^{d_1}\dots z_k^{d_k}}
       {\prod_{1 \leq a < b \leq k} \zeta^{\wA_{\Gamma}}_{i_bi_a} \left(\frac {z_b}{z_a} \right)} 
\end{equation}
However, since both $F$ and the contours of integration are color-symmetric, the right-hand side of \eqref{eqn:residue formula} is a linear functional of
$$
  \text{Sym} \left[ \frac{z_1^{d_1}\dots z_k^{d_k}}{\prod_{1 \leq a < b \leq k} \zeta^{\wA_{\Gamma}}_{i_bi_a} \left(\frac {z_b}{z_a} \right)} \right] = 
  \frac{\twUpsilon^+_{\wA_{\Gamma}} \left(e_{i_1,d_1}\dots e_{i_k,d_k} \right)}{\prod_{1 \leq a \neq b \leq k} \zeta^{\wA_{\Gamma}}_{i_bi_a} \left(\frac {z_b}{z_a} \right)}
$$
which concludes the proof of Lemma \ref{lem:pairing sl}.

\medskip
\noindent
For future reference, let us mention that the key formula \eqref{eqn:residue formula} is proved using the following recursive procedure for $\ell$ going down from $k$ to $1$: we initialize $\mathscr{S}_k$ to be the shrubbery consisting of a single shrub with a single vertex $v_k$ of color $i_k$. At some point during the recursion, assume that the variables $\{z_b\}_{\ell < b \leq k}$ have all been specialized to $t_{S_c}(v_b)z_{r_c}$ for various constituent shrubs $\{S_c\}_{c \in \{1,\dots,t\}}$ of a shrubbery $\mathscr{S}_{\ell+1}$. As per formula \eqref{eqn:pairing explicit}, we move the contour of integration of the variable $z_{\ell}$ from infinity to the circle of radius $\rho_{i_{\ell}}$. If the contour makes it all the way to this circle, then we declare the variable $z_{\ell}$ to be specialized to the root of a new shrub:
\begin{equation}
\label{eqn:option 1}
  \mathscr{S}_{\ell} = \mathscr{S}_{\ell+1} \sqcup \Big(\text{one-vertex shrub } \{v_{\ell}\} \text{ of color }i_{\ell} \Big)
\end{equation}
Otherwise (by the residue theorem) the variable $z_{\ell}$ would be specialized to one of the apparent poles of the rational function \eqref{eqn:f}, which are of the form
\begin{equation}
\label{eqn:pole}
  z_{\ell} = z_b t_\alpha 
\end{equation}
for various arrows $\alpha \colon i_b \rightarrow i_{\ell}$, and various $b \in \{\ell+1,\dots,k\}$. If we let $v_{\ell} \in \tQ$ denote the vertex whose coordinate equals that of $v_b$ times $t_{\alpha}$ (with $b$ and $\alpha$ as in \eqref{eqn:pole}), then the denominator of \eqref{eqn:f} produces a pole of order 
$$
  \# = \left| \Big\{ \text{arrows from }v_{\ell+1},\dots,v_k \in \mathscr{S}_{\ell+1} \text{ to }v_{\ell} \Big\} \right| 
$$
However, as proved in \cite[Proposition 3.21]{N Reduced}, the combinatorics of shrubs ensures that the numerator of \eqref{eqn:f} vanishes (because $F$ satisfies the wheel conditions \eqref{eqn:wheel affine even} and \eqref{eqn:wheel affine odd}, i.e.\ vanishes when its variables are specialized according to any triangular or square face) up to order 
$$
  \begin{cases} 
    = \# - 1 &\text{if }v_{\ell} \text{ is addable to some constituent shrub }S_c \subset \mathscr{S}_{\ell+1} \\
    \geq \# &\text{otherwise} 
  \end{cases}
$$
To see that the first option above holds in the example of Figure \ref{fig:addable}, assume that $v = v_\ell$ is the vertex marked by the black disk and $S_c$ is the green shrub. We have $\#=3$ in this case, but the fact that $F$ satisfies the length $3$ wheel conditions \eqref{eqn:wheel affine even} implies that the numerator of \eqref{eqn:f} vanishes to order $2=3-1$ (this is because of the two shaded red faces in Figure \ref{fig:addable}, which impose two independent vanishing conditions as they only intersect at the black vertex; one could also use the two shaded blue faces instead to reach the same conclusion).

\medskip 
\noindent 
We thus conclude that the apparent pole \eqref{eqn:pole} could be an actual pole (and thus contributes non-trivially to the right-hand side of \eqref{eqn:residue formula}) only if the vertex $v_{\ell}$ is addable to $S_c \subset \mathscr{S}_{\ell+1}$, in which case the pole in question is simple. We let
\begin{equation}
\label{eqn:option 2}
  \mathscr{S}_{\ell} = \Big(\mathscr{S}_{\ell+1} \text{ with }S_c \text{ replaced by }S_c + v_{\ell} \Big)
\end{equation}
and continue the recursive procedure until we get down to $\ell=1$. The shrubbery $\mathscr{S} = \mathscr{S}_1$ obtained via \eqref{eqn:option 1} or \eqref{eqn:option 2} at every step of this recursive algorithm is the one appearing in the right-hand side of \eqref{eqn:residue formula}. 
\end{proof}


\medskip 

\subsection{Asps (\underline{a}cceptable \underline{s}pecialization \underline{p}attern\underline{s})}
\label{sub:specialization patterns}

For further	use, let us note that in the proof of Lemma \ref{lem:pairing sl}, the full combinatorics of shrubs (i.e.\ the classification of Proposition \ref{prop:shrubs sl}) was not needed. Instead, the only essential feature we used is that a (labeled) acceptable shrub encodes the information necessary to specialize Laurent polynomials $F$ at various values
\begin{equation*}
  \big\{z_\ell \mapsto x_{\ell} \in \BC^\times \big\}_{\ell \in \{1,\dots,k\}}
\end{equation*}
(where each $z_{\ell}$ is a variable of color $i_\ell$ of $F$), which is defined recursively in $\ell$, ranging from $k$ down to $1$. These specializations have the following features:

\medskip 

\begin{itemize}[leftmargin=0.7cm]

\item 
if there are $\#$ factors of the form $z_{\ell} - z_o \frac {x_{\ell}}{x_o}$ in the numerator of the expression
$$
  \prod_{o = \ell+1}^k \zeta^{\wA_\Gamma}_{i_{o}i_{\ell}} \left( \frac {z_o}{z_{\ell}} \right)
$$
then any $F \in \CS^-_{\wA_{\Gamma}}$ has the property that the specialization 
$$
  F|_{z_o \mapsto x_o, \ell < o \leq k}
$$
vanishes to order $\geq \#-1$ at $z_{\ell} = x_{\ell}$; 

\medskip

\item 
for a fixed choice of positive real numbers $\{\rho_i\}_{i \in \wI}$, we require that
\begin{equation}
\label{eq:acceptability}
  |x_k| =  \rho_{i_k}, \qquad |x_{\ell}| > \rho_{i_{\ell}}  \quad \forall\, 1\leq \ell < k 
\end{equation}
in order to mimic the acceptability condition \eqref{eqn:acceptable}. 

\end{itemize}

\medskip 
\noindent 
In types other than $A$ and $\wA$ (corresponding to an arbitrary quiver with vertex set $I$ and parameters as in Subsection \ref{sub:quiver}), we formalize the above discussion as follows. The main idea is that the two bullets above allow one to recursively define the map
$$
  F(z_1,\dots,z_k) \leadsto \underset{S}{\text{Res}} \left( \frac {F(z_1,\dots,z_k)}{\prod_{1\leq a < b \leq k} \zeta^{\wA_\Gamma}_{i_bi_a}\left(\frac {z_b}{z_a}\right)} \right) \in \BK(z_k)
$$
for any shrub $S$ (recall \eqref{eqn:shrub} for an example). In the case of a shrubbery consisting of $t$ shrubs, a similar procedure produces a rational function in $t$ variables. The analogous construction for a general quiver is given by the following.

\medskip

\begin{definition}
\label{def:specialization pattern}
A specialization pattern is a collection of so-called \emph{vertices}
$$
  v_\ell = ( i_{\ell}, x_{\ell} ) \in I \times \BK^\times 
$$
for $\ell \in \{1,\dots,k\}$, which satisfies the following property for all $\ell$: if there are $\#$ arrows $\{i_o \rightarrow i_{\ell}\}_{\ell < o \leq k}$ in the quiver with parameter equal to $\frac {x_{\ell}}{x_{o}}$, then any Laurent polynomial $F$ in the corresponding shuffle algebra has the property that 
\begin{equation}
\label{eqn:spec}
  F|_{z_o \mapsto x_o, \ell < o \leq k}
\end{equation}
vanishes to order $\geq \#-1$ at $z_{\ell} = x_{\ell}$, with the variables labeled as in \eqref{eqn:f}. The vertex $v_k$ is called the \emph{root} of the specialization pattern. 
\end{definition}

\medskip

\begin{definition}
\label{def:acceptable specialization pattern} 
Fix a choice of positive real numbers $\{\rho_i\}_{i \in I}$. A specialization pattern is called \emph{acceptable} (abbreviated \emph{asp}) if \eqref{eq:acceptability} holds.
\end{definition}

\medskip 
\noindent 
With this in mind, we observe that the key argument that allowed the proof of Lemma \ref{lem:pairing sl} is the following: the set of shrubs is a collection of asps which is \emph{closed under extensions}. The first requirement for a collection $\mathcal{C}$ of asps to be closed under extensions is that it should contain all one-vertex asps. The second requirement for $\mathcal{C}$ to be closed under extensions is the following: suppose we have chosen vertices $v_{\ell+1},\dots,v_k$ that determine an asp in $\mathcal{C}$, and pick an arbitrary vertex $v_{\ell} = (i_{\ell},x_{\ell})$ that satisfies the acceptability condition \eqref{eq:acceptability}. If $\#$ is the number of arrows $\{i_o \rightarrow i_{\ell}\}_{\ell < o \leq k}$ in the quiver with parameter equal to $\frac {x_{\ell}}{x_{o}}$, then one of the following holds:

\medskip 

\begin{enumerate}[leftmargin=1cm]

\item 
the vertices $\{v_{\ell},\dots,v_k\}$ also form an asp in $\mathcal{C}$;

\medskip 

\item  
any $F$ in the corresponding shuffle algebra has the property that the specialization \eqref{eqn:spec} vanishes to order $\geq \#$ at $z_{\ell} = x_{\ell}$.

\end{enumerate}

\medskip 
\noindent 
When using the terminology of asps in Section \ref{sec:osp}, we will often use the phrase ``go" or ``travel" from an asp $\{v_{\ell+1},\dots,v_k\}$ to a vertex $v_{\ell}$ in the situation of option (1) above, in order to suggestively present the concept of extending asps. Alternatively, in option (2), we would say that various wheel conditions on $F$ ``disallow'' or ``prohibit'' us from going/traveling from an asp $\{v_{\ell+1},\dots,v_k\}$ to the vertex $v_{\ell}$.

\medskip

\begin{remark} 
\label{rem:asp}
Thus, we may summarize the proof of Lemma \ref{lem:pairing sl} as saying that ``asps underlie shrubs", i.e.\ shrubs are merely a combinatorial way to encode asps in super types $A$ and $\wA$, and to prove that they are closed under extensions. Thus, the need to use shrubs is simply a consequence of the complicated combinatorics of the periodic quiver $\tQ$ of Figure \ref{fig:grid} for general parity sequences. In the non-super types $A$ and $\wA$ studied in \cite{E, FO, N Toroidal, Ts}, the parity sequence was $(1,\dots,1)$ and the periodic quiver was the one of Figure \ref{fig:C3 cover}. In this particular case, by choosing the absolute values of $q$ and $d$ appropriately, one could ensure that the only asps are given by straight diagonal lines and the full combinatorics of shrubs is not necessary.
\end{remark}


\bigskip 
    
\section{Quantum affine superalgebras of types $B, C, D$}
\label{sec:osp}
	
\medskip 

In this Section, we will give a case-by-case rundown of types $B,C,D$, and give the shuffle realization for each of them. Note that our organization into the three types $B,C,D$ follows~\cite[\S 3.1]{Y1} (it is suggested by the analogy with the shape of non-super Dynkin diagrams) and is different from the standard reference~\cite{K}. Let $\pI = \{1,\dots,N\} = \sI\cup\{N\}$ be the indexing set of simple roots in types $B,C,D$.


\subsection{Type $D$}
\label{sub:d}
	
Define the pre-quantum affine superalgebra as (cf.~\eqref{eqn:quad intro})
\begin{equation*}
  \tudG = \BC \Big \langle e_{i,d}, f_{i,d}, \ph_{i,d'}^\pm \Big \rangle_{i \in \pI, d \in \BZ, d' \geq 0} \Big/ 
  \Big (  \text{relations \eqref{eqn:rel quantum affine 1}-\eqref{eqn:rel quantum affine 6}}   \Big )
\end{equation*}
where the relations above are defined with respect to the zeta functions $\zeta^{D_{\Gamma}}_{ij}(x)$ that are encoded by the decorated Dynkin diagram $\Gamma$. We further define the quantum affine superalgebra $\udG$ by imposing higher order relations, depending on $\Gamma$.

\medskip
\noindent
\underline{Case 1}
\begin{equation*}
\begin{tikzcd}
  &&&& {N-1} \\
  \cdots && {N-2} \\
  &&&& N
  \arrow[no head, from=2-1, to=2-3]
  \arrow["q^{\pm 1}", no head, from=2-3, to=1-5]
  \arrow["q^{\mp 2}", swap, no head, from=1-5, to=1-5, looseness=4]
  \arrow["q^{\pm 1}"', no head, from=2-3, to=3-5]
  \arrow["q^{\mp 2}", swap, no head, from=3-5, to=3-5, looseness=4]
  \arrow[dashed, no head, from=2-3, to=2-3, looseness=4]
\end{tikzcd}
\end{equation*}
The body $\Gb$ formed by the vertices $\{1,\dots,N-2\}$ is a type $A$ decorated Dynkin diagram, and $|N-1|=0=|N|$. The zeta functions $\zeta^{D_{\Gamma}}_{ij}(x)$ are defined by
\begin{equation}
\label{eq:zeta-D-case1}
\begin{split}
  & \zeta^{D_{\Gamma}}_{ij}(x)=\zeta^{A_{\Gb}}_{ij}(x) \qquad \forall\, i,j\in \Gb  \\
  & \zeta^{D_{\Gamma}}_{N-1,N-1}(x) = \zeta^{D_{\Gamma}}_{NN}(x)=\frac{1-xq^{\mp 2}}{1-x} \\
  & \zeta^{D_{\Gamma}}_{N-2,N-1}(x) = \zeta^{D_{\Gamma}}_{N-2,N}(x) = \frac{1-xq^{\pm 1}}{x} \\
  & \zeta^{D_{\Gamma}}_{N-1,N-2}(x) = \zeta^{D_{\Gamma}}_{N,N-2}(x) = -(1-xq^{\pm 1}) \\
  & \zeta^{D_{\Gamma}}_{N-1,N}(x) = \zeta^{D_{\Gamma}}_{N,N-1}(x) = 1 \\
  & \zeta^{D_{\Gamma}}_{N-1,i}(x)=\zeta^{D_{\Gamma}}_{Ni}(x)=\zeta^{D_{\Gamma}}_{i,N-1}(x)=\zeta^{D_{\Gamma}}_{iN}(x)=1 
    \qquad \forall\, i \in \{1,\dots,N-3\}
\end{split}
\end{equation}
The corresponding quantum affine superalgebra of type $D_\Gamma$ is 
\begin{equation*}
\udG = \tudG \Big / \Big (\text{relations \eqref{eqn:rel quantum affine 8}-\eqref{eqn:rel quantum affine 9}} \text{ and their }  f\text{-versions} \Big )
\end{equation*}
where we consider \eqref{eqn:rel quantum affine 8} for $i,i\pm 1 \leq N-1$ as well as $(i,i\pm 1)$ replaced by $(N-2,N)$ or $(N,N-2)$, consider~\eqref{eqn:rel quantum affine 9} for $i-1,i,i+1\leq N-1$ as well as $(i-1,i,i+1)$ replaced by $(N-3,N-2,N)$. Note, however, that we do not impose~\eqref{eqn:rel quantum affine 9} for $\{i-1,i,i+1\}=\{N-2,N-1,N\}$. 
For the neck parameter $q^{\pm 1} = q^{-1}$, this recovers the algebra denoted by $U^D_q(\widehat{\fg}_{\bs}) |_{C=1}$ in~\cite[Definition 5.1]{BFK} corresponding to $\fosp(2m|2n)$ with $N=m+n$ and the parity sequence $\bs$ satisfying $s_{N-1}=1, s_{N}=1$ (which is conjectured to be isomorphic to $U_q^{DJ}(\fg_{\widehat{X}_\bs})|_{c=1}$, see \cite[Conjecture 5.1]{BFK}).

\medskip
\noindent
\medskip
\noindent
\underline{Case 2}
\begin{equation*}
\begin{tikzcd}
  &&&& {N-1} \\
  \cdots && {N-2} \\
  &&&& N
  \arrow[no head, from=2-1, to=2-3]
  \arrow["q^{\pm 1}", no head, from=2-3, to=1-5]
  \arrow["q^{\pm 1}"', no head, from=2-3, to=3-5]
  \arrow["{q^{\mp 2}}", no head, from=1-5, to=3-5]
  \arrow[dashed, no head, from=2-3, to=2-3, looseness=4]
\end{tikzcd}
\end{equation*}

\noindent
The body $\Gb$ formed by the vertices $\{1,\dots,N-2\}$ is a type $A$ decorated Dynkin diagram, and $|N-1|=1=|N|$. The zeta functions $\zeta^{D_{\Gamma}}_{ij}(x)$ are defined by
\begin{equation}
\label{eq:zeta-D-case2}
\begin{split}
  & \zeta^{D_{\Gamma}}_{ij}(x)=\zeta^{A_{\Gb}}_{ij}(x) \qquad \forall\, i,j\in \Gb \\
  & \zeta^{D_{\Gamma}}_{N-1,N-1}(x) = \zeta^{D_{\Gamma}}_{NN}(x)=\frac{x^{\frac{1}{2}}}{1-x} \\
  & \zeta^{D_{\Gamma}}_{N-2,N-1}(x) = \zeta^{D_{\Gamma}}_{N-2,N}(x) = (-1)^{|N-2|} \frac{1-xq^{\pm 1}}{x} \\
  & \zeta^{D_{\Gamma}}_{N-1,N-2}(x) = \zeta^{D_{\Gamma}}_{N,N-2}(x) = - (1-xq^{\pm 1}) \\ 
  & \zeta^{D_{\Gamma}}_{N-1,N}(x)=-\frac{1-xq^{\mp 2}}{x}, \quad \zeta^{D_{\Gamma}}_{N,N-1}(x)=-(1-xq^{\mp 2}) \\
  & \zeta^{D_{\Gamma}}_{N-1,i}(x)=\zeta^{D_{\Gamma}}_{Ni}(x)=1, \quad \zeta^{D_{\Gamma}}_{i,N-1}(x)=\zeta^{D_{\Gamma}}_{iN}(x)=(-1)^{|i|} \qquad \forall\, i\in \{1,\dots,N-3\} 
\end{split}
\end{equation}
The corresponding quantum affine superalgebra of type $D$ is 
\begin{equation*}
\udG = \tudG \Big / \Big (\text{relations \eqref{eqn:rel quantum affine 8}-\eqref{eqn:rel quantum affine 9}, \eqref{eqn:rel cd 1}} 
  \text{ and their }  f\text{-versions} \Big )
\end{equation*}
where \eqref{eqn:rel quantum affine 8}-\eqref{eqn:rel quantum affine 9} are understood as in Case 1 above, and 
\begin{equation}
\label{eqn:rel cd 1}
  [ [ e_{N-2}(w) , e_{N-1}(y) ]_{q^{\mp 1}},  e_N(z) ]_{q^{\pm 1}} - [ [ e_{N-2}(w) , e_N(z) ]_{q^{\mp 1}} ,  e_{N-1}(y) ]_{q^{\pm 1}} = 0
\end{equation}
For the neck parameter $q^{\pm 1} = q$, this recovers the algebra denoted by $U^D_q(\widehat{\fg}_{\bs}) |_{C=1}$ in~\cite[Definition 5.1]{BFK} corresponding to $\fosp(2m|2n)$ with $N=m+n$ and the parity sequence $\bs$ satisfying $s_{N-1}=-1, s_{N}=1$ (which is conjectured to be isomorphic to $U_q^{DJ}(\fg_{\widehat{X}_\bs})|_{c=1}$, see \cite[Conjecture 5.1]{BFK}). In particular, relation~\eqref{eqn:rel cd 1} recovers~\cite[(5.15)]{BFK} (though \loccit lists six relations, all of them are tautologically equivalent).~\footnote{In fact, the $q^{\pm 1}\leadsto q^{\mp 1}$ version of this relation can alternatively be used, cf.~Remark~\ref{rem:equivalent-C-six-relations}.}

\medskip
\noindent 
The big shuffle algebra is (with $\twist$ defined similarly to~\eqref{eq:root-twist})
$$
  \CV^+_{D_{\Gamma}} = \bigoplus_{\bn = (n_i)_{i \in \pI} \in \pnn} \BC[z_{i1}^{\pm 1},\dots,z_{in_i}^{\pm 1}]_{i \in \pI}^{\sym} \cdot \twist
$$
made into an associative algebra by formula \eqref{eqn:shuffle product} with $\zeta^{D_{\Gamma}}$ instead of $\zeta$. Let
$$
  \CS^+_{D_{\Gamma}} \subset \CV^+_{D_{\Gamma}}
$$
be the set of color-symmetric Laurent polynomials (times $\twist$) which vanish at the length $3$ and $4$ wheels \eqref{eqn:wheel finite even}-\eqref{eqn:wheel finite odd} for all applicable colors as explained above, together with the additional length $3$ wheels \eqref{eqn:wheel d 1} in Case 2. According to Lemma~\ref{lem:easy}, the aforementioned wheel conditions are preserved by the shuffle product and thus $\CS^+_{D_{\Gamma}}$ is an algebra. We will now prove Theorem \ref{thm:d intro}, which states that the homomorphism 
$$
  \tUpsilon^+_{D_{\Gamma}}\colon \tudGp \to \CV^+_{D_{\Gamma}}, \qquad  e_{i,d}\mapsto z^d_{i1} ,\quad \forall\, i\in \pI, d\in \BZ
$$ 
of~\eqref{eqn:tilde upsilon} induces an algebra isomorphism 
$$
  \Upsilon^+_{D_{\Gamma}}\colon \udGp \iso \CS^+_{D_{\Gamma}} 
$$
We will denote by $\CS^-_{D_{\Gamma}} \subset \CV^-_{D_{\Gamma}}$ the same sets as $\CS^+_{D_{\Gamma}} \subset \CV^+_{D_{\Gamma}}$, but endowed with the opposite algebra structure.

\medskip

\begin{proof} \emph{of Theorem \ref{thm:d intro}:} 
Since inverting all edge parameters in a decorated Dynkin diagram has the effect of swapping $q \leftrightarrow q^{-1}$ in all associated quantum algebras, we may assume without loss of generality that the neck parameter is $q$. We will follow the outline of Subsection \ref{sub:plan}, in which one needs to prove the following key facts: 
\begin{equation}
\label{eqn:fact 1 cd 1}
  \Big \langle \Big( \text{LHS of \eqref{eqn:rel quantum affine 8}-\eqref{eqn:rel quantum affine 9} and \eqref{eqn:rel cd 1}} \Big), F \Big \rangle = 0 \quad \text{for} \quad F \in \CV^-_{D_{\Gamma}} 
  \quad \Rightarrow \quad F \in \CS^-_{D_{\Gamma}} 
\end{equation} 
(we note that relation \eqref{eqn:rel cd 1} applies only to Case 2) and 
\begin{equation}
\label{eqn:fact 2 d}
  \Big \langle e_{i_1,d_1}\dots e_{i_k,d_k}, F \Big \rangle = 
  \left[\text{a linear functional of }\tUpsilon^+_{D_{\Gamma}} \left(e_{i_1,d_1}\dots e_{i_k,d_k} \right) \right] 
\end{equation}
for any $F\in \CS^-_{D_{\Gamma}}$.

\medskip
\noindent
To establish \eqref{eqn:fact 1 cd 1}, we follow the process laid out in the proof of~\eqref{eqn:key pairing finite sl}. By analogy with \eqref{eqn:x}, we consider the corresponding power series 
$$
  X(w,y,z) = \sum_{a,b,c\in \BZ} X^{a,b,c} w^{-a} y^{-b} z^{-c} = \Big( \text{LHS of \eqref{eqn:rel cd 1}} \Big) 
$$
in Case 2, with all $X^{a,b,c}\in \tudGp$ of degree $\bn=\bsi^{N-2}+\bsi^{N-1}+\bsi^N$. A direct calculation (either by hand or by computer) easily verifies the following analogue of~\eqref{eqn:rational function}:
$$
  \tUpsilon^+_{D_{\Gamma}}(X(w,y,z)) = 0
$$
Moreover, we have the following analogue of \eqref{eqn:dim 1} and~\eqref{eqn:dim 2}:
\begin{equation*}
  \dim_{\BC} \left(\CV_{D_{\Gamma};-\bn,-d}/\CS_{D_{\Gamma};-\bn,-d} \right) = 2
\end{equation*}
because there are two different length $3$ wheels in~\eqref{eqn:wheel d 1}. However, the following analogues of~\eqref{eqn:not zero 1} and~\eqref{eqn:not zero 2} can be computed in a straightforward fashion (either by computer or by hand, see Appendices~\ref{sub:non-zero type d case 2} and~\ref{app:D-fork-odd} for details):
\begin{equation}
\label{eqn:not zero d}
\begin{split}  
  \big \langle X^{a,b,c}, z^{-a}_{N-2,1} z^{-b-1}_{N-1,1} z^{-c+1}_{N1} \big \rangle &= 1 \\
  \big \langle X^{a,b,c}, z^{-a}_{N-2,1} z^{-b}_{N-1,1} z^{-c}_{N1} \big \rangle &= -1  \\ 
  \big \langle X^{a,b,c}, z^{-a}_{N-2,1} z^{-b+1}_{N-1,1} z^{-c-1}_{N1} \big \rangle &= -q^2-1-q^{-2} 
\end{split}
\end{equation}
Because of these formulas, we conclude that for any fixed $a,b,c \in \BZ$, the two linear functionals $\langle X^{a,b,c},-\rangle$ and $\langle X^{a,b+1,c-1},-\rangle$ take linearly independent values when evaluated on the Laurent polynomials $z^{-a}_{N-2,1} z^{-b}_{N-1,1} z^{-c}_{N1}$ and $z^{-a}_{N-2,1} z^{-b-1}_{N-1,1} z^{-c+1}_{N1}$. Thus, just like in the proof of~\eqref{eqn:key pairing finite sl}, we conclude that 
$$
  \big \langle X(w,y,z), F \big \rangle = 0 \text{ for } F \in \CV_{D_{\Gamma};-\bn} \quad \Rightarrow \quad F \in \CS_{D_{\Gamma};-\bn}
$$
This allows to complete the proof of~\eqref{eqn:fact 1 cd 1} just like in the proof of~\eqref{eqn:key pairing finite sl}.

\medskip 
\noindent 
In order to prove \eqref{eqn:fact 2 d}, we will adapt the proof of Lemma \ref{lem:pairing sl}. As explained in Subsection \ref{sub:specialization patterns}, we only need to describe the asps of Definitions \ref{def:specialization pattern}-\ref{def:acceptable specialization pattern} in the case at hand, and to show that they are closed under extensions. To this end, we define acceptability with respect to positive real numbers \eqref{eqn:rho} such that 
\begin{equation}
\label{eqn:greater d}
  \rho_1 , \dots, \rho_{N-2} \ll \rho_{N-1} = \rho_N, 
\end{equation}
meaning that while the relative order of the numbers $\rho_1,\dots,\rho_{N-2}$ is irrelevant, $\rho_{N-1}=\rho_N$ should be much larger than all of them \footnote{Specifically, we need $|\rho_N| = |\rho_{N-1}| \gg |\rho_1| |q^{\pm k\varpi}|,\dots,|\rho_{N-2}| |q^{\pm k\varpi}|$, where $\varpi$ is a suitably large constant and $k$ is the number which appears in \eqref{eqn:fact 2 d}.}. Because of the assumption \eqref{eqn:greater d}, any asp rooted at color $\leq N-2$ will actually be constrained to vertices of colors $1,\dots,N-2$, and is thus nothing more than a finite type shrub. As for asps rooted at color $N-1$ (the case of color $N$ is analogous), their full classification is depicted on the left side of Figures \ref{fig:asps D 1}-\ref{fig:asps D 2} and we will now explain how to read the pictures. The parameter decorating any arrow is completely determined by its endpoints according to the rule
$$
  (\text{lattice point }(x,y)) \xrightarrow{q^{x'+y'-x-y}} (\text{lattice point }(x',y'))
$$
(the lattice is normalized so that the vertical arrows  in the pictures on the left carry the parameter $q$). The root of the asp is depicted by the blue disk, and the arrows indicate the behavior of the asps in colors $N-2$ and $N-1,N$. The black arrows carry the same information as a finite type shrub $S$ rooted at the black disk (indicated on the right side of Figures \ref{fig:asps D 1}-\ref{fig:asps D 2}), corresponding to a type $A$ decorated Dynkin diagram with vertices
\begin{equation}
\label{eqn:small d}
  1,\dots,N-2, *
\end{equation}
where $*$ is an odd vertex. The blue arrows in Figures \ref{fig:asps D 1}-\ref{fig:asps D 2} just show how one can extend the finite type shrub $S$ in order to obtain a type $D$ asp; the only condition is that there be at least as many southwest-pointing blue arrows as southwest-pointing black curved arrows. Note that whenever two solid arrows come into a vertex $v$, there is always a dotted arrow coming out of that vertex: this indicates that the apparent double pole corresponding to $v$ in the residue \eqref{eqn:residue} is actually reduced to a simple pole due to various wheel conditions, much like the example of Figure \ref{fig:shrub} in type $A$. Finally, the fact that the asps in Figures \ref{fig:asps D 1}-\ref{fig:asps D 2} are closed under extensions is clear from the positions of the red $\textcolor{red}{\boldsymbol{\times}}$, which indicate vertices where we cannot extend an asp due to the following wheel conditions (by analogy with Remark~\ref{rem:zero}, one cannot extend an asp to an already existing vertex):

\medskip 

\begin{itemize}[leftmargin=0.7cm]

\item 
the $\textcolor{red}{\boldsymbol{\times}}$ southwest of the root are forbidden due to length $3$ wheels they would form with a dotted arrow connected vertex of color $N-2$ and its northeast neighbor of color $N-2$ (or length $4$ wheel if $N-2$ is odd, where we also adjoin one vertex of color $N-3$ that is located on the curved arrow);

\medskip 

\item 
the $\textcolor{red}{\boldsymbol{\times}}$ northeast of the root are forbidden due to length $3$ wheels they would form with a dotted arrow connected vertex of color $N-2$ and the vertex underneath the latter.

\end{itemize}

\begin{figure}[h]
  \includegraphics[scale=0.75]{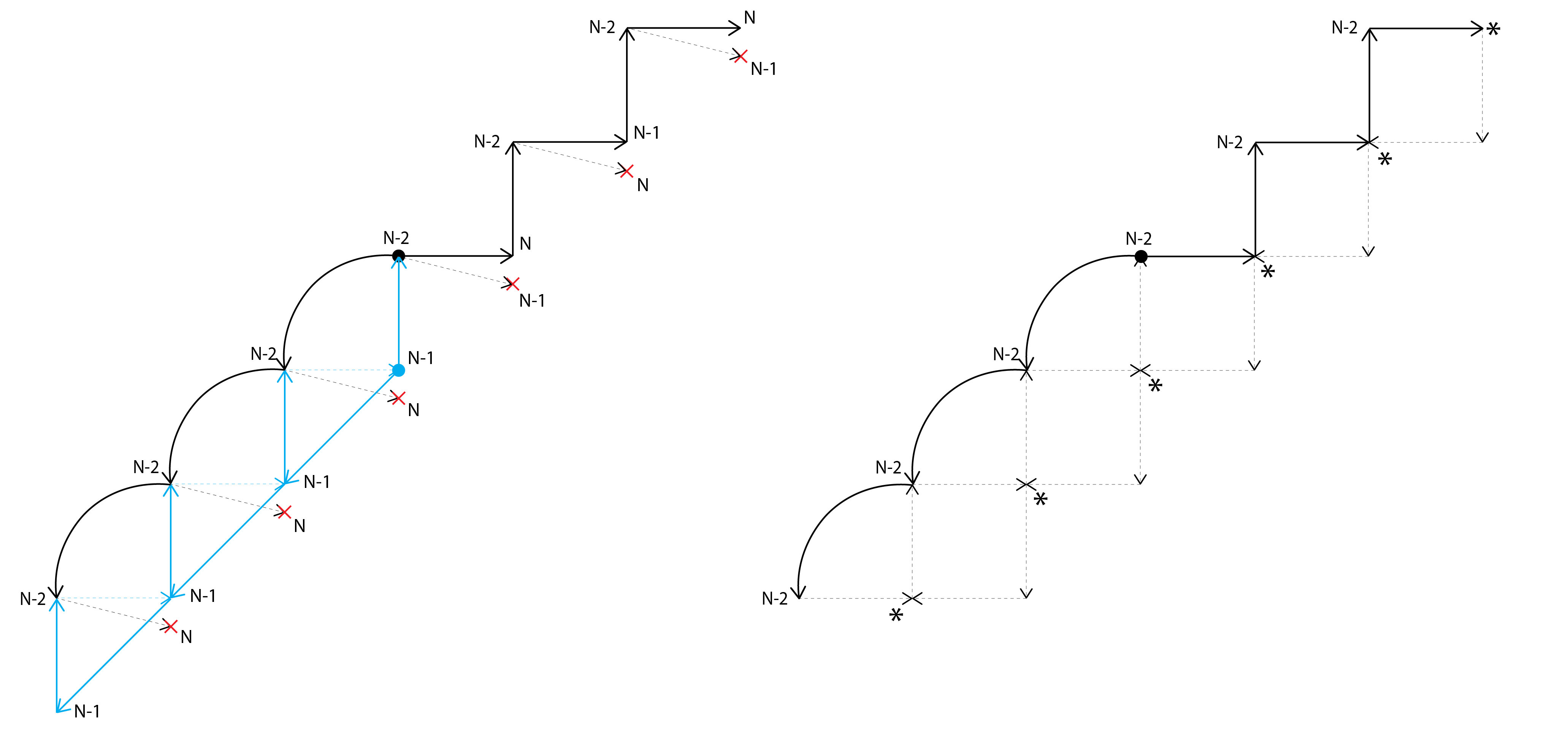} 
  \caption{Type $D$ Case 1: asps rooted at the blue disk on the left, the corresponding finite type shrub on the vertex set \eqref{eqn:small d} on the right. The curved arrows indicate a straight southwest pointing arrow if $|N-2|=0$ or a left-then-down zig-zag if $|N-2|=1$.}
  \label{fig:asps D 1}
\end{figure}

\begin{figure}[h]
  \includegraphics[scale=0.75]{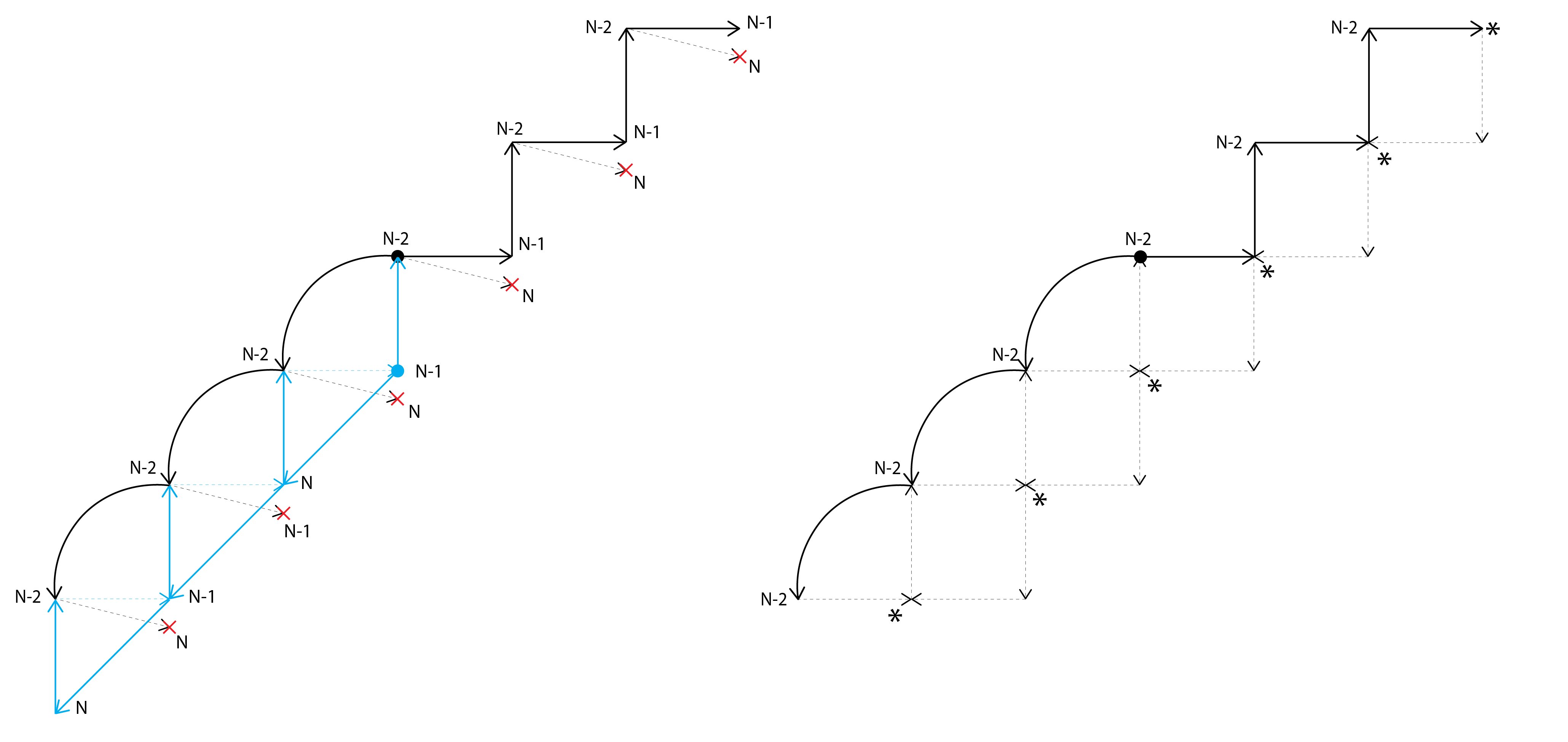} 
  \caption{Type $D$ Case 2: asps rooted at the blue disk on the left, the corresponding finite type shrub on the vertex set \eqref{eqn:small d} on the right. The curved arrows indicate a straight southwest pointing arrow if $|N-2|=0$ or a left-then-down zig-zag if $|N-2|=1$.}
  \label{fig:asps D 2}
\end{figure}

\medskip 
\noindent 
Note that we never invoked the acceptability condition \eqref{eqn:greater d}: to be precise, the notion of ``asps" only involves those pictures among Figures \ref{fig:asps D 1}-\ref{fig:asps D 2} such that for every vertex $v$ of color $N-1$ or $N$ other than the blue disk, the product of arrow parameters from the blue disk to $v$ has absolute value strictly greater than $1$. By cleverly choosing the absolute value of $q$ itself, one could drastically reduce the number of asps. For instance, if we choose $|q|>1$ (respectively $|q|<1$), then a type $D$ asp cannot contain any color $N-1,N$ vertices southwest (respectively northeast) of the blue disk. The reason why we do not make this simplifying assumption at the present moment is that we will need to avoid it for the toroidal cases that will be treated in Section \ref{sec:osp-toroidal}. 
\end{proof}


\medskip 
	
\subsection{Type $B$}
\label{sub:b}

Define the pre-quantum affine superalgebra as (cf.~\eqref{eqn:quad intro})
\begin{equation*}
  \tubG = \BC \Big \langle e_{i,d}, f_{i,d}, \ph_{i,d'}^\pm \Big \rangle_{i \in \pI, d \in \BZ, d' \geq 0} \Big/ 
  \Big (   \text{relations \eqref{eqn:rel quantum affine 1}-\eqref{eqn:rel quantum affine 6}}   \Big )
\end{equation*}
where the relations above are defined with respect to the zeta functions $\zeta^{B_{\Gamma}}_{ij}(x)$ that are encoded by the decorated Dynkin diagram $\Gamma$. We further define the quantum affine superalgebra $\ubG$ by imposing higher order relations, depending on $\Gamma$.

\medskip 
\noindent 
\underline{Case 1}
\begin{equation*}
\begin{tikzcd}
  \cdots && {N-2} && {N-1} && N
  \arrow["q^{\pm 1}"', no head, from=1-7, to=1-5]
  \arrow[no head, from=1-5, to=1-3]
  \arrow[no head, from=1-3, to=1-1]
  \arrow["q^{\mp 1}"', no head, from=1-7, to=1-7, looseness=4]
  \arrow[dashed, no head, from=1-5, to=1-5, looseness=4]
  \arrow[dashed, no head, from=1-3, to=1-3, looseness=4]
\end{tikzcd}
\end{equation*}
The body $\Gb$ formed by the vertices $\{1,\dots,N-1\}$ is a type $A$ decorated Dynkin diagram, and $|N|=0$. The zeta functions $\zeta^{B_{\Gamma}}_{ij}(x)$ are defined by
\begin{equation}
\label{eq:zeta-B-case1}
\begin{split}
  & \zeta^{B_{\Gamma}}_{ij}(x)=\zeta^{A_{\Gb}}_{ij}(x) \qquad \forall\, i,j\in \Gb \\
  & \zeta^{B_{\Gamma}}_{NN}(x)=\frac{1-xq^{\mp 1}}{1-x} \\
  & \zeta^{B_{\Gamma}}_{N-1,N}(x)=\frac{1-xq^{\pm 1}}{x}, \quad \zeta^{B_{\Gamma}}_{N,N-1}(x)=-(1-xq^{\pm 1}) \\
  & \zeta^{B_{\Gamma}}_{Ni}(x)=1, \quad \zeta^{B_{\Gamma}}_{iN}(x)=1 \qquad \forall\, i \in \{1,\dots,N-2\}
\end{split}
\end{equation}
The corresponding quantum affine superalgebra of type $B$ is 
\begin{equation*}
\ubG = \tubG \Big / \Big (\text{relations \eqref{eqn:rel quantum affine 8}-\eqref{eqn:rel quantum affine 9}},~\eqref{eqn:rel b plus} 
  \text{ and their }  f\text{-versions} \Big )
\end{equation*}
where \eqref{eqn:rel quantum affine 8}-\eqref{eqn:rel quantum affine 9} apply to all indices with the exception of \eqref{eqn:rel quantum affine 8} for $(N,N-1)$, when we instead impose the following relation:
\begin{equation}
\label{eqn:rel b plus}
  \mathrm{Sym}_{z_1,z_2,z_3}\,  [ e_N(z_1), [ e_N(z_2), [ e_N(z_3) ,e_{N-1}(w) ]_{q^{\mp 1}} ]_1 ]_{q^{\pm 1}} = 0 
\end{equation}
(this relation is easily seen to be invariant under $q\leadsto q^{-1}$).
For the neck parameter $q^{\pm 1} = q^{-1}$, this recovers the algebra denoted by $U^D_q(\widehat{\fg}_{\bs}) |_{C=1}$ in~\cite[Definition~5.1]{BFK} corresponding to $\fosp(2m+1|2n)$ with $N=m+n$ and the parity sequence $\bs$ satisfying $s_{N}=1$ (which is conjectured to be isomorphic to $U_q^{DJ}(\fg_{\widehat{X}_\bs})|_{c=1}$, see \cite[Conjecture~5.1]{BFK}).

\medskip
\noindent 
\underline{Case 2}
\begin{equation*}
\begin{tikzcd}
  \cdots && {N-2} && {N-1} && N 
  \arrow["q^{\pm 1}"', no head, from=1-7, to=1-5]
  \arrow[no head, from=1-5, to=1-3]
  \arrow[no head, from=1-3, to=1-1]
  \arrow["q^{\pm 1}", no head, from=1-7, to=1-7, looseness=4]
  \arrow["q^{\mp 2}"', no head, from=1-7, to=1-7, looseness=8]
  \arrow[dashed, no head, from=1-5, to=1-5, looseness=4]
  \arrow[dashed, no head, from=1-3, to=1-3, looseness=4]
\end{tikzcd}
\end{equation*}
The body $\Gb$ formed by the vertices $\{1,\dots,N-1\}$ is a type $A$ decorated Dynkin diagram, and $|N|=1$. The zeta functions $\zeta^{B_{\Gamma}}_{ij}(x)$ are defined by
\begin{equation}
\label{eq:zeta-B-case2}
\begin{split}
  & \zeta^{B_{\Gamma}}_{ij}(x)=\zeta^{A_{\Gb}}_{ij}(x) \qquad \forall\, i,j\in \Gb \\
  & \zeta^{B_{\Gamma}}_{NN}(x)=x^{-\frac{1}{2}} \frac{(1-xq^{\mp 2})(1-xq^{\pm 1})}{1-x} \\
  & \zeta^{B_{\Gamma}}_{N-1,N}(x)=(-1)^{|N-1|} \frac{1-xq^{\pm 1}}{x}, \quad \zeta^{B_{\Gamma}}_{N,N-1}(x)=-(1-xq^{\pm 1}) \\
  & \zeta^{B_{\Gamma}}_{Ni}(x)=1, \quad \zeta^{B_{\Gamma}}_{iN}(x)=(-1)^{|i|} \qquad \forall\, i \in \{1,\dots,N-2\} 
\end{split}
\end{equation}
The corresponding quantum affine superalgebra of type $B$ is 
\begin{equation*}
\ubG = \tubG \Big / \Big (\text{relations \eqref{eqn:rel quantum affine 8}-\eqref{eqn:rel quantum affine 9}, \eqref{eqn:rel b 1}-\eqref{eqn:rel b 2}} 
  \text{ and their }  f\text{-versions} \Big )
\end{equation*}
where \eqref{eqn:rel quantum affine 8}-\eqref{eqn:rel quantum affine 9} are understood as in Case 1 and we add the following two relations: 
\begin{equation}
\label{eqn:rel b 1}
  \mathrm{Sym}_{z_1,z_2,z_3}\,  z_3 [ e_N(z_1), [ e_N(z_2), e_N(z_3) ]_{q^{\pm 1}} ]_{q^{\pm 2}}  = 0 
\end{equation}
\begin{multline}
\label{eqn:rel b 2}
   \mathrm{Sym}_{z_1,z_2}\,  z_1\Big( q^{\mp 1}[[e_N(z_1),e_N(z_2)]_{q^{\mp 1}},e_{N-1}(w)]_{q^{\pm 2}}  \\ + (q+q^{-1}) [[e_{N-1}(w),e_N(z_1)]_{q^{\mp 1}},e_N(z_2)]_{1} \Big) = 0 
\end{multline}
In the ``$f$-version'' of relations~\eqref{eqn:rel b 1}-\eqref{eqn:rel b 2}, we replace every $e_i(z)$ with $f_i(z)$ but also replace each prefactor $z_k$ with $z_k^{-1}$. For the neck parameter $q^{\pm 1} = q$, this recovers the algebra denoted by $U^D_q(\widehat{\fg}_{\bs}) |_{C=1}$ in~\cite[Definition 5.1]{BFK} corresponding to $\fosp(2m+1|2n)$ with $N=m+n$ and the parity sequence $\bs$ satisfying $s_{N}=-1$ (which is conjectured to be isomorphic to $U_q^{DJ}(\fg_{\widehat{X}_\bs})|_{c=1}$, see \cite[Conjecture 5.1]{BFK}). In particular, we note that~\cite[(5.10)]{BFK} is precisely~\eqref{eqn:rel b 1}, while \cite[(5.12)]{BFK} is equivalent to~\eqref{eqn:rel b 2}, and \cite[(5.11)]{BFK} is equivalent to~\eqref{eqn:quad intro} for $i=j=N$ and the above choice of $\zeta^{B_{\Gamma}}_{NN}(x)$.

\medskip
\noindent
The big shuffle algebra is (with $\twist$ defined similarly to~\eqref{eq:root-twist})
$$
  \CV^+_{B_{\Gamma}} = \bigoplus_{\bn = (n_i)_{i \in \pI} \in \pnn} \BC[z_{i1}^{\pm 1},\dots,z_{in_i}^{\pm 1}]_{i \in \pI}^{\sym} \cdot \twist
$$
made into an associative algebra by formula \eqref{eqn:shuffle product} with $\zeta^{B_{\Gamma}}$ instead of $\zeta$. Let
$$
  \CS^+_{B_{\Gamma}} \subset \CV^+_{B_{\Gamma}}
$$
be the set of color-symmetric Laurent polynomials (times $\twist$) which satisfy the length $3$ and $4$ wheel conditions \eqref{eqn:wheel finite even} and \eqref{eqn:wheel finite odd} for all applicable colors except for $(i,i-1)=(N,N-1)$ in Case 1, in which case we impose instead the length $4$ wheel condition~\eqref{eqn:wheel b 1}, together with the additional length 3 wheel condition~\eqref{eqn:wheel b 2} in Case 2. According to Lemma~\ref{lem:easy}, all these wheel conditions are preserved by the shuffle product and thus $\CS^+_{B_{\Gamma}}$ is an algebra. We will now prove Theorem \ref{thm:b intro}, which states that the algebra homomorphism 
$$
  \tUpsilon^+_{B_{\Gamma}}\colon \tubGp \to \CV^+_{B_{\Gamma}}, \qquad e_{i,d}\mapsto z^d_{i1} , \quad \forall\, i\in \pI, d\in \BZ
$$
of~\eqref{eqn:tilde upsilon} induces an isomorphism 
$$
  \Upsilon^+_{B_{\Gamma}}\colon \ubGp \iso \CS^+_{B_{\Gamma}}
$$
We will denote by $\CS^-_{B_{\Gamma}} \subset \CV^-_{B_{\Gamma}}$ the same sets as $\CS^+_{B_{\Gamma}} \subset \CV^+_{B_{\Gamma}}$, but endowed with the opposite algebra structure.

\medskip

\begin{proof}\emph{of Theorem \ref{thm:b intro}:} 
Since inverting all edge parameters in a decorated Dynkin diagram has the effect of swapping $q \leftrightarrow q^{-1}$ in all associated quantum algebras, we may assume without loss of generality that the neck parameter is $q$. We will follow the outline of Subsection \ref{sub:plan}, in which one needs to prove the following key facts: 
\begin{multline}
\label{eqn:fact 1 b}
  \Big \langle \Big( \text{LHS of \eqref{eqn:rel quantum affine 8}-\eqref{eqn:rel quantum affine 9} and \eqref{eqn:rel b plus}/\eqref{eqn:rel b 1}-\eqref{eqn:rel b 2}} \Big), F \Big \rangle = 0 
  \quad \text{for} \quad F \in \CV^-_{B_{\Gamma}} \\ \quad \Rightarrow \quad F \in \CS^-_{B_{\Gamma}}
\end{multline} 
(above, the slash reflects the fact that relation \eqref{eqn:rel b plus} applies only to Case 1, while relations \eqref{eqn:rel b 1}-\eqref{eqn:rel b 2} apply only to Case 2) and 
\begin{equation}
\label{eqn:fact 2 b}
  \Big \langle e_{i_1,d_1}\dots e_{i_k,d_k}, F \Big \rangle = 
  \left[\text{a linear functional of }\tUpsilon^+_{B_{\Gamma}} \left(e_{i_1,d_1}\dots e_{i_k,d_k} \right) \right] 
\end{equation}
for any $F\in \CS^-_{B_{\Gamma}}$.

\medskip
\noindent
To establish \eqref{eqn:fact 1 b}, we follow the process laid out in the proof of~\eqref{eqn:key pairing finite sl}.  
By analogy with \eqref{eqn:x}, we consider the corresponding power series 
$$
  X_{N-1,N}(z_1,z_2,z_3,w) = \sum_{k_1,k_2,k_3,\ell\in \BZ} X_{N-1,N}^{k_1,k_2,k_3,\ell} z^{-k_1}_1 z^{-k_2}_2 z^{-k_3}_3 w^{-\ell} = 
  \Big( \text{LHS of \eqref{eqn:rel b plus}} \Big) 
$$
in Case 1, and 
\begin{align*}
  X_{N,N-1}(z_1,z_2,w) & = \sum_{k_1,k_2,\ell\in \BZ} X_{N,N-1}^{k_1,k_2,\ell} z^{-k_1}_1 z^{-k_2}_2 w^{-\ell} = \Big( \text{LHS of \eqref{eqn:rel b 2}} \Big) \\
  X_N(z_1,z_2,z_3) &= \sum_{k_1,k_2,k_3\in \BZ} X_N^{k_1,k_2,k_3} z^{-k_1}_1 z^{-k_2}_2 z^{-k_3}_3 = \Big( \text{LHS of \eqref{eqn:rel b 1}} \Big) 
\end{align*}
in Case 2. A direct calculation (either by hand or by computer) easily verifies the respective analogues of~\eqref{eqn:rational function} for the series above:
\begin{equation*}
\begin{split}
  & \tUpsilon^+_{B_{\Gamma}}(X_{N-1,N}(z_1,z_2,z_3,w)) = 0 \quad  \mathrm{in\ Case}\ 1   \\
  & \tUpsilon^+_{B_{\Gamma}}(X_{N,N-1}(z_1,z_2,w)) = 0  \quad  \mathrm{in\ Case}\ 2   \\
  & \tUpsilon^+_{B_{\Gamma}}(X_{N}(z_1,z_2,z_3)) = 0 \quad  \mathrm{in\ Case}\ 2 
\end{split}
\end{equation*}
For example, the last two equalities are equivalent to the rational function identities:
\begin{multline}
\label{eq:B-type-defining last 2}
  \Sym_{z_1,z_2} \Bigg[ (z_1q^{-1}-z_2) \zeta^{B_{\Gamma}}_{NN} \left(\frac{z_1}{z_2}\right) \cdot 
  \Bigg( \zeta^{B_{\Gamma}}_{N,N-1}\left(\frac{z_1}{w}\right) \zeta^{B_{\Gamma}}_{N,N-1}\left(\frac{z_2}{w}\right) -  \\
   (-1)^{|N-1|} (q+q^{-1}) \zeta^{B_{\Gamma}}_{N,N-1}\left(\frac{z_1}{w}\right) \zeta^{B_{\Gamma}}_{N-1,N}\left(\frac{w}{z_2}\right) + 
  \zeta^{B_{\Gamma}}_{N-1,N}\left(\frac{w}{z_1}\right) \zeta^{B_{\Gamma}}_{N-1,N}\left(\frac{w}{z_2}\right) \Bigg) \Bigg] = 0
\end{multline}
\begin{equation}
\label{eq:B-type-defining last one}
  \Sym_{z_1,z_2,z_3} \left[ (z_3+z_2(q-q^2)-z_1q^3) 
  \zeta^{B_{\Gamma}}_{NN}\left(\frac{z_1}{z_2}\right) \zeta^{B_{\Gamma}}_{NN}\left(\frac{z_1}{z_3}\right) \zeta^{B_{\Gamma}}_{NN}\left(\frac{z_2}{z_3}\right) \right] = 0
\end{equation}

\medskip
\noindent
The key facts to check are the following analogues of~\eqref{eqn:not zero 1} and~\eqref{eqn:not zero 2} which can be computed in a straightforward way (either by computer or by hand, see Appendices~\ref{app:B-quartic},~\ref{app:B-cubic-1},~\ref{app:B-cubic-2}, ~\ref{sub:non-zero type b case 1},~\ref{sub:non-zero type b usual wheel}, and~\ref{sub:non-zero type b cubic wheel}  combined with Remark~\ref{rem:cont-irrelevance}): 
\begin{equation}
\label{eq:const-B-1st}
  \left \langle X_{N-1,N}^{0,0,0,d}, z^{-d}_{N-1,1} \right \rangle = 6
\end{equation}
\begin{equation}
\label{eq:const-B-2nd}
  \left \langle X_{N,N-1}^{0,0,d},  \frac {z_{N-1,1}^{-d}}{\sqrt{z_{N1}z_{N2}}}  \right \rangle = 2q^{-1}
\end{equation}
\begin{multline}
\label{eq:const-B-3rd}
  \left \langle X_{N}^{0,0,d}, z^{-d-1}_{N 1} + z^{-d-1}_{N 2} + z^{-d-1}_{N 3} \right \rangle  = \\
   \frac{-2q^{3-2d}(1+q^{d}+q^{2d})(1+q^{d+1}+q^{2d+2})}{1+q+q^2} 
\end{multline}
(the square roots in~\eqref{eq:const-B-2nd} are necessary due to the corresponding $\twist$, the $B$-version of~\eqref{eq:root-twist}). This allows to complete the proof of \eqref{eqn:fact 1 b} just like in the proof of~\eqref{eqn:key pairing finite sl}.

\medskip 
\noindent 
In order to prove \eqref{eqn:fact 2 b}, we will adapt the proof of Lemma \ref{lem:pairing sl}. As explained in Subsection \ref{sub:specialization patterns}, we only need to describe the asps of Definitions \ref{def:specialization pattern}-\ref{def:acceptable specialization pattern} in the case at hand, and to show that they are closed under extensions. To this end, we define acceptability with respect to positive real numbers \eqref{eqn:rho} such that 
\begin{equation}
\label{eqn:greater b}
  \rho_1 , \dots,  \rho_{N-1} \ll \rho_N, 
\end{equation}
meaning that while the relative order of $\rho_1,\dots,\rho_{N-1}$ is irrelevant, $\rho_N$ should be much larger than all of them. Because of the assumption \eqref{eqn:greater b}, any asp rooted at color $\leq N-1$ will actually be constrained to vertices of colors $1,\dots,N-1$, and is thus nothing more than a finite type shrub. As for asps rooted at color $N$, their full classification is depicted in Figures \ref{fig:asps B 1}-\ref{fig:asps B 2} and we will now explain how to read the pictures. The parameter decorating any arrow is completely determined by its endpoints according to the rule
$$
  (\text{lattice point }(x,y)) \xrightarrow{q^{x'+y'-x-y}} (\text{lattice point }(x',y'))
$$
(the lattice is normalized so that the vertical arrows  in the pictures on the left carry the parameter $q$). The root of the asp is depicted by the blue disk, and the arrows indicate the behavior of the asps in colors $N-1$ and $N$. The black arrows carry the same information as two arbitrary finite type shrubs $S_\bullet$ and $S_\circ$ with vertices
\begin{equation}
\label{eqn:small b}
  1,\dots,N-1, *
\end{equation}
where $*$ is an odd vertex. The aforementioned shrubs $S_\bullet$ and $S_\circ$ are rooted at the black solid and hollow disks, respectively, and because the parameters of their roots differ by $q$ they do not interact with each other \footnote{Here, the word ``interact" means that there are no arrows between vertices of $S_\bullet$ and $S_\circ$; this is because in type $A$, arrows between vertices of the same color carry parameters $q^{\pm 2}$ while arrows between vertices of neighboring colors carry parameters $q^{\pm 1}$.}. The colored arrows in Figures \ref{fig:asps B 1}-\ref{fig:asps B 2} indicate how one can extend the shrubs $S_\bullet$ and $S_\circ$ in order to obtain a type $B$ asp; note that whenever two solid arrows come into a vertex $v$, there is always a dotted arrow coming out of that vertex: this indicates that the apparent double pole corresponding to $v$ in the residue \eqref{eqn:residue} is actually reduced to a simple pole due to various wheel conditions, much like the example of Figure \ref{fig:shrub} in type $A$.

\begin{figure}[h]
  \includegraphics[scale=0.75]{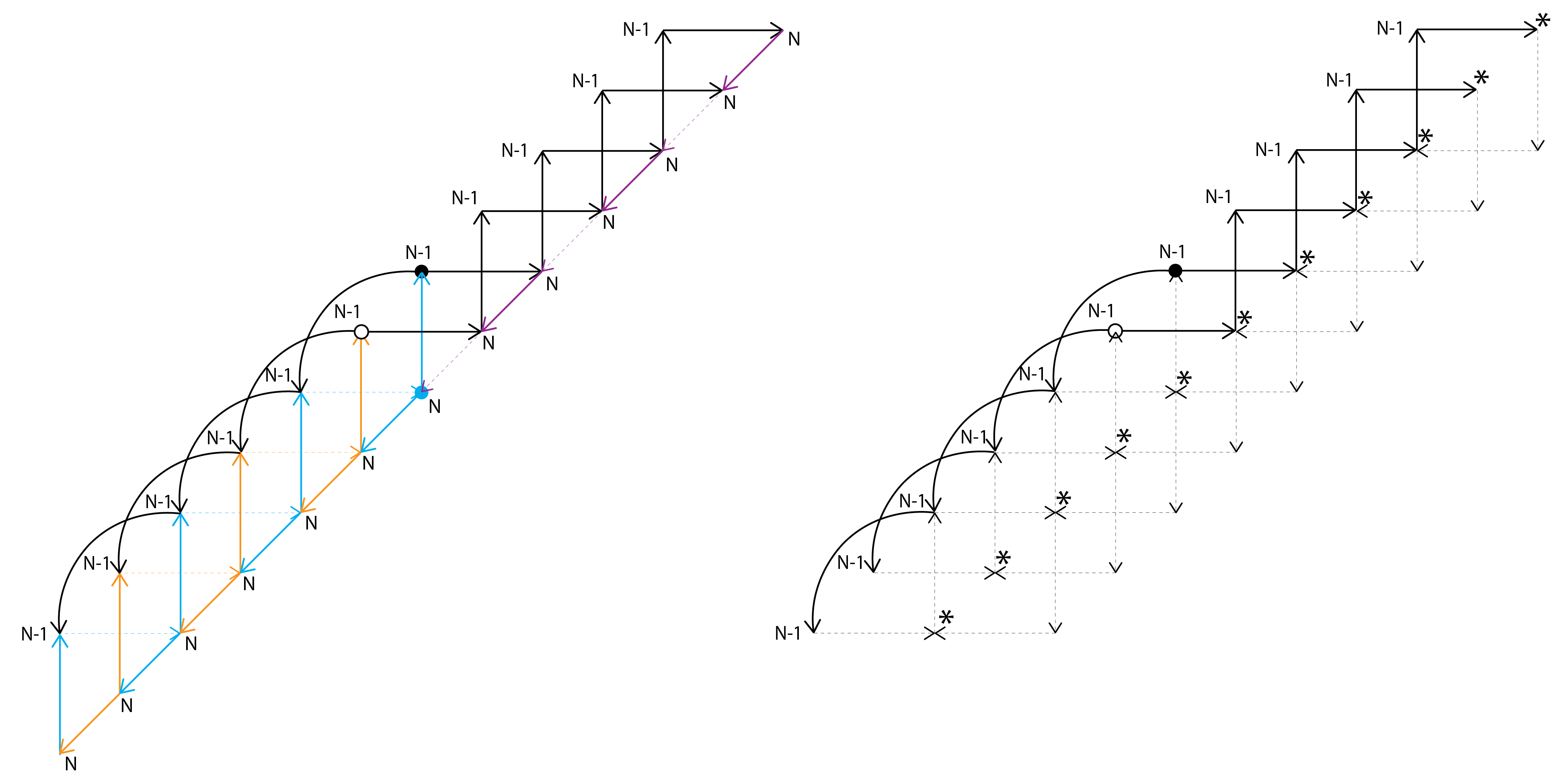} 
  \caption{Type $B$ Case 1: asps rooted at the blue disk on the left, the corresponding finite type non-interacting shrubs $S_\bullet$ and $S_\circ$ on the vertex set \eqref{eqn:small b} on the right. The curved arrows indicate a straight southwest pointing arrow if $|N-1|=0$ or a left-then-down zig-zag if $|N-1|=1$.}
  \label{fig:asps B 1}
\end{figure}

\begin{figure}[h]
  \includegraphics[scale=0.75]{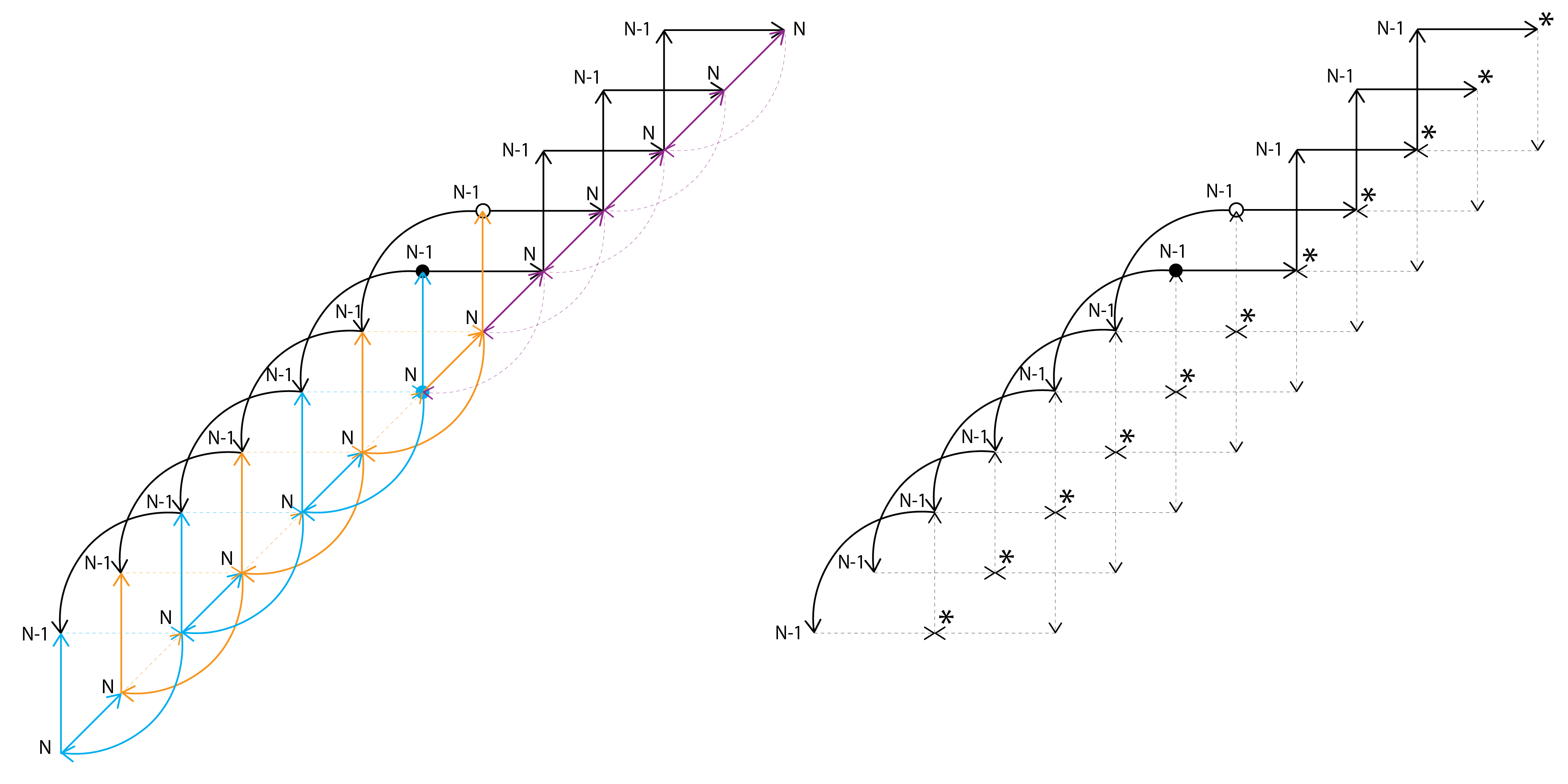} 
  \caption{Type $B$ Case 2: asps rooted at the blue disk on the left, the corresponding finite type non-interacting shrubs $S_\bullet$ and $S_\circ$ on the vertex set \eqref{eqn:small b} on the right. The curved arrows indicate a straight southwest pointing arrow if $|N-1|=0$ or a left-then-down zig-zag if $|N-1|=1$.}
  \label{fig:asps B 2}
\end{figure}

\medskip 
\noindent
Note that we never invoked the acceptability condition \eqref{eqn:greater b}: to be precise, the notion of ``asps" only involves those pictures among Figures \ref{fig:asps B 1}-\ref{fig:asps B 2} such that for every vertex $v$ of color $N$ other than the blue disk, the product of arrow parameters from the blue disk to $v$ has absolute value strictly greater than $1$. By cleverly choosing the absolute value of $q$ itself, one could drastically reduce the number of asps. For instance, if we choose $|q|>1$ in Case 1 (respectively $|q|<1$ in Case~2), then a type $B$ asp cannot contain any color $N$ vertices southwest (respectively northeast) of the blue disk; with either choice, the hollow disk (and hence the entire shrub $S_\circ$) would thus be prohibited. The reason why we do not make this simplifying assumption at the present moment is that we will need to avoid it for the toroidal cases that will be treated in Section \ref{sec:osp-toroidal}. 
\end{proof}


\medskip
	
\subsection{Type $C$}
\label{sub:c}
	
Define the pre-quantum affine superalgebra as (cf.~\eqref{eqn:quad intro}) 
\begin{equation*}
  \tucG = \BC \Big \langle e_{i,d}, f_{i,d}, \ph_{i,d'}^\pm \Big \rangle_{i \in \pI, d \in \BZ, d' \geq 0} \Big/ 
  \Big (  \text{relations \eqref{eqn:rel quantum affine 1}-\eqref{eqn:rel quantum affine 6}} \Big )
\end{equation*}
where the relations above are defined with respect to the zeta functions $\zeta^{C_{\Gamma}}_{ij}(x)$ that are encoded by the decorated Dynkin diagram $\Gamma$. We further define the quantum affine superalgebra $\ucG$ by imposing higher order relations, depending on $\Gamma$.

\medskip
\noindent
\underline{Case 1} 
\begin{equation*}
\begin{tikzcd}
  \cdots &&  {N-2} && {N-1} && N
  \arrow["q^{\pm 2}"', no head, from=1-7, to=1-5]
  \arrow["q^{\pm 1}"', no head, from=1-5, to=1-3] 
  \arrow[no head, from=1-3, to=1-1]
  \arrow["q^{\mp 4}"', no head, from=1-7, to=1-7, looseness=4]
  \arrow["q^{\mp 2}"', no head, from=1-5, to=1-5, looseness=4]
  \arrow[dashed, no head, from=1-3, to=1-3, looseness=4]
\end{tikzcd}
\end{equation*}
The body $\Gb$ formed by the vertices $\{1,\dots,N-1\}$ is a type $A$ decorated Dynkin diagram, and $|N|=0$. The zeta functions $\zeta^{C_{\Gamma}}_{ij}(x)$ are defined by
\begin{equation}
\label{eq:zeta C-type}
\begin{split}
  & \zeta^{C_{\Gamma}}_{ij}(x)=\zeta^{A_{\Gb}}_{ij}(x) \qquad \forall\, i,j\in \Gb \\
  & \zeta^{C_{\Gamma}}_{NN}(x)=\frac{1-xq^{\mp 4}}{1-x} \\
  & \zeta^{C_{\Gamma}}_{N-1,N}(x)=\frac{1-xq^{\pm 2}}{x}, \quad  \zeta^{C_{\Gamma}}_{N,N-1}(x)=-(1-xq^{\pm 2}) \\
  & \zeta^{C_{\Gamma}}_{Ni}(x)=\zeta^{C_{\Gamma}}_{iN}(x)=1 \qquad \forall\, i\in \{1,\dots,N-2\}
\end{split}
\end{equation}
The corresponding quantum affine superalgebra of type $C$ is 
\begin{equation*}
\ucG = \tucG \Big / \Big (\text{relations \eqref{eqn:rel quantum affine 8}-\eqref{eqn:rel quantum affine 9}},~\eqref{eqn:rel b plus-plus} 
  \text{ and their }  f\text{-versions} \Big )
\end{equation*}
where we consider \eqref{eqn:rel quantum affine 8} for $i,i\pm 1 \leq N-1$ or $(i,i-1)=(N,N-1)$, consider~\eqref{eqn:rel quantum affine 9} for $i-1,i,i+1\leq N-1$, and impose the following relation:
\begin{equation}
\label{eqn:rel b plus-plus}
  \mathrm{Sym}_{z_1,z_2,z_3}\,  [ e_{N-1}(z_1), [ e_{N-1}(z_2), [ e_{N-1}(z_3) ,e_{N}(w) ]_{q^{\mp 2}} ]_{1} ]_{q^{\pm 2}} = 0 
\end{equation}  
(this relation is easily seen to be invariant under $q\leadsto q^{-1}$).
For the neck parameter $q^{\pm 2} = q^{2}$, this recovers the algebra denoted by $U^D_q(\widehat{\fg}_{\bs}) |_{C=1}$ in~\cite[Definition 5.1]{BFK} corresponding to $\fosp(2m|2n)$ with $N=m+n$ and the parity sequence $\bs$ satisfying $s_{N-1}=s_N=-1$ (which is conjectured to be isomorphic to $U_q^{DJ}(\fg_{\widehat{X}_\bs})|_{c=1}$, see \cite[Conjecture 5.1]{BFK}).

\medskip
\noindent
\underline{Case 2} 
\begin{equation*}
\begin{tikzcd}
  \cdots && N-3 && {N-2} && {N-1} && N
  \arrow["q^{\pm 2}"', no head, from=1-9, to=1-7]
  \arrow["q^{\mp 1}"', no head, from=1-7, to=1-5]
  \arrow["q^{\pm 1}"', no head, from=1-5, to=1-3]
  \arrow[no head, from=1-3, to=1-1]
  \arrow["q^{\mp 4}"', no head, from=1-9, to=1-9, looseness=4]
  \arrow[dashed, no head, from=1-3, to=1-3, looseness=4]
\end{tikzcd}
\end{equation*}
The body $\Gb$ formed by the vertices $\{1,\dots,N-1\}$ is a type $A$ decorated Dynkin diagram, $|N|=0$, and the zeta functions $\zeta^{C_{\Gamma}}_{ij}(x)$ are defined by~\eqref{eq:zeta C-type}. The corresponding quantum affine superalgebra of type $C$ is 
\begin{equation*}
\ucG = \tucG \Big / \Big (\text{relations \eqref{eqn:rel quantum affine 8}-\eqref{eqn:rel quantum affine 9}},~\eqref{eqn:rel cd 2 1} 
  \text{ and their }  f\text{-versions} \Big )
\end{equation*}
where we consider \eqref{eqn:rel quantum affine 8} and~\eqref{eqn:rel quantum affine 9} for the same indices as in Case 1, and impose the following degree $6$ relation:
\begin{multline}
\label{eqn:rel cd 2 1}
  \mathrm{Sym}_{z_1,z_2} \Sym_{w_1,w_2,w_3} \\
  [[[e_{N-2}(z_1),e_{N-1}(w_1)]_{q^{\pm 1}},[[e_{N-2}(z_2), e_{N-1}(w_2)]_{q^{\pm 1}},e_{N}(y)]_{q^{\mp 2}}]_1,e_{N-1}(w_3)]_1 = 0
\end{multline}
For the neck parameter $q^{\pm 2} = q^{2}$, this recovers the algebra denoted by $U^D_q(\widehat{\fg}_{\bs}) |_{C=1}$ in~\cite[Definition 5.1]{BFK} corresponding to $\fosp(2m|2n)$ with $N=m+n$ and the parity sequence $\bs$ satisfying $s_{N-2}=-1$, $s_{N-1}=1$, $s_{N}=-1$ (which is conjectured to be isomorphic to $U_q^{DJ}(\fg_{\widehat{X}_\bs})|_{c=1}$, see \cite[Conjecture 5.1]{BFK}).
We also recall some of explicit zeta values that will be needed later: 
\begin{equation}
\label{eq:C-zeta-case2}
\begin{split}
  & \zeta^{C_{\Gamma}}_{N-2,N-2}(x)=\zeta^{C_{\Gamma}}_{N-1,N-1}(x) = \frac{x^{\frac{1}{2}}}{1-x}  \\
  & \zeta^{C_{\Gamma}}_{N-1,N}(x)=\frac{1-xq^{\pm 2}}{x}, \quad \zeta^{C_{\Gamma}}_{N,N-1}(x)=-(1-xq^{\pm 2}) \\
  & \zeta^{C_{\Gamma}}_{N-2,N-1}(x)=-\frac{1-xq^{\mp 1}}{x}, \quad \zeta^{C_{\Gamma}}_{N-1,N-2}(x)=-(1-xq^{\mp 1})
\end{split}
\end{equation}

\medskip

\begin{remark}\label{rem:equivalent-C-six-relations}
(a) We note that the published version of~\cite[(5.13)]{BFK} contains a small typo, as they imposed 
\begin{multline*}
\mathrm{Sym}_{z_1,z_2} \mathrm{Sym}_{w_1,w_2,w_3} \\
  [[e_{N-2}(z_1),e_{N-1}(w_1)]_{q^{\pm 1}},[[e_{N-2}(z_2),e_{N-1}(w_2)]_{q^{\pm 1}},[e_{N-1}(w_3),e_{N}(y)]_{q^{\mp 2}}]_{q^{\mp 1}} ]_{q^{\pm 1}} = 0
\end{multline*}
(which is incompatible with~\cite[(3.7)]{BFK}) instead of the required \eqref{eqn:rel cd 2 1}.

\medskip
\noindent
(b) Alternatively, either of the following relations is equivalent to~\eqref{eqn:rel cd 2 1}:
\begin{multline}
\label{eqn:rel cd 2 1 corrected}
  \mathrm{Sym}_{z_1,z_2} \mathrm{Sym}_{w_1,w_2,w_3} \\
  [[e_{N-2}(z_1),e_{N-1}(w_1)]_{q^{\pm 1}},[[e_{N-2}(z_2),e_{N-1}(w_2)]_{q^{\pm 1}}, [e_{N}(y),e_{N-1}(w_3)]_{q^{\mp 2}} ]_{q^{\mp 1}} ]_{q^{\pm 1}} = 0
\end{multline}
\begin{multline}
\label{eqn:rel cd 2 1 another}
  \mathrm{Sym}_{z_1,z_2} \mathrm{Sym}_{w_1,w_2,w_3} \\
  [[e_{N-2}(z_1), e_{N-1}(w_1)]_{q^{\pm 1}},  [[[e_{N-2}(z_2),e_{N-1}(w_2)]_{q^{\pm 1}},e_{N}(y)]_{q^{\mp 2}},e_{N-1}(w_3)]_{q^{\mp 1}} ]_{q^{\pm 1}} = 0
\end{multline}

\noindent
(c) It is in general non-trivial to show that different higher order relations, such as \eqref{eqn:rel cd 2 1},  \eqref{eqn:rel cd 2 1 corrected},  \eqref{eqn:rel cd 2 1 another} and their $q\leadsto q^{-1}$ versions are equivalent (modulo the quadratic relations in \eqref{eqn:quad intro} with $\zeta^{C_{\Gamma}}$ instead of $\zeta$). However, Theorem~\ref{thm:c intro} and the discussion of Section~\ref{sec:arbitrary} provide a natural framework for establishing this kind of equivalence.
\end{remark}

\medskip
\noindent
\underline{Case 3} 
\begin{equation*}
\begin{tikzcd}
  \cdots && N-3 && {N-2} && {N-1} && N
  \arrow["q^{\pm 2}"', no head, from=1-9, to=1-7]
  \arrow["q^{\mp 1}"', no head, from=1-7, to=1-5]
  \arrow["q^{\mp 1}"', no head, from=1-5, to=1-3]
  \arrow[no head, from=1-3, to=1-1]
  \arrow["q^{\mp 4}"', no head, from=1-9, to=1-9, looseness=4]
  \arrow["q^{\pm 2}"', no head, from=1-5, to=1-5, looseness=4]
  \arrow[dashed, no head, from=1-3, to=1-3, looseness=4]
\end{tikzcd}
\end{equation*}
The body $\Gb$ formed by the vertices $\{1,\dots,N-1\}$ is a type $A$ decorated Dynkin diagram, $|N|=0$, and the zeta functions $\zeta^{C_{\Gamma}}_{ij}(x)$ are defined by~\eqref{eq:zeta C-type}. The corresponding quantum affine superalgebra of type $C$ is 
\begin{equation*}
  \ucG = \tucG \Big / \Big (\text{relations \eqref{eqn:rel quantum affine 8}-\eqref{eqn:rel quantum affine 9}},~\eqref{eqn:rel cd 2 2} 
  \text{ and their }  f\text{-versions} \Big )
\end{equation*}
where we consider \eqref{eqn:rel quantum affine 8} and~\eqref{eqn:rel quantum affine 9} for the same indices as in Case 1, and impose the following degree $7$ relation: \footnote{In fact, the $q^{\pm 1} \leadsto q^{\mp 1}$ version of this relation can alternatively be used, cf.~Remark~\ref{rem:equivalent-C-six-relations}.}
\begin{multline}
\label{eqn:rel cd 2 2}
  \mathrm{Sym}_{z_1,z_2} \Sym_{w_1,w_2,w_3} \,  
  [[[[[[ e_{N-3}(y), e_{N-2}(z_1) ]_{q^{\pm 1}}, e_{N-1}(w_1) ]_{q^{\pm 1}}, \\ e_N(u) ]_{q^{\mp 2}},  e_{N-1}(w_2) ]_{q^{\mp 1}}, e_{N-2}(z_2) ]_{q^{\pm 1}}, e_{N-1}(w_3) ]_{1} = 0
\end{multline}
For the neck parameter $q^{\pm 2} = q^{2}$, this recovers the algebra denoted by $U^D_q(\widehat{\fg}_{\bs}) |_{C=1}$ in~\cite[Definition 5.1]{BFK} corresponding to $\fosp(2m|2n)$ with $N=m+n$ and the parity sequence $\bs$ satisfying $s_{N-2}=1$, $s_{N-1}=1$, $s_{N}=-1$ (which is conjectured to be isomorphic to $U_q^{DJ}(\fg_{\widehat{X}_\bs})|_{c=1}$, see \cite[Conjecture 5.1]{BFK}).
We also recall some of explicit zeta values that will be needed later:
\begin{equation}
\label{eq:C-zeta-case3}
\begin{split}
  & \zeta^{C_{\Gamma}}_{N-2,N-2}(x)=\frac{1-xq^{\pm 2}}{1-x}, \quad \zeta^{C_{\Gamma}}_{N-1,N-1}(x) = \frac{x^{\frac{1}{2}}}{1-x}  \\
  & \zeta^{C_{\Gamma}}_{N-3,N-2}(x)=\zeta^{C_{\Gamma}}_{N-2,N-1}(x)=\frac{1-xq^{\mp 1}}{x}, \quad 
    \zeta^{C_{\Gamma}}_{N-1,N}(x)=\frac{1-xq^{\pm 2}}{x} \\
  & \zeta^{C_{\Gamma}}_{N-2,N-3}(x)=\zeta^{C_{\Gamma}}_{N-1,N-2}(x)=-(1-xq^{\mp 1}), \quad 
    \zeta^{C_{\Gamma}}_{N,N-1}(x)=-(1-xq^{\pm 2}) \\
\end{split}
\end{equation}

\medskip
\noindent
The big shuffle algebra is (with $\twist$ of~\eqref{eq:root-twist})
$$
  \CV^+_{C_{\Gamma}} = \bigoplus_{\bn = (n_i)_{i \in \pI} \in \pnn} \BC[z_{i1}^{\pm 1},\dots,z_{in_i}^{\pm 1}]_{i \in \pI}^{\sym} \cdot \twist
$$
made into an associative algebra by formula \eqref{eqn:shuffle product} with $\zeta^{C_{\Gamma}}$ instead of $\zeta$. Let
$$
  \CS^+_{C_{\Gamma}} \subset \CV^+_{C_{\Gamma}}
$$
be the set of color-symmetric Laurent polynomials (times $\twist$) which satisfy the length $3$ wheel conditions \eqref{eqn:wheel finite even} for $i,i\pm 1<N$ or $(i,i-1)=(N,N-1)$, the length $4$ wheel conditions \eqref{eqn:wheel finite odd} for $i\leq N-2$, the length $4$ wheel condition~\eqref{eqn:wheel c 1} in Case 1, the length $6$ wheel condition~\eqref{eqn:wheel c 2} in Case 2, and the length $7$ scary wheel condition~ \eqref{eqn:wheel c 3} in Case 3. By definition, a Laurent polynomial 
$$
  E\Big(z_{N-3,1},z_{N-2,1},z_{N-2,2},z_{N-1,1},z_{N-1,2},z_{N-1,3},z_{N1},\dots \Big)
$$
(where the $\dots$ stand for other variables that will not be important to us) is said to satisfy the length 7 scary wheel condition, if 
\begin{equation}
\label{eqn:rigorous}
\begin{split}
  & (z_{N-2,2}-xq^{\pm 1})^2 \quad \text{divides} \\ 
  & \qquad R \mid_{(z_{N-3,1},z_{N-2,1},z_{N-1,1},z_{N-1,2},z_{N-1,3},z_{N1}) \mapsto 
    (xq^{\pm 2},xq^{\pm 3},x,xq^{\pm 4},xq^{\pm 2},xq^{\pm 2})}
\end{split}
\end{equation}
This is the rigorous definition of the symbol $\boxed{2}$ in \eqref{eqn:wheel c 3}. The fact that $\CS^+_{C_{\Gamma}}$ is a subalgebra is a consequence of Lemma \ref{lem:easy} together with the following result.

\medskip

\begin{lemma}
\label{lem:7}
The set of Laurent polynomials which satisfy the length $7$ scary wheel \eqref{eqn:wheel c 3} (see \eqref{eqn:rigorous}) together with the length $3$ wheels corresponding to the two small triangles in the top-left part of the diagram \eqref{eqn:wheel c 3},  is closed under the shuffle product.
\end{lemma}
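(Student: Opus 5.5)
We follow the proof of Lemma~\ref{lem:easy}. Let $E$, $E'$ satisfy the two triangle wheels and the order-$2$ condition \eqref{eqn:rigorous}. Closure under the triangle wheels is already given by Lemma~\ref{lem:easy}, so only the order-$2$ vanishing of $E*E'$ at \eqref{eqn:wheel c 3} remains. By \eqref{eqn:shuffle product}, it suffices to treat each summand
\[
  G = E(\text{first block}) \, E'(\text{second block}) \prod \zeta\Big(\tfrac{\text{first}}{\text{second}}\Big)
\]
after clearing denominators. Each such summand distributes the seven wheel variables $z_{N-3,1},z_{N-2,1},z_{N-2,2},z_{N-1,1},z_{N-1,2},z_{N-1,3},z_{N1}$ (and the rest) between the two blocks. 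The symmetrization only permutes which concrete variables play these roles, so it is enough to fix one assignment of the seven wheel variables to blocks.

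If all seven wheel variables lie in the same block, the order-$2$ vanishing comes directly from $E$ or $E'$. Otherwise we count zeros along the specialization $z_{N-2,2}\to xq^{\pm 1}$, with the other six variables set as in \eqref{eqn:rigorous}. Zeros come from two sources.
\begin{itemize}
\item Linear factors $(z_b - z_a t)$ of the cross zeta functions. Such a factor contributes when $z_a$ lies in the first block, $z_b$ in the second, and $z_a\xrightarrow{t}z_b$ is an arrow of the wheel, including the internal arrow $z_{N-1,1}\xrightarrow{q^{\pm2}} z_{N-2,1}$ and the arrows of the two triangles. We must check that the specialized arrow factors really vanish there.
\item Vanishing of $E$ or $E'$ alone. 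This happens whenever one block contains a full triangle, since the triangle wheel \eqref{eqn:wheel finite even} for $i=N-2$ does not involve $z_{N-2,2}$ in one of the triangles. It also happens whenever $z_{N-2,2}$ together with the relevant neighbours forms a complete length $3$ triangle inside one block, which gives a simple zero in $z_{N-2,2}$.
\end{itemize}

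The plan is to show that every splitting of the seven vertices into a nonempty proper subset and its complement produces at least two such zeros along the specialized line. Going around a cycle, any nontrivial bipartition has at least one edge pointing from the first block to the second. The diagram \eqref{eqn:wheel c 3} contains several cycles: the outer $7$-cycle, and the two triangles sharing the vertex $z_{N-2,1}$ (through the diagonal $q^{\pm2}$ arrow $z_{N-2,2}\to z_{N-2,1}$). The aim is to exhibit two independent zero sources among them. For instance, a crossing arrow on the outer cycle vanishes identically on the specialization, which gives one factor. A second zero then comes from either a crossing edge of one of the triangles or a triangle lying fully in one block.

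The main obstacle is the case analysis and the multiplicity bookkeeping. Three things need care.
\begin{itemize}
\item Factors that vanish identically on the specialization locus must be distinguished from those that vanish only at $z_{N-2,2}=xq^{\pm1}$. A factor not involving $z_{N-2,2}$ that vanishes on the locus already makes $G$ zero identically, which is stronger.
\item The order-$2$ requirement in $z_{N-2,2}$ must be obtained from the product of two simple zeros.
\item Splittings where a single cross arrow on the outer cycle is the only crossing must be handled. In that case I would show that at least one triangle lies wholly in one block, so $E$ or $E'$ contributes the second zero.
\end{itemize}
I would enumerate these splittings by which block contains $z_{N-2,1}$ and $z_{N-2,2}$, reducing to a handful of cases. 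A remaining worry is the zeta functions of the odd color $N-1$ (for example $\zeta_{N-1,N-1}$), which carry no linear factors, so edges between same-color odd vertices give no zeros. For those arrows I would check that the relevant edges of \eqref{eqn:wheel c 3} only join different colors. Edges of the form $q^{\mp1}$ or $q^{\pm2}$ between distinct colors do appear in \eqref{eq:C-zeta-case3}, and this is what makes the count go through.
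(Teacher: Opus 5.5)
Your strategy is the paper's: treat each summand of $E*E'$ separately according to which of the seven wheel variables go into $E$ and which into $E'$, and collect zeros in $z_{N-2,2}$ from two sources. One source is the cross factors of arrows pointing from the $E$-block to the $E'$-block; the other is the two triangle conditions. However, the proposal stops where the proof begins. The case analysis is the entire content of the lemma, and some of the structural facts you plan to feed into it are wrong.
\begin{itemize}
\item There is no arrow $z_{N-1,1}\to z_{N-2,1}$. The only internal $q^{\pm 2}$ arrow is $z_{N-2,2}\to z_{N-2,1}$. It joins two variables of the even color $N-2$, so your claim that all wheel edges join different colors is also off, though harmlessly.
\item More importantly, both triangles contain $z_{N-2,2}$; they share the edge $z_{N-2,2}\to z_{N-2,1}$. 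So a triangle lying inside one block does not make $E$ or $E'$ vanish identically on the locus of \eqref{eqn:rigorous}. The wheel condition only guarantees one simple zero at $z_{N-2,2}=xq^{\pm1}$, which must always be paired with a second zero.
\item Likewise, a crossing arrow that vanishes identically does not ``give one factor''. It kills the summand outright, while crossing arrows through $z_{N-2,2}$ give only simple zeros.
\end{itemize}

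What makes the enumeration short is an observation you almost state. The arrows avoiding $z_{N-2,2}$ form the chain $z_{N-1,1}\to z_{N1}\to z_{N-1,2}\to z_{N-2,1}\to\{z_{N-3,1},z_{N-1,3}\}$. So for a non-vanishing summand, the $E$-block must be closed under following these arrows. The cases then close as follows.
\begin{itemize}
\item If $z_{N-2,2}$ goes to $E'$, there are three possibilities. If both $z_{N-3,1},z_{N-1,3}$ go to $E$, the two arrows into $z_{N-2,2}$ give the double zero. If exactly one does, closure forces $z_{N-2,1}$ into $E'$, so the other triangle lies in $E'$ and you get one arrow zero plus one wheel zero. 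If neither does, closure forces everything into $E'$.
\item If $z_{N-2,2}$ goes to $E$ and $z_{N-2,1}$ to $E'$, closure puts $z_{N-1,1}$ in $E'$. The arrows $z_{N-2,2}\to z_{N-2,1}$ and $z_{N-2,2}\to z_{N-1,1}$ then give $(z_{N-2,2}-xq^{\pm1})^2$.
\item If both $z_{N-2,1},z_{N-2,2}$ go to $E$, closure puts $z_{N-3,1},z_{N-1,3}$ there too, so $E$ supplies a triangle zero. Unless all seven variables are in $E$, $z_{N-1,1}$ is in $E'$, and the arrow $z_{N-2,2}\to z_{N-1,1}$ supplies the second zero.
\end{itemize}
This is exactly the paper's case analysis, which it organizes by the position of $z_{N-1,1}$ first. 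With it written out, and the misstatements above corrected, your argument becomes a complete proof.
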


\medskip

\begin{proof} 
Without loss of generality, we shall assume that the neck parameter is $q^{2}$. 
As in the proof of Lemma~\ref{lem:easy}, we must consider each summand in the symmetrization that defines $E * E'$, and consider all ways of distributing the variables $z_{N-3,1},z_{N-2,1},z_{N-2,2},z_{N-1,1},z_{N-1,2},z_{N-1,3},z_{N1}$ among $E$ and $E'$. If all of the variables are placed in either $E$ or $E'$, then the fact that both $E$ and $E'$ individually satisfy the length $7$ scary wheel condition implies that the corresponding summand of $E * E'$ also does. Suppose first that the variable $z_{N-1,1}$ is plugged into $E$: because of the zeta factors determined by the arrows in \eqref{eqn:wheel c 3}, the variables $z_{N1}, z_{N-1,2}, z_{N-2,1}, z_{N-1,3}, z_{N-3,1}$ must all be plugged into $E$ in order to have a chance of a non-zero result under the specialization from~\eqref{eqn:rigorous}. This implies that the last variable $z_{N-2,2}$ must be plugged into $E'$, but then the divisibility by $(z_{N-2,2}-xq)^2$ is due to two arrows from $z_{N-3,1}$ and $z_{N-1,3}$ into $z_{N-2,2}$.

\medskip 
\noindent 
The alternative is that the variable $z_{N-1,1}$ is plugged into $E'$, and we shall now look at the placement of $z_{N-2,1}$. If $z_{N-2,1}$ is plugged into $E$, then by the above argument the only nontrivial case is when $z_{N-3,1},z_{N-1,3}, z_{N-2,2}$ are also plugged into $E$; in this case, the resulting specialization of $E$ is already divisible by $z_{N-2,2}-xq$ (due to either of the length $3$ wheel conditions on $E$ in the variables $\{z_{N-2,1},z_{N-2,2},z_{N-3,1}\}$ or $\{z_{N-2,1},z_{N-2,2},z_{N-1,3}\}$) and we furthermore get another linear factor $z_{N-2,2}-xq$ due to the arrow from $z_{N-2,2}$ to $z_{N-1,1}$. Let us now consider the case when $z_{N-2,1}$ is plugged into $E'$, so that the variables $z_{N1}$ and $z_{N-1,2}$ are also plugged into $E'$ to have a chance of producing a non-zero result under the specialization from~\eqref{eqn:rigorous}. If the variable $z_{N-2,2}$ is plugged into $E$, then again we get $(z_{N-2,2}-xq)^2$ due to two arrows from $z_{N-2,2}$ to $z_{N-2,1}$ and $z_{N-1,1}$. On the other hand, if $z_{N-2,2}$ is plugged into $E'$, then either one or both of the variables $z_{N-1,3},z_{N-3,1}$ must be plugged into $E$. If both of them occur in $E$, then the two arrows from them to $z_{N-2,2}$ contribute $(z_{N-2,2}-xq)^2$. If $z_{N-3,1}$ is in $E$ but $z_{N-1,3}$ is in $E'$ (the other case is symmetric), then the length $3$ wheel condition in the variables $\{z_{N-2,1},z_{N-2,2},z_{N-1,3}\}$ on $E'$ implies that (the resulting specialization of) $E'$ is divisible by $z_{N-2,2}-xq$ at the last step while we get another factor of $z_{N-2,2}-xq$ due to the arrow from $z_{N-3,1}$ to $z_{N-2,2}$. 
\end{proof}

\medskip 
\noindent  
We will now prove Theorem \ref{thm:c intro}, which states that 
the algebra homomorphism 
$$
  \tUpsilon^+_{C_{\Gamma}}\colon \tucGp \to \CV^+_{C_{\Gamma}}, \qquad e_{i,d}\mapsto z^d_{i1} , \quad \forall\, i\in \pI, d\in \BZ
$$ 
of~\eqref{eqn:tilde upsilon} induces an isomorphism 
$$
  \Upsilon^+_{C_{\Gamma}}\colon \ucGp \iso  \CS^+_{C_{\Gamma}} 
$$
We will denote by $\CS^-_{C_{\Gamma}} \subset \CV^-_{C_{\Gamma}}$ the same sets as $\CS^+_{C_{\Gamma}} \subset \CV^+_{C_{\Gamma}}$, but endowed with the opposite algebra structure.

\medskip

\begin{proof} \emph{of Theorem \ref{thm:c intro}:} 
Since inverting all edge parameters in a decorated Dynkin diagram has the effect of swapping $q \leftrightarrow q^{-1}$ in all associated quantum algebras, we may assume without loss of generality that the neck parameter is $q^2$. We will follow the outline of Subsection \ref{sub:plan}, in which one needs to prove the following key facts: 
\begin{multline}
\label{eqn:fact 1 cd 2}
  \Big \langle \Big( \text{LHS of \eqref{eqn:rel quantum affine 8}-\eqref{eqn:rel quantum affine 9} and~\eqref{eqn:rel b plus-plus}/~\eqref{eqn:rel cd 2 1}/~\eqref{eqn:rel cd 2 2}} \Big) , F \Big \rangle = 0 
  \quad \text{for} \quad F \in \CV^-_{C_{\Gamma}} \\ \quad \Rightarrow \quad F \in \CS^-_{C_{\Gamma}} 
\end{multline} 
(above, the slash reflects the fact that relation \eqref{eqn:rel b plus-plus} applies to Case 1, \eqref{eqn:rel cd 2 1} applies to Case 2, and \eqref{eqn:rel cd 2 2} applies to Case 3) and 
\begin{equation}
\label{eqn:fact 2 c}
  \Big \langle e_{i_1,d_1}\dots e_{i_k,d_k}, F \Big \rangle = 
  \left[\text{a linear functional of }\tUpsilon^+_{C_{\Gamma}} \left(e_{i_1,d_1}\dots e_{i_k,d_k} \right) \right] 
\end{equation}
for any $F\in \CS^-_{C_{\Gamma}}$.

\medskip
\noindent
To establish \eqref{eqn:fact 1 cd 2}, we follow the process laid out in the proof of~\eqref{eqn:key pairing finite sl}. By analogy with \eqref{eqn:x}, we consider the corresponding power series 
$$
  X(z_1,z_2,z_3,w) = \sum_{k_1,k_2,k_3,\ell\in \BZ} X^{k_1,k_2,k_3,\ell} z^{-k_1}_1 z^{-k_2}_2 z^{-k_3}_3 w^{-\ell} = 
  \Big( \text{LHS of \eqref{eqn:rel b plus-plus}} \Big) 
$$
in Case 1, 
\begin{multline*}
  X(z_1,z_2,w_1,w_2,w_3,y) = \\
  \sum_{k_1,k_2,\ell_1,\ell_2,\ell_3,a\in \BZ} X^{k_1,k_2,\ell_1,\ell_2,\ell_3,a} z^{-k_1}_1 z^{-k_2}_2 w_1^{-\ell_1} w_2^{-\ell_2} w_3^{-\ell_3} y^{-a} 
  = \Big( \text{LHS of \eqref{eqn:rel cd 2 1}} \Big) 
\end{multline*}
in Case 2, and 
\begin{multline*}
  X(z_1,z_2,w_1,w_2,w_3,y,u) = \\
  \sum_{k_1,k_2,\ell_1,\ell_2,\ell_3,a,b\in \BZ} X^{k_1,k_2,\ell_1,\ell_2,\ell_3,a,b} z^{-k_1}_1 z^{-k_2}_2 w_1^{-\ell_1} w_2^{-\ell_2} w_3^{-\ell_3} y^{-a} u^{-b} 
  = \Big( \text{LHS of \eqref{eqn:rel cd 2 2}} \Big) 
\end{multline*}
in Case 3. A direct calculation (on a computer, as the time investment to do the calculation by hand would be too great and not worth it in our opinion; the relevant values of zeta functions are presented in~\eqref{eq:C-zeta-case2} and~\eqref{eq:C-zeta-case3}) verifies the respective analogues of~\eqref{eqn:rational function} for them:
\begin{equation*}
\begin{split}
  & \tUpsilon^+_{C_{\Gamma}}(X(z_1,z_2,z_3,w)) = 0 \quad \mathrm{in\ Case}\ 1 \\
  & \tUpsilon^+_{C_{\Gamma}}(X(z_1,z_2,w_1,w_2,w_3,y)) = 0 \quad \mathrm{in\ Case}\ 2 \\
  & \tUpsilon^+_{C_{\Gamma}}(X(z_1,z_2,w_1,w_2,w_3,y,u)) = 0 \quad \mathrm{in\ Case}\ 3
\end{split}
\end{equation*}

\medskip
\noindent
It remains to establish the analogues of~\eqref{eqn:not zero 1} and~\eqref{eqn:not zero 2}, which we present below 
(verified either by computer or by hand, see details in Appendices~\ref{sub:non-zero type c case 1},~\ref{sub:non-zero type c case 2},~\ref{sub:non-zero type c case 3},~\ref{app:C-quartic}): 
\begin{equation}
\label{eqn:not zero c 1}
  \left \langle X^{0,0,0,d}, z^{-d}_{N1} \right \rangle = -6
\end{equation}
\begin{equation}
\label{eqn:not zero c 2}
  \left \langle X^{0,0,0,0,0,d}, \frac {z_{N1}^{1-d}}{\sqrt{z_{N-2,1}z_{N-2,2}}} \right \rangle = 12 
\end{equation}
\begin{equation}
\label{eqn:not zero c 3}
  \left \langle X^{0,0,0,0,0,0,d}, \frac {z_{N-3,1}^2(z_{N-2,1}q - z_{N-2,2}q^{-1})(z_{N-2,2}q - z_{N-2,1}q^{-1})}{z_{N-2,1}^2z_{N-2,2}^2z_{N1}^{d}} \right \rangle = 12 
\end{equation}
(we note that the square roots in~\eqref{eqn:not zero c 2} are necessary due to the corresponding $\twist$ from~\eqref{eq:root-twist}). Let us first explain why in \eqref{eqn:not zero c 3} we compute the pairing with the element
\begin{multline*}
  F_d\Big(z_{N-3,1},z_{N-2,1},z_{N-2,2},z_{N-1,1},z_{N-1,2},z_{N-1,3},z_{N 1}\Big) = \\ 
 \frac {z_{N-3,1}^2(z_{N-2,1}q - z_{N-2,2}q^{-1})(z_{N-2,2}q - z_{N-2,1}q^{-1})}{z_{N-2,1}^2z_{N-2,2}^2z_{N1}^{d}} 
 \in \CV_{C_{\Gamma};-\bn,-d}
\end{multline*}
for $\bn = \bsi^{N-3}+2\bsi^{N-2}+3\bsi^{N-1}+\bsi^N$. Because the scary wheel \eqref{eqn:wheel c 3} involves a second order vanishing condition on Laurent polynomials, we have
$$
  \dim_{\BC} \left(\CV_{C_{\Gamma};-\bn,-d} / \CS_{C_{\Gamma};-\bn,-d}\right) = 2
$$
for all $d \in \BZ$. However, if we let $\CV_{C_{\Gamma};-\bn,-d} \supset \CT_{C_{\Gamma};-\bn,-d}$ denote the subset of color-symmetric Laurent polynomials which vanish to the first order at the scary wheel \eqref{eqn:wheel c 3}, we note that
$$
  \dim_{\BC} \left(\CT_{C_{\Gamma};-\bn,-d} / \CS_{C_{\Gamma};-\bn,-d}\right) = 1
$$
We have $F_d \in \CT_{C_{\Gamma};-\bn,-d}$, and because of this, the argument from the proof of~\eqref{eqn:key pairing finite sl} shows that
$$
  \left \langle X^{0,0,0,0,0,0,d},F \right \rangle = 0 \text{ for } F \in \CT_{C_{\Gamma};-\bn,-d} \quad \Rightarrow \quad F \in \CS_{C_{\Gamma};-\bn,-d}
$$
However,~\eqref{eqn:key pairing finite sl} itself shows that
$$
  \big \langle (\text{ideal generated by LHS of \eqref{eqn:rel quantum affine 8}}),F \big \rangle = 0 \text{ for } 
  F \in \CV_{C_{\Gamma};-\bn,-d} \quad \Rightarrow \quad F \in \CT_{C_{\Gamma};-\bn,-d}
$$
Combining the two displays above shows the required fact that
\begin{multline*}
  \big \langle  (\text{ideal generated by } X^{k_1,k_2,\ell_1,\ell_2,\ell_3,a,b} \text{ and LHS of \eqref{eqn:rel quantum affine 8}-\eqref{eqn:rel quantum affine 9}}), F \big \rangle = 0 \\ 
  \text{ for } F \in \CV_{C_{\Gamma};-\bn}  \qquad \Rightarrow \qquad F \in \CS_{C_{\Gamma};-\bn}
\end{multline*}
This allows to complete the proof of \eqref{eqn:fact 1 cd 2} just like in the proof of~\eqref{eqn:key pairing finite sl}.

\medskip 
\noindent 
In order to prove \eqref{eqn:fact 2 c}, we will adapt the proof of Lemma \ref{lem:pairing sl}. As explained in Subsection \ref{sub:specialization patterns}, we only need to describe the asps of Definitions \ref{def:specialization pattern}-\ref{def:acceptable specialization pattern} in the case at hand, and to show that they are closed under extensions. To this end, we define acceptability with respect to positive real numbers \eqref{eqn:rho} such that 
\begin{equation}
\label{eqn:greater c}
  \rho_1, \dots, \rho_{N-1} \ll \rho_N, 
\end{equation}
meaning that while the relative order of $\rho_1,\dots,\rho_{N-1}$ is irrelevant, $\rho_N$ should be much larger than all of them. Because of the assumption \eqref{eqn:greater c}, any asp rooted at color $\leq N-1$ will actually be constrained to vertices of colors $1,\dots,N-1$, and is thus nothing more than a finite type shrub. As for asps rooted at color $N$, their full classification is depicted in Figures \ref{fig:asps C 1}-\ref{fig:asps C 2} and we will now explain how to read the pictures. The parameter decorating any arrow is completely determined by its endpoints according to the rule
$$
  (\text{lattice point }(x,y)) \xrightarrow{q^{x'+y'-x-y}} (\text{lattice point }(x',y'))
$$
(the lattice is normalized so that the vertical arrows in the pictures on the left carry the parameter $q^2$). The root of the asp is depicted by the blue disk, and the arrows indicate the behavior of the asp in colors $N-2,N-1,N$. The black arrows carry the same information as a shrub $S$ rooted at the black disk, corresponding to a type $A$ decorated Dynkin diagram with vertices
\begin{equation}
\label{eqn:small c}
  1,\dots,N-1, *
\end{equation}
where $*$ is an even vertex. The colored arrows in Figures \ref{fig:asps C 1}-\ref{fig:asps C 2} just show how one can extend the type $A$ shrub $S$ in order to obtain a type $C$ asp; the only restriction is that if there are $k$ (respectively $k'$) color $N$ vertices an even (respectively odd) number of steps southwest of the blue disk and $\ell$ (respectively $\ell'$) color $N-1$ vertices an even (respectively odd) number of steps southwest of the black disk, we should have $k \geq \ell$ and $k' < \ell'$.

\begin{figure}[h]
  \includegraphics[scale=1]{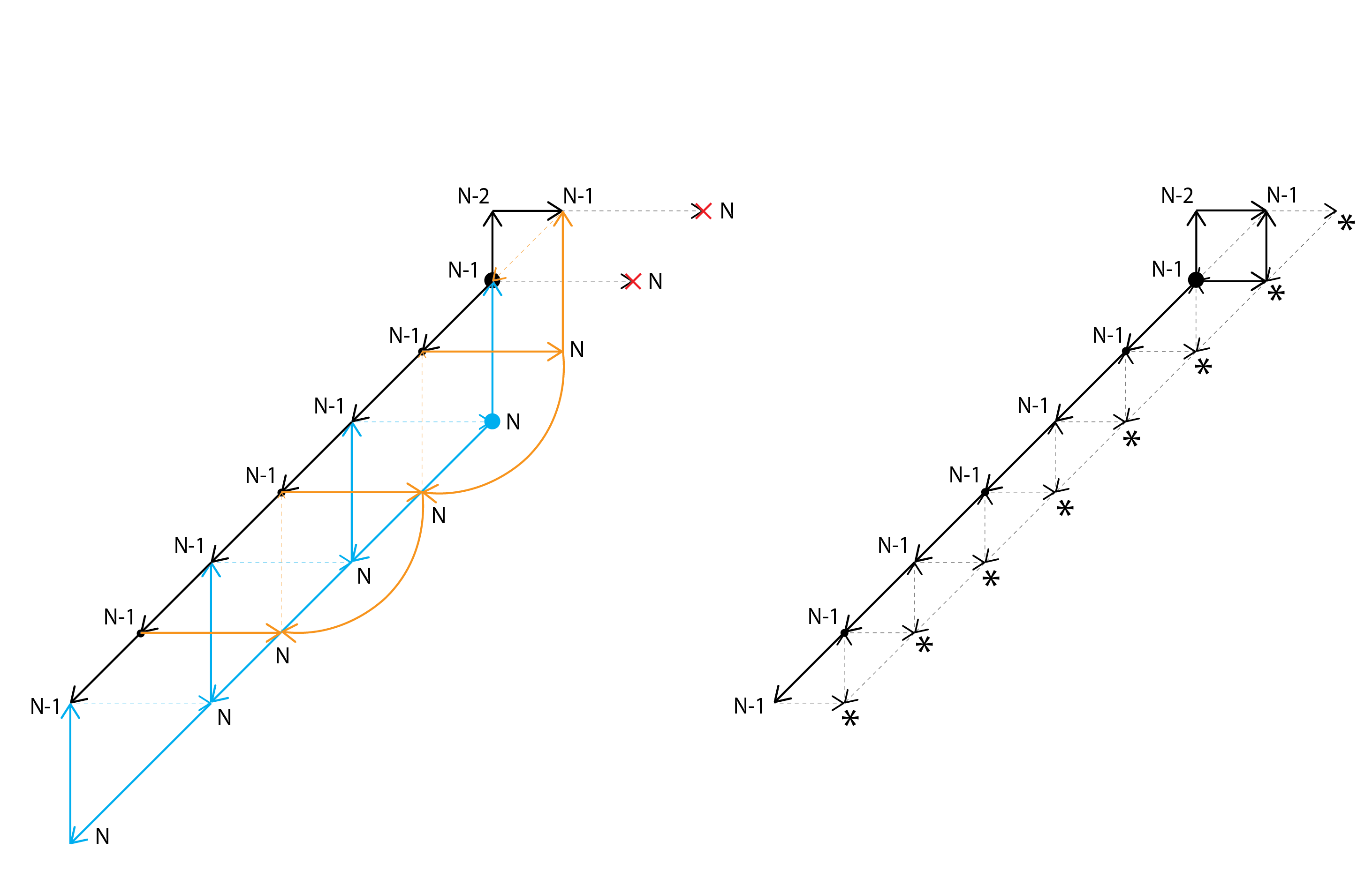} 
  \caption{Type $C$ Case 1: asps rooted at the blue disk on the left, the corresponding finite type shrub on the vertex set \eqref{eqn:small c} on the right (the small square on the top of the picture on the right cannot be extended further due to the $\textcolor{red}{\boldsymbol{\times}}$ in the picture on the left).}
  \label{fig:asps C 1} 
\end{figure}

\begin{figure}[h]
  \includegraphics[scale=1]{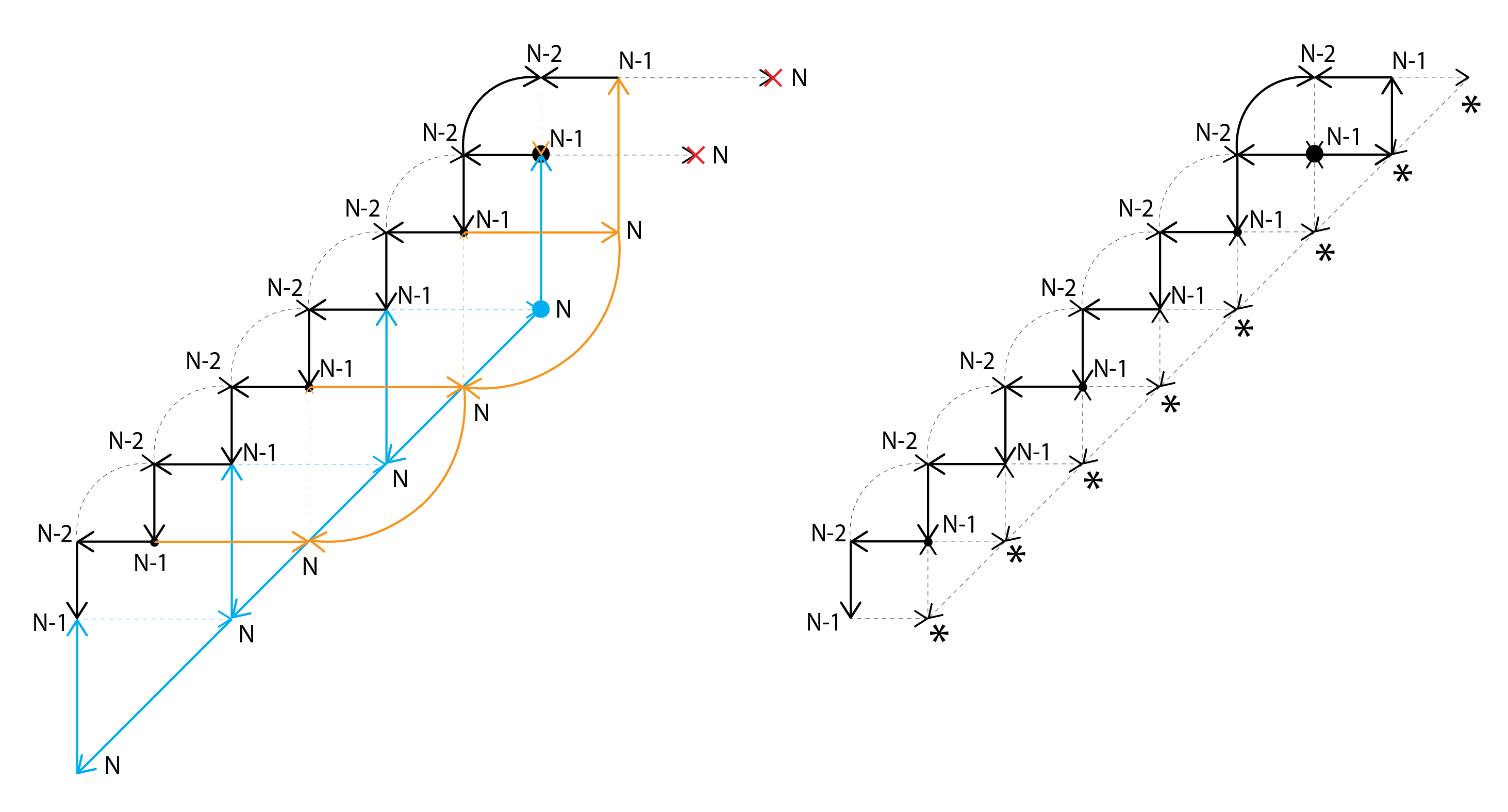} 
  \caption{Type $C$ Cases 2-3: asps rooted at the blue disk on the left, the corresponding finite type shrub on the vertex set \eqref{eqn:small c} on the right. The curved arrows indicate a straight northeast pointing arrow if $|N-2|=0$ (Case 3) or an up-then-right zig-zag if $|N-2|=1$ (Case 2).}
  \label{fig:asps C 2}
\end{figure}

\medskip
\noindent 
Note that whenever two solid arrows come into a vertex $v$, there is always a dotted arrow coming out of that vertex: this indicates that the apparent double pole corresponding to $v$ in the residue \eqref{eqn:residue} is actually reduced to a simple pole due to various wheel conditions, much like the example of Figure \ref{fig:shrub} in type $A$. This is a quite non-trivial fact in the case of Figure~\ref{fig:asps C 2}, which is depicted in Figure \ref{fig:scary}: the apparent double pole caused by the two arrows incoming into the color $N-2$ vertices on top is reduced to a simple pole due to the wheel conditions \eqref{eqn:wheel c 2} and \eqref{eqn:wheel c 3}; in the latter case, this is due to Claim \ref{claim:seven}. Finally, the red $\textcolor{red}{\boldsymbol{\times}}$ indicate vertices that cannot be part of any asp due to the length $3$ wheels they would form with a dotted arrow connected vertex of color $N-1$ and the vertex of color $N$ underneath it.

\begin{figure}[h]
  \includegraphics[scale=1.5]{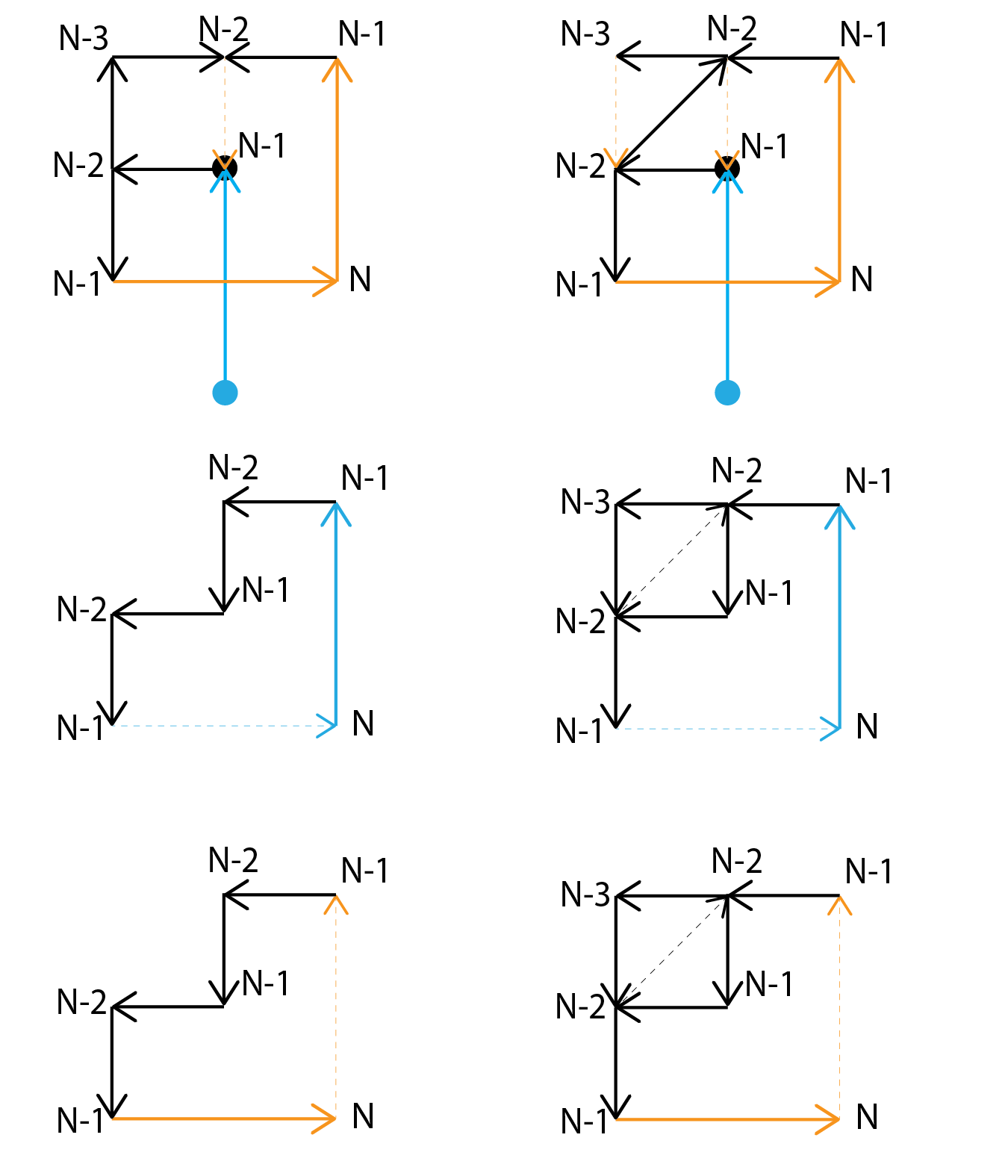} 
  \caption{The importance of length $6$ and $7$ wheels near the black disk or its southwestern neighbors in Figure \ref{fig:asps C  2} (Case 2 on the left, Case 3 on the right).}
  \label{fig:scary}
\end{figure}

\medskip 
\noindent
Note that we never invoked the acceptability condition \eqref{eqn:greater c}: to be precise, the notion of ``asps" only involves those pictures among Figures \ref{fig:asps C 1}-\ref{fig:asps C 2} such that for every vertex $v$ of color $N$ other than the blue disk, the product of arrow parameters from the blue disk to $v$ has absolute value strictly greater than $1$. By cleverly choosing the absolute value of $q$ itself, one could drastically reduce the number of asps. For instance, if we choose $|q|>1$, a type $C$ asp can not contain any of the color $N$ vertices southwest of the blue disk, and thus also can not contain any of the color $N-1$ vertices at least two steps southwest of the black disk. The reason why we do not make this simplifying assumption at the present moment is that we will need to avoid it for the toroidal cases that will be treated in Section \ref{sec:osp-toroidal}. 
\end{proof}

\medskip

\begin{claim}
\label{claim:seven}
Consider any partition
$$
  \Big\{z_{N-3,1}, z_{N-2,1}q^{\mp 1}, z_{N-2,2}q^{\pm 1}, z_{N-1,1}q^{\pm 2} ,z_{N-1,2}q^{\mp 2},z_{N-1,3},z_{N1}\Big\} 
  = A \sqcup B
$$
such that $z_{N-2,1}q^{\mp 1} \in A$ and $z_{N-2,2}q^{\pm 1} \in B$. Then any 
$$
  F\Big(z_{N-3,1}, z_{N-2,1}, z_{N-2,2}, z_{N-1,1},z_{N-1,2},z_{N-1,3},z_{N1}, \dots \Big) \in \CS^-_{C_{\Gamma}}
$$
has the property that
\begin{equation}
\label{eqn:divisibility}
  (x-y)^2 \text{  divides  } F \Big|_{(\text{variables in }A) \leadsto x, (\text{variables in }B) \leadsto y} 
\end{equation}
\end{claim}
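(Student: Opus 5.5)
The plan is to reduce the double divisibility \eqref{eqn:divisibility} to a first-order statement: the full differential of $F$ vanishes at every point of the scary-wheel locus. Without loss of generality, take the neck parameter to be $q^2$. Write $u_1,\dots,u_7$ for the seven normalized quantities $z_{N-3,1}, z_{N-2,1}q^{-1}, z_{N-2,2}q, z_{N-1,1}q^2, z_{N-1,2}q^{-2}, z_{N-1,3}, z_{N1}$.

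\emph{Step 1: the diagonal is the scary-wheel locus.} The specialization in \eqref{eqn:rigorous}, with $z_{N-2,2}=xq$, is exactly the point $u_1=\dots=u_7=xq^2$. So the diagonal $x=y$ of the two-parameter family in \eqref{eqn:divisibility} is the one-parameter scary-wheel locus $W=\{u_1=\dots=u_7\}$, with all other variables of $F$ arbitrary. Let $\gamma(t)$ be the point obtained by setting the variables in $A$ to $x+t$ and those in $B$ to $x$. Then $(x-y)^2$ divides the specialization if and only if $F(\gamma(t))$ vanishes to order $2$ at $t=0$, for every $x$ and every value of the other variables. Since $\gamma(0)\in W$, it suffices to show that $F$ and its full differential $dF$ vanish at every point of $W$.

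\emph{Step 2: vanishing on $W$ and constraints from the two triangles.} Consider the two length $3$ wheels \eqref{eqn:wheel finite even} at the even vertex $N-2$, namely the two small triangles in the top-left of \eqref{eqn:wheel c 3}. They say that $F$ vanishes on two linear loci in logarithmic coordinates. The locus $T_1$ is defined by $u_1=u_2=u_3$, and $T_2$ by $u_2=u_3=u_6$, in each case with all remaining variables free. Both contain $W$, so $F|_W=0$. Moreover, $dF$ at a point of $W$ annihilates both tangent spaces.
\begin{itemize}
\item $T_pT_1$ is spanned by all coordinate directions of $z_{N-1,1}, z_{N-1,2}, z_{N-1,3}, z_{N1}$ and of the extra variables, together with the joint direction $\partial_{u_1}+\partial_{u_2}+\partial_{u_3}$.
\item $T_pT_2$ contains $\partial_{u_1}$ and $\partial_{u_6}+\partial_{u_2}+\partial_{u_3}$.
\end{itemize}
Hence $T_pT_1+T_pT_2$ contains every coordinate direction except that $\partial_{u_2}$ and $\partial_{u_3}$ appear only through $\partial_{u_2}+\partial_{u_3}$. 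This sum is a hyperplane, and the missing direction is, for instance, $\partial_{u_3}$, i.e. moving $z_{N-2,2}$ alone.

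\emph{Step 3: the scary wheel supplies the missing direction.} By \eqref{eqn:rigorous}, $(z_{N-2,2}-xq)^2$ divides the restriction of $F$ to the locus where the other six variables are specialized. In particular, $\partial_{z_{N-2,2}}F=0$ at points of $W$. Together with Step 2, this gives $dF|_W=0$, which proves \eqref{eqn:divisibility} by Step 1.

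\emph{Main obstacle.} The step I expect to need the most care is the bookkeeping in Step 2. One must check that the triangle wheels hold for arbitrary values of all other variables, so that the tangent spaces really are as large as claimed, and one must track the normalizations $q^{\pm1},q^{\pm2}$ correctly so that the scary point is genuinely the common point $u_1=\dots=u_7$. The case distinction on how the seven variables split between $A$ and $B$ then becomes unnecessary, since the argument applies to any direction. The hypothesis separating $z_{N-2,1}$ from $z_{N-2,2}$ is only what makes the statement non-trivial for the residue argument in Figure \ref{fig:scary}. As a consistency check, I would also compare against the case analysis in the proof of Lemma \ref{lem:7}, which uses exactly the same two triangles together with the order-$2$ condition.
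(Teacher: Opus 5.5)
Your proposal is correct and is essentially the paper's own proof. Both arguments reduce \eqref{eqn:divisibility} to the vanishing of all first partial derivatives of $F$ at the diagonal point. Both then use the two length $3$ triangles at vertex $N-2$ to kill every partial except that only $\partial_a F+\partial_b F=0$ is forced for the two colour $N-2$ variables, and both invoke the defining case $B=\{z_{N-2,2}q^{\pm 1}\}$ of \eqref{eqn:rigorous} to supply the missing direction. Your tangent-space formulation is a coordinate-free rephrasing of the paper's identities $\sum_{\gamma\in C}\partial F/\partial\gamma=0$ over sets $C$ containing $\{a,b,c\}$ or $\{a,b,d\}$.
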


\medskip 
\noindent 
Note that definition \eqref{eqn:rigorous} corresponds to \eqref{eqn:divisibility} with $B = \{z_{N-2,2}q^{\pm 1}\}$. The relevance of the condition in Claim \ref{claim:seven} is that various choices of $A$ and $B$ ensure various vanishings of color-symmetric Laurent polynomials at the vertices of various asps. For example, the pictures on the right in Figure~\ref{fig:scary} correspond respectively to
\begin{equation}\label{eq:B-set}
\begin{split}
  & B = \{z_{N-2,2}q^{\pm 1}, z_{N-1,1}q^{\pm 2}, z_{N-1,2}q^{\mp 2}, z_{N-1,3}, z_{N1}\} \\
  & B = \{z_{N-2,2}q^{\pm 1}, z_{N-1,1}q^{\pm 2}\} \\
  & B = \{z_{N-2,2}q^{\pm 1}, z_{N-1,1}q^{\pm 2}, z_{N1}\}
\end{split}
\end{equation}

\medskip

\begin{proof} \emph{of Claim~\ref{claim:seven}:} 
The only key feature of $F \in \CS^-_{C_{\Gamma}}$ that we will use is that it vanishes at two length $3$ wheels corresponding to the small triangles in \eqref{eqn:wheel c 3}. To make the presentation clearer, let us rename the variables according to
\begin{multline*}
  \Big(z_{N-3,1}, z_{N-2,1}q^{\mp 1}, z_{N-2,2}q^{\pm 1}, z_{N-1,1}q^{\pm 2} ,z_{N-1,2}q^{\mp 2}, z_{N-1,3},z_{N1}\Big) \\ = (c,a,b,s,t,d,u)
\end{multline*}
while using $\dots$ for other variables that will not be important to us. With this in mind, it suffices to prove the following claim: for any Laurent polynomial $F$ which vanishes when $a=b=c$ and when $a=b=d$, conditions \eqref{eqn:divisibility} are equivalent to each other for any partition
\begin{equation}
\label{eq:partition}
  \{a,b,c,d,s,t,u\} = A \sqcup B
\end{equation}
such that $a \in A$ and $b \in B$. Any homogeneous element $F \in \CS^-_{C_\Gamma}$ satisfies $F(x,x,x,x,x,x,x,\dots)=0$ due to the length $3$ wheel conditions, and therefore 
$$
  \sum_{\gamma \in \{a,b,c,d,s,t,u\}} \frac {\partial F}{\partial \gamma}(x,x,x,x,x,x,x,\dots) = 0
$$
Hence, the divisibility condition \eqref{eqn:divisibility} is equivalent to
\begin{equation}
\label{eqn:divisibility foiled}
  \sum_{\gamma \in A} \frac {\partial F}{\partial \gamma}(x,x,x,x,x,x,x,\dots) =  
  \sum_{\gamma \in B} \frac {\partial F}{\partial \gamma}(x,x,x,x,x,x,x,\dots)  = 0
\end{equation}
However, for any set of variables $C \subset \{a,b,c,d,s,t,u\}$ which contains either $\{a,b,c\}$ or $\{a,b,d\}$, the length $3$ wheel conditions imply that
\begin{multline*}
  F|_{(\text{variables in }C)\leadsto x} = 0 \quad \Rightarrow \quad 
  \sum_{\gamma \in C} \frac {\partial F}{\partial \gamma}\Big|_{(\text{variables in }C)\leadsto x} = 0 \quad \\ 
  \Rightarrow \quad \sum_{\gamma \in C} \frac {\partial F}{\partial \gamma} (x,x,x,x,x,x,x,\dots) = 0
\end{multline*}
Therefore, we have 
\begin{equation*}
\begin{split}
  & \frac {\partial F}{\partial \gamma} (x,x,x,x,x,x,x,\dots) = 0 \qquad \forall\, \gamma \in \{c,d,s,t,u\}, \\
  & \frac {\partial F}{\partial a} (x,x,x,x,x,x,x,\dots) + \frac {\partial F}{\partial b} (x,x,x,x,x,x,x,\dots) = 0
\end{split}
\end{equation*}
This implies that the divisibility conditions \eqref{eqn:divisibility}, interpreted as~\eqref{eqn:divisibility foiled}, are in fact equivalent to 
$$
  \frac {\partial F}{\partial a} (x,x,x,x,x,x,x,\dots) = \frac {\partial F}{\partial b} (x,x,x,x,x,x,x,\dots) = 0
$$
for any partitions~\eqref{eq:partition} with $A \ni a$ and $B \ni b$. Since the choice $B = \{b\}$ precisely corresponds to $F \in \CS^-_{C_{\Gamma}}$, we conclude that \eqref{eqn:divisibility} holds for all $A$ and $B$ as above.
\end{proof}


\bigskip 

\section{Quantum toroidal superalgebras of types $B, C, D$}
\label{sec:osp-toroidal}

\medskip 

In this Section, we will consider quantum loop superalgebras associated to affine $B,C,D$ type decorated Dynkin diagrams (and their generalizations, as we will explain in Subsection \ref{sub:extended}) and prove the shuffle realization from Theorem \ref{thm:toroidal intro}.


\medskip 

\subsection{Extended decorated diagrams}
\label{sub:extended}

Recall that the decorated Dynkin diagrams of finite type $B,C,D$ consist of gluing together the following:

\medskip 

\begin{itemize}[leftmargin=0.7cm]

\item 
a \emph{body} $\Gb$, i.e.\ a type $A$ decorated Dynkin diagram with vertices $1,\dots,N-1$ (in types $B,C$) or $1,\dots,N-2$ (in type $D$). We call the largest vertex the~\emph{neck};

\medskip 

\item 
a \emph{head} $\Gh$, i.e.\ an edge from the neck to vertex $N$ (in types $B,C$) or a fork from the neck to vertices $N-1,N$ (in type $D$), see the diagrams \eqref{eqn:type b 1 head}-\eqref{eqn:type d 2 head}.

\end{itemize}

\medskip 
\noindent
Similarly, an \emph{extended decorated Dynkin diagram} $\Gamma$ of affine type $B,C,D$ consists of a type $A$ decorated Dynkin diagram $\Gb$ (a body) to which we glue $\Gh',\Gh''$ (two heads) at the leftmost and rightmost ends, respectively. If $X,Y \in \{B,C,D\}$ are the types of these heads, then we shall refer to all objects associated to $\Gamma$ as being of type
\begin{equation}
\label{eqn:extended diagram}
  \widehat{XY}_{\Gamma} 
\end{equation}
An extended decorated Dynkin diagram has two \emph{neck} vertices where the body is connected to the two heads, and thus two neck parameters connecting the respective necks with the outermost vertices. As in finite types $B,C,D$, the existence and parameter of the loop on each neck (and hence its parity) is determined by comparing the neck parameter and the parameter of the edge in the body adjacent to the neck. The existence of the latter is due to the following.

\medskip 
\noindent 
\textbf{Assumption:} the two necks of an extended decorated Dynkin diagram are different, and moreover in type $CC$, the body has at least one vertex between the two necks.

\medskip 
\noindent
All decorated Dynkin diagrams associated to type $B,C,D$ affine Lie superalgebras are of the form \eqref{eqn:extended diagram}, but our notion of extended decorated Dynkin diagrams is actually more general. For instance, it includes situations which are prohibited for affine superalgebras, such as type $BB$, or type $CC$ with neck parameters involving different signs (cf.~\cite[\S1.4]{Y} that provides Dynkin diagrams of type $XY$ with $X,Y\in \{B,C,D\}$ that correspond to twisted affine Lie superalgebras). The vertex set of the diagram $\Gamma$ will be denoted by 
$$
  \tI = \{0,\dots,N\}
$$
The following are possible extended decorated Dynkin diagrams:
\begin{equation}
\label{eqn:dynkin bb}
\text{Type $BB$:} \begin{tikzcd}
  0 && 1 & \cdots & {N-1} && N
  \arrow["q^{\pm 1}"', no head, from=1-7, to=1-5]
  \arrow[no head, from=1-5, to=1-4]
  \arrow[no head, from=1-4, to=1-3]
  \arrow["q^{\pm' 1}"', no head, from=1-3, to=1-1]
  \arrow["q^{\mp 1}"', no head, from=1-7, to=1-7, looseness=3.3]
  \arrow[dashed, no head, from=1-5, to=1-5, looseness=3.3]
  \arrow[dashed, no head, from=1-3, to=1-3, looseness=4.5]
  \arrow["q^{\mp' 1}"', no head, from=1-1, to=1-1, looseness=4.5]
\end{tikzcd}
\end{equation}
\begin{equation}
\label{eqn:dynkin bc}
\text{Type $BC$:} \begin{tikzcd}
  0 && 1 & \cdots & {N-1} && N
  \arrow["q^{\pm 2}"', no head, from=1-7, to=1-5]
  \arrow[no head, from=1-5, to=1-4]
  \arrow[no head, from=1-4, to=1-3]
  \arrow["q^{\pm' 1}"', no head, from=1-3, to=1-1]
  \arrow["q^{\mp 4}"', no head, from=1-7, to=1-7, looseness=3.3]
  \arrow[dashed, no head, from=1-5, to=1-5, looseness=3.3]
  \arrow[dashed, no head, from=1-3, to=1-3, looseness=4.5]
  \arrow["q^{\mp' 1}"', no head, from=1-1, to=1-1, looseness=4.5]
\end{tikzcd}
\end{equation}
\begin{equation}
\label{eqn:dynkin bd}
\text{Type $BD$:} \quad \begin{tikzcd}
  &&&&& N-1 \\
  0 && 1 & \cdots & {N-2} \\
  &&&&& N
  \arrow[no head, from=2-5, to=2-4]
  \arrow[no head, from=2-4, to=2-3]
  \arrow[dashed, no head, from=2-4, to=2-3]
  \arrow["q^{\pm' 1}"', no head, from=2-3, to=2-1]
  \arrow["q^{\pm 1}"', no head, from=1-6, to=2-5]
  \arrow["q^{\pm 1}"', swap, no head, from=3-6, to=2-5]
  \arrow["q^{\mp 2}"', no head, from=3-6, to=1-6]
  \arrow["q^{\mp' 1}"', no head, from=2-1, to=2-1, looseness=4.5]
  \arrow[dashed, no head, from=2-3, to=2-3, looseness=4.5]
  \arrow[dashed, no head, from=2-5, to=2-5, looseness=3.3]
\end{tikzcd}
\end{equation}
\begin{equation}
\label{eqn:dynkin cc}
\text{Type $CC$:} \begin{tikzcd}
  0 && 1 & \cdots & {N-1} && N
  \arrow["q^{\pm 2}"', no head, from=1-7, to=1-5]
  \arrow[no head, from=1-5, to=1-4]
  \arrow[no head, from=1-4, to=1-3]
  \arrow["q^{\pm' 2}"', no head, from=1-3, to=1-1]
  \arrow["q^{\mp 4}"', no head, from=1-7, to=1-7, looseness=3.3]
  \arrow[dashed, no head, from=1-5, to=1-5, looseness=3.3]
  \arrow[dashed, no head, from=1-3, to=1-3, looseness=4]
  \arrow["q^{\mp' 4}"', no head, from=1-1, to=1-1, looseness=4]
\end{tikzcd}
\end{equation}
\begin{equation}
\label{eqn:dynkin cd}
\text{Type $CD$:} \quad \begin{tikzcd}
  &&&&& N-1 \\
  0 && 1 & \cdots & {N-2} \\
  &&&&& N
  \arrow[no head, from=2-5, to=2-4]
  \arrow[no head, from=2-4, to=2-3]
  \arrow[dashed, no head, from=2-4, to=2-3]
  \arrow["q^{\pm' 2}"', no head, from=2-3, to=2-1]
  \arrow["q^{\pm 1}"', no head, from=1-6, to=2-5]
  \arrow["q^{\pm 1}"', swap, no head, from=3-6, to=2-5]
  \arrow["q^{\mp 2}"', no head, from=3-6, to=1-6]
  \arrow["q^{\mp' 4}"', no head, from=2-1, to=2-1, looseness=4.5]
  \arrow[dashed, no head, from=2-3, to=2-3, looseness=4.5]
  \arrow[dashed, no head, from=2-5, to=2-5, looseness=3.3]
\end{tikzcd}
\end{equation}
\begin{equation}
\label{eqn:dynkin dd}
\text{Type $DD$:} \quad \begin{tikzcd}
0 &&&& N-1 \\
& 2 & \cdots & {N-2} & \\
1 &&&& N
  \arrow["q^{\pm' 1}", no head, from=1-1, to=2-2]
  \arrow["q^{\pm' 1}", swap, no head, from=3-1, to=2-2]
  \arrow["q^{\pm 1}", swap, no head, from=1-5, to=2-4]
  \arrow["q^{\pm 1}", no head, from=3-5, to=2-4]
  \arrow["q^{\mp 2}", swap, no head, from=3-5, to=1-5]
  \arrow["q^{\mp' 2}", no head, from=3-1, to=1-1]
  \arrow[no head, from=2-4, to=2-3]
  \arrow[no head, from=2-3, to=2-2]
  \arrow[dashed, no head, from=2-2, to=2-2, looseness=4.5]
  \arrow[dashed, no head, from=2-4, to=2-4, looseness=3.3]
\end{tikzcd}
\end{equation}
The signs $\pm'$ and $\pm$ are independent of each other. In types $B$ and $D$ we may replace either of the heads represented above (which are \eqref{eqn:type b 1 head} and \eqref{eqn:type d 2 head}) by \eqref{eqn:type b 2 head} and \eqref{eqn:type d 1 head}, respectively. One also has types $CB$, $DB$ and $DC$ by reflecting the above diagrams left-to-right.

\medskip

\begin{definition}
\label{def:special}
Let $Q \in q^{\BZ}$ be the product of all the horizontal edge parameters in $\Gamma$ (for each type $D$ head, we include the corresponding neck parameter exactly once in the product), times a single factor of $q^{\pm 1}$ in those types $BB$ where both heads have a loop parameter of $q^{\pm 1}$. 

\medskip 
\noindent 
An extended decorated Dynkin diagram is called \emph{special} if it is of types $BB, CC, DD$, $CD, DC$ and $Q = 1$, and \emph{non-special} otherwise.
\end{definition}

\medskip
\noindent 
Special extended decorated Dynkin diagrams are the type $B,C,D$ analogues of type $\wA$ with $m=n$, which we have already seen to involve long wheels \eqref{eqn:wheel special 1}-\eqref{eqn:wheel special 2}, dually resulting in clunky relations \eqref{eqn:rel quantum toroidal special} that are not easy to write down. We will see the analogous behavior for special extended decorated Dynkin diagrams in Subsection~\ref{sub:wheel toroidal}.


\medskip 

\subsection{Quantum toroidal superalgebras in types $B,C,D$}
\label{sub:toroidal}

For the remainder of the present Section, we fix a non-special extended decorated Dynkin diagram $\Gamma$ as in~\eqref{eqn:extended diagram}. We note that cutting off the head containing $0$ (or $0,1$ if $X$ is of type $D$) provides a decorated Dynkin diagram of type $Y_{\Gar}$, where $\Gar$ denotes $\Gamma$ with its leftmost head $\Gh'$ removed, while leaving behind the neck. Likewise, cutting off $N$ (or $N-1,N$ if $Y$ is of type $D$) provides a decorated Dynkin diagram of type $X_{\Gal}$, where $\Gal$ denotes $\Gamma$ with its rightmost head $\Gh''$ removed, while leaving behind the neck. For such finite types, we wrote down the corresponding zeta factors in Section~\ref{sec:osp}, which are compatible across the body:
$$
  \zeta^{X_{\Gal}}_{ij}(x) = \zeta^{Y_{\Gar}}_{ij}(x) = \zeta^{A_{\Gb}}_{ij}(x) \qquad \forall\, i,j\in \Gb
$$
With this in mind, we introduce zeta functions of type~\eqref{eqn:extended diagram} as follows:
\begin{equation}
\label{eq:zeta-toroidal}    
\begin{split}
  & \zeta^{\wXYG}_{ij} (x) =  \zeta^{Y_{\Gar}}_{ij}(x) \qquad \forall\, i,j\in \Gar \\
  & \zeta^{\wXYG}_{ij} (x) =  \zeta^{X_{\Gal}}_{ij}(x) \qquad \forall\, i,j\in \Gal \\
  & \zeta^{\wXYG}_{ji} (x) =1 , \quad \zeta^{\wXYG}_{ij} (x) = (-1)^{|i||j|} \quad \forall\, 
    i\in \Gh'\backslash\{\mathrm{neck}\}, 
    j\in \Gh''\backslash\{\mathrm{neck}\}
\end{split}
\end{equation}
with the parity of the nodes in the heads defined explicitly in Section~\ref{sec:osp}.

\medskip
\noindent
We define the pre-quantum toroidal superalgebra of type $\wXYG$ as before:
\begin{equation*}
  \widetilde{U}_q(L\fg_{\wXYG}) = \BC \Big \langle e_{i,d}, f_{i,d}, \ph_{i,d'}^\pm \Big \rangle_{i \in \tI, d \in \BZ, d' \geq 0} \Big/ 
  \Big ( \text{relations \eqref{eqn:rel quantum affine 1}-\eqref{eqn:rel quantum affine 6}} \Big )
\end{equation*}
where the relations above are defined with respect to the zeta functions of~\eqref{eq:zeta-toroidal}. To keep our notations short, we shall use $\big\{ \relR \big\}$ to denote all the relations from Section~\ref{sec:osp} that enter the definition of $U_q(L\fg_{Y_{\Gar}})$ as an explicit quotient of $\widetilde{U}_q(L\fg_{Y_{\Gar}})$, and likewise we shall use $\big\{ \relL \big\}$ to denote all similar relations entering the definition of $U_q(L\fg_{X_{\Gal}})$ as an explicit quotient of $\widetilde{U}_q(L\fg_{X_{\Gal}})$. With this in mind, we make the following definition.

\medskip

\begin{definition}
\label{def:toroidal}
For a non-special extended decorated Dynkin diagram $\Gamma$, we define the quantum toroidal superalgebra of type $\wXYG$ as
\begin{equation}
\label{eqn:toroidal}
  U_q(L\fg_{\wXYG}) = \widetilde{U}_q(L\fg_{\wXYG}) \Big/ \Big( \relR , \relL \Big)
\end{equation}
\end{definition}


\medskip 

\subsection{Wheel conditions for extended decorated Dynkin diagrams}
\label{sub:wheel toroidal}

Consider the big shuffle algebra associated to~\eqref{eq:zeta-toroidal}:
$$
  \CV^+_{\wXYG}= \bigoplus_{\bn \in \tnn} \BC[z_{i1},\dots,z_{in_i}]^{\sym}_{i \in \tI} \cdot \twist
$$
with $\twist$ defined analogously to~\eqref{eq:root-twist}. Recall from Subsection~\ref{sub:shuffle intro} all possible wheels of types $Y_{\Gar}$ and $X_{\Gal}$, both of which include the length $3$ and $4$ wheels over the body $A_{\Gb}$. We will refer to the totality of all such wheels as type $\wXYG$ wheels.

\medskip

\begin{definition}
\label{def:extended shuffle}    
For a non-special extended decorated Dynkin diagram $\Gamma$, we define 
\begin{equation*}
  \CS^+_{\wXYG} = \Big\{ F \ \mathrm{satisfying\ all\ type} \ \wXYG \ \mathrm{wheel \ conditions}  \Big \} \subset  \CV^+_{\wXYG} 
\end{equation*}
\end{definition}

\medskip 
\noindent 
Special extended decorated Dynkin diagrams require additional wheel conditions (and therefore dually, the quantum toroidal superalgebras \eqref{eqn:toroidal} require additional relations, which we do not discuss in the present paper), akin to \eqref{eqn:wheel special 1}-\eqref{eqn:wheel special 2}. For example:

\medskip

\begin{itemize}[leftmargin=0.7cm]

\item 
the type $CC$ diagram 
$$
\begin{tikzcd}
	0 && 1 && 2 && 3 && 4
	\arrow["q^2", no head, from=1-1, to=1-3]
    \arrow["q", no head, from=1-3, to=1-5]
    \arrow["q^{-1}", no head, from=1-5, to=1-7]
	\arrow["q^{-2}", no head, from=1-7, to=1-9]
    \arrow["q^{-4}"', no head, from=1-1, to=1-1, looseness=4]
    \arrow["q^{-2}"', no head, from=1-3, to=1-3, looseness=4]
    \arrow["q^{2}"', no head, from=1-7, to=1-7, looseness=4]
    \arrow["q^4"', no head, from=1-9, to=1-9, looseness=4]
\end{tikzcd}
$$
requires the wheel 
$$
\begin{tikzcd}
	{z_{01}} && {z_{11}} & {z_{21}} \\
	&&& {z_{31}} \\
	{z_{12}} \\
	{z_{22}} & {z_{32}} && {z_{41}}
	\arrow["{q^2}", from=1-1, to=1-3]
	\arrow["q", from=1-3, to=1-4]
	\arrow["{q^{-1}}", from=1-4, to=2-4]
	\arrow["{q^{-2}}", from=2-4, to=4-4]
	\arrow["{q^2}", from=3-1, to=1-1]
	\arrow["q", from=4-1, to=3-1]
	\arrow["{q^{-1}}", from=4-2, to=4-1]
	\arrow["{q^{-2}}", from=4-4, to=4-2]
\end{tikzcd}
$$

\item 
the type $DD$ diagram 
$$
\begin{tikzcd}
0 &&&&&& 5 \\
& 2 && 3 && 4 & \\
1 &&&&&& 6
  \arrow["q", no head, from=2-2, to=1-1]
  \arrow["q", swap, no head, from=2-2, to=3-1]
  \arrow["q^{- 1}", no head, from=1-7, to=2-6]
  \arrow["q^{- 1}", swap, no head, from=3-7, to=2-6]
  \arrow["q^{2}", swap, no head, from=3-7, to=1-7]
  \arrow["q^{- 2}", no head, from=3-1, to=1-1]
  \arrow["q^{-1}", swap, no head, from=2-6, to=2-4]
  \arrow["q", swap, no head, from=2-4, to=2-2]
  \arrow["q^{-2}"', no head, from=2-2, to=2-2, looseness=4]
  \arrow["q^{2}"', no head, from=2-6, to=2-6, looseness=4]
\end{tikzcd}
$$
requires the wheel (the two $\boxed{2}$ in the diagram below refer to the fact that one needs to impose order $2$ vanishing conditions on Laurent polynomials, akin to~\eqref{eqn:rigorous}, which we will not need to make explicit in the present paper)
$$
\begin{tikzcd}
	& {z_{11}} && {z_{22}} &&& {z_{32}} \\
	{z_{01}} & \boxed{2} \\
	\\
	{z_{21}} &&&&&& {z_{42}} \\
	\\
	&&&&& \boxed{2} & {z_{51}} \\
	{z_{31}} &&& {z_{41}} && {z_{61}}
	\arrow["q", from=1-2, to=1-4]
	\arrow["q", from=1-4, to=1-7]
	\arrow["{q^{-1}}", from=1-7, to=4-7]
	\arrow["q", swap, from=2-1, to=1-4]
	\arrow["q", swap, from=4-1, to=1-2]
	\arrow["q", from=4-1, to=2-1]
	\arrow["{q^{-1}}", from=4-7, to=6-7]
	\arrow["{q^{-1}}"', from=4-7, to=7-6]
	\arrow["{q^{-1}}"', from=6-7, to=7-4]
	\arrow["q", from=7-1, to=4-1]
	\arrow["{q^{-1}}"', from=7-4, to=7-1]
	\arrow["{q^{-1}}", from=7-6, to=7-4]
\end{tikzcd}
$$

\item 
the type $BB$ diagram 
$$
\begin{tikzcd}
  0 && 1 && 2 && 3
  \arrow["q"', no head, from=1-7, to=1-5]
  \arrow["q^{-1}", swap, no head, from=1-5, to=1-3]
  \arrow["q"', no head, from=1-3, to=1-1]
  \arrow["q^{- 1}"', no head, from=1-7, to=1-7, looseness=4]
  \arrow["q^{- 1}"', no head, from=1-1, to=1-1, looseness=4]
\end{tikzcd}
$$
requires the wheel 
$$
\begin{tikzcd}
	&& {z_{01}} &&&&&& \\
	\\
	{z_{02}} &&&& {z_{12}} \\
	\\
	&& {z_{11}} &&&& {z_{21}} \\
	\\
	&&&& {z_{22}} &&&& {z_{31}} \\
	\\
	&&&&&& {z_{32}}
	\arrow["{q^{-1}}"', from=1-3, to=3-1,]
    \arrow["q", from=3-1, to=3-5, pos=0.7]
	\arrow["{q^{-1}}"', from=3-5, to=7-5, pos=0.7]
	\arrow["q", from=5-3, to=1-3,  pos=0.7]
	\arrow["{q^{-1}}", from=5-7, to=5-3, swap, pos=0.7]
	\arrow["q", from=7-5, to=7-9, pos=0.7]
	\arrow["{q^{-1}}", from=7-9, to=9-7]
	\arrow["q", from=9-7, to=5-7, pos=0.7]
\end{tikzcd}
$$

\end{itemize}

\medskip
\noindent
None of these \emph{special} wheel conditions is implied by the finite type wheel conditions on the two heads or the body of the respective extended decorated Dynkin diagram.


\medskip 

\subsection{The shuffle realization for toroidal superalgebras}
\label{sub:proof toroidal}

We now recall Theorem \ref{thm:toroidal intro}, which states that for a non-special extended decorated Dynkin diagram $\Gamma$, the algebra homomorphism 
$$
  \tUpsilon^+_{\wXYG}\colon \widetilde{U}^+_q(L\fg_{\wXYG}) \to \CV^+_{\wXYG}, \qquad e_{i,d}\mapsto z^d_{i1} , \quad \forall\, i\in \tI, d\in \BZ
$$
of~\eqref{eqn:tilde upsilon} induces an isomorphism 
$$
  \Upsilon^+_{\wXYG}\colon U^+_q(L\fg_{\wXYG}) \iso \CS^+_{\wXYG}
$$

\medskip

\begin{proof} 
\emph{of Theorem \ref{thm:toroidal intro}:} 
The proof of the toroidal analogues of~\eqref{eqn:fact 1 cd 1},~\eqref{eqn:fact 1 b},~\eqref{eqn:fact 1 cd 2} is completely identical to the proofs already given. It therefore remains to prove the analogue of Lemma \ref{lem:pairing sl} which, as we explained in Subsection \ref{sub:specialization patterns}, boils down to describing the asps and showing that they are closed under extensions.

\medskip
\noindent
We will first deal with the case when the diagram is of type $CC, CD$ or $DD$. In this case, we define the acceptability condition with respect to positive real numbers
\begin{equation}
\label{eqn:greater toroidal}
  \rho_0 \gg \rho_1, \dots, \rho_{N-1} \ll \rho_N = \rho_0
\end{equation}
(if the left-most head is of type $D$, we replace the above by $\rho_0 = \rho_1 \gg \rho_2$; if the right-most head is of type $D$ then we replace above by $\rho_{N-2}\ll \rho_{N-1}=\rho_N=\rho_0$). The relative order of the $(\rho_i)_{i\in \Gb}$ will be irrelevant. Finally, we assume that the product $Q$  from Definition~\ref{def:special} satisfies
\begin{equation}
\label{eqn:Q}
  |Q| < 1
\end{equation}
which is possible due to the assumption that $\Gamma$ is non-special and $Q\in q^{\BZ}$. Because of the choice \eqref{eqn:greater toroidal}, any asp rooted at a color in the body $\Gb$ will be constrained to the body, and is thus nothing more than a finite type shrub. It therefore remains to classify asps rooted at the right-most head $N$; the case of the root being at the left-most head $0$ is analogous. These are indicated in Figures \ref{fig:even head} and \ref{fig:odd head} in the cases when the left-most neck is even or odd, respectively. We note that these figures are drawn in the case when the right-most neck is even and of type $D$; the other cases will require small modifications at colors $N-2,N-1,N$, but will not affect the behavior at colors $0,1,2$, which is where the subsequent argument plays out.

\begin{figure}[h]
  \includegraphics[scale=0.6]{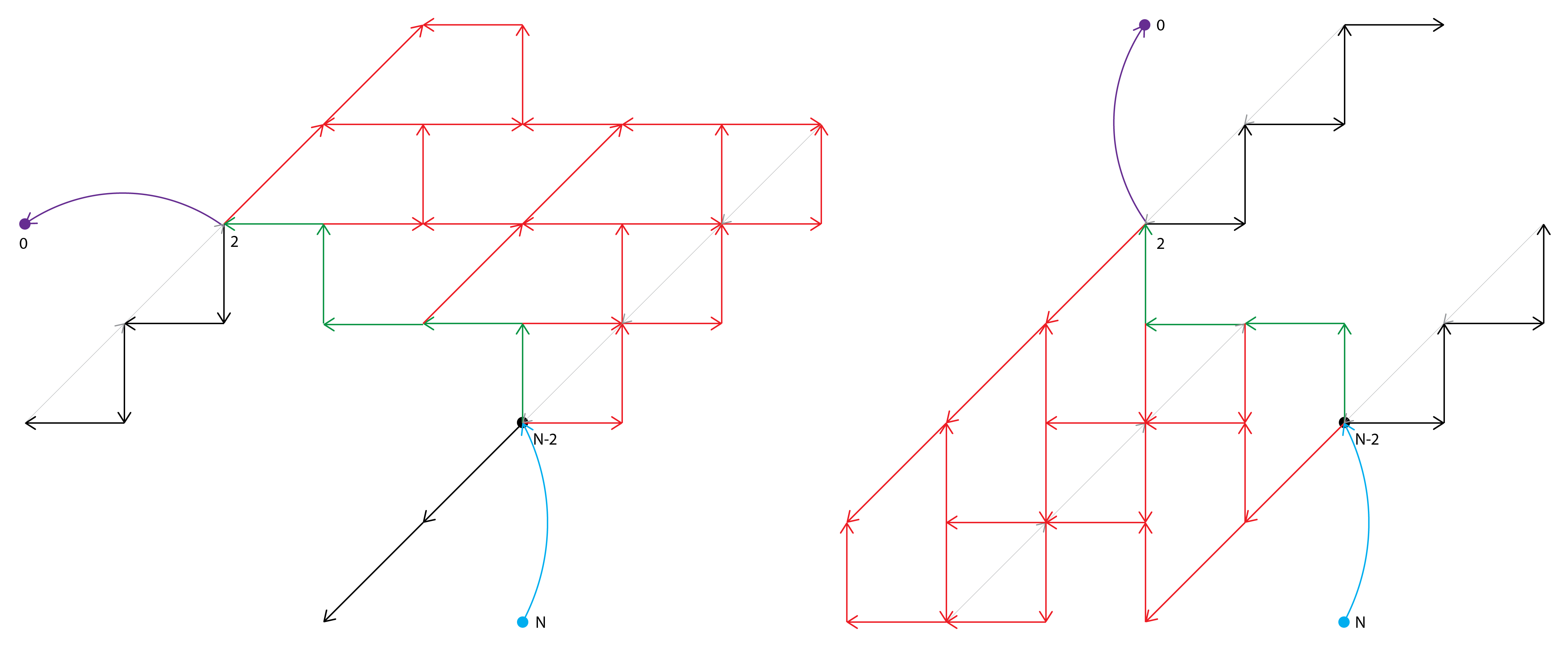} 
  \caption{The possible asps rooted at the right-most head, for a type $DD$ diagram with neck $|N-2|=0$ (the case of $|N-2|=1$, as well as types $CC$, $CD$, $DC$ are analogous), if the neck $|2|=0$.}
  \label{fig:even head}
\end{figure}

\begin{figure}[h]
  \includegraphics[scale=0.6]{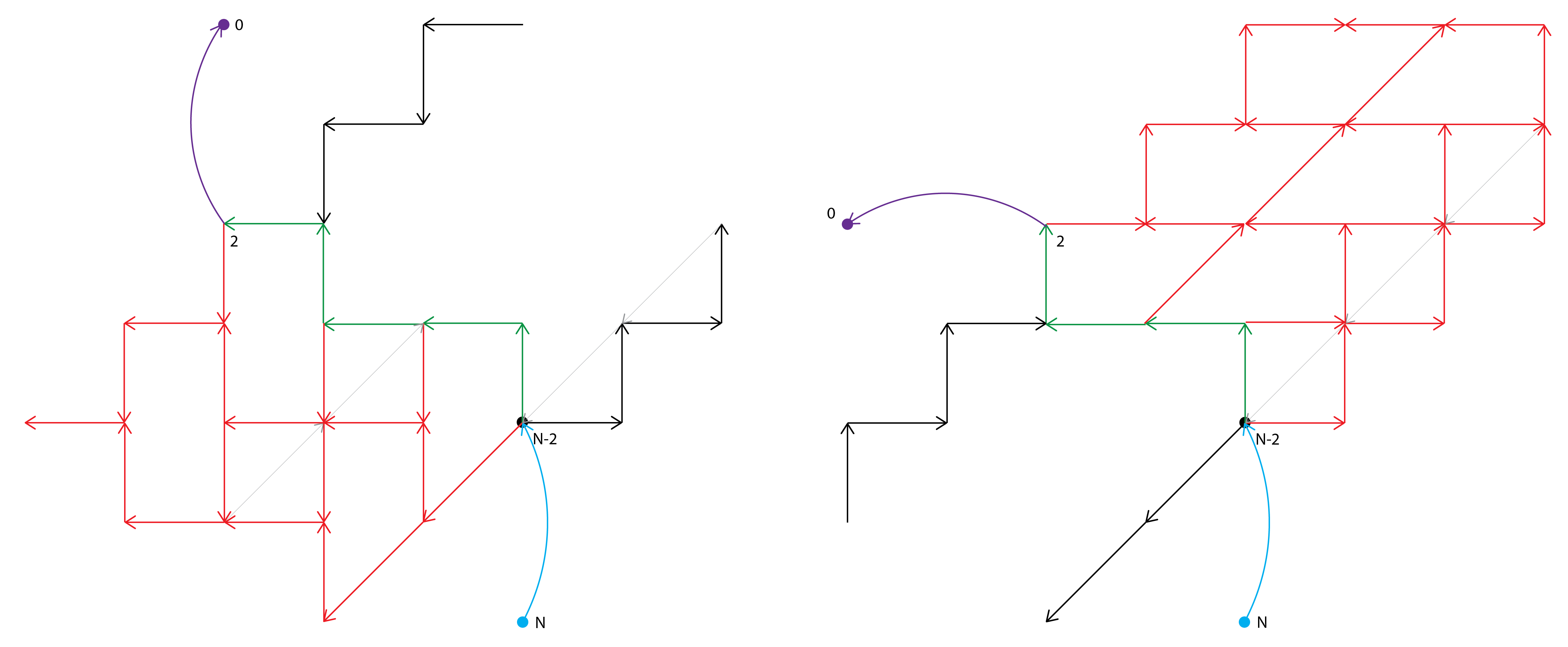} 
  \caption{The possible asps rooted at the right-most head, for a type $DD$ diagram with neck $|N-2|=0$ (the case of $|N-2|=1$, as well as types $CC$, $CD$, $DC$ are analogous), if the neck $|2|=1$.}
  \label{fig:odd head}
\end{figure}

\medskip 
\noindent 
Let us explain how to read Figures \ref{fig:even head}-\ref{fig:odd head}: strictly speaking, the subsequent explanation only applies when the left-most head of color $0$ is of type $D$, but we will explain afterwards how to modify the argument for a type $C$ left head. As shown in the proof of Theorem~\ref{thm:d intro} (respectively Theorem~\ref{thm:c intro}), any type $D$ (respectively type $C$) asp rooted at the color $N$ blue disk consists of a chain of color $N$, $N-1$ vertices (respectively color $N$ vertices) attached to a finite type shrub $S$ rooted at the color $N-2$ (respectively $N-1$) black disk above; note that in type $B$, we actually obtain two non-interacting shrubs instead of $S$, and this case will be discussed at the end of the proof. The behavior of the shrub $S$ between the two necks precisely follows the classification in Proposition \ref{prop:shrubs sl}: in particular, the green arrows represent the irregular directrix that points out from the black disk toward color $2$. The shrub $S$ is depicted in red in Figures \ref{fig:even head}-\ref{fig:odd head}, and we claim that it is constrained only to one side of the green directrix: 

\medskip 

\begin{itemize}[leftmargin=0.7cm]

\item 
when the left-most neck $|2|=0$ (depicted in Figure \ref{fig:even head}), this is because the shrub condition would be violated if $S$ contained any of the black zig-zags at colors $2$ and $3$; this forbids the shrub from containing any color $2$ vertices on the respective side of the green directrix;

\medskip 

\item 
when the left-most neck $|2|=1$ (depicted in Figure \ref{fig:odd head}), this is because Corollary \ref{cor:shrubs sl} does not allow 
the bounding ladders (zig-zags or straight rays) of any shrub to point in opposite directions (as do the two black ladders in the figure), given that one of these ladders lies on the regular directrix of the shrub $S$.

\end{itemize}

\medskip 
\noindent
Therefore, the only vertices of color $2$ in the shrub $S$ are the vertex $v$ lying at the end of the green directrix (indicated by ``$2$" in Figures \ref{fig:even head} and \ref{fig:odd head}) and its neighbors either only northeast or only southwest, as indicated in the respective figures. If we wished to extend such a shrub to the head of color $0$ (the case of color $1$ is analogous) of the extended decorated Dynkin diagram, which is indicated by a violet curved arrow in the figures, length $3$ wheel conditions (if $|2|=0$) or length $4$ wheel conditions (if $|2|=1$) only allow such an extension to happen from the vertex $v$. However, in this case, \eqref{eqn:Q} prohibits the extension due to the acceptability condition \eqref{eqn:greater toroidal}. We conclude that no asp can involve both heads of the extended decorated Dynkin diagram, and thus the classification of asps in the case at hand is nothing more than that of Theorems \ref{thm:d intro} and \ref{thm:c intro}.

\medskip 
\noindent 
Let us now assume the left-most head is of type $C$. In this case, we can extend the shrub $S$ toward color 0 not only from the end-point of the green directrix, but also from its immediate neighbor on the red diagonal (Case 1) or from its same-color neighbor on the red zig-zag (Cases 2-3). This is depicted in Figure \ref{fig:neighbor}, which also explains why such an asp could not be extended any further from color $0$: from the violet disk we cannot go diagonally to the cyan $\textcolor{cyan}{\boldsymbol{\times}}$ due to length $3$ wheel condition, not directly down to the orange $\textcolor{orange}{\boldsymbol{\times}}$ due to length $4$, $6$ or $7$ wheel condition depending on whether the head is Case 1, Case 2 or Case 3 (the latter is quite non-trivial and is based on Claim~\ref{claim:seven} applied to $B = \{z_{N-2,2}q^{\pm 1}, z_{N-1,1}q^{\pm 2}, z_{N-1,2}q^{\mp 2}, z_{N1}\}$).

\begin{figure}[h]
  \includegraphics[scale=0.75]{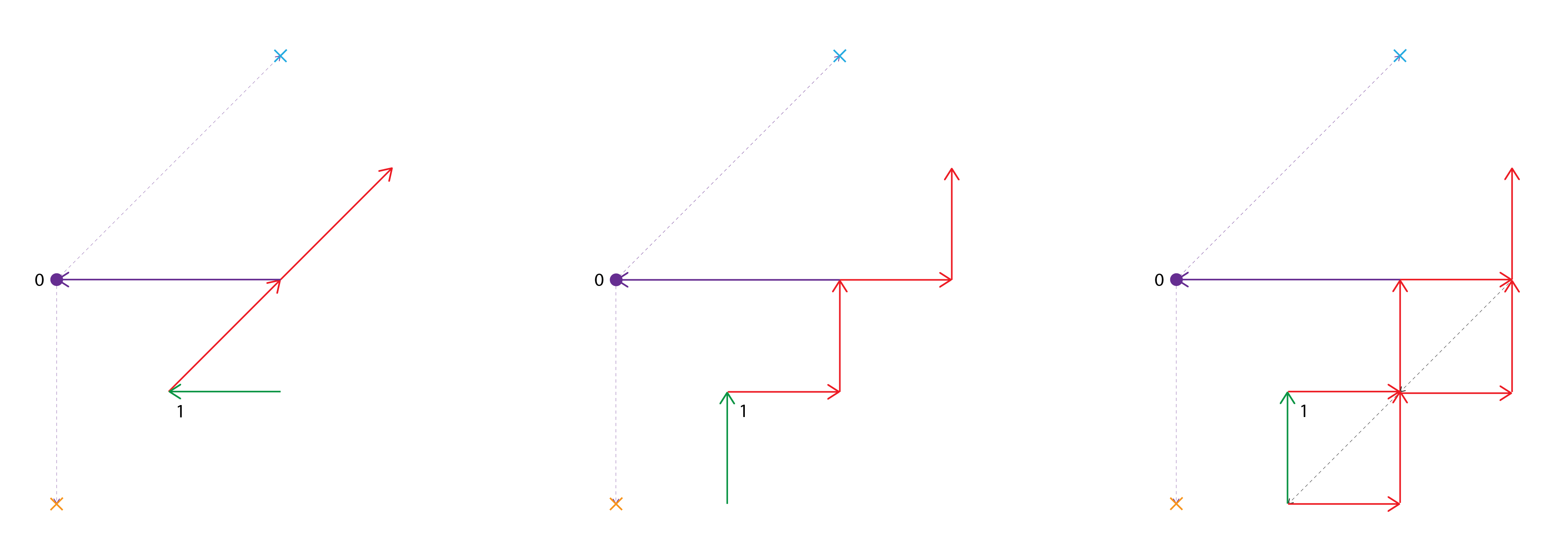} 
  \caption{Reaching a type $C$ head (Case 1 on the left, Case 2 in the middle, Case 3 on the right); the green arrow is the northwestern most green arrow on the directrix of Figures~\ref{fig:even head} and~\ref{fig:odd head}. If the violet arrow points vertically, the treatment is analogous.} 
  \label{fig:neighbor}
\end{figure}

\medskip 
\noindent 
We will now deal with the case of type $BC$ and $BD$ diagrams, all of which are non-special. Without loss of generality, we assume that the left-most head is of type $B$, and that the corresponding vertex $0$ has a loop with parameter $q$ (in Case 2, it also has a loop with parameter $q^{-2}$). With this in mind, we define the acceptability condition with respect to positive real numbers 
\begin{equation}
\label{eqn:greater toroidal 2}
  \rho_0 \gg \rho_1, \dots, \rho_{N-1} \ll \rho_N = \frac {\rho_0}r
\end{equation}
(if the right-most head is of type $D$, we replace $\rho_{N-1} \ll \rho_N$ above by $\rho_{N-2} \ll \rho_{N-1} = \rho_N=\frac {\rho_0}{r}$) for some positive real number $r$ which satisfies the property:

\medskip 

\begin{itemize}[leftmargin=0.7cm]

\item $r=1$ if $Q \neq 1 \neq Qq$; 

\medskip 

\item $|q^{-1}| < r < 1$ if $Qq = 1$, which requires us to assume $|q|>1$;

\medskip 

\item $|q| < r < 1$ if $Q=1$, which requires us to assume $|q|<1$.
    
\end{itemize}

\medskip 
\noindent 
If $Q \neq 1 \neq Qq$, then we may choose the absolute value of $q$ such that $|Q|, |Qq| < 1$. With these choices in mind, let us consider any asp which starts at either head (asps which start in the body of the diagram are dealt with as in the previous case). Just like in the case of types $CC, CD, DD$ treated above, such an asp can never reach the other head; the only exception is if the other head is of type $C$, in which case our treatment should follow the one surrounding Figure \ref{fig:neighbor}. In any case, such asps are closed under extensions. On the other hand, if either $Q=1$ or $Qq=1$, it is elementary to show that the choices in the above bulleted list ensure that
\begin{equation}
\label{eqn:uno}
  |Q| < r \quad \text{ or } \quad |Qq|<r
\end{equation}
and 
\begin{equation}
\label{eqn:dos}
  |Q| < 1/r \quad \text{and} \quad  |Qq| < 1/r
\end{equation}
Property \eqref{eqn:uno} together with the acceptability condition~\eqref{eqn:greater toroidal 2} ensures that any shrub which starts at the right-most (type $C$ or $D$) head either can never reach the left-most (type $B$) head at color $0$, or $Q=1$ holds and the shrub can reach color $0$ only at the violet disk in Figure \ref{fig:bc head}. However, from there the asp cannot extend anywhere: the violet $\textcolor{violet}{\boldsymbol{\times}}$ is prohibited due to the assumption $|Qq|< r$ and the acceptability condition~\eqref{eqn:greater toroidal 2}, while the orange $\textcolor{orange}{\boldsymbol{\times}}$ and cyan $\textcolor{cyan}{\boldsymbol{\times}}$ are prohibited by length $3$ and $4$ wheel conditions. Meanwhile, property \eqref{eqn:dos} ensures that any shrub which starts at the left-most head (type $B$) can never reach the right-most (type $C$ or $D$) head at color $N$, according to the characterization of type $B$ asps in the proof of Theorem \ref{thm:b intro}.

\begin{figure}[h]
  \includegraphics[scale=1]{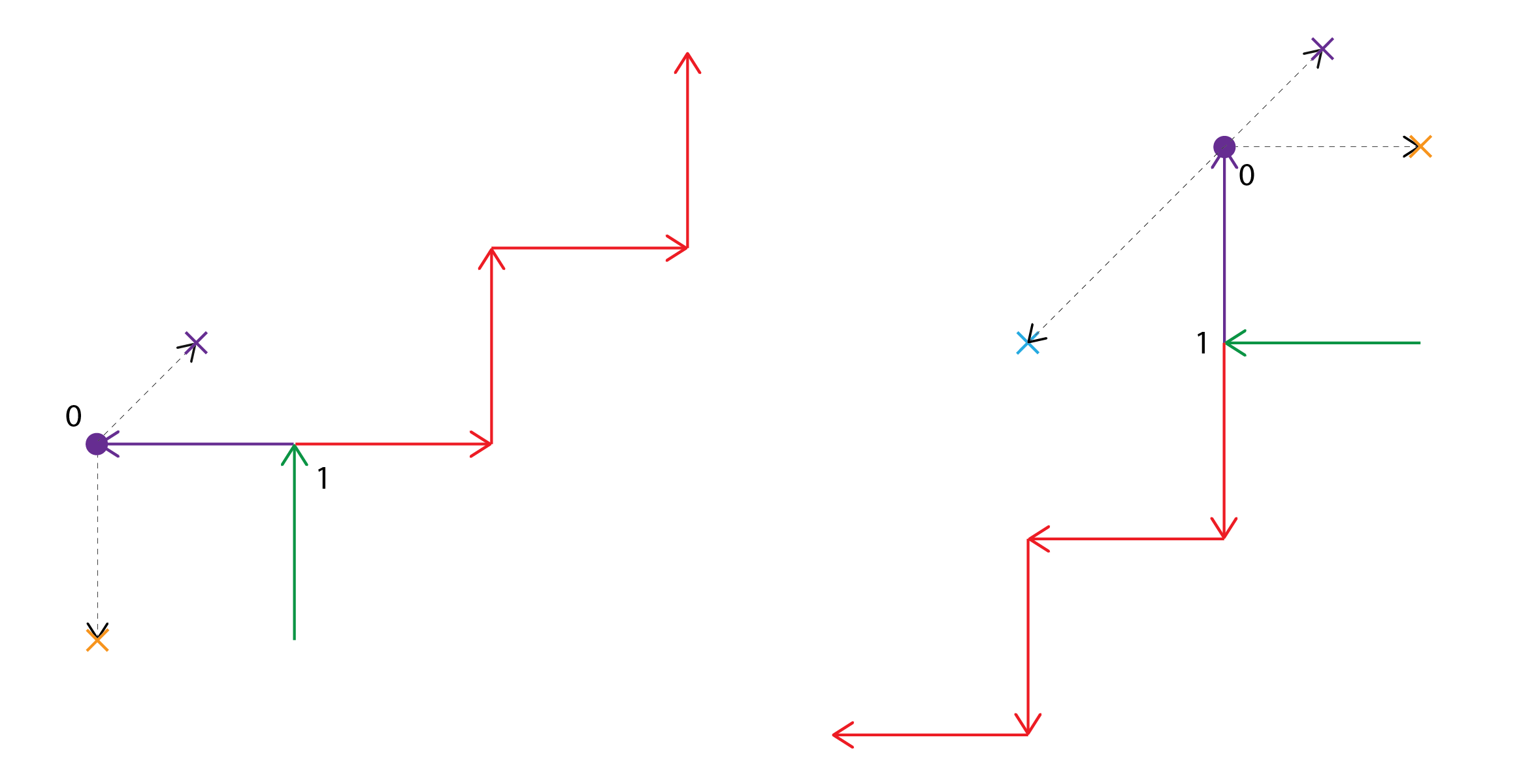} 
  \caption{Reaching a type $B$ head (Case 1 on the left, Case 2 on the right) from a type $C$ or $D$ root; the green arrow is the northwestern most green arrow on the directrix of Figures~\ref{fig:even head} and~\ref{fig:odd head}. We do not depict the various reflections of the diagram, or its natural analogue when $|1|=0$, which are treated analogously.}
  \label{fig:bc head}
\end{figure}

\medskip 
\noindent 
Finally, we consider the case when both heads are of type $B$, and we will assume that the heads have loop parameters $q$ and $q' \in \{q,q^{-1}\}$ in vertices $N$ and $0$, respectively (there may also be loops with parameters $q^{-2}$ at $N$ and $(q')^{-2}$ at $0$). We define the acceptability condition with respect to positive real numbers $\rho_i$ as in \eqref{eqn:greater toroidal}, and recall that the diagram $\Gamma$ being non-special implies that
\begin{align*} 
  Qq \neq 1 & \text{ if } q = q' \\
  Q \neq 1 & \text{ if } q \neq q'
\end{align*}
In the former case, we assume $|Qq| < 1$, which means that an asp that starts at vertex $N$ (the case of asps starting at $0$ is analogous)  can only reach color $0$ at the violet disk in Figure \ref{fig:bb head 1}. However, from there we cannot move anywhere else: the violet $\textcolor{violet}{\boldsymbol{\times}}$ is prohibited due to the assumption $|Qq|<1$, while the orange $\textcolor{orange}{\boldsymbol{\times}}$ and cyan $\textcolor{cyan}{\boldsymbol{\times}}$ are prohibited by length $3$ or $4$ wheel conditions. Note that when $|1|=1$, the source of the green arrow preceding the violet one would exactly provide the fourth variable in the aforementioned length $4$ wheel.

\begin{figure}[h]
  \includegraphics[scale=0.75]{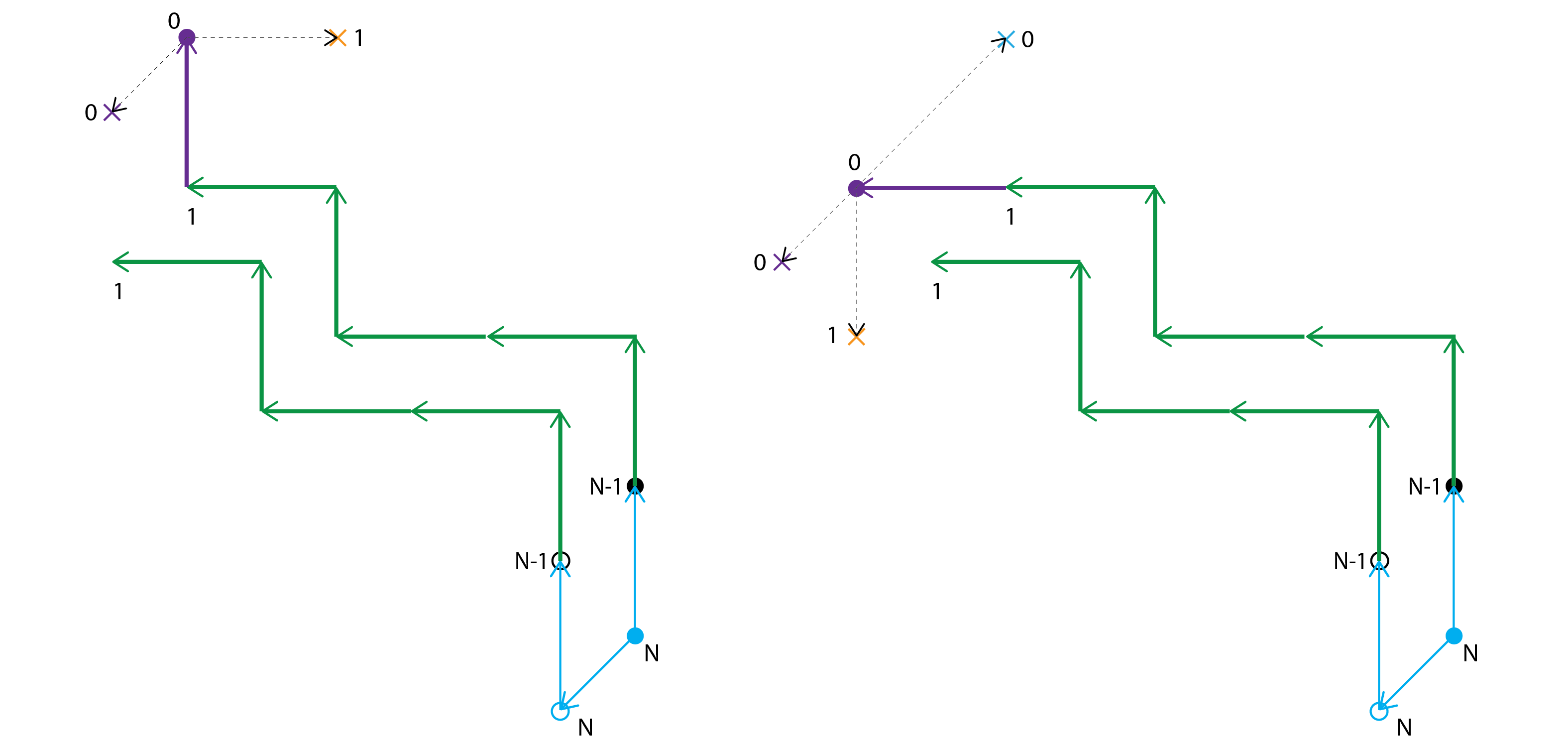} 
  \caption{Reaching a type $B$ head from a type $B$ root (Case 1 on the left and Case 2 on the right). We do not depict the various reflections of the above diagrams, which are analogous.}
  \label{fig:bb head 1}
\end{figure}

\medskip 
\noindent 
We are left with the case $q \neq q'$. As $Q\ne 1$, we may assume without loss of generality that $Q\in q^{\BZ_{<0}}$ (the case $Q\in q^{\BZ_{>0}}=(q')^{\BZ_{<0}}$ is treated analogously). We shall now assume that $|q|>1$, so that $|Q|, |q'| < 1$. 
This means that an asp starting at vertex $0$ can never reach color $N$, while the asp that starts at vertex $N$ can only reach color $0$ at the hollow violet disk in Figure \ref{fig:bb head 2}. However, from there we cannot move anywhere else: the violet $\textcolor{violet}{\boldsymbol{\times}}$ is prohibited due to the assumption $|Q|<1$, while the orange $\textcolor{orange}{\boldsymbol{\times}}$ and cyan $\textcolor{cyan}{\boldsymbol{\times}}$ are prohibited by length $3$ or $4$ wheel conditions. Note that when $|1|=1$, the source of the green arrow preceding the violet one would exactly provide the fourth variable in the aforementioned length $4$ wheel.

\begin{figure}[h]
  \includegraphics[scale=0.75]{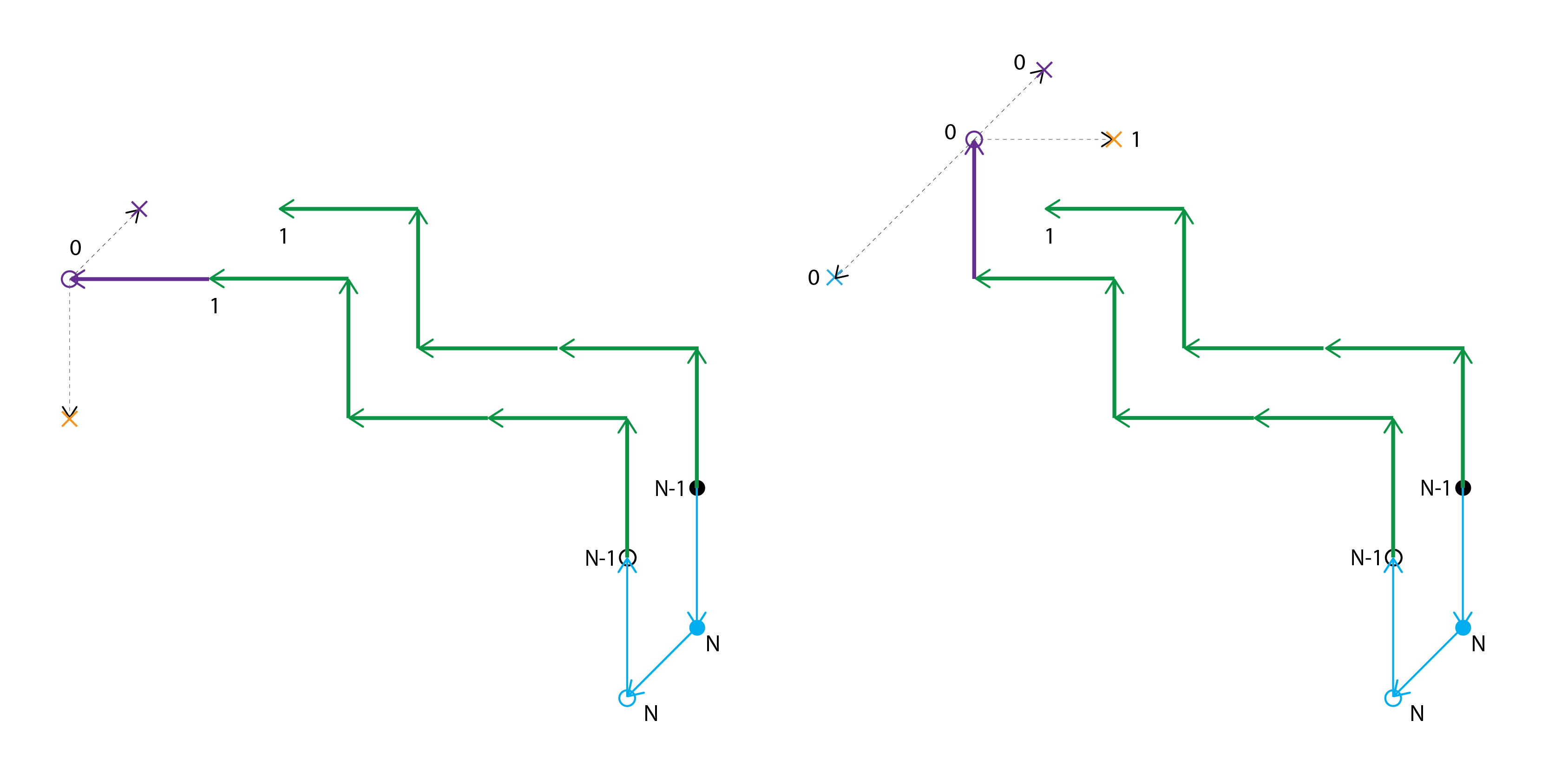} 
  \caption{Reaching a type $B$ head from a type $B$ root (Case 1 on the left and Case 2 on the right). We do not depict the various reflections of the above diagrams, which are analogous.}
  \label{fig:bb head 2}
\end{figure}

\end{proof}


\bigskip

\appendix


\section{Small rank cases}\label{sec:small-N}

\noindent
The quiver approach provides a uniform way to define $\UslG$ for $N\leq 3$, while their standard definitions in literature have more complicated structure. We set
$$ 
  q_1=q^{-1}d,\quad q_2=q^2,\quad q_3=q^{-1}d^{-1}
$$


\medskip

\subsection{$N=1$} 
We may assume $\bs = (1)$ by the invariance $\bs \leftrightarrow -\bs$ discussed at the end of Subsection \ref{sub:super lie}, and shall ignore the unique index $0 \in \widehat{I}_1$ from the notation. The corresponding algebra is commonly known as the quantum toroidal algebra of $\fgl_1$, see~\cite{M}. The corresponding quiver $Q$ is depicted in Figure~\ref{fig:c3}, and the zeta function is given by 
\begin{equation}
\label{eqn:zeta affine gl n=1}
  \zeta(x)=x^{-1}\frac{(1-xq_1)(1-xq_2)(1-xq_3)}{1-x} 
\end{equation}
resulting in the following quadratic relation (cf.~\eqref{eqn:quad intro}):
$$
  e(z)e(w)(z-wq_1)(z-wq_2)(z-wq_3)=-e(w)e(z)(w-zq_1)(w-zq_2)(w-zq_3)
$$
Define the pre-quantum toroidal algebra of $\fgl_{1}$ by
$$
  \widetilde{U}_{q,d}(\widehat{\widehat{\fgl}}_{1}) =  
  \BC \Big \langle e_{d}, f_{d}, \ph_{d'}^\pm \Big \rangle_{d \in \BZ, d' \geq 0} \Big/ 
  \Big ( \text{relations \eqref{eqn:rel quantum affine 1}-\eqref{eqn:rel quantum affine 6}} \Big )
$$
with the relations above defined with respect to the zeta function $\zeta(x)$ of~\eqref{eqn:zeta affine gl n=1}. The corresponding quantum toroidal algebra of $\mathfrak{gl}_{1}$ is then defined by 
\begin{equation*}
  U_{q,d}(\widehat{\widehat{\fgl}}_{1}) = \widetilde{U}_{q,d}(\widehat{\widehat{\fgl}}_{1}) \Big / 
  \Big (\text{clunky relations } e_{\bigcirc} \text{ of~\eqref{eqn:clunky}, and their }  f\text{-versions} \Big )
\end{equation*}
associated with the length $3$ wheels $\bigcirc$ formed by two possible triangular faces of~$\tQ$ in Figure~\ref{fig:c3}:
\begin{equation}
\label{eq:wheel-pic-gl1}
		\begin{tikzcd}
			{z_{3}} && {z_{2}} &&&& {z_{2}} \\ \\
			{z_{1}} &&&& {z_{1}} && {z_{3}}
			\arrow["{q_{3}}"', from=1-1, to=3-1]
			\arrow["{q_{1}}"', from=1-3, to=1-1]
			\arrow["{q_{3}}", from=1-7, to=3-7]
			\arrow["{q_{2}}"', from=3-1, to=1-3]
			\arrow["{q_{2}}", from=3-5, to=1-7]
			\arrow["{q_{1}}", from=3-7, to=3-5]
		\end{tikzcd}
\end{equation}
As pointed out in~\cite{N Reduced, NSS}, this recovers the usual quantum toroidal algebra of $\mathfrak{gl}_1$ (see~\cite{M}) whose degree $3$ relation 
\begin{equation}
\label{eq:Miki}
  \Sym_{z_1,z_2,z_3}\, \frac{z_2}{z_3} [e(z_1), [e(z_2),e(z_3)]] = 0
\end{equation}
is equivalent to the above $e_{\bigcirc}$ modulo the quadratic relations~\eqref{eqn:quad intro}. As noted in \loccitt, the relation~\eqref{eq:Miki} is dual to both length $3$ wheels above: algebraically this is reflected by the equality~\eqref{eq:delta-gl1} and its consequence~\eqref{eq:wheel-gl1}, see Section~\ref{sec:arbitrary}.


\medskip

\subsection{$N=2$ and $\bs=(1,1)$} 

The corresponding quiver dictates the following choice   
\begin{equation*} 
  \zeta_{ij}(x) \sim  
  \begin{cases} 
    \frac {1-xq_2}{1-x} &\text{if } i = j  \\ 
    (1-xq_1)(1-xq_3) &\text{if } i \ne j
  \end{cases} 
\end{equation*}
To match with the standard quadratic relations for the quantum toroidal $\mathfrak{gl}_2$ (see~\cite{FJMM}): 
\begin{equation*}
\begin{split}
  e_i(z)e_i(w)(z-wq_2) &= - e_i(w)e_i(z)(w-zq_2), \ \qquad \qquad \forall \ i\in \wItwo \\
  e_i(z)e_j(w)(z-wq_1)(z-wq_3) &= e_j(w)e_i(z)(w-zq_1)(w-zq_3), \quad \forall \ i\ne j
\end{split}
\end{equation*}
we make the following explicit choice: 
\begin{equation} 
\label{eqn:zeta affine sl n=2}
  \zeta_{ij}(x) =
  \begin{cases} 
    \frac {1-xq_2}{1-x} &\text{if } i = j  \\ 
    x^{-1}(1-xq_1)(1-xq_3) &\text{if } i\ne j
  \end{cases} 
\end{equation}
Define the pre-quantum toroidal algebra of $\fgl_{2}$ by
$$
  \widetilde{U}_{q,d}(\widehat{\widehat{\fgl}}_{2}) =  \BC \Big \langle e_{i,d}, f_{i,d}, \ph_{i,d'}^\pm \Big \rangle_{i\in \wItwo, d \in \BZ, d' \geq 0} \Big/ 
  \Big (  \text{relations \eqref{eqn:rel quantum affine 1}-\eqref{eqn:rel quantum affine 6}} \Big )
$$
with the relations above defined with respect to the zeta functions $\zeta_{ij}(x)$ of~\eqref{eqn:zeta affine sl n=2}. The corresponding quantum toroidal algebra of $\mathfrak{gl}_{2}$ is then defined by 
\begin{equation*}
  U_{q,d}(\widehat{\widehat{\fgl}}_{2}) = \widetilde{U}_{q,d}(\widehat{\widehat{\fgl}}_{2}) \Big / 
  \Big (\text{clunky relations } e_{\bigcirc} \text{ of~\eqref{eqn:clunky}, and their }  f\text{-versions} \Big )
\end{equation*}
associated with the length $3$ wheels $\bigcirc$ formed by the corresponding triangular faces of~$\tQ$:
\begin{equation}
\label{eq:wheel-pic-gl2}
		\begin{tikzcd}
			{z_{j1}} && {z_{i2}} &&&& {z_{i2}} \\ \\
			{z_{i1}} &&&& {z_{i1}} && {z_{j1}}
			\arrow["{q_{3}}"', from=1-1, to=3-1]
			\arrow["{q_{1}}"', from=1-3, to=1-1]
			\arrow["{q_{3}}", from=1-7, to=3-7]
			\arrow["{q_{2}}"', from=3-1, to=1-3]
			\arrow["{q_{2}}", from=3-5, to=1-7]
			\arrow["{q_{1}}", from=3-7, to=3-5]
		\end{tikzcd}
\end{equation}
This recovers the usual quantum toroidal algebra of $\mathfrak{gl}_2$ (see~\cite{FJMM}) featuring the following degree $4$ relation (see~\cite{C}) for $i\ne j\in \{0,1\}$:
\begin{multline}
\label{eq:3Serre-sl2}
   \Sym_{z_1,z_2}\, \Big[
    e_i(z_1) e_i(z_2) e_j(w) (z_1(1+q_2^{-1}) - w(q_1+q_3))  \\
    -  e_i(z_1) e_j(w) e_i(z_2) (z_1(1+q_2) + z_2(1+q_2^{-1}) - w(q_1+q_3+q^{-1}_1+q^{-1}_3)) \\
     + e_j(w) e_i(z_1) e_i(z_2) e_j(w) (z_2(1+q_{2}) - w(q^{-1}_1+q^{-1}_3))   \Big] = 0 
\end{multline}
This relation is dual to both length $3$ wheels in~\eqref{eq:wheel-pic-gl2}: algebraically this is reflected by the equality~\eqref{eq:delta-gl2} and its consequence~\eqref{eq:wheel-gl2}, see the discussion in Section~\ref{sec:arbitrary}.


\medskip

\subsection{$N=2$ and $\bs=(1,-1)$} 
In this case, the key assumption~\eqref{eq:assumption} fails as we have a length $2$ path (part of the square face) starting from color $1$ going one step to the right and then one step down ending again in color $1$ and having product of arrow parameters equal to $1$. With this in mind, we redefine the arrow parameters by:
$$
  t_{\uparrow}=q^{-1}d^{-1}=q_3, \quad t_{\downarrow}=qd=q_3^{-1}, \quad t_{\leftarrow}=q^{-1}d=q_1, \quad t_{\rightarrow}=qd^{-1}=q^{-1}_1
$$
thus resulting in the following zeta functions
\begin{equation*} 
  \bar{\zeta}_{ij}(x) \sim  
  \begin{cases} 
    \frac{1}{1-x} &\text{if } i = j  \\ 
    (1-xq_3)(1-xq^{-1}_3) &\text{if } i=0,  j=1 \\
    (1-xq_1)(1-xq^{-1}_1) &\text{if } i=1,  j=0 
  \end{cases} 
\end{equation*}
We shall make the particular normalization 
\begin{equation} 
\label{eqn:zeta affine sl 11}
  \bar{\zeta}_{ij}(x) =
  \begin{cases} 
    x^{\frac{1}{2}} \frac {1}{1-x} &\text{if } i = j  \\ 
    -x^{-1}(1-xq_3)(1-xq^{-1}_3) &\text{if } i=0,  j=1 \\
    x^{-1}(1-xq_1)(1-xq^{-1}_1) &\text{if } i=1,  j=0 
  \end{cases} 
\end{equation}
so that the quadratic relations of~\eqref{eqn:quad intro}, with $\bar{\zeta}_{ij}(x)$ instead of $\zeta_{ij}(x)$,  match those in~\cite[\S 4.4]{NW}:
\begin{equation*}
\begin{split}
  e_i(z)e_i(w) &= - e_i(w)e_i(z), \qquad \forall\, i\in \wItwo \\
  e_1(z)e_0(w)(z-wq_3)(z-wq^{-1}_3) &= - e_0(w)e_1(z)(w-zq_1)(w-zq^{-1}_1) 
\end{split}
\end{equation*}
We define the pre-quantum toroidal superalgebra of $\mathfrak{gl}_{1|1}$ by
$$
  \widetilde{U}_{q,d}(\widehat{\widehat{\fgl}}_{1|1}) =  
  \BC \Big \langle e_{i,d}, f_{i,d}, \ph_{i,d'}^\pm \Big \rangle_{i\in \wItwo, d \in \BZ, d' \geq 0} \Big/ 
  \Big ( \text{relations \eqref{eqn:rel quantum affine 1}-\eqref{eqn:rel quantum affine 6}} \Big )
$$
with the relations above defined with respect to the zeta functions $\bar{\zeta}_{ij}(x)$ of~\eqref{eqn:zeta affine sl 11}. The corresponding quantum toroidal superalgebra of $\mathfrak{gl}_{1|1}$ is then defined by 
\begin{equation*}
  U_{q,d}(\widehat{\widehat{\fgl}}_{1|1}) = \widetilde{U}_{q,d}(\widehat{\widehat{\fgl}}_{1|1}) \Big / 
  \Big (\text{clunky relations } e_{\bigcirc} \text{ of~\eqref{eqn:clunky}, and their }  f\text{-versions} \Big )
\end{equation*}
associated with the length $4$ wheels $\bigcirc$ formed by the following square faces of~$\tQ$:
\begin{equation}
\label{eqn:wheels 1|1}
		\begin{tikzcd}
			{z_{11}} && {z_{01}} \\ \\
			{z_{02}} && {z_{12}}
			\arrow["{q_1^{-1}}", from=1-1, to=1-3]
			\arrow["{q_3^{-1}}", from=1-3, to=3-3]
			\arrow["{q_3}", from=3-1, to=1-1]
			\arrow["{q_1}", from=3-3, to=3-1]
		\end{tikzcd} \qquad  \qquad
        \begin{tikzcd}
			{z_{02}} && {z_{11}} \\ \\
			{z_{12}} && {z_{01}}
			\arrow["{q_1}", swap, to=1-1, from=1-3]
			\arrow["{q_3}", swap, to=1-3, from=3-3]
			\arrow["{q_3^{-1}}", swap, to=3-1, from=1-1]
			\arrow["{q_1^{-1}}", swap, to=3-3, from=3-1]
		\end{tikzcd}
\end{equation}
By the natural analogue of Theorem~\ref{thm:shuffle sl}, we have the shuffle realization
\begin{equation}
\label{eqn:1|1}
U^+_{q,d}(\widehat{\widehat{\fgl}}_{1|1}) \simeq \Big \{ E(z_{ia})_{i \in \{0,1\}, a \geq 1} 
\text{ which vanishes at the wheels \eqref{eqn:wheels 1|1}} \Big\} 
\end{equation}
To see this, consider (as in Subsection~\ref{sub:specialization patterns}) the collection of asps given by all zig-zags which alternate between colors $0$ and $1$ via the edges decorated with the parameters $q_3^{\pm 1}$ and $q_1^{\pm' 1}$, for all signs $\pm, \pm'$. It is easy to prove that this collection of asps is closed under extensions, a claim which we leave as an exercise to the reader. This implies the isomorphism \eqref{eqn:1|1} by analogy with the proof of Theorem~\ref{thm:shuffle sl}.

\medskip

\subsection{$N=3$ and $\bs=(1,1,-1)$} \footnote{All other parity sequences can be obtained from this one using cyclic permutations and $\bs \leftrightarrow -\bs$, except the non-super parity sequence $(1,1,1)$ to which Theorem~\ref{thm:shuffle sl} applies.} In this case, we alter our definition of $\UslG$ by replacing the degree $4$ relations~\eqref{eqn:rel quantum affine 9} with the degree $4$ clunky relations $e_{\bigcirc}$ of~\eqref{eqn:clunky} associated with the square faces of the corresponding quiver~$\tQ$: 
\begin{equation}
\label{eq:BM-wheels}
		\begin{tikzcd}
			{z_{21}} && {z_{02}} \\ \\
			{z_{01}} && {z_{11}}
			\arrow["{q_3^{-1}}", from=1-1, to=1-3]
			\arrow["{q_3}", from=1-3, to=3-3]
			\arrow["{q_1^{-1}}", from=3-1, to=1-1]
			\arrow["{q_1}", from=3-3, to=3-1]
		\end{tikzcd} \qquad  \qquad
        \begin{tikzcd}
			{z_{11}} && {z_{21}} \\ \\
			{z_{22}} && {z_{01}}
			\arrow["{q_1}", swap, to=1-1, from=1-3]
			\arrow["{q_1^{-1}}", swap, to=1-3, from=3-3]
			\arrow["{q_3}", swap, to=3-1, from=1-1]
			\arrow["{q_3^{-1}}", swap, to=3-3, from=3-1]
		\end{tikzcd}
\end{equation}
In this Subsection, we show that this definition is equivalent to that of~\cite{BM}:
\begin{equation}
\label{eq:BM-iso}    
  \UslG \simeq \UslGBM 
\end{equation}

\medskip
\noindent
The algebra $\UslGBM$ featured in~\eqref{eq:BM-iso} was defined akin to $\UslG$ but rather imposing the following degree $5$ relation (and its $f$-version): \footnote{In fact, \cite{BM} imposed instead the equivalent $q\leadsto q^{-1}$ version, cf.~Remark~\ref{rem:equivalent-C-six-relations}.}
\begin{multline}
\label{eq:BM-relation-0}
   \Sym_{z_1,z_2} \Sym_{w_1,w_2} \Big( [ e_0(z_1), e_2(w_1), e_0 (z_2),  e_2(w_2), e_1(y) ]_{q^{-1},1,1,q} \\ - 
    [ e_2(w_1),  e_0(z_1),  e_2 (w_2),  e_0(z_2), e_1(y) ]_{q^{-1},1,1,q} \Big) = 0
\end{multline}
where we use the standard conventions for iterated $q$-commutators:
\begin{equation*}
  [x_1,\ldots,x_{k-1},x_k]_{\lambda_1,\ldots,\lambda_{k-1}} = [x_1, \dots [x_{k-1},x_k]_{\lambda_1}\dots ]_{\lambda_{k-1}}  
\end{equation*}
Henceforth, we shall iteratively use the following (super-deformed-)Jacobi identity:
\begin{equation}
\label{eq:Jacobi}
  [x,y,z]_{\lambda,\mu} = [[x,y]_\nu,z]_{\lambda\mu/\nu} + (-1)^{|x||y|}\nu [y,[x,z]_{\mu/\nu}]_{\lambda/\nu} 
  \quad \forall\, \lambda,\mu,\nu \in \BC^\times 
\end{equation}
We also consider $H_{2 1}=(\ph^+_{2 0})^{-1}\ph^+_{2 1}$, so that by~\eqref{eqn:rel quantum affine 4} we have for any $\ell\in \BZ$:
\begin{equation}
\label{eq:aux-comm}
\begin{split}
  & \ph^+_{2 0} e_{2 \ell} = e_{2 \ell} \ph^+_{2 0}, \qquad 
    [H_{2 1},e_{2 \ell}]_{1} = 0 \\
  & \ph^+_{2 0} e_{0 \ell} = q e_{0 \ell} \ph^+_{2 0}, \qquad 
    [H_{2 1},e_{0 \ell}]_{1} = d^{-1}(q-q^{-1}) e_{0,\ell+1} \\
  & \ph^+_{2 0} e_{1 \ell} = q^{-1} e_{1 \ell} \ph^+_{2 0}, \quad 
    [H_{2 1},e_{1 \ell}]_{1} = -d^{-1}(q-q^{-1})  e_{1,\ell+1}
\end{split}
\end{equation}
This also implies the following useful formula (for any $y$ and $\lambda\in \BC^\times$, $\ell\in \BZ$):
\begin{equation}
\label{eq:cartan-left}
  [e_{0 \ell}, \ph^+_{2 0}y]_{\lambda} = q^{-1}\ph^+_{2 0} [e_{0 \ell},y]_{q\lambda}, \qquad 
  [e_{2 \ell}, \ph^+_{2 0}y]_{\lambda} = \ph^+_{2 0} [e_{2 \ell},y]_{\lambda} 
\end{equation}

\medskip
\noindent
Consider the degree $5$ relation obtained by evaluating the $y^{-k}$-coefficients in~\eqref{eq:BM-relation-0}:
\begin{equation}   
\label{eq:deg 5 special}
  [ e_{00}, e_{20}, e_{00}, e_{20}, e_{1k}]_{q^{-1},1,1,q}  \, -  [ e_{20}, e_{00}, e_{20}, e_{00}, e_{1k} ]_{q^{-1},1,1,q} \,  = 0
\end{equation}
Applying $[(q-q^{-1})f_{2 1},-]$ to this equality and combining~\eqref{eqn:rel quantum affine 6} with~\eqref{eq:Jacobi}, we obtain 
\begin{multline}
\label{eq:4-term}
   [ e_{00}, e_{20}, e_{00}, \ph^+_{21}, e_{1k} ]_{q^{-1},1,1,q} \, + 
   [ e_{20}, e_{00}, \ph^+_{21}, e_{00}, e_{1k} ]_{q^{-1},1,1,q} \\
   + [ e_{00}, \ph^+_{21}, e_{00}, e_{20}, e_{1k} ]_{q^{-1},1,1,q} \, +
   [ \ph^+_{21}, e_{00}, e_{20}, e_{00}, e_{1k} ]_{q^{-1},1,1,q} \, = 0
\end{multline}
We shall now evaluate each term in the LHS using~\eqref{eq:Jacobi}-\eqref{eq:cartan-left}.

\smallskip
\noindent
$\bullet$ 
First, applying~\eqref{eq:aux-comm} we obtain 
$$
  [\ph^+_{21}, e_{1k}]_{q^{-1}} = \ph^+_{20} [H_{21},e_{1k}]_1 = - d^{-1}(q-q^{-1}) \ph^+_{20} e_{1,k+1}
$$ 
Invoking~\eqref{eq:cartan-left}, we can thus evaluate the entire first summand in the LHS of~\eqref{eq:4-term}:
\begin{multline}
\label{eq:term-1}
  [ e_{00}, e_{20}, e_{00}, \ph^+_{21}, e_{1k} ]_{q^{-1},1,1,q} = \\
  -q^{-2}d^{-1}(q-q^{-1}) \ph^+_{20} [e_{00} , e_{20} , e_{00} , e_{1,k+1}]_{q,1,q^2}
\end{multline}

\noindent
$\bullet$
To evaluate the second term in~\eqref{eq:4-term}, we start with the following calculation: 
\begin{multline*}
  [\ph^+_{2 1} , e_{00} , e_{1k}]_{q^{-1},1} = \ph^+_{2 0} [H_{2 1} , e_{00} , e_{1k}]_{q^{-1},1} = \\
  d^{-1}(q-q^{-1}) \ph^+_{2 0} ([e_{01},e_{1k}]_{q^{-1}} - [e_{00},e_{1,k+1}]_{q^{-1}})
\end{multline*}
where we used~\eqref{eq:Jacobi} with $\nu=1$ in the last equality. Invoking~\eqref{eq:cartan-left}, we get 
\begin{multline}
\label{eq:term-2}
  [ e_{20}, e_{00}, \ph^+_{21}, e_{00}, e_{1k} ]_{q^{-1},1,1,q} = \\
  q^{-1}d^{-1}(q-q^{-1}) \ph^+_{2 0} 
  \big( [e_{20} , e_{00} , e_{01} , e_{1 k}]_{q^{-1},q,q} - [e_{20} , e_{00} , e_{00} , e_{1,k+1}]_{q^{-1},q,q} \big)
\end{multline}

\noindent
$\bullet$
We likewise evaluate the third term in~\eqref{eq:4-term} to obtain:
\begin{multline}
\label{eq:term-3}
  [ e_{00}, \ph^+_{21}, e_{00}, e_{20}, e_{1k} ]_{q^{-1},1,1,q} = \\
  q^{-1}d^{-1}(q-q^{-1}) \ph^+_{2 0} 
  \big( [e_{00} , e_{01} , e_{20} , e_{1 k} ]_{q^{-1},1,q^2} - [ e_{00} , e_{00} , e_{20} , e_{1,k+1} ]_{q^{-1},1,q^2} \big)
\end{multline}

\noindent
$\bullet$
A similar iterated application of~\eqref{eq:Jacobi} evaluates the last summand of \eqref{eq:4-term}:
\begin{multline}
\label{eq:term-4}
  [ \ph^+_{21}, e_{00}, e_{20}, e_{00}, e_{1k} ]_{q^{-1},1,1,q}  = 
  d^{-1}(q-q^{-1}) \ph^+_{2 0} \cdot \\ 
  \Big( [ e_{01} , e_{20} , e_{00} , e_{1 k}]_{q^{-1},1,1} + [ e_{00} , e_{20} , e_{01} , e_{1 k} ]_{q^{-1},1,1} - [ e_{00} , e_{20} , e_{00} , e_{1,k+1} ]_{q^{-1},1,1} \Big)
\end{multline}

\medskip
\noindent
Adding together the RHS of \eqref{eq:term-1}-\eqref{eq:term-4}, gives us the following degree $4$ relation: 
\begin{multline}
\label{eq:deg4-special}
  [e_{01} , e_{20} , e_{00} , e_{1 k}]_{q^{-1},1,1} + [e_{00} , e_{20} , e_{01} , e_{1 k}]_{q^{-1},1,1} - [e_{00} , e_{20} , e_{00} , e_{1,k+1}]_{q^{-1},1,1} \\
  + q^{-1} [e_{00} , e_{01} , e_{20} , e_{1 k}]_{q^{-1},1,q^2} - q^{-1} [e_{00} , e_{00} , e_{20} , e_{1,k+1}]_{q^{-1},1,q^2} \\
  + q^{-1} [e_{20} , e_{00} , e_{01} , e_{1 k}]_{q^{-1},q,q} - q^{-1} [e_{20} , e_{00} , e_{00} , e_{1,k+1}]_{q^{-1},q,q}  \\
  - q^{-2} [e_{00} , e_{20} , e_{00} , e_{1,k+1}]_{q,1,q^2} = 0
\end{multline}
for each $k\in \BZ$. If $X_k$ denotes the LHS of~\eqref{eq:deg4-special} then a direct calculation shows that
$$
  \tUpsilon^+_{\wA_{\Gamma}}(X_k) = 0
$$
We also have the following analogue of~\eqref{eqn:not zero 1} and~\eqref{eqn:not zero 2}:
\begin{equation*}
  \langle X_k, F_k \rangle = q_1 + q_3  
  \qquad \mathrm{for} \qquad F_k=\frac{z^{-k}_{11}}{\sqrt{z_{01}z_{02}}}\in \CV_{\wA_{\Gamma};-2\bsi^0-\bsi^1-\bsi^2,-k-1}
\end{equation*}
Therefore, just like in the proof of~\eqref{eq:duality-deg3}, we obtain that 
\begin{equation*}
  \big \langle (X_k)_{k\in \BZ} , F \big \rangle = 0 \text{ for } F \in \CV^-_{\wA_{\Gamma}} \quad \Leftrightarrow \qquad 
  F \text{ vanishes at the right wheel in \eqref{eq:BM-wheels} }
\end{equation*}
Noting the natural symmetry of $\UslG$ swapping $d\leftrightarrow d^{-1}$ and currents $e_0(z)\leftrightarrow e_2(z), f_0(z)\leftrightarrow f_2(z), \ph^\pm_0(z)\leftrightarrow \ph^\pm_2(z)$, we also obtain the following degree $4$ relation:
\begin{multline}
\label{eq:deg4-special-2}
  [e_{21} , e_{00} , e_{20} , e_{1 k}]_{q^{-1},1,1} + [e_{20} , e_{00} , e_{21} , e_{1 k}]_{q^{-1},1,1} - [e_{20} , e_{00} , e_{20} , e_{1,k+1}]_{q^{-1},1,1} \\
  + q^{-1} [e_{20} , e_{21} , e_{00} , e_{1 k}]_{q^{-1},1,q^2} - q^{-1} [e_{20} , e_{20} , e_{00} , e_{1,k+1}]_{q^{-1},1,q^2} \\
  + q^{-1} [e_{00} , e_{20} , e_{21} , e_{1 k}]_{q^{-1},q,q} - q^{-1} [e_{00} , e_{20} , e_{20} , e_{1,k+1}]_{q^{-1},q,q}  \\
  - q^{-2} [e_{20} , e_{00} , e_{20} , e_{1,k+1}]_{q,1,q^2} = 0
\end{multline}
(which can be directly obtained by applying $[(q-q^{-1})f_{01},-]$ to~\eqref{eq:deg 5 special}). Therefore, just as above, if $X'_k$ denotes the LHS of~\eqref{eq:deg4-special-2}, then we obtain 
\begin{equation*}
  \big \langle (X'_k)_{k\in \BZ} , F \big \rangle = 0 \text{ for } F \in \CV^-_{\wA_{\Gamma}} \quad \Leftrightarrow \quad 
  F \text{ vanishes at the left wheel in \eqref{eq:BM-wheels} }
\end{equation*}
This establishes the main isomorphism~\eqref{eq:BM-iso} of this Subsection, following the approach outlined in Subsection~\ref{sub:proof} in type $A$.

\medskip
\noindent
In fact, one can derive from~\eqref{eq:deg4-special} and~\eqref{eq:deg4-special-2} more general degree $4$ relations. To this end, define elements $\{H_{i,\pm r}\}_{i\in \wIthree}^{r>0}$ by 
$(\ph^\pm_{i,0})^{-1} \ph^\pm_i(z)  = \exp \left( \pm \sum_{r>0} H_{i,\pm r} z^{\mp r}\right)$ 
It follows from~\eqref{eqn:rel quantum affine 4} that these elements commute with $e_{j,\ell}$ via
\begin{equation*}
  [H_{i,r}, e_{j,\ell}] = \frac{d^{-rM^{\bs}_{ij}}(q^{rA^{\bs}_{ij}} - q^{-rA^{\bs}_{ij}})}{r} \cdot e_{j,\ell+r} \qquad \forall\, r\ne 0, \ell\in \BZ, i,j\in \wIthree
\end{equation*}
with $A^\bs_{ij}=(\bsi^i,\bsi^j)$ and $M^{\bs}_{ij}=s_i\delta_{i,j+1}-s_j\delta_{i+1,j}$ as in~\cite{BM}. 
Since the corresponding $3\times 3$ matrix $\big( d^{-rM^{\bs}_{ij}}(q^{rA^{\bs}_{ij}} - q^{-rA^{\bs}_{ij}}) \big)_{i,j\in\wIthree}$ is non-degenerate for any $r\ne 0$, one can therefore find $H^{\perp}_{i,r} \in \mathrm{span}_{\BC} \{H_{0,r}, H_{1,r}, H_{2,r}\}$ such that 
\begin{equation}
\label{eq:shift-cartan}
  [H^{\perp}_{i,r}, e_{j,\ell}] = \delta_{ij} e_{j,\ell+r} \quad \forall\, r\ne 0, \ell\in \BZ, i,j\in \wIthree
\end{equation}
We refer to $[H^\perp_{i,r},-]$ as \emph{shift operators} $\tau_{r\bsi^i}$ for each $i\in \wIthree, r\in \BZ$, whereas $\tau_{0}=\mathrm{Id}$.

\medskip
\noindent
Applying the operators $\tau_{r'\bsi^2}$ as well as $\tau_{r_1\bsi^0}\tau_{r_2\bsi^0} - \tau_{(r_1+r_2)\bsi^0}$ to~\eqref{eq:deg4-special}, multiplying each result by $z_1^{-r_1}z_2^{-r_2}w^{-r'}y^{-k}$ and summing over all $r_1,r_2,r',k\in \BZ$ implies
\begin{equation*}
\begin{split}
  \Sym_{z_1,z_2} \Big(
  & (z_1+z_2-y) [e_{0}(z_1) , e_{2}(w) , e_{0}(z_2) , e_{1}(y)]_{q^{-1},1,1}  \\
  & + q^{-1}(z_2-y) [e_{0}(z_1) , e_{0}(z_2) , e_{2}(w) , e_{1}(y)]_{q^{-1},1,q^2} \\
  & + q^{-1}(z_2-y) [e_{2}(w) , e_{0}(z_1) , e_{0}(z_2) , e_{1}(y)]_{q^{-1},q,q} \\
  & - q^{-2} y  [e_{0}(z_1) , e_{2}(w) , e_{0}(z_2) , e_{1}(y)]_{q,1,q^2} \Big) = 0
\end{split}
\end{equation*}
Completely analogously, one derives from~\eqref{eq:deg4-special-2} the following relation:
\begin{equation*}
\begin{split}
  \Sym_{w_1,w_2} \Big(
  & (w_1+w_2-y) [e_{2}(w_1) , e_{0}(z) , e_{2}(w_2) , e_{1}(y)]_{q^{-1},1,1}  \\
  & + q^{-1}(w_2-y) [e_{2}(w_1) , e_{2}(w_2) , e_{0}(z) , e_{1}(y)]_{q^{-1},1,q^2} \\
  & + q^{-1}(w_2-y) [e_{0}(z) , e_{2}(w_1) , e_{2}(w_2) , e_{1}(y)]_{q^{-1},q,q} \\
  & - q^{-2} y  [e_{2}(w_1) , e_{0}(z) , e_{2}(w_2) , e_{1}(y)]_{q,1,q^2} \Big) = 0
\end{split}
\end{equation*}


\bigskip 

\section{Non-zero pairings}\label{non-zero pairings}

The purpose of this appendix is to evaluate certain pairings $\langle X,F \rangle$ that played crucial roles in our proof of~\eqref{eqn:key pairing finite sl} and its type $B,C,D$ versions from the proofs of Theorems \ref{thm:d intro}, \ref{thm:b intro}, \ref{thm:c intro}. Here, $X$ denotes certain linear combinations of coefficients of the higher order relations defining the corresponding quantum affine superalgebras, while $F$ is a ``test function'' from the corresponding big shuffle algebras.


\medskip 

\subsection{}
\label{sub:non-zero type a even}

We start with the case of $|i|=0$ in type $A$. In the proof of~\eqref{eqn:key pairing finite sl}, we considered the zero-th modes of the series involved, namely
$$
  X = [ e_{i,0}, [ e_{i,0},e_{i\pm 1,0} ]_{q^{s_i}} ]_{q^{-s_i}} = \tfrac{1}{2} X^{0,0,0}_{i,i\pm 1} 
$$
We let 
$$
  F = 1 \in \CV_{A_{\Gamma};-2\bsi^i-\bsi^{i\pm 1}}
$$
and a key part of the computation was to show that the pairing $\langle X,F\rangle$ is non-zero (we note that in the aforementioned proof, $F$ was multiplied by the monomial $z^{-d}_{i\pm 1,1}$, which perfectly balanced out a shift in the indices of $e_{i\pm 1,d}$ that appeared in $X^{0,0,d}_{i,i\pm 1}$). Then (the type $A_{\Gamma}$ analogue of) formula \eqref{eqn:pairing explicit} implies that
\begin{equation}
\label{eqn:a non-zero 1}
  \langle X,F \rangle = \sum_{i_1,i_2,i_3} (\pm q^*) \int_{|z_1|\gg |z_2|\gg |z_3|} \frac {Dz_1 Dz_2 Dz_3}{\zeta^{A_{\Gamma}}_{i_2i_1}\left(\frac {z_2}{z_1}\right)\zeta^{A_{\Gamma}}_{i_3i_2}\left(\frac {z_3}{z_2}\right)\zeta^{A_{\Gamma}}_{i_3i_1}\left(\frac {z_3}{z_1}\right)} 
\end{equation}
with $\textbf{i}=(i_1,i_2,i_3) \in \{(i,i,i\pm 1), (i,i\pm 1,i), (i\pm 1,i,i)\}$. Henceforth, $*$ denotes some integers that will not be crucial until the very end. Due to \eqref{eqn:zeta finite sl}, we have
$$
  \lim_{x \rightarrow 0} \zeta^{A_{\Gamma}}_{i+1,i}(x) = -1, \qquad 
  \lim_{x \rightarrow 0} \zeta^{A_{\Gamma}}_{i,i-1}(x) = -1, \qquad 
  \lim_{x \rightarrow 0} \zeta^{A_{\Gamma}}_{ii}(x) = 1
$$
while 
\begin{equation}
\label{eq:infty-limits}
  \lim_{x \rightarrow 0} \zeta^{A_{\Gamma}}_{i,i+1}(x) = \infty, \qquad 
  \lim_{x \rightarrow 0} \zeta^{A_{\Gamma}}_{i-1,i}(x) = \infty 
\end{equation}  
Thus, in the limit as $|z_1| \gg |z_2| \gg |z_3|$, the integral in \eqref{eqn:a non-zero 1} vanishes unless $\textbf{i} \in \{(i-1,i,i), (i,i,i+1)\}$. We conclude that only one of the three terms that make up $X$ produces a non-zero contribution to the right-hand side of \eqref{eqn:a non-zero 1}, which is easily seen to equal  $1(-1)^2(\pm q^*)=1$. Thus $\langle X,F \rangle = 1$, establishing formula~\eqref{eqn:not zero 1}.


\medskip 

\subsection{}
\label{sub:non-zero type a odd}

Next, we consider the case of $|i|=1$ in type $A$. In the proof of~\eqref{eqn:key pairing finite sl}, we considered the zero-th modes of the series involved, namely 
\begin{align*}
  X' = 
  [ e_{i,0}, [ e_{i+1,0}, [ e_{i,0} , e_{i-1,0} ]_{q^{s_i}}]_{q^{-s_i}}]_1 = \tfrac{1}{2} X^{0,0,0,0}_{i-1,i,i+1}
\end{align*}
We also set $p = |i-1|$, $p' = |i+1|$. We let 
$$
  F' = \sqrt{\frac {z_{i1}}{z_{i2}}} + \sqrt{\frac {z_{i2}}{z_{i1}}} \in \CV_{A_{\Gamma};-\bsi^{i-1}-2\bsi^i-\bsi^{i + 1}}
$$
(note that in the aforementioned proof, $F$ was multiplied by the monomial $z^{-d}_{i+1, 1}$ which perfectly balanced out a shift in the indices of $e_{i+1,d}$ that appeared in $X^{0,0,0,d}$) and the goal is to prove the non-zero-ness of the following quantity:
\begin{equation}
\label{eqn:a non-zero 2}
  \langle X',F' \rangle = \sum_{i_1,i_2,i_3,i_4} (\pm q^*) \int_{|z_1|\gg |z_2|\gg |z_3| \gg |z_4|} \frac {\left(\sqrt{\frac {z_{a'}}{z_{b'}}}+\sqrt{\frac {z_{b'}}{z_{a'}}}\right) Dz_1 Dz_2 Dz_3 Dz_4}{\prod_{1 \leq a < b \leq 4} \zeta^{A_{\Gamma}}_{i_bi_a}\left(\frac {z_b}{z_a}\right)} 
\end{equation}
where $\textbf{i}=(i_1,i_2,i_3,i_4) \in \{(i,i+1,i,i-1),(i,i+1,i-1,i),(i,i,i-1,i+1),(i,i-1,i,i+1),(i+1,i,i-1,i),(i+1,i-1,i,i),(i,i-1,i+1,i),(i-1,i,i+1,i)\}$. Let $\{a'<b', c', d'\} = \{1,2,3,4\}$ denote the indices with the property that $i_{a'} = i_{b'} = i$, $i_{c'}=i-1$, and $i_{d'}=i+1$. Similar to the previous Subsection, we have 
\begin{equation*}
\begin{split}
  & \lim_{x \rightarrow 0} \zeta^{A_{\Gamma}}_{i+1,i}(x) = -1 ,\qquad  
    \lim_{x \rightarrow 0} \zeta^{A_{\Gamma}}_{i,i-1}(x) = -1 \\ 
  & \lim_{x \rightarrow 0} \zeta^{A_{\Gamma}}_{i+1,i-1}(x) = 1 ,\qquad 
    \lim_{x \rightarrow 0} \zeta^{A_{\Gamma}}_{i-1,i+1}(x) = (-1)^{pp'}  
\end{split}
\end{equation*}
as well as ~\eqref{eq:infty-limits} and 
$$
  \lim_{x \rightarrow 0} x^{-\frac 12} \zeta_{ii}^{A_{\Gamma}}(x) = 1
$$
Because of the latter property, we have 
\begin{equation*}
  \frac {\sqrt{\frac {z_{a'}}{z_{b'}}}+\sqrt{\frac {z_{b'}}{z_{a'}}}}{\zeta_{ii}^{A_{\Gamma}} \left(\frac {z_{b'}}{z_{a'}}\right)} 
  = \frac {z_{a'}}{z_{b'}} \left[1 + O   \left(\frac {z_{b'}}{z_{a'}}\right) \right] 
  = \frac {z_{a'}}{z_{b'}} + O(1) 
\end{equation*}
in the limit as $|z_{a'}|\gg |z_{b'}|$. However, due to the zeta factors $\zeta^{A_{\Gamma}}_{i-1,i}$ and $\zeta^{A_{\Gamma}}_{i,i+1}$ in the denominator, it is easy to see that the integrand behaves like $O(z^{-1}_1)$ or $O(z^0_1 z^{-1}_2)$, forcing the corresponding integral in \eqref{eqn:a non-zero 2} to vanish, in all cases except for
$$
  \textbf{i}=(i_1,i_2,i_3,i_4) \in \{(i,i-1,i,i+1),(i-1,i,i+1,i), (i,i-1,i+1,i)\}
$$
For $\textbf{i}=(i,i-1,i,i+1)$, the corresponding integral equals 
$$
  \int_{|z_1|\gg |z_2|\gg |z_3| \gg |z_4|} 
  \frac { -(-1)^p  (\frac{z_2}{z_3} + o(1)) Dz_1 Dz_2 Dz_3 Dz_4}
        {(1-\frac{z_2}{z_1}q^{-s_i})(1-\frac{z_4}{z_1}q^{s_i})(1-\frac{z_3}{z_2}q^{-s_i})(1-\frac{z_4}{z_3}q^{s_i})} = 
  - (-1)^p q^{-s_i} 
$$
while  $(\pm q^*)=(-1)^{p+p'+pp'}$. For $\textbf{i}=(i-1,i,i+1,i)$, the corresponding integral is
$$
  \int_{|z_1|\gg |z_2|\gg |z_3| \gg |z_4|} 
  \frac { -(-1)^{p'}  (\frac{z_2}{z_3} + o(1)) Dz_1 Dz_2 Dz_3 Dz_4}
        {(1-\frac{z_2}{z_1}q^{-s_i})(1-\frac{z_4}{z_1}q^{-s_i})(1-\frac{z_3}{z_2}q^{s_i})(1-\frac{z_4}{z_3}q^{s_i})} = 
  - (-1)^{p'} q^{s_i}
$$
while $(\pm q^*)=(-1)^{pp'}$. For $\textbf{i}=(i,i-1,i+1,i)$, the corresponding integral is
$$
  \int_{|z_1|\gg |z_2|\gg |z_3| \gg |z_4|} 
  \frac { (-1)^{p+p'}  ( \frac {z_2}{z_3} + o(1)) Dz_1 Dz_2 Dz_3 Dz_4}
        {(1-\frac{z_2}{z_1} q^{-s_i})(1-\frac{z_3}{z_1}q^{s_i})(1-\frac{z_4}{z_2}q^{-s_i})(1-\frac{z_4}{z_3}q^{s_i})} = 0
$$
Hence $\langle X',F' \rangle = -(-1)^{p'(1+p)} (q^{s_i}+q^{-s_i}) = -(-1)^{p'(1+p)} (q+q^{-1})$, establishing~\eqref{eqn:not zero 2}.


\medskip

\subsection{}
\label{sub:non-zero type b case 1}

Consider type $B$ Case 1. Without loss of generality, we shall assume that the neck parameter equals $q^{-1}$. In the proof of Theorem \ref{thm:b intro}, we considered the zero-th mode of the series involved, namely 
\begin{align*}
  X &= 
  [e_{N,0}, [e_{N,0}, [e_{N,0},e_{N-1,0}]_{q}]_1]_{q^{-1}} = \tfrac{1}{6} X^{0,0,0,0}_{N-1,N} 
\end{align*}
We let
$$
  F = 1 \in \CV_{B_{\Gamma};-3\bsi^N - \bsi^{N-1}}
$$
(in the aforementioned proof, $F$ was multiplied by the monomial $z^{-d}_{N-1, 1}$ which perfectly balanced out a shift in the indices of $e_{N-1,d}$ that appeared in $X_{N-1,N}^{0,0,0,d}$). Then (the type $B_{\Gamma}$ analogue of) formula \eqref{eqn:pairing explicit} implies that
\begin{equation}
\label{eqn:b non-zero}
  \langle X,F \rangle = \sum_{i_1,i_2,i_3,i_4} (\pm q^*) \int_{|z_1|\gg |z_2|\gg |z_3| \gg |z_4|} \frac {Dz_1 Dz_2 Dz_3 Dz_4}{\prod_{1 \leq a < b \leq 4} \zeta^{B_{\Gamma}}_{i_bi_a}\left(\frac {z_b}{z_a}\right)} 
\end{equation}
where $\textbf{i}=(i_1,i_2,i_3,i_4) \in \{(N,N,N,N-1), (N,N,N-1,N), (N,N-1,N,N), (N-1,N,N,N)\}$. Since
$$
  \lim_{x \rightarrow 0} \zeta^{B_{\Gamma}}_{N,N-1}(x) = -1, \qquad 
  \lim_{x \rightarrow 0} \zeta^{B_{\Gamma}}_{NN}(x) = 1, \qquad 
  \lim_{x \rightarrow 0} \zeta^{B_{\Gamma}}_{N-1,N}(x) = \infty
$$
it is clear that only one out of the four summands in the right-hand side of \eqref{eqn:b non-zero}, namely the one with $\textbf{i} = (N-1,N,N,N)$, produces the non-zero contribution $(-1)^3\cdot (\pm q^*)=1$. Hence $\langle X , F \rangle = 1$, establishing~\eqref{eq:const-B-1st}.


\medskip    

\subsection{}
\label{sub:non-zero type d case 2}

Consider type $D$ Case 2. Without loss of generality, we shall assume that the neck parameter equals $q$. In the proof of Theorem \ref{thm:d intro},  we considered the zero-th mode of the series involved, namely 
\begin{align*}
  X 
  &= [ [ e_{N-2,0} , e_{N-1,0} ]_{q^{-1}} ,  e_{N,0} ]_{q} - [ [ e_{N-2,0} , e_{N,0} ]_{q^{-1}} ,  e_{N-1,0}  ]_{q}
\end{align*}
For $\bn = \bsi^{N-2} + \bsi^{N-1} + \bsi^N$, we consider $F,F',F''\in \CV_{D_{\Gamma};-\bn}$ defined by
$$
  F = \frac {z_{N1}}{z_{N-1,1}} , \qquad F'= 1 , \qquad F''= \frac {z_{N-1,1}}{z_{N1}}
$$
(in the aforementioned proof $F,F',F''$ were all multiplied by $z^{-a}_{N-2,1}z^{-b}_{N-1,1}z^{-c}_{N1}$ which perfectly balanced out a shift in the indices of $e_{N-2,a}, e_{N-1,b}, e_{N,c}$ in $X^{a,b,c}$). Then (the type $D_{\Gamma}$ analogue of) formula \eqref{eqn:pairing explicit} implies that
\begin{equation}
\label{eqn:d non-zero}
  \langle X,F \rangle = \sum_{i_1,i_2,i_3} (\pm q^*) \int_{|z_1|\gg |z_2|\gg |z_3|} \frac {\frac {z_{b'}}{z_{a'}} Dz_1 Dz_2 Dz_3}{\prod_{1 \leq a < b \leq 3} \zeta^{D_{\Gamma}}_{i_bi_a}\left(\frac {z_b}{z_a}\right)} 
\end{equation}
\begin{equation}
\label{eqn:d non-zero prime}
  \langle X,F' \rangle = \sum_{i_1,i_2,i_3} (\pm q^{*'}) \int_{|z_1|\gg |z_2|\gg |z_3|} \frac {Dz_1 Dz_2 Dz_3}{\prod_{1 \leq a < b \leq 3} \zeta^{D_{\Gamma}}_{i_bi_a}\left(\frac {z_b}{z_a}\right)} 
\end{equation}
\begin{equation}
\label{eqn:d non-zero prime prime}
  \langle X,F'' \rangle = \sum_{i_1,i_2,i_3} (\pm q^{*''}) \int_{|z_1|\gg |z_2|\gg |z_3|} \frac {\frac {z_{a'}}{z_{b'}} Dz_1 Dz_2 Dz_3}{\prod_{1 \leq a < b \leq 3} \zeta^{D_{\Gamma}}_{i_bi_a}\left(\frac {z_b}{z_a}\right)} 
\end{equation}
where $\textbf{i}=(i_1,i_2,i_3) \in \{(N-2,N-1,N), (N-1,N-2,N), (N, N-2,N-1),  (N, N-1,N-2)\} \sqcup \{\text{previous set of triples with }N \leftrightarrow N-1\}$ and $\{a',b'\} \subset \{1,2,3\}$ denote those indices with the property that $i_{a'} = N-1$, $i_{b'} = N$. Since
\begin{equation}
\label{eq:D-limits}
\begin{split}
  &\lim_{x \rightarrow 0} \zeta^{D_{\Gamma}}_{N,N-1}(x) = \lim_{x \rightarrow 0} \zeta^{D_{\Gamma}}_{N-1,N-2}(x) = \lim_{x \rightarrow 0} \zeta^{D_{\Gamma}}_{N,N-2}(x) = -1 \\
  &\lim_{x \rightarrow 0} \zeta^{D_{\Gamma}}_{N-1,N}(x) = \lim_{x \rightarrow 0} \zeta^{D_{\Gamma}}_{N-2,N-1}(x) =  \lim_{x \rightarrow 0} \zeta^{D_{\Gamma}}_{N-2,N}(x) = \infty
\end{split}
\end{equation}
it is clear that only one out of the eight summands in the right-hand side of \eqref{eqn:d non-zero prime}, namely the one with $\textbf{i} = (N-2,N-1,N)$, produces the non-zero contribution $(-1)^3 (\pm q^{*'}) = -1$. Thus $\langle X,F' \rangle = -1$, proving the second formula of~\eqref{eqn:not zero d}.

\medskip 
\noindent 
As for \eqref{eqn:d non-zero}, due to $\frac {z_{b'}}{z_{a'}}$ in the numerator, the integral may be non-zero only if $a' > b'$, i.e.\ the index $N$ appears before  $N-1$ in the sequence $\textbf{i}$. However, in this case, the zeta factor $\zeta^{D_{\Gamma}}_{N-1,N}(z_{a'}/z_{b'})$ in the denominator results in 
$$
  \frac { \frac{z_{b'}}{z_{a'}} }{\zeta^{D_{\Gamma}}_{N-1,N} (\frac {z_{a'}}{z_{b'}})} = \frac { -1 } {1- \frac{z_{a'}}{z_{b'}} q^{-2}} = -1 + o(1)
$$
Invoking~\eqref{eq:D-limits}, we see that only the summand corresponding to $\textbf{i} = (N-2,N,N-1)$ in \eqref{eqn:d non-zero} produces a non-zero contribution, equal to $(-1)^3(\pm q^*)=1$. Therefore $\langle X,F \rangle = 1$, proving the first formula of~\eqref{eqn:not zero d}.

\medskip 
\noindent 
As for \eqref{eqn:d non-zero prime prime}, due to $\frac {z_{a'}}{z_{b'}}$ in the numerator the integral may be non-zero only if $b' > a'$, i.e.\ the index $N-1$ appears before $N$ in the sequence $\textbf{i}$. For $\textbf{i}=(N-2,N-1,N)$, the corresponding integral equals 
$$
  \int_{|z_1|\gg |z_2|\gg |z_3|} \frac {-\frac{z_2}{z_3} Dz_1 Dz_2 Dz_3}
  {(1-\frac{z_2}{z_1}q) (1-\frac{z_3}{z_1}q) (1-\frac{z_3}{z_2}q^{-2})} = - q^{-2}
$$
while $(\pm q^{*''})=1$. For $\textbf{i}=(N-1,N-2,N)$, the corresponding integral is
$$
  \int_{|z_1|\gg |z_2|\gg |z_3|} \frac {(-1)^{|N-2|} \frac{z_2}{z_3} Dz_1 Dz_2 Dz_3}
  {(1-\frac{z_2}{z_1}q) (1-\frac{z_3}{z_1}q^{-2}) (1-\frac{z_3}{z_2}q)} =  (-1)^{|N-2|} q
$$
while the corresponding terms $(\pm q^{*''})$ contribute $-(-1)^{|N-2|}q-(-1)^{|N-2|}q^{-1}$ (we note that $\textbf{i}=(N-1,N-2,N)$ appears twice among the eight summands constituting $X$). For $\textbf{i}=(N-1,N,N-2)$, the corresponding integral equals 
$$
  \int_{|z_1|\gg |z_2|\gg |z_3|} \frac { - \frac{z^2_3}{z_2^2} Dz_1 Dz_2 Dz_3}
  {(1-\frac{z_2}{z_1}q^{-2}) (1-\frac{z_3}{z_1}q) (1-\frac{z_3}{z_2}q)} = 0
$$
Hence $\langle X , F'' \rangle = -q^2 - 1 - q^{-2}$, proving the last formula of~\eqref{eqn:not zero d}.


\medskip 

\subsection{}
\label{sub:non-zero type c case 1}

Consider type $C$ Case 1. Without loss of generality, we shall assume that the neck parameter equals $q^{2}$. In the proof of Theorem \ref{thm:c intro}, we considered the zero-th mode of the series involved, namely 
\begin{align*} 
  X 
  &= [ e_{N-1,0}, [ e_{N-1,0}, [ e_{N-1,0} ,e_{N,0} ]_{q^{-2}}]_1]_{q^2} = \tfrac{1}{6} X^{0,0,0,0} 
\end{align*} 
We let
$$
  F = 1 \in \CV_{C_{\Gamma};-3\bsi^{N-1}-\bsi^N}
$$
(in the aforementioned proof, $F$ was multiplied by the monomial $z^{-d}_{N1}$ which perfectly balanced out a shift in the indices of $e_{N,d}$ that appeared in $X^{0,0,0,d}$). Then (the type $C_{\Gamma}$ analogue of) formula \eqref{eqn:pairing explicit} implies that
\begin{equation}
\label{eqn:c non-zero case 1}
  \langle X,F \rangle = \sum_{i_1,i_2,i_3,i_4} (\pm q^*) \int_{|z_1|\gg |z_2|\gg |z_3| \gg |z_4|} \frac {Dz_1 Dz_2 Dz_3 Dz_4}{\prod_{1 \leq a < b \leq 4} \zeta^{C_{\Gamma}}_{i_bi_a}\left(\frac {z_b}{z_a}\right)} 
\end{equation}
where $\textbf{i}=(i_1,i_2,i_3,i_4) \in \{(N-1,N-1,N-1,N), (N-1,N-1,N,N-1), (N-1,N,N-1,N-1), (N,N-1,N-1,N-1)\}$. Since
$$
  \lim_{x \rightarrow 0} \zeta^{C_{\Gamma}}_{N,N-1}(x) = -1, \qquad 
  \lim_{x \rightarrow 0} \zeta^{C_{\Gamma}}_{N-1,N}(x) = \infty, \qquad 
  \lim_{x \rightarrow 0} \zeta^{C_{\Gamma}}_{N-1,N-1}(x) = 1
$$
it is clear that only one out of the four summands in the right-hand side of \eqref{eqn:c non-zero case 1} produces a non-zero contribution, namely the one with  $\textbf{i}=(N-1,N-1,N-1,N)$, and that contribution is $(-1)^3(\pm q^*)=-1$. Hence $\langle X,F\rangle = -1$, establishing~\eqref{eqn:not zero c 1}.


\medskip 

\subsection{}
\label{sub:non-zero type c case 2}

Consider type $C$ Case 2. Without loss of generality, we shall assume that the neck parameter equals $q^{2}$. In the proof of Theorem \ref{thm:c intro}, we considered the zero-th mode of the series involved, namely 
\begin{align*}
  X 
  & = [[[ e_{N-2,0}, e_{N-1,0} ]_{q},  [[ e_{N-2,0}, e_{N-1,0} ]_{q} , e_{N,0} ]_{q^{-2}}]_1 , e_{N-1,0} ]_1 
    = \tfrac{1}{12} X^{0,0,0,0,0,0} 
\end{align*}
We let 
$$
  F = \frac {z_{N1}}{\sqrt{z_{N-2,1}z_{N-2,2}}} \in \CV_{C_{\Gamma};-2\bsi^{N-2}-3\bsi^{N-1}-\bsi^N}
$$
(note that in the aforementioned proof $F$ was multiplied by the monomial $z^{-d}_{N1}$ which perfectly balanced out a shift in the indices of $e_{N,d}$ that appeared in $X^{0,0,0,0,0,d}$). Then (the type $C_{\Gamma}$ analogue of) formula \eqref{eqn:pairing explicit} implies that
\begin{equation}
\label{eqn:c non-zero case 2}
  \langle X,F \rangle = \sum_{i_1,\dots,i_6} (\pm q^*) \int_{|z_1|\gg \dots \gg |z_6|} \frac {\frac {z_{c'}}{\sqrt{z_{a'}z_{b'}}} Dz_1 \dots Dz_6}{\prod_{1 \leq a < b \leq 6} \zeta^{C_{\Gamma}}_{i_bi_a}\left(\frac {z_b}{z_a}\right)} 
\end{equation}
where $\textbf{i}=(i_1,\dots,i_6)$ goes over certain permutations of $(N-2,N-2,N-1,N-1,N-1,N)$ and $\{a' < b',c'\} \subset \{1,\dots,6\}$ denote the indices such that $i_{a'} = i_{b'} = N-2$ and $i_{c'} = N$. We also let $\{d'<e'<f'\} \subset \{1,\dots,6\}$ denote those indices such that $i_{d'} = i_{e'} = i_{f'} = N-1$. Invoking \eqref{eq:C-zeta-case2}, we see that 
\begin{align*} 
  & \lim_{x \rightarrow 0} \zeta^{C_{\Gamma}}_{N,N-1}(x) = \lim_{x \rightarrow 0} \zeta^{C_{\Gamma}}_{N-1,N-2}(x)  = -1 \\
  & \lim_{x \rightarrow 0} \zeta^{C_{\Gamma}}_{N,N-2}(x) = \lim_{x \rightarrow 0} \zeta^{C_{\Gamma}}_{N-2,N}(x)  = 1 \\
  & \lim_{x \rightarrow 0} \zeta^{C_{\Gamma}}_{N-1,N}(x) = \lim_{x \rightarrow 0} \zeta^{C_{\Gamma}}_{N-2,N-1}(x) = \infty \\
  & \lim_{x \rightarrow 0} x^{-\frac{1}{2}} \zeta^{C_{\Gamma}}_{N-1,N-1}(x) = 
    \lim_{x \rightarrow 0} x^{-\frac{1}{2}} \zeta^{C_{\Gamma}}_{N-2,N-2}(x) = 1
\end{align*}
so that the integrand in \eqref{eqn:c non-zero case 2} is equal to
$$
  \frac {z_{c'}}{\sqrt{z_{a'}z_{b'}}} \sqrt{\frac {z_{a'}}{z_{b'}}} \sqrt{\frac {z_{d'}}{z_{e'}}} \sqrt{\frac {z_{d'}}{z_{f'}}} \sqrt{\frac {z_{e'}}{z_{f'}}} 
  = \frac {z_{c'}z_{d'}}{z_{b'} z_{f'}}
$$
times terms which are either $1$, $-1$, or $0$ in the limit $|z_1| \gg \dots \gg |z_6|$. If $i_1 \in \{N-1,N\}$, i.e.\ $1 \in \{d',c'\}$, then the zeta factors in the denominator of \eqref{eqn:c non-zero case 2} produce a pole in the variable $z_1$, forcing the entire integral to vanish. Thus we may assume $i_1 = N-2 \Rightarrow a'=1$, which also forces $i_2 = N-1 \Rightarrow d'=2$, due to the structure of the iterated $q$-commutator defining $X$. If $i_3=N \Rightarrow c'=3$, then the zeta factors $\zeta^{C_{\Gamma}}_{N-1,N}(z_{e'}/z_{c'}), \zeta^{C_{\Gamma}}_{N-1,N}(z_{f'}/z_{c'}), \zeta^{C_{\Gamma}}_{N-2,N-1}(z_{b'}/z_{d'})$ in the denominator of \eqref{eqn:c non-zero case 2} produce a pole in the variable $z_3$, forcing the integral to vanish. If $i_3 = N-1$, then the structure of the iterated $q$-commutator defining $X$ implies $\textbf{i} = (N-2,N-1,N-1,N-2,N,N-1)$, for which the zeta factors $\zeta^{C_{\Gamma}}_{N-2,N-1}(z_{b'}/z_{d'}), \zeta^{C_{\Gamma}}_{N-2,N-1}(z_{b'}/z_{e'}), \zeta^{C_{\Gamma}}_{N-1,N}(z_{f'}/z_{c'})$ in the denominator will create a pole in $z_3$, forcing the integral to vanish. If $i_3=N-2$, then we have $\textbf{i} = (N-2,N-1,N-2,N-1,N,N-1)$ and the resulting contribution is $1(\pm  q^*)=1$. Thus $\langle X,F \rangle = 1$, establishing formula~\eqref{eqn:not zero c 2}.


\medskip 

\subsection{}
\label{sub:non-zero type c case 3}

Consider type $C$ Case 3. Without loss of generality, we shall assume that the neck parameter equals $q^{2}$. In the proof of Theorem \ref{thm:c intro}, we considered the zero-th mode of the series involved, namely 
\begin{align*}
  X 
  &= [[[[[[ e_{N-3,0}, e_{N-2,0}]_q , e_{N-1,0} ]_q , e_{N,0} ]_{q^{-2}} , e_{N-1,0} ]_{q^{-1}}, e_{N-2,0} ]_q, e_{N-1,0} ]_1 \\
  &= \tfrac{1}{12} X^{0,0,0,0,0,0,0} 
\end{align*}
For $\bn = \bsi^{N-3} + 2\bsi^{N-2} + 3\bsi^{N-1} + \bsi^N$, we let 
$$
  F = \frac {z_{N-3,1}^2 (z_{N-2,1}q - z_{N-2,2}q^{-1})(z_{N-2,2}q - z_{N-2,1}q^{-1})}{z_{N-2,1}^2z_{N-2,2}^2} \in \CV_{C_{\Gamma};-\bn}
$$
(note that in the aforementioned proof, $F$ was multiplied by the monomial $z^{-d}_{N1}$ which perfectly balanced out a shift in the indices of $e_{N,d}$ featuring in $X^{0,0,0,0,0,0,d}$). Then (the type $C_{\Gamma}$ analogue of) formula \eqref{eqn:pairing explicit} implies that
\begin{equation}
\label{eqn:c non-zero case 3}
  \langle X,F \rangle = \sum_{i_1,\dots,i_7} (\pm q^*) \int_{|z_1|\gg \dots \gg |z_7|} \frac {\frac {z^2_{g'}(z_{a'}q-z_{b'}q^{-1})(z_{b'}q-z_{a'}q^{-1})}{z^2_{a'}z^2_{b'}} Dz_1 \dots Dz_7}{\prod_{1 \leq a < b \leq 7} \zeta^{C_{\Gamma}}_{i_bi_a}\left(\frac {z_b}{z_a}\right)} 
\end{equation}
where $\textbf{i}=(i_1,\dots,i_7)$ goes over certain permutations of $(N-3,N-2,N-2,N-1,N-1,N-1,N)$ and $\{a' < b',d'<e'<f', c',g'\} = \{1,\dots,7\}$ denote the indices such that $i_{a'} = i_{b'} = N-2$, $i_{d'} = i_{e'} = i_{f'} = N-1$, $i_{c'} = N$, and $i_{g'}=N-3$. Invoking \eqref{eq:C-zeta-case3}, we see that 
\begin{align*} 
  & \lim_{x \rightarrow 0} \zeta^{C_{\Gamma}}_{N,N-1}(x) = \lim_{x \rightarrow 0} \zeta^{C_{\Gamma}}_{N-1,N-2}(x) = 
    \lim_{x \rightarrow 0} \zeta^{C_{\Gamma}}_{N-2,N-3}(x)  = -1 \\
  & \lim_{x \rightarrow 0} \zeta^{C_{\Gamma}}_{N,N-2}(x) = \lim_{x \rightarrow 0} \zeta^{C_{\Gamma}}_{N,N-3}(x) = 
    \lim_{x \rightarrow 0} \zeta^{C_{\Gamma}}_{N-2,N}(x) = \lim_{x \rightarrow 0} \zeta^{C_{\Gamma}}_{N-3,N}(x) = 1 \\  
  & \lim_{x \rightarrow 0} \zeta^{C_{\Gamma}}_{N-2,N-2}(x) = 1, \quad  \lim_{x \rightarrow 0} \zeta^{C_{\Gamma}}_{N-3,N-1}(x) = (-1)^{|N-3|}, 
    \quad \lim_{x \rightarrow 0} x^{-\frac{1}{2}}\zeta^{C_{\Gamma}}_{N-1,N-1}(x) = 1 \\
  & \lim_{x \rightarrow 0} \zeta^{C_{\Gamma}}_{N-1,N}(x) = \lim_{x \rightarrow 0} \zeta^{C_{\Gamma}}_{N-2,N-1}(x) = 
    \lim_{x \rightarrow 0} \zeta^{C_{\Gamma}}_{N-3,N-2}(x) = \infty
\end{align*}
so that the integrand in \eqref{eqn:c non-zero case 3} is equal to
$$
  \frac {z^2_{g'}(z_{a'}q-z_{b'}q^{-1})(z_{b'}q-z_{a'}q^{-1})}{z^2_{a'}z^2_{b'}} \cdot \frac {z_{d'}}{z_{f'}}
$$
times terms which are either $1$, $-1$, or $0$ in the limit $|z_1| \gg \dots \gg |z_7|$. If $i_1=N-2$, then the zeta factor $\zeta^{C_{\Gamma}}_{N-3,N-2}(z_{g'}/z_{a'})$ in the denominator produces a pole in $z_1$ forcing the integral to vanish. If $i_1=N-1$, then the zeta factors $\zeta^{C_{\Gamma}}_{N-2,N-1}(z_{a'}/z_{d'}), \zeta^{C_{\Gamma}}_{N-2,N-1}(z_{b'}/z_{d'})$ in the denominator produce a pole in $z_1$ forcing the integral to vanish. If $i_1=N$, then the zeta factors $\zeta^{C_{\Gamma}}_{N-1,N}(z_{d'}/z_{c'})$, $\zeta^{C_{\Gamma}}_{N-1,N}(z_{e'}/z_{c'})$, $\zeta^{C_{\Gamma}}_{N-1,N}(z_{f'}/z_{c'})$ in the denominator produce a pole in $z_1$ forcing the integral to vanish again. For $i_1=N-3$, the structure of the iterated $q$-commutator defining $X$ forces $\textbf{i} = (N-3,N-2,N-1,N,N-1,N-2,N-1)$ while the corresponding $(\pm q^*)=1$. For that choice, direct calculations of the integral give 
\begin{align*}
  \langle X,F \rangle  
  &=\int_{|z_1|\gg \dots \gg |z_7|} \frac{q^{-1} (z_2-z_6)(z_2q-z_6q^{-1})z_1^2 z_2^{-2}z_4^{-2} Dz_1\dots Dz_7}
  {\left[\substack{(1-\frac{z_2}{z_1}q^{-1})(1-\frac{z_6}{z_1}q^{-1})(1-\frac{z_3}{z_2}q^{-1})(1-\frac{z_5}{z_2}q^{-1})(1-\frac{z_7}{z_2}q^{-1})(1-\frac{z_4}{z_3}q^{2})\\
  (1-\frac{z_6}{z_3}q^{-1})(1-\frac{z_5}{z_4}q^{2})(1-\frac{z_7}{z_4}q^{2})(1-\frac{z_6}{z_5}q^{-1})(1-\frac{z_7}{z_6}q^{-1})}\right]} 
  \\
  &=\int_{|z_2|\gg \dots \gg |z_7|} \frac{q^{-3} (z^3_2-z^3_6)(z_2q-z_6q^{-1})z_2^{-2}z_4^{-2} Dz_2 \dots Dz_7}
  {\left[\substack{(1-\frac{z_3}{z_2}q^{-1})(1-\frac{z_5}{z_2}q^{-1})(1-\frac{z_7}{z_2}q^{-1})(1-\frac{z_4}{z_3}q^{2})\\
  (1-\frac{z_6}{z_3}q^{-1})(1-\frac{z_5}{z_4}q^{2})(1-\frac{z_7}{z_4}q^{2})(1-\frac{z_6}{z_5}q^{-1})(1-\frac{z_7}{z_6}q^{-1})}\right]} 
  \\
  &=\int_{|z_3|\gg \dots \gg |z_7|} \frac{q^{-4} z_4^{-2} (z_3^2+z_5^2+z_7^2+z_3z_5+z_3z_7+z_5z_7) Dz_3 \dots Dz_7}
  {(1-\frac{z_4}{z_3}q^{2})(1-\frac{z_6}{z_3}q^{-1})(1-\frac{z_5}{z_4}q^{2})(1-\frac{z_7}{z_4}q^{2})(1-\frac{z_6}{z_5}q^{-1})(1-\frac{z_7}{z_6}q^{-1})} 
  \\
  &=\int_{|z_4|\gg \dots \gg |z_7|} \frac{Dz_4 \dots Dz_7}
  {(1-\frac{z_5}{z_4}q^{2})(1-\frac{z_7}{z_4}q^{2})(1-\frac{z_6}{z_5}q^{-1})(1-\frac{z_7}{z_6}q^{-1})} = 1
\end{align*}
Thus $\langle X,F \rangle = 1$, establishing formula~\eqref{eqn:not zero c 3}.


\medskip    

\subsection{}
\label{sub:non-zero type b usual wheel}

Consider type $B$ Case 2. Without loss of generality, we shall assume that the neck parameter equals $q$. Formula \eqref{eqn:rel b 2}, see also~\eqref{eq:B-type-defining last 2}, compels us to take 
\begin{multline*}
  X = - (-1)^{|N-1|} (q+q^{-1}) (q^{-1} e_{N,1} e_{N-1,0} e_{N,0} - e_{N,0} e_{N-1,0} e_{N,1}) \\
  + (q^{-1} e_{N,1} e_{N,0} - e_{N,0} e_{N,1} ) e_{N-1,0} + e_{N-1,0} ( q^{-1} e_{N,1} e_{N,0} - e_{N,0} e_{N,1} ) 
  = \tfrac{1}{2} X^{0,0,0}_{N,N-1} 
\end{multline*}
and set 
\begin{align*}
  F = \frac{1}{\sqrt{z_{N1} z_{N2}}} \in \CV_{B_{\Gamma}; - \bsi^{N-1} - 2\bsi^N} 
\end{align*} 
(note that in the aforementioned proof, $F$ was multiplied by the monomial $z^{-d}_{N-1,1}$ which perfectly balanced out a shift in the indices of $e_{N-1,d}$ featuring in $X_{N,N-1}^{0,0,d}$). Then (the type $B_{\Gamma}$ analogue of) formula \eqref{eqn:pairing explicit} implies that
\begin{equation}
\label{eqn:b non-zero usual deg3}
  \langle X,F \rangle = \sum_{i_1,i_2,i_3} \mathrm{coef}_{i_1,i_2,i_3} \int_{|z_1|\gg |z_2|\gg |z_3|} \frac { \frac{z_{a'}q^{-1}-z_{b'}}{\sqrt{z_{a'}z_{b'}}}   Dz_1 Dz_2 Dz_3}{\prod_{1 \leq a < b \leq 3} \zeta^{B_{\Gamma}}_{i_bi_a}\left(\frac {z_b}{z_a}\right)} 
\end{equation}
where $\textbf{i}=(i_1,i_2,i_3) \in \{(N,N,N-1), (N,N-1,N), (N-1, N, N)\}$, $\{a'<b', c'\} = \{1,2,3\}$ denote indices with $i_{a'} = i_{b'} = N$, $i_{c'}=N-1$, and $\mathrm{coef}_{N,N,N-1}=1,\mathrm{coef}_{N-1,N,N}=1, \mathrm{coef}_{N,N-1,N} = - (-1)^{|N-1|} (q+q^{-1})$. Since
\begin{equation*}
  \lim_{x \rightarrow 0} \zeta^{B_{\Gamma}}_{N,N-1}(x) = -1, \qquad  
  \lim_{x \rightarrow 0} \zeta^{B_{\Gamma}}_{N-1,N}(x) = \infty, \qquad 
  \lim_{x \rightarrow 0} x^{\frac{1}{2}}\zeta^{B_{\Gamma}}_{NN}(x) = 1 
\end{equation*}
we see that the integrand in \eqref{eqn:b non-zero usual deg3} is equal to
$$
  q^{-1} - \frac{z_{b'}}{z_{a'}} 
$$
times terms which are either $1$, $-1$, or $0$ in the limit $|z_1| \gg |z_2| \gg |z_3|$. For $i_1=N$, the zeta factor $\zeta^{B_{\Gamma}}_{N-1,N}(z_{c'}/z_{a'})$ produces a pole in the variable $z_1$, forcing the integral to vanish. Finally, for $\textbf{i}=(N-1,N,N)$, the corresponding integral is
$$
  \int_{|z_1|\gg |z_2|\gg |z_3|} \frac { q^{-1} + o(1) } {( 1- \frac{z_2}{z_1}q ) ( 1-\frac{z_3}{z_1}q ) } 
  \frac{1-\frac{z_3}{z_2}}{(1-\frac{z_3}{z_2}q)(1-\frac{z_3}{z_2}q^{-2})} Dz_1 Dz_2 Dz_3 = q^{-1}
$$
Therefore $\langle X,F \rangle = q^{-1}$, establishing formula \eqref{eq:const-B-2nd}.


\medskip    

\subsection{}
\label{sub:non-zero type b cubic wheel}

Consider type $B$ Case 2. Without loss of generality, we shall assume that the neck parameter equals $q$. We note that $X_N(z_1,z_2,z_3)$ involves only a single current $e_N(z)$ and thus one cannot easily absorb the parameter $d$ as in the previous Subsections. To get around this, we consider a certain linear combination of elements $X^{a,b,c}_N$ and a different choice of test functions $F\in \CV_{B_{\Gamma}; -3\bsi^N}$, see Remark~\ref{rem:cont-irrelevance}.

\medskip
\noindent
$\bullet$ For $d=a+b+c=3k$ with $k\in \BZ$, we consider 
$$
  X = 6 ( -q^3 e_{N,k+1}e^2_{N,k} + (q-q^2) e_{N,k} e_{N,k+1} e_{N,k} + e^2_{N,k} e_{N,k+1} ) =X^{k,k,k}_N
$$ 
and set 
\begin{align*}
  F = (z^{-1}_{N1} + z^{-1}_{N2} + z^{-1}_{N3})\cdot (z_{N1}z_{N2}z_{N3})^{-k} \in \CV_{B_{\Gamma}; -3\bsi^N} 
\end{align*} 
so that (the type $B_{\Gamma}$ analogue of) formula \eqref{eqn:pairing explicit}, cf.~\eqref{eq:B-type-defining last one}, implies that
\begin{equation}
\label{eqn:b non-zero new deg3}
  \langle X,F \rangle = \int_{|z_1|\gg |z_2|\gg |z_3|} 
  \frac { 6(-z_1q^3 + z_2(q-q^2) + z_3)(z^{-1}_{1} + z^{-1}_{2} + z^{-1}_{3})  Dz_1 Dz_2 Dz_3}
        {\prod_{1 \leq a < b \leq 3} \zeta^{B_{\Gamma}}_{i_bi_a}\left(\frac {z_b}{z_a}\right)} 
\end{equation}
As  $\lim_{x \rightarrow 0} x^{\frac{1}{2}}\zeta^{B_{\Gamma}}_{NN}(x) =1$, we see that the integrand in \eqref{eqn:b non-zero new deg3} is equal to
$$
  6\frac{z_3}{z_1} (-z_1q^3 + z_2(q-q^2) + z_3) (z^{-1}_{1} + z^{-1}_{2} + z^{-1}_{3}) = -6q^3 + o(1)
$$
times terms which are $1$ in the limit $|z_1| \gg |z_2| \gg |z_3|$, so that the integral is $-6q^3$. This establishes 
\begin{equation}
\label{eq:const-B-alt-1}
  \langle X,F \rangle = -6q^{3}
\end{equation}

\medskip
\noindent
$\bullet$ For $d=a+b+c=3k-1$ with $k\in \BZ$, we consider 
$$ 
  X=X^{k-1,k,k}_N + X^{k,k-1,k}_N + X^{k,k,k-1}_N
$$
and set 
\begin{align*}
  F = (z_{N1}z_{N2}z_{N3})^{-k} \in \CV_{B_{\Gamma}; -3\bsi^N} 
\end{align*} 
so that the pairing $\langle X,F \rangle$ is given by the same integral as in the previous bullet: 
\begin{equation*}
  \langle X,F \rangle = -6q^{3}
\end{equation*}

\medskip
\noindent
$\bullet$ For $d=a+b+c=3k+1$ with $k\in \BZ$, we consider 
$$ 
  X=X^{k+1,k,k}_N + X^{k,k+1,k}_N + X^{k,k,k+1}_N
$$
and set 
\begin{align*}
  F = (z_{N1} + z_{N2} + z_{N3})\cdot (z_{N1}z_{N2}z_{N3})^{-k-1} \in \CV_{B_{\Gamma}; -3\bsi^N} 
\end{align*} 
so that (the type $B_{\Gamma}$ analogue of) formula \eqref{eqn:pairing explicit} implies that 
\begin{equation*}
  \langle X,F \rangle = 6 \int_{|z_1|\gg |z_2|\gg |z_3|} 
  \frac { (-z_1q^3 + z_2(q-q^2) + z_3)(z_{1} + z_{2} + z_{3})^2 Dz_1 Dz_2 Dz_3}
        {z_1z_2z_3\cdot \prod_{1 \leq a < b \leq 3} \zeta^{B_{\Gamma}}_{i_bi_a}\left(\frac {z_b}{z_a}\right)} 
\end{equation*}
As  $\lim_{x \rightarrow 0} x^{\frac{1}{2}}\zeta^{B_{\Gamma}}_{NN}(x) =1$, we see that the integrand above is equal to
$$
  -q^3\frac{z_1}{z_2} + (q-q^2-2q^3) + o(1)
$$
times terms which are $1$ in the limit $|z_1| \gg |z_2| \gg |z_3|$. The contribution of the part with $(q-q^2-2q^3)$ gives $(q-q^2-2q^3)$, while the contribution of the part with $-q^3\frac{z_1}{z_2}$ gives $-q^3(-1+q+q^{-2})=-q+q^3-q^4$. Thus 
\begin{equation*}
  \langle X,F \rangle = -q^2(1+q+q^2)
\end{equation*}


\bigskip

\section{Delta function equalities}\label{sec:delta-equalities}

The goal of this Appendix is to show explicitly that the relations one imposes to obtain quantum affine superalgebras from pre-quantum ones are indeed dual to the corresponding wheel conditions, as discussed in Section~\ref{sec:arbitrary}. In the case of non-super types $A$, $B$, $C$, $D$ this essentially goes back to~\cite{E}. All the calculations in this Appendix are based on the standard equality
\begin{equation}
\label{eq:delta-1}
  \delta(x) = \frac{1}{1-x} + \frac{1}{x-1}   
\end{equation}
with the terms in the right-hand side understood as formal power series:
$$
  \frac{1}{1-x}=\sum_{d\geq 0} x^d, \qquad \frac{1}{x-1}=\sum_{d>0} x^{-d}
$$  
Invoking~\eqref{eqn:pairing explicit}, we note that each nontrivial linear factor $(1-xt)$ entering $\zeta_{i_b i_a}(x)$ yields the factor $\frac{1}{1-tz_b/z_a}$ in $\zeta^{-1}_{i_bi_a}(z_b/z_a)$ understood as a power-series expansion in the domain $|z_a|\gg|z_b|$. We shall utilize~\eqref{eq:delta-1} to re-expand 
\begin{equation}
\label{eq:reexpansion}
  \frac{1}{1-\frac {tz_b}{z_a}} = \delta\left( \frac {tz_b}{z_a}\right)-\frac{1}{\frac {tz_b}{z_a}-1}
\end{equation}
in the domain $|z_a|\ll|z_b|$, which will buy us valuable delta function identities. Every time we apply~\eqref{eq:reexpansion}, we pick up either the delta function or the expansion of the same rational function in the correct domain. We always apply~\eqref{eqn:key} whenever picking up a delta function, so as to decrease the number of variables. In this Appendix, we will use the notation $|z_1|\sim \ldots\sim |z_k|$ while $|q^{\pm 1}|\gg 1$ to indicate the domain $(z_1,\ldots,z_k)\in (\BC^\times)^k$ such that $|z_a/z_b| \ll |q^{\pm 1}|$ for all $1\leq a,b\leq k$.


\medskip 

\subsection{}
\label{app:A-cubic}
The evaluation of $\langle X_{i,i+1}(z_1,z_2,w), F \rangle$ for $X_{i,i+1}(z_1,z_2,w)$ of~\eqref{eqn:x} and $F\in \CV_{A_{\Gamma};-2\bsi^i-\bsi^{i+1}}$ essentially goes back to~\cite[\S2]{E}, but we shall recall it since it serves as a prototypical example for the rest of this Appendix. Applying~\eqref{eqn:pairing explicit}, we obtain 
\begin{multline*}
  \Big\langle X_{i,i+1}(z_1,z_2,w) , F \Big\rangle = F \cdot \\
  \Sym_{z_1,z_2}\, \frac{1}{\zeta^{A_{\Gamma}}_{ii}(\frac{z_{2}}{z_1})}
  \Bigg( \frac{1}{\zeta^{A_{\Gamma}}_{i+1,i}(\frac{w}{z_1})\zeta^{A_{\Gamma}}_{i+1,i}(\frac{w}{z_2})} 
    - \frac{q^{s_i}+q^{-s_i}}{\zeta^{A_{\Gamma}}_{i+1,i}(\frac{w}{z_1})\zeta^{A_{\Gamma}}_{i,i+1}(\frac{z_2}{w})}     
    + \frac{1}{\zeta^{A_{\Gamma}}_{i,i+1}(\frac{z_1}{w})\zeta^{A_{\Gamma}}_{i,i+1}(\frac{z_2}{w})}\Bigg)
\end{multline*}
The expression in the second line above vanishes as a rational function, but the reader should remember that each factor is expanded via~\eqref{eq:delta-1} into the power series in different domains. We shall assume that $|q^{s_i}| \gg 1$ while $|z_1|\sim |z_2|\sim |w|$, so that we have to re-expand using~\eqref{eq:delta-1} only the terms $\frac{1}{1-q^{2s_i}z_2/z_1}$ and $\frac{1}{1-q^{2s_i}z_1/z_2}$. Picking up $\delta\left(q^{2s_i}z_2/z_1\right)$ allows us to replace $z_1$ by $z_2q^{2s_i}$ in the remaining factors, and similarly for $\delta\left(q^{2s_i} z_1/z_2 \right)$. A direct calculation then provides the formula: 
\begin{equation}
\label{eq:A-Enriguez}
  \Big\langle X_{i,i+1}(z_1,z_2,w) , F \Big\rangle = 
  \Sym_{z_1,z_2}\,   \delta\left( \frac{z_1}{q^{2s_i}z_2} \right) \delta\left( \frac{w}{q^{-s_i} z_1}  \right) \cdot F(z_1,z_2,w)
\end{equation}
where we plug $z_1,z_2,w$ instead of $z_{i1}, z_{i2}, z_{i+1,1}$. In particular, we have
\begin{equation*}
  \langle X_{i,i+1}(z_1,z_2,w) , F \rangle=0 \quad \Leftrightarrow \quad F \text{ vanishes at the wheel \eqref{eqn:wheel finite even}}
\end{equation*}
Formula~\eqref{eq:A-Enriguez} also implies the equality~\eqref{eqn:not zero 1} from the proof of~\eqref{eqn:key pairing finite sl}:
$$
  \langle X_{i,i+1}^{0,0,d}, z_{i+1,1}^{-d} \rangle = 2
$$
The case of $\langle X_{i,i-1}(z_1,z_2,w) , - \rangle$ is completely analogous; we leave it to the reader.


\medskip 

\subsection{}
\label{app:A-quartic}

Next, we consider the pairing of $X_{i-1,i,i+1}(z_1,z_2,w,y)=(\mathrm{LHS\ of}\ \eqref{eqn:rel quantum affine 9})$ with $F\in \CV_{\wA_{\Gamma}; -\bsi^{i-1}-2\bsi^i-\bsi^{i+1}}$ (the case of finite type $A$ follows by setting $d=1$). Without loss of generality, we shall assume that $s_i=1$. To simplify the exposition, we also assume that $|i-1|=0=|i+1|$ (in the general case, one simply gains an extra sign $(-1)^{|i+1|(1+|i-1|)}$). Let $q_1=q^{-1}d, q_3=q^{-1}d^{-1}$. Then we have 
\begin{multline*}
  \Big\langle X_{i-1,i,i+1}(z_1,z_2,w,y) , F \Big\rangle = \\
  F \cdot \Sym_{z_1,z_2}\,  
  \left[ \Bigg( \tfrac{1}{\zeta^{\wA_{\Gamma}}_{ii}(\frac{z_2}{z_1})\zeta^{\wA_{\Gamma}}_{i+1,i}(\frac{y}{z_1})\zeta^{\wA_{\Gamma}}_{i-1,i}(\frac{w}{z_1})} + \tfrac{1}{\zeta^{\wA_{\Gamma}}_{ii}(\frac{z_1}{z_2})\zeta^{\wA_{\Gamma}}_{i,i+1}(\frac{z_1}{y}) \zeta^{\wA_{\Gamma}}_{i,i-1}(\frac{z_1}{w})} \Bigg) \right. \times \\
  \left. \Bigg( \tfrac{1}{\zeta^{\wA_{\Gamma}}_{i,i+1}(\frac{z_2}{y}) \zeta^{\wA_{\Gamma}}_{i-1,i}(\frac{w}{z_2})}  
   - \tfrac{q}{\zeta^{\wA_{\Gamma}}_{i,i+1}(\frac{z_2}{y}) \zeta^{\wA_{\Gamma}}_{i,i-1}(\frac{z_2}{w})}
   - \tfrac{q^{-1}}{\zeta^{\wA_{\Gamma}}_{i+1,i}(\frac{y}{z_2}) \zeta^{\wA_{\Gamma}}_{i-1,i}(\frac{w}{z_2})} 
   + \tfrac{1}{\zeta^{\wA_{\Gamma}}_{i+1,i}(\frac{y}{z_2}) \zeta^{\wA_{\Gamma}}_{i,i-1}(\frac{z_2}{w})}\Bigg) \right]
\end{multline*}
The above $\Sym_{z_1,z_2}\,(\dots)$ vanishes as a rational function, but the reader should remember that each factor is expanded via~\eqref{eq:delta-1} into the power series in different domains. We shall assume that $|q_1|, |q_3| \gg 1$ while $|z_1|\sim |z_2|\sim |w| \sim |y|$, so that we have to re-expand using~\eqref{eq:delta-1} only the terms $\frac{1}{1-q_3 w/z_a}$ and $\frac{1}{1-q_1 z_a/w}$ for $a=1,2$. Picking up two $\delta$-factors $\delta(q_3w/z_1) \delta(q_3w/z_2)$ or $\delta(q_1z_1/w)\delta(q_1z_2/w)$ will always produce $0$, due to the factor $z_1-z_2$ arising from $1/\zeta^{\wA_{\Gamma}}_{ii}(z_1/z_2)$ or $1/\zeta^{\wA_{\Gamma}}_{ii}(z_2/z_1)$. On the other hand, picking up only a single $\delta$-function, e.g.\ $\delta\left(q_3w/z_1\right)$ and replacing $w$ by $z_1 q_3^{-1}$ in the rest of terms, will also result in $0$ (all the corresponding power series are expansions of the same-named rational functions that add up to $0$ in the common domain $|z_1|\sim |z_2| \sim |y|$ for  $|q_1|, |q_3| \gg 1$). Finally, picking up two $\delta$-factors $\delta(q_3w/z_1) \delta(q_1z_2/w)$, the rest of the terms add up to:
\begin{multline*}
  \frac{-q^2(1-q^2)q_3^{-1}z_1y^{-1}}{(1-q_1^{-1}\frac{y}{z_1})(1-q_1^{-1}q_3^{-2}\frac{z_1}{y})} -
  \frac{1-q^2}{(1-q_1^{-1}\frac{y}{z_1})(1-q_3\frac{y}{z_1})} \\ 
  - \frac{q_3^{-2}(1-q^2)z_1^2y^{-2}}{(1-q_3^{-1}\frac{z_1}{y})(1-q_1^{-1}q_3^{-2}\frac{z_1}{y})} -
  \frac{q_3^{-1}(1-q^2)z_1y^{-1}}{(1-q_1^{-1}\frac{y}{z_1})(1-q_1^{-1}q_3^{-2}\frac{z_1}{y})}
\end{multline*}
The above sum vanishes as a rational function, but the reader should remember that each factor is expanded via~\eqref{eq:delta-1} into the power series in different domains. Assuming $|q_1|, |q_3| \gg 1$, we see that we have to re-expand using~\eqref{eq:delta-1} only the single term $\frac{1}{1-q_3 y/z_1}$, which produces $-\delta(q_3y/z_1)$. Therefore, 
\begin{multline}
\label{eq:a-deg4-equality}
  \Big\langle X_{i-1,i,i+1}(z_1,z_2,w,y) , F \Big\rangle =\\ 
   - \Sym_{z_1,z_2}\,  \delta\left( \frac{z_1}{q_3w} \right) \delta\left( \frac{y}{q_3^{-1}z_1} \right) \delta\left( \frac{z_2}{q_1^{-1}y} \right) \cdot F(z_1,z_2,w,y)
\end{multline}
where we plug $z_1,z_2,w,y$ instead of $z_{i1}, z_{i2}, z_{i-1,1}, z_{i+1,1}$. In particular, we have 
$$
  \langle X_{i-1,i,i+1}(z_1,z_2,w,y) , F \rangle=0 \quad \Leftrightarrow \quad F \ \text{ vanishes at the wheel~\eqref{eqn:wheel affine odd}}
$$
Formula~\eqref{eq:a-deg4-equality} also implies the equality~\eqref{eqn:not zero 2} from the proof of~\eqref{eqn:key pairing finite sl}:
$$
   \left\langle X_{i-1,i,i+1}^{0,0,0,d}, z_{i+1,1}^{-d}\left(\sqrt{\frac{z_{i1}}{z_{i2}}} + \sqrt{\frac{z_{i2}}{z_{i1}}}\right) \right\rangle  =  -2(q+q^{-1}) 
$$


\medskip 

\subsection{}
\label{app:B-quartic}

Consider type $B$ Case 1. Without loss of generality, we shall assume that the neck parameter equals $q^{-1}$. For $F\in \CV_{B_{\Gamma};-3\bsi^N-\bsi^{N-1}}$, the evaluation of the pairing $\langle X_{N-1,N}(z_1,z_2,z_3,w) , F \rangle$ with the series $X_{N-1,N}(z_1,z_2,z_3,w)$ from the proof of Theorem~\ref{thm:b intro} proceeds analogously:
\begin{multline*}
  \Big\langle X_{N-1,N}(z_1,z_2,z_3,w) , F \Big\rangle = F \cdot
  \Sym_{z_1,z_2,z_3}\, \Bigg[ \frac{(z_1-z_2)(z_1-z_3)(z_2-z_3)}{(z_1-z_2 q)(z_1-z_3 q)(z_2-z_3 q)}\times \\
  \Bigg( \frac{w^3}{(z_1-wq^{-1})(z_2-wq^{-1})(z_3-wq^{-1})} + \frac{(q+1+q^{-1})w^3}{(z_1-wq^{-1})(z_2-wq^{-1})(w-z_3q^{-1})} \\ 
  + \frac{(q+1+q^{-1})w^3}{(z_1-wq^{-1})(w-z_2q^{-1})(w-z_3q^{-1})} + \frac{w^3}{(w-z_1q^{-1})(w-z_2q^{-1})(w-z_3q^{-1})}\Bigg) \Bigg]
\end{multline*}
According to~\cite[Proposition 4]{E} (with $m=2$  and $q^2$ replaced by $q^{-1}$), we obtain 
\begin{multline}
\label{eq:B-Enriguez}
  \Big\langle X_{N-1,N}(z_1,z_2,z_3,w) , F \Big\rangle = \\
  \Sym_{z_1,z_2,z_3}\,  \delta\left( \frac{z_2}{qz_1} \right) \delta\left( \frac{z_3}{qz_2} \right) \delta\left( \frac{w}{q^{-1}z_3} \right) \cdot F(z_1,z_2,z_3,w)
\end{multline}
where we plug $z_1,z_2,z_3,w$ instead of $z_{N 1}, z_{N 2}, z_{N 3}, z_{N-1,1}$. In particular, we have 
\begin{equation*}
  \langle X_{N-1,N}(z_1,z_2,z_3,w) , F \rangle=0 \quad \Leftrightarrow \quad F \text{ vanishes at the wheel \eqref{eqn:wheel b 1}}
\end{equation*}
Formula~\eqref{eq:B-Enriguez} also implies the equality~\eqref{eq:const-B-1st} from the proof of Theorem~\ref{thm:b intro}:
$$
  \langle X_{N-1,N}^{0,0,0,d}, z^{-d}_{N-1,1} \rangle = 6
$$


\medskip 

\subsection{}
\label{app:B-cubic-1}

Consider type $B$ Case 2. Without loss of generality, we shall assume that the neck parameter equals $q$. Let $X_N(z_1,z_2,z_3)$ be the LHS of~\eqref{eqn:rel b 1} as used in the proof of Theorem~\ref{thm:b intro}. Then for any $F\in \CV_{B_{\Gamma}; -3\bsi^N}$, we have 
\begin{equation*}
  \Big\langle X_N(z_1,z_2,z_3) , F \Big\rangle = 
  \Sym_{z_1,z_2,z_3}\,  \frac{-z_1q^3+z_2(q-q^2)+z_3}{\zeta^{B_{\Gamma}}_{NN}(\frac{z_2}{z_1}) \zeta^{B_{\Gamma}}_{NN}(\frac{z_3}{z_1}) \zeta^{B_{\Gamma}}_{NN}(\frac{z_3}{z_2})}  \cdot F
\end{equation*}
The above $\Sym_{z_1,z_2,z_3}\,(\dots)$ vanishes as a rational function, but the reader should remember that each factor is expanded via~\eqref{eq:delta-1} into the power series in different domains. We shall assume that $|q|\gg 1$ while $|z_1|\sim |z_2|\sim |z_3|$, so that we have to re-expand using~\eqref{eq:delta-1} only the terms $\frac{1}{1-qz_a/z_b}$ for $a\ne b$. Picking up two $\delta$-factors 
$$ 
  \delta\left(\frac{z_b}{qz_a}\right) \delta\left(\frac{z_b}{qz_c}\right) \quad \mathrm{or} \quad 
  \delta\left(\frac{z_a}{qz_b}\right)\delta\left(\frac{z_c}{qz_b}\right)
$$
for $\{a,b,c\}=\{1,2,3\}$ will always produce $0$, due to the factor $z_a-z_c$ arising from $1/\zeta^{B_{\Gamma}}_{NN}(z_a/z_c)$ or $1/\zeta^{B_{\Gamma}}_{NN}(z_c/z_a)$. On the other hand, picking only a single $\delta$-function, e.g.\ $\delta\left(qz_2/z_1\right)$ and replacing $z_2$ by $z_1 q^{-1}$ in the rest of terms, will also result in $0$ (all the corresponding power series are expansions of the same-named rational functions that add up to $0$ in the common domain $|z_2|\sim |z_3| < |z_1|$ for  $|q|\gg 1$). Evaluating the remaining terms with $\delta\left(qz_b/z_a\right) \delta\left(qz_c/z_b\right)$ for $\{a,b,c\}=\{1,2,3\}$, we obtain 
\begin{multline}
\label{eq:b1-equality}
  \Big\langle X_N(z_1,z_2,z_3) , F \Big\rangle =\\ 
   \frac{-q^5}{1+q+q^{2}} \cdot \Sym_{z_1,z_2,z_3}\,  z_3 
   \delta\left( \frac{z_1}{qz_2} \right) \delta\left( \frac{z_2}{qz_3} \right)  \cdot F(z_1,z_2,z_3)
\end{multline}
where we plug $z_1,z_2,z_3$ instead of $z_{N1}, z_{N2}, z_{N3}$. In particular, we have 
$$
  \langle X_N(z_1,z_2,z_3) , F \rangle=0 \quad \Leftrightarrow \quad F \ \text{ vanishes at the wheel~\eqref{eqn:wheel b 2}}
$$
Formula~\eqref{eq:b1-equality} also implies the equality~\eqref{eq:const-B-3rd} from the proof of Theorem~\ref{thm:b intro}:
$$
   \langle X_{N}^{0,0,d}, z^{-d-1}_{N 1} + z^{-d-1}_{N 2} + z^{-d-1}_{N 3} \rangle  =  \frac{-2q^{3-2d}(1+q^{d}+q^{2d})(1+q^{d+1}+q^{2d+2})}{1+q+q^2} 
$$
In the particular case $d=0$, this is compatible with the formula~\eqref{eq:const-B-alt-1}.


\medskip 

\subsection{}
\label{app:B-cubic-2}

Consider type $B$ Case 2. Without loss of generality, we shall assume that the neck parameter equals $q$. Let $X_{N,N-1}(z_1,z_2,w)$ be the LHS of~\eqref{eqn:rel b 2} as used in the proof of Theorem~\ref{thm:b intro}, so that for any $F\in \CV_{B_{\Gamma}; -\bsi^{N-1}-2\bsi^N}$:
\begin{multline*}
  \Big\langle X_{N,N-1}(z_1,z_2,w) , F \Big\rangle = F\cdot \Sym_{z_1,z_2}\, \Bigg[ \frac{z_1 q^{-1} - z_2}{\zeta^{B_{\Gamma}}_{NN}(\frac{z_2}{z_1})} \times \\
  \Bigg( \frac{1}{\zeta^{B_{\Gamma}}_{N-1,N}(\frac{w}{z_1}) \zeta^{B_{\Gamma}}_{N-1,N}(\frac{w}{z_2})}
  + \frac{1}{\zeta^{B_{\Gamma}}_{N,N-1}(\frac{z_1}{w}) \zeta^{B_{\Gamma}}_{N,N-1}(\frac{z_2}{w})} 
  - \frac{(-1)^{|N-1|}(q+q^{-1})}{\zeta^{B_{\Gamma}}_{N-1,N}(\frac{w}{z_1}) \zeta^{B_{\Gamma}}_{N,N-1}(\frac{z_2}{w})} \Bigg)  \Bigg]
\end{multline*}
The above $\Sym_{z_1,z_2}\, (\dots)$ vanishes as a rational function, but the reader should remember that each factor is expanded via~\eqref{eq:delta-1} into the power series in different domains. We shall assume that $|q|\ll 1$ while $|z_1|\sim |z_2|\sim |w|$, so that we have to re-expand using~\eqref{eq:delta-1} only the terms $\frac{1}{1-q^{-2}z_2/z_1}$ and $\frac{1}{1-q^{-2}z_1/z_2}$. Picking up $\delta\left(q^{-2}z_2/z_1\right)$ allows us to replace $z_1$ by $z_2q^{-2}$ in the remaining factors, and likewise for $\delta\left(q^{-2}z_1/z_2\right)$. The direct calculation provides the following formula:
\begin{equation}
\label{eq:b2-equality}
  \Big\langle X_{N,N-1}(z_1,z_2,w) , F \Big\rangle = \Sym_{z_1,z_2}\, \frac{z_2}{q^2} \delta\left( \frac{z_2}{q^{2}z_1} \right) \delta\left( \frac{w}{q^{-1}z_2} \right)  \cdot F(z_1,z_2,w)
\end{equation}
where we plug $z_1,z_2,w$ instead of $z_{N1}, z_{N2}, z_{N-1,1}$. In particular, we have 
$$
  \langle X_{N,N-1}(z_1,z_2,w) , F \rangle=0 \quad \Leftrightarrow \quad F \ \text{ vanishes at the wheel~\eqref{eqn:wheel finite even} in colors } N-1,N,N
$$
Formula~\eqref{eq:b2-equality} also implies the equality~\eqref{eq:const-B-2nd} from the proof of Theorem~\ref{thm:b intro}:
$$
  \left\langle X_{N,N-1}^{0,0,d}, \frac{z^{-d}_{N-1,1}}{\sqrt{z_{N1} z_{N2}}}  \right\rangle = 2q^{-1}
$$


\medskip 

\subsection{}
\label{app:D-fork-odd}

Consider type $D$ Case 2. Without loss of generality, we shall assume that the neck parameter equals $q$. Let $X(w,y,z)$ be the LHS of~\eqref{eqn:rel cd 1} as used in the proof of Theorem~\ref{thm:d intro}, so that for any $F\in \CV_{D_{\Gamma}; -\bsi^{N-2}-\bsi^{N-1}-\bsi^N}$:
\begin{multline*}
  \Big\langle X(w,y,z) , F \Big\rangle = F \times \\
  \Bigg( - \frac{1}{\zeta^{D_{\Gamma}}_{N,N-2}(\frac{z}{w})\zeta^{D_{\Gamma}}_{N-1,N-2}(\frac{y}{w})\zeta^{D_{\Gamma}}_{N-1,N}(\frac{y}{z})} 
    + \frac{1}{\zeta^{D_{\Gamma}}_{N-2,N}(\frac{w}{z})\zeta^{D_{\Gamma}}_{N-2,N-1}(\frac{w}{y})\zeta^{D_{\Gamma}}_{N,N-1}(\frac{z}{y})} \\
    + \frac{1}{\zeta^{D_{\Gamma}}_{N,N-2}(\frac{z}{w})\zeta^{D_{\Gamma}}_{N-1,N-2}(\frac{y}{w})\zeta^{D_{\Gamma}}_{N,N-1}(\frac{z}{y})} 
    - \frac{1}{\zeta^{D_{\Gamma}}_{N-2,N}(\frac{w}{z})\zeta^{D_{\Gamma}}_{N-2,N-1}(\frac{w}{y})\zeta^{D_{\Gamma}}_{N-1,N}(\frac{y}{z})} \\
    + \frac{(-1)^{|N-2|}(q+q^{-1})}{\zeta^{D_{\Gamma}}_{N-2,N}(\frac{w}{z})\zeta^{D_{\Gamma}}_{N-1,N-2}(\frac{y}{w})\zeta^{D_{\Gamma}}_{N-1,N}(\frac{y}{z})}
    - \frac{(-1)^{|N-2|}(q+q^{-1})}{\zeta^{D_{\Gamma}}_{N,N-2}(\frac{z}{w})\zeta^{D_{\Gamma}}_{N-2,N-1}(\frac{w}{y})\zeta^{D_{\Gamma}}_{N,N-1}(\frac{z}{y})} \Bigg)
\end{multline*}
The expression in above $(\dots)$ vanishes as a rational function, but each factor is expanded via~\eqref{eq:delta-1} in different domains. We shall assume that $|q|\ll 1$ while $|y|\sim |z|\sim |w|$, so that we have to re-expand using~\eqref{eq:delta-1} only the terms $\frac{1}{1-q^{-2} y/z}$ and $\frac{1}{1- q^{-2} z/y}$. Picking up $\delta\left( q^{-2}y/z \right)$ allows us to replace $z$ by $yq^{-2}$ in the remaining factors, and likewise for $\delta\left(q^{-2}z/y\right)$. A direct calculation then shows that
\begin{multline}
\label{eq:D-fork-odd-delta}
  \big\langle X(w,y,z) , F \big\rangle = \\ 
  \left( \frac{q^2}{1-q^2} \delta\left( \frac{z}{q^{-2}y} \right) \delta\left( \frac{w}{qz} \right) - 
         \frac{1}{1-q^2} \delta\left( \frac{y}{q^{-2}z} \right) \delta\left( \frac{w}{qy}\right) \right) \cdot F(w,y,z)
\end{multline}
where we plug $w,y,z$ instead of $z_{N-2,1}, z_{N-1,1}, z_{N1}$. In particular, we have 
\begin{equation}
\label{eq:D-fork-odd-conclusion}
  \langle X(w,y,z) , F \rangle=0 \quad \Leftrightarrow \quad F \ \text{ vanishes at both wheels~\eqref{eqn:wheel d 1}}
\end{equation}
Formula~\eqref{eq:D-fork-odd-delta} also implies the equalities \eqref{eqn:not zero d} used in the proof of Theorem~\ref{thm:d intro}.


\medskip 

\subsection{}
\label{app:C-quartic}

Consider type $C$ Case 1. Without loss of generality, we shall assume that the neck parameter equals $q^2$. For any $F\in \CV_{C_{\Gamma};-3\bsi^{N-1}-\bsi^N}$, the evaluation of $\langle X(z_1,z_2,z_3,w) , F \rangle$ with the series $X(z_1,z_2,z_3,w)$ from the proof of Theorem~\ref{thm:c intro} is very similar to the one in Subsection~\ref{app:B-quartic}: 
\begin{multline*}
  \Big\langle X(z_1,z_2,z_3,w) , F \Big\rangle = F \cdot
  \Sym_{z_1,z_2,z_3}\, \Bigg[ \frac{(z_1-z_2)(z_1-z_3)(z_2-z_3)}{(z_1-z_2 q^{-2})(z_1-z_3 q^{-2})(z_2-z_3 q^{-2})}\times \\
  \Bigg( - \frac{z_1z_2z_3}{(z_1-wq^{2})(z_2-wq^{2})(z_3-wq^{2})} - \frac{(q^2+1+q^{-2})z_1z_2z_3}{(z_1-wq^{2})(z_2-wq^{2})(w-z_3q^{2})} \\ 
  - \frac{(q^2+1+q^{-2})z_1z_2z_3}{(z_1-wq^{2})(w-z_2q^{2})(w-z_3q^{2})} - \frac{z_1z_2z_3}{(w-z_1q^{2})(w-z_2q^{2})(w-z_3q^{2})}\Bigg) \Bigg]
\end{multline*}
Invoking~\cite[Proposition 4]{E} again, we thus obtain 
\begin{multline}
\label{eq:C-Enriguez}
  \Big\langle X(z_1,z_2,z_3,w) , F \Big\rangle = \\
  - \Sym_{z_1,z_2,z_3}\,  \delta\left( \frac{z_2}{q^{-2}z_1} \right) \delta\left( \frac{z_3}{q^{-2}z_2} \right) \delta\left( \frac{w}{q^{2}z_3} \right) \cdot F(z_1,z_2,z_3,w)
\end{multline}
where we plug $z_1,z_2,z_3,w$ instead of $z_{N-1,1}, z_{N-1,2}, z_{N-1,3}, z_{N1}$. In particular 
\begin{equation*}
  \langle X(z_1,z_2,z_3,w) , F \rangle=0 \quad \Leftrightarrow \quad F \text{ vanishes at the wheel \eqref{eqn:wheel c 1}}
\end{equation*}
Formula~\eqref{eq:C-Enriguez} also implies the equality \eqref{eqn:not zero c 1} from the proof of Theorem~\ref{thm:c intro}:
$$ 
  \langle X^{0,0,0,d}, z^{-d}_{N1} \rangle = -6
$$


\medskip 

\subsection{}
\label{app:gl1-cubic}

Let $X(z_1,z_2,z_3)$ be the LHS of~\eqref{eq:Miki} . Direct calculations, similar to the ones presented above, then prove the following formula for any $F\in \CV_{-3\bsi}$:
\begin{multline}
\label{eq:delta-gl1}
  \Big\langle X(z_1,z_2,z_3) , F \Big\rangle = \\
  \Sym_{z_1,z_2,z_3}\, \Bigg( \gamma_1 \delta\left( \frac{z_2}{q_2 z_1} \right) \delta\left( \frac{z_3}{q_1 z_2}\right)
  + \gamma_2 \delta\left( \frac{z_2}{q_2 z_1} \right) \delta\left( \frac{z_3}{q_3 z_2}\right) \Bigg) \cdot  F(z_1,z_2,z_3) 
\end{multline}
with certain nonzero constants $\gamma_1$ and $\gamma_2$. Therefore, we obtain:
\begin{equation}
\label{eq:wheel-gl1}
  \langle X(z_1,z_2,z_3) , F \rangle = 0 \quad \Leftrightarrow \quad F \text{ vanishes at both wheels~\eqref{eq:wheel-pic-gl1}}
\end{equation}


\medskip 

\subsection{}
\label{app:gl2-cubic}

Let $X(z_1,z_2,w)$ be the LHS of~\eqref{eq:3Serre-sl2} for $i\ne j\in \{0,1\}$. Direct calculations then prove for $F\in \CV_{-2\bsi^i-\bsi^j}$:
\begin{multline}
\label{eq:delta-gl2}
  \Big\langle X(z_1,z_2,w) , F \Big\rangle = \\
  \Sym_{z_1,z_2}\, \Bigg( \gamma'_1 \delta\left( \frac{z_2}{q_2 z_1} \right) \delta\left( \frac{w}{q_1 z_2}\right)
  + \gamma'_2 \delta\left( \frac{z_2}{q_2 z_1} \right) \delta\left( \frac{w}{q_3 z_2}\right) \Bigg) \cdot  w F(z_1,z_2,w) 
\end{multline}
with certain nonzero constants $\gamma'_1$ and $\gamma'_2$. Therefore, we obtain:
\begin{equation}
\label{eq:wheel-gl2}
  \langle X(z_1,z_2,w) , F \rangle=0 \quad \Leftrightarrow \quad F \text{ vanishes at both wheels~\eqref{eq:wheel-pic-gl2}}
\end{equation}


\end{document}